\tikzstyle{shaded}=[fill=red!10!blue!20!gray!30!white]
\tikzstyle{shaded line}=[double=red!10!blue!20!gray!30!white, double distance=1.5mm, draw=black]
\tikzstyle{unshaded}=[fill=white]
\tikzstyle{unshaded line}=[double=white, double distance=1.5mm, draw=black]
\tikzstyle{Tbox}=[circle, draw, thick, fill=white, opaque,]
\tikzstyle{empty box}=[circle, draw, thick, fill=white, opaque, inner sep=2mm]
\tikzstyle{background rectangle}= [fill=red!10!blue!20!gray!40!white,rounded corners=2mm] 
\tikzstyle{on}=[very thick, red!50!blue!50!black]
\tikzstyle{off}=[gray]
\tikzstyle{traces}=[scale=.2, inner sep=1mm]
\tikzstyle{quadratic}=[scale=.4, inner sep=1mm, baseline]
\tikzstyle{annular}=[scale=.7, inner sep=1mm, baseline]
\tikzstyle{make triple edge size}= [scale=.4, inner sep=1mm,baseline] 
\tikzstyle{icosahedron network}=[scale=.3, inner sep=1mm, baseline]
\tikzstyle{ATLsix}=[scale=.25, baseline]
\tikzstyle{TL12}=[scale=.15,baseline]
\tikzstyle{PAdefn}=[scale=.7,baseline]
\tikzstyle{TLEG}=[scale=.5,baseline]
\newtheorem{lemma}{Lemma}[section]
\newtheorem{definition}[lemma]{Definition}
\newtheorem{theorem}[lemma]{Theorem}
\newtheorem{proposition}[lemma]{Proposition}
\newtheorem{remark}[lemma]{Remark}
\newtheorem{corollary}[lemma]{Corollary}
\newtheorem{conjecture}[lemma]{Conjecture}
\newtheorem{examples}[lemma]{Examples}
\newtheorem{problem}[lemma]{Problem}
\newtheorem{question}[lemma]{Question}
\newtheorem{statement}[lemma]{Statement}
 \title[Ore's theorem for cyclic subfactor planar algebras]{Ore's theorem for cyclic subfactor planar algebras and applications}
  \author[Sebastien Palcoux]{Sebastien Palcoux}
\address{Institute of Mathematical Sciences, Chennai, India}
\email{palcoux@imsc.res.in}
\keywords{von Neumann algebras; intermediate subfactors; distributive lattice; subfactor planar algebras}
\begin{document}

\begin{abstract}
This paper introduces the cyclic subfactors, generalizing the cyclic groups as the subfactors generalize the groups, and generalizing the natural numbers as the maximal subfactors generalize the prime numbers. On one hand, a theorem of O. Ore states that a finite group is cyclic if and only if its subgroups lattice is distributive, and on the other hand, every subgroup of a cyclic group is normal. Then, a subfactor planar algebra is called cyclic if all the biprojections are normal and form a distributive lattice. The main result shows in what sense a cyclic subfactor is singly generated, by generalizing one side of Ore's theorem as follows: if a subfactor planar algebra is cyclic then it is weakly cyclic (or w-cyclic), i.e. there is a minimal $2$-box projection generating the identity biprojection. Some extensions of this result are discussed, and some applications of it are given in subfactors, quantum groups and finite group theories, for example, a non-trivial upper bound for the minimal number of irreducible complex representations generating the left regular representation.
\end{abstract}

\maketitle

\section{Introduction} 

This paper gives a first result emerging from the nascent theory of cyclic subfactors; we first narrate how this theory was born. Vaughan Jones proved in \cite{jo2} that the set of possible index $[M:N]$ for a subfactor $(N \subseteq M)$ is exactly
$$\{ 4 cos^2(\frac{\pi}{n}) \  \vert \  n \geq 3  \} \sqcup [4,\infty]$$ We observe that it's the disjoint union of a discrete series and a continuous series.
Moreover, for a given intermediate subfactor $N \subseteq P \subseteq M$, $[M:N] = [M:P] \cdot  [P:N] $, so by applying a kind of Eratosthenes sieve, we get that a subfactor of index in the discrete series or in $(4,8)$ except the countable set of numbers which are product of numbers in the discrete series, can't have a non-trivial  intermediate subfactor. A subfactor without non-trivial intermediate subfactor is called maximal \cite{bi}. So for example, any subfactor of index in $(4,3+\sqrt{5})$ is maximal; (except $A_{\infty}$) there are at least $19$ irreducible subfactor planar algebras for this interval (see \cite{jms}), the first example is the Haagerup subfactor \cite{ep}. Thanks to the Galois correspondence \cite{nk}, a finite group subfactor, $(R^G \subseteq R)$ or $(R \subseteq R \rtimes G)$, is maximal if and only if it's a prime order cyclic group subfactor (i.e. $G = \mathbb{Z}/p$ with $p$ a prime number). We can see the class of maximal subfactors as a quantum generalization of the prime numbers. Now the natural informal question is:
\begin{question}
What's the quantum generalization of the natural numbers (as the class of maximal subfactors is for the prime numbers)?
\end{question}
For answering this question, we need to find a natural class of subfactors, called the ``cyclic subfactors'', checking: 
\begin{itemize}
\item[(1)] Every maximal subfactor is cyclic.
\item[(2)] A finite group subfactor $(R^G \subseteq R)$ or $(R \subseteq R \rtimes G)$ is cyclic if and only if the group $G$ is cyclic.
\end{itemize} 

An old and little known theorem published in 1938 by the Norwegian mathematician Oystein Ore states that: 
\begin{theorem}[\cite{or}]
A finite group $G$ is cyclic if and only if its subgroups lattice $\mathcal{L}(G)$ is distributive.
\end{theorem}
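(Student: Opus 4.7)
The plan is to prove the two implications separately, the forward one being a direct verification and the reverse carrying the substance of the theorem.

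For the forward direction, I would use that the subgroups of $\mathbb{Z}/n$ form a lattice isomorphic to the divisor lattice $D(n)$ (meet $=\gcd$, join $=\mathrm{lcm}$) via $H \mapsto |H|$. Writing $n = p_1^{a_1} \cdots p_k^{a_k}$, prime factorization yields $D(n) \cong \prod_{i=1}^k \{0, 1, \ldots, a_i\}$, and any finite product of chains is distributive. Hence $\mathcal{L}(\mathbb{Z}/n)$ is distributive.

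For the reverse direction, assuming $\mathcal{L}(G)$ is distributive, I would proceed in three steps. First, every Sylow $p$-subgroup $P$ of $G$ must be cyclic: the interval $[\{e\}, P]$ coincides with $\mathcal{L}(P)$ and is distributive as a sublattice; but a non-cyclic $p$-group contains a copy of $(\mathbb{Z}/p)^2$, whose subgroup lattice is the modular lattice $M_{p+1}$ (top, bottom, and $p+1$ atoms of order $p$), which contains $M_3$ and is therefore not distributive, a contradiction. So $G$ is a Z-group, and by the classical structure theorem admits a metacyclic presentation $\langle a, b \mid a^m = b^n = 1,\ bab^{-1} = a^r\rangle$ with $\gcd(m,n) = 1$ and $r^n \equiv 1 \pmod m$. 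Second, I would show $G$ must be abelian, i.e.\ $r \equiv 1 \pmod m$. Otherwise, setting $H = \langle b\rangle$ and $H' = aHa^{-1} = \langle a^{1-r}b\rangle$ (computed from $ba = a^r b$), one has $H \neq H'$; yet with $K = \langle a\rangle$, coprimality of $|H| = |H'| = n$ and $|K| = m$ yields $H \wedge K = H' \wedge K = \{e\}$, while $H \vee K = H' \vee K = G$ (the non-obvious inclusion following from $b = a^{-1}(aba^{-1})a \in \langle H', K\rangle$). The cancellation law valid in any distributive lattice then forces $H = H'$, a contradiction. Third, an abelian group with cyclic Sylow subgroups of pairwise coprime prime-power orders is cyclic by the Chinese Remainder Theorem.

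The main obstacle is the second step: Step 1 invokes only a classical fact about $p$-groups and Step 3 is immediate, but eliminating the non-trivial conjugation action of a Z-group genuinely requires distributivity, not merely modularity (for instance, $\mathcal{L}(S_3) \cong M_4$ is modular, yet $S_3$ is not cyclic). The key move is to produce two distinct conjugate subgroups sharing both their meet and their join with a common coprime-order complement, which the distributive cancellation law forbids; this is precisely what distinguishes cyclic groups from the strictly larger class of finite metacyclic Z-groups.
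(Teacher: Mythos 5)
Your reverse direction has a genuine gap in Step 1. The claim that every non-cyclic finite $p$-group contains a copy of $(\mathbb{Z}/p)^2$ is false for $p=2$: the generalized quaternion groups $Q_{2^n}$ are non-cyclic but have a \emph{unique} subgroup of order $2$, hence contain no Klein four subgroup. (The correct classical statement is that a $p$-group with a unique subgroup of order $p$ is cyclic or generalized quaternion.) So your argument does not rule out a generalized quaternion Sylow $2$-subgroup, and without that you cannot invoke the Z-group structure theorem. The gap is fixable: $\mathcal{L}(Q_8)$ contains the diamond $M_3$ directly (the three cyclic subgroups $\langle i\rangle,\langle j\rangle,\langle k\rangle$ all have meet $\langle -1\rangle$ and join $Q_8$), and every $Q_{2^n}$ with $n\ge 3$ contains $Q_8$, so a distributive subgroup lattice still forces cyclic Sylow $2$-subgroups --- but this case must be treated separately. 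The rest of your argument is sound: the computation $aba^{-1}=a^{1-r}b$, the coprimality giving $H\wedge K=H'\wedge K=\{e\}$ and $H\vee K=H'\vee K=G$, and the cancellation law in distributive lattices correctly force $r\equiv 1\pmod m$.

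Beyond the gap, your route is genuinely different from the paper's. The paper proves the harder direction by specializing its interval version (Theorem \ref{ore2}) to $H=\{1\}$: after reducing to a boolean top interval via Lemma \ref{topBn}, it runs a purely lattice-theoretic induction, taking generators $a$ of a maximal subgroup $M$ and $b$ of its complement $M^{\complement}$ and showing $\langle ab\rangle = \langle ab, a\rangle \wedge \langle ab, b\rangle = G$ by distributivity. That argument uses no structure theory at all and is exactly what generalizes to subfactor planar algebras. Your proof instead leans on Sylow theory and the H\"older--Burnside--Zassenhaus classification of groups with all Sylow subgroups cyclic (which itself rests on transfer arguments), and then isolates where distributivity enters via the cancellation law applied to a pair of conjugate complements. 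Your approach is heavier and does not generalize beyond groups, but it gives a sharper structural picture of exactly which non-cyclic configuration ($M_3$ from a non-abelian metacyclic action) distributivity excludes, and your remark contrasting $M_4=\mathcal{L}(S_3)$ with modularity is a good sanity check that the full strength of distributivity is being used.
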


First, the intermediate subfactors lattice of a maximal subfactor is obviously distributive. Next, by Galois correspondence, the intermediate subfactors lattice of a finite group subfactor is exactly the subgroups lattice (or its reverse) of the group; but the distributivity is reverse invariant, so it permits to get (1) and (2) by Ore's theorem.  

Now an abelian group, so a fortiori a cyclic group, admits only normal subgroups; but T. Teruya has generalized in \cite{teru} the notion of normal subgroup by the notion of normal intermediate subfactor, so: 

\begin{definition}
A finite index irreducible subfactor is cyclic if all its intermediate subfactors are normal and form a distributive lattice.
\end{definition} 

% Every cyclic group subfactor and every maximal subfacto are cyclic, so we get the \textit{natural} definition expected.  \\
 
\begin{remark}
Our motivation comes from our own interpretation of the cyclic subfactors theory as a ``quantum arithmetic''. \end{remark}

% It's important to keep in mind that the cyclic subfactors theory is a kind of ``quantum arithmetic'', and should be central in the subfactors theory, as the following slogan promotes:  \begin{center}  \textit{The prime numbers are for the natural numbers \\ what the maximal subfactors are for the cyclic subfactors, \\ and the cyclic groups are for the groups \\ what the cyclic subfactors are for the subfactors} \end{center} 

Note that an irreducible finite index subfactor $(N \subseteq M)$ admits a finite lattice $\mathcal{L}(N \subseteq M)$ by \cite{wa}, as for the subgroups lattice of a finite group. Moreover, a finite group subfactor remembers the group by \cite{jo}.

There are plenty of examples of cyclic subfactors (see Section \ref{cycl}): of course the cyclic group subfactors and the (irreducible finite index) maximal subfactors, but also (up to equivalence) exactly $23279$ among $34503$ inclusions of groups of index $< 30$, give a cyclic subfactor (more than $65\%$). Moreover, the class of cyclic subfactors is stable by free composition (see Corollary \ref{corofree}), by tensor product ``generically'' (see Remark \ref{coroprod}), and finally, stable by taking the dual or any intermediate. 

Now, the natural problem about the cyclic subfactors is to understand in what sense they are ``singly generated'', and the following theorem, generalizing one side of Ore's theorem, is a first step.   

\begin{theorem}  \label{introthm}
Let $\mathcal{P}$ be a finite index irreducible subfactor planar algebra.
If all its biprojections are normal and form a distributive lattice (i.e. cyclic subfactor) then there is a minimal projection generating the identity biprojection (i.e. w-cyclic subfactor).
\end{theorem}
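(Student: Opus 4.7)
The plan is to produce a minimal projection $u\in P_{2,+}$ whose generated biprojection $\langle u\rangle$ equals the top element $\mathrm{id}$ of the biprojection lattice $\mathcal{L}$. Since $\langle u\rangle$ is the smallest biprojection dominating $u$, the equality $\langle u\rangle=\mathrm{id}$ is equivalent to $u$ being dominated by no proper biprojection, i.e., by no coatom of $\mathcal{L}$. The theorem therefore reduces to an avoidance problem: exhibit a minimal projection $u$ with $u\not\le b_i$ for every coatom $b_1,\dots,b_k$ of $\mathcal{L}$.

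Combinatorially, distributivity collapses the joins of coatoms in a very usable way. Any two distinct coatoms of a finite lattice already join to $\mathrm{id}$, so every join of $\ge 2$ of the $b_i$ equals $\mathrm{id}$; distributivity gives the dual identity $b_i=\bigvee_{j\ne i}(b_i\wedge b_j)$ whenever $k\ge 3$, so the family of coatoms and their pairwise meets determine $\mathcal{L}$. By Birkhoff's representation, $\mathcal{L}$ is the lattice of order ideals of its poset $J$ of join-irreducibles, with the coatoms $b_i$ in bijection with the maximal elements of $J$. This rigid combinatorial input is what I would carry into the rest of the argument.

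The normality hypothesis then supplies the Lagrange-style arithmetic. For a normal biprojection $b$ and any biprojection $b'$, a second-isomorphism-type index formula $[b\vee b':b]=[b':b\wedge b']$ should be available (the planar-algebra analog of $NK/N\cong K/(N\cap K)$), and together with the collapse $b_i\vee b_j=\mathrm{id}$ it gives strong control on how the coatoms overlap. I would then compare sums of indices or traces over the $b_i$ with the total $\tau(\mathrm{id})$ or $[\mathrm{id}:e_1]$, via an inclusion--exclusion whose higher-order terms collapse because $b_{i_1}\vee\cdots\vee b_{i_r}=\mathrm{id}$ for $r\ge 2$. The resulting inequality should be incompatible with every minimal projection of $P_{2,+}$ lying in some corner $b_iP_{2,+}b_i$, and any minimal $u$ escaping all these corners is a w-cyclic generator.

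The main obstacle I anticipate is precisely this last counting step. Minimal projections do not form a vector space and the sets $\{u:u\le b_i\}$ are not subspaces but corners of a finite-dimensional multi-matrix algebra, so a raw trace or dimension inequality does not mechanically deliver a minimal projection outside their union. In the group-subfactor case this is exactly where Ore counts $G\smallsetminus\bigcup_i H_i$ using Lagrange's theorem and distributivity of the subgroup lattice. In the planar-algebra setting I would proceed by induction on $|\mathcal{L}|$: apply the theorem to the still-cyclic sublattice below each coatom (cyclic, since normality and distributivity are inherited) to produce a local generator $u_i$ of that coatom, and then use the normality of the $b_i$ to combine or perturb the $u_i$ into a single minimal projection that evades every coatom. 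Verifying that the resulting projection is genuinely \emph{minimal}---not merely nonzero---is the delicate point, and is where I expect the proof to require a specific construction rather than a pure existence argument.
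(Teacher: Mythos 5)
Your reduction is the right one, and it matches the paper's first moves: it suffices to exhibit a minimal projection dominated by no maximal biprojection, and by Lemmas \ref{topBn} and \ref{Topw} one may pass to the top interval, assume the biprojection lattice is boolean, and induct on the height. But neither of the two mechanisms you then propose closes the argument. The trace/inclusion--exclusion route is used in the paper only for the \emph{quantitative} variants (Proposition \ref{l<} and Theorem \ref{le2}, which need the extra hypothesis $\sum_i [id:b_i]^{-1}\le 1$ or $\le 2$); as you yourself observe, it cannot work unconditionally, and the paper does not attempt it for Theorem \ref{introthm}. Your fallback --- ``combine or perturb the $u_i$ into a single minimal projection'' --- is exactly the step that requires an idea, and the idea you are missing is the \emph{coproduct}. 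Moreover, normality is not exploited through a Lagrange-type index formula $[b\vee b':b]=[b':b\wedge b']$ (no such formula is invoked anywhere); it is used only through the weaker consequence that every biprojection is \emph{central}.

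Concretely, the proof of Theorem \ref{centheo} fixes one maximal biprojection $b$ and its boolean complement $b^{\complement}$ (so $b\wedge b^{\complement}=e_1$ and $b\vee b^{\complement}=id$), obtains by induction minimal projections $u,v$ with $\langle u\rangle=b$ and $\langle v\rangle=b^{\complement}$, and takes \emph{any} minimal projection $c\preceq u*v$ --- the planar analogue of $g=ab$ in Ore's group-theoretic argument; note $u*v>0$ by Theorem \ref{th}, so such a $c$ exists and is minimal by construction, so the delicacy you anticipate about minimality does not arise. Distributivity gives $\langle c\rangle=\langle c\rangle\vee(b\wedge b^{\complement})=\langle c,u\rangle\wedge\langle c,v\rangle$, and the weak Frobenius reciprocity Lemma \ref{pfr} (the analogue of $a=gb^{-1}$ and $b=a^{-1}g$) yields minimal projections $u'\preceq c*\overline{v}$ and $v'\preceq\overline{u}*c$ with $uu'\neq 0$ and $vv'\neq 0$, hence $Z(u')=Z(u)$ and $Z(v')=Z(v)$. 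Since all biprojections are central, $\langle c,v\rangle\ge u'$ forces $\langle c,v\rangle\ge Z(u')=Z(u)\ge u$, so $\langle c,v\rangle\ge\langle u,v\rangle=id$, and symmetrically $\langle c,u\rangle=id$; therefore $\langle c\rangle=id$. The coproduct as the combining operation, the complementation $b\mapsto b^{\complement}$ rather than the full family of coatoms, and Lemma \ref{pfr} are the three ingredients absent from your sketch, and without them the plan does not constitute a proof.
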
  

It's the main theorem of the paper. The initial statement, strictly in the subfactor framework (see Corollary \ref{mainsub}), has been translated to the planar algebra framework by Zhengwei Liu. 

The converse is not true, counter-examples come from the result that a subfactor $(R^G \subseteq R)$ is w-cyclic if and only if $G$ is linearly primitive, whereas it is cyclic if and only if $G$ is cyclic, but ``linearly primitive'' is strictly weaker than ``cyclic'', see for example $S_3$. That's why the name w-cyclic (i.e. weakly cyclic) was chosen. We are looking for a natural additional assumption to w-cyclic for having a complete characterization of the cyclic subfactors.

 We investigate some extensions of Theorem \ref{introthm} to distributive subfactor planar algebras (i.e. without assuming the biprojections to be normal). We can reduce to the boolean case (i.e. the biprojections lattice is equal to $\mathcal{B}_n$, the subsets lattice of $\{1, \dots , n \}$, for some $n$); we give a proof for $n \le 4$, as a corollary of the following result.
\begin{theorem} \label{le2intro}
A distributive subfactor planar algebra with the maximal biprojections $b_1, \dots, b_n$  satisfying $\sum_i \frac{1}{[id:b_i]} \le 2$, is w-cyclic.
\end{theorem}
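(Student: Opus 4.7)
I would argue by contradiction: assume $\mathcal{P}$ is not w-cyclic and derive $\sum_i 1/[id:b_i]>2$, contradicting the hypothesis. Not being w-cyclic means the biprojection $\langle p\rangle$ generated by every minimal projection $p \in P_{2,+}$ is proper; by distributivity of the biprojection lattice, every proper biprojection lies below some coatom $b_i$, so every minimal projection of $P_{2,+}$ is dominated by some $b_i$.

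Passing to the isotypic decomposition $P_{2,+}=\bigoplus_\alpha 1_\alpha P_{2,+}\simeq\bigoplus_\alpha M_{n_\alpha}(\mathbb{C})$, the minimal projections of $P_{2,+}$ lying in the $M_{n_\alpha}$-summand are the rank-one projections, identified with complex lines in $\mathbb{C}^{n_\alpha}$. The covering assumption forces every such line into the range of some $b_i \cdot 1_\alpha$; since a finite union of proper subspaces of a $\mathbb{C}$-vector space cannot exhaust the whole space, some $b_i \cdot 1_\alpha = 1_\alpha$, hence $b_i \ge 1_\alpha$ for each $\alpha$. Setting $\pi_i := \sum_{\alpha:\, b_i\ge 1_\alpha} 1_\alpha$, the largest central projection under $b_i$, the above gives $\bigvee_i \pi_i = 1$.

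The final step is the quantitative one. Since the $\pi_i$ are central, commutative inclusion--exclusion on the normalized trace $\mathrm{tr}$ (with $\mathrm{tr}(1)=1$, so that $\mathrm{tr}(b)=1/[id:b]$ for biprojections $b$) yields
\[
1 \;=\; \mathrm{tr}\!\left(\bigvee_i \pi_i\right) \;=\; \sum_{\emptyset\neq S\subseteq [n]} (-1)^{|S|+1}\,\mathrm{tr}\!\left(\bigwedge_{i\in S}\pi_i\right),
\]
and the Bonferroni lower bound together with $\pi_i \le b_i$ already gives $\sum_i 1/[id:b_i] \ge 1$. To promote this to $>2$, one must exploit distributivity of the full biprojection lattice via a multiplicative-type index inequality at meets and joins (for instance, $[id:b\vee c]\cdot[id:b\wedge c]\ge[id:b][id:c]$ in the distributive case, which in the group setting reduces to the abelian identity $|HK|\cdot|H\cap K|=|H||K|$). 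This is the main obstacle: the isotypic argument alone yields only $\sum \ge 1$, and the substantive content of the theorem is the passage from $\ge 1$ to $>2$, where distributivity must play the role that normality plays in Theorem~\ref{introthm}.
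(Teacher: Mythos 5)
Your opening reductions are sound and in fact mirror the paper's own ingredients: "not w-cyclic" does mean every minimal projection sits under one of the $b_i$ (though note this uses only that the lattice is finite, not distributivity), and the observation that a $\mathbb{C}$-vector space is not a finite union of proper subspaces — the same device as in Theorem~\ref{mini} — correctly forces each minimal central projection $1_\alpha$ to satisfy $b_i\ge 1_\alpha$ for some $i$. But, as you say yourself, this only yields $\sum_i 1/[id:b_i]\ge 1$, which is perfectly compatible with the hypothesis $\sum_i 1/[id:b_i]\le 2$; no contradiction is reached, so the proof is incomplete exactly at its substantive step. The suggested repair via an index inequality $[id:b\vee c]\cdot[id:b\wedge c]\ge[id:b]\,[id:c]$ is not established anywhere in the paper, and it is not clear it would close the gap even if true; in particular a configuration with two maximal biprojections $b_1,b_2$, each a full sum of central summands, with $b_1\wedge b_2=e_1$ and $tr(b_1)+tr(b_2)$ barely above $1$, shows that no purely trace-counting argument of the kind you set up can work on its own.

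The paper (Theorem~\ref{le2}) closes the gap with a dichotomy on $K:=\bigwedge_{i\neq j}(b_i\wedge b_j)^{\perp}$ after reducing to the boolean case. When $K=0$ one gets, for every $i$, $\bigwedge_{j\neq i}b_j^{\perp}=0$, i.e.\ each central summand $p_s$ is covered by the $b_j$ with $j\neq i$ \emph{and} (by your subspace argument, in effect) wholly contained in some $b_{i(s)}$; this double covering gives $\sum_j tr(p_{j,s})\ge 2\,tr(p_s)$ for $s\neq 1$, whence $\sum_i tr(b_i)\ge 2\,tr(id)+(n-2)\,tr(e_1)>2$ for $n>2$ (and $n\le 2$ is Corollary~\ref{two}) — the factor $2$ comes from this combinatorial double count, not from an index inequality. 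When $K\neq 0$ counting is abandoned entirely: one takes a minimal projection $u\le K$ with $\langle u\rangle=b_i$, writes the complement $b_i^{\complement}=\langle v\rangle$, produces $c\preceq u*v$ and auxiliary projections via Lemmas~\ref{wmin} and~\ref{pfr}, and concludes by the distributive identity $\langle c\rangle=\langle c\rangle\vee(b_i\wedge b_i^{\complement})=\langle u,c\rangle\wedge\langle c,v\rangle=id$. That coproduct argument is where distributivity genuinely enters, and it is the idea missing from your proposal.
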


The reformulation of Theorems \ref{introthm} and \ref{le2intro}, to interval of finite groups, gives surprising purely group theoretic results:

\begin{corollary}[O. Ore, \cite{or}] \label{introre2} If the interval of finite groups $[H,G]$ is distributive, then $\exists g \in G$ such that $\langle H, g \rangle = G$. 
\end{corollary}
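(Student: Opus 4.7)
The plan is to deduce the corollary from Theorem~\ref{introthm} by applying it to a suitable group subfactor associated with the inclusion $H \subseteq G$. Let $R$ be the hyperfinite $\mathrm{II}_1$ factor equipped with a free outer action of $G$, and consider the irreducible finite-index subfactor $\mathcal{N} = (R^G \subseteq R^H)$. By the Galois correspondence (\cite{nk}), the intermediate subfactors of $\mathcal{N}$ are in order-reversing bijection with the interval $[H,G]$, so the biprojection lattice of the associated subfactor planar algebra is anti-isomorphic to $[H,G]$; since distributivity is reversal-invariant, this biprojection lattice is distributive.

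Next I would verify that every biprojection of $\mathcal{N}$ is normal in Teruya's sense. This is a general feature of group-type subfactors: every intermediate subfactor has the form $R^K$ for some $H \le K \le G$, and the bimodule structure required for normality is read off directly from the group action. Granting this, $\mathcal{N}$ is cyclic in the paper's sense, and Theorem~\ref{introthm} then produces a minimal $2$-box projection $p$ generating the identity biprojection.

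Finally, I would translate the w-cyclic conclusion back into group theory. The $2$-box space of $\mathcal{N}$ is naturally graded by the double cosets in $H \backslash G / H$, and the biprojection generated by a minimal projection supported on the double coset $HgH$ corresponds, via the Galois correspondence, to the smallest intermediate subgroup containing both $H$ and $g$, namely $\langle H, g \rangle$. Since the identity biprojection corresponds under the anti-isomorphism to the top element $G$ of $[H,G]$, the existence of $p$ supplies an element $g \in G$ with $\langle H, g \rangle = G$.

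The main obstacle will be making the translation in the last step precise: one must identify which intermediate subgroup is ``generated'' in the planar-algebra sense by a given minimal $2$-box projection, and show it coincides with $\langle H, g \rangle$ for some $g \in G$ extracted from the support of $p$. By comparison, the Galois correspondence is classical and the normality verification for group-type subfactors is routine.
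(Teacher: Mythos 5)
Your strategy — deducing Ore's theorem from the planar-algebra result applied to a group-type subfactor — is close in spirit to what the paper does for Corollary \ref{dualore2}, but as written it has two genuine gaps, and the first one lands you on the wrong theorem. You chose the wrong side of the duality: the $2$-box space of $(R^G \subseteq R^H)$ is $\bigoplus_{V_i^H \neq 0} \mathrm{End}(V_i^H)$, indexed by irreducible representations of $G$, and its minimal projections generate biprojections corresponding to pointwise stabilizers $G_{(V^H)}$ (Lemma \ref{corrminstab}); it is the \emph{dual} $2$-box space, i.e.\ that of $(R \rtimes H \subseteq R \rtimes G)$, which decomposes as $\bigoplus_{g} \mathbb{C}e_{HgH}$ over double cosets. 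By Corollary \ref{wgrp}, w-cyclicity of $(R^G \subseteq R^H)$ translates to linear primitivity of the inclusion — that is the dual Ore theorem, Corollary \ref{dualore} — whereas it is w-cyclicity of $(R \rtimes H \subseteq R \rtimes G)$ that translates to the existence of $g$ with $\langle H,g\rangle = G$. Second, the claim that normality of all biprojections is a routine general feature of group-type subfactors is false: by Teruya's criterion (Lemma \ref{teruya}), $b_K$ is normal if and only if $HgK = KgH$ for all $g \in G$, which fails in general and is precisely the extra hypothesis (1) that Corollary \ref{dualore} has to impose. So Theorem \ref{introthm} cannot be invoked as stated for either subfactor.

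The repair is what the paper does in Corollary \ref{dualore2}: work with $(R \rtimes H \subseteq R \rtimes G)$, whose biprojection lattice is exactly $[H,G]$ and whose $2$-box algebra $\bigoplus_g \mathbb{C}e_{HgH}$ is commutative, so that every biprojection is automatically \emph{central}; then apply the stronger Theorem \ref{centheo} (central biprojections plus distributivity imply w-cyclic) rather than Theorem \ref{thm}, and translate back via Corollary \ref{wgrp}. Note also that the proof the paper actually attaches to this statement (Theorem \ref{ore2}) is a short purely group-theoretic translation of that planar argument: reduce to the boolean top interval, induct on the height, take a maximal $M \in [H,G]$ and its complement $M^{\complement}$, choose $a,b$ with $\langle H,a\rangle = M$ and $\langle H,b\rangle = M^{\complement}$, set $g = ab$, and conclude from $\langle H,g\rangle = \langle H,g\rangle \vee (M \wedge M^{\complement}) = \langle H,a,g\rangle \wedge \langle H,g,b\rangle = G$ by distributivity.
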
  

\begin{remark} Corollary \ref{dualore} is a new result in finite group theory, it is proved by subfactor methods, and is almost the dual of Corollary \ref{introre2}. \end{remark}

\begin{corollary} \label{dualore} If the interval of finite groups $[H,G]$ is distributive, and if one of the following (non-equivalent) statements occurs, 
\begin{itemize}
\item[(1)]  $\forall K \in [H,G] ,\ \forall g \in G ,\ HgK = KgH$,
\item[(2)] $\sum_i \frac{1}{\vert K_i : H  \vert} \le 2$, with $K_1, \dots , K_n$  the minimal overgroups of $H$,
\end{itemize}
then $\exists V$ irreducible complex representation of $G$ such that $G_{(V^H)} = H$. 
\begin{remark}  Moreover, if $H$ is core-free, then $G$ is linearly primitive. Also,  (2) is clearly satisfied for $\left\vert [H,G] \right\vert$  or  $|G:H| < 32$. The statement of Corollary \ref{dualore} is conjectured true without assuming (1) or (2).  \end{remark}
\end{corollary}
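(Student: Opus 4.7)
The plan is to realize $[H,G]$ as the biprojection lattice of a suitable subfactor planar algebra and then apply the main theorems of the paper. Let $R$ be the hyperfinite II$_1$ factor with an outer $G$-action, and consider the irreducible finite-index subfactor $(R^G \subseteq R^H)$, with associated planar algebra $\mathcal{P}$. By the Nakamura--Kosaki Galois correspondence \cite{nk}, the intermediate subfactor lattice, and hence the biprojection lattice of $\mathcal{P}$, is identified with $[H,G]$ up to order reversal; it is therefore distributive by hypothesis. The biprojection $b_K$ attached to the intermediate $R^K$ satisfies $[\mathrm{id}:b_K] = [R^H:R^K] = |K:H|$.

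I would next check that each hypothesis feeds into one of the main theorems. Hypothesis (2) matches Theorem \ref{le2intro} verbatim, once the minimal overgroups $K_i$ of $H$ are identified with the maximal biprojections $b_{K_i}$ of $\mathcal{P}$. For hypothesis (1), the double-coset symmetry $HgK = KgH$ is the standard group-theoretic avatar of Teruya normality for the intermediate $R^K$ inside $R^G \subseteq R^H$; granting this dictionary, every biprojection of $\mathcal{P}$ is normal, so $\mathcal{P}$ is cyclic and Theorem \ref{introthm} applies. In either case, $\mathcal{P}$ is w-cyclic.

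It then remains to translate w-cyclicity back into representation theory. Up to isomorphism, the positive 2-box algebra of $(R^G \subseteq R^H)$ is the Hecke algebra $\mathrm{End}_{\mathbb{C}[G]}(\mathbb{C}[G/H])$, which by Peter--Weyl decomposes as $\bigoplus_V \mathrm{End}(V^H)$, $V$ ranging over the irreducibles of $G$ with $V^H \ne 0$; its minimal projections are in bijection with pairs $(V,\ell)$, where $\ell \subseteq V^H$ is a line. Under this identification, the biprojection $b_K$ corresponds to the projection $\mathbb{C}[G/H] \twoheadrightarrow \mathbb{C}[G/K]$, whose action on the $V$-component is $V^H \twoheadrightarrow V^K$; thus $p_{V,\ell} \le b_K$ iff $\ell \subseteq V^K$ iff $K \subseteq G_{(\ell)}$. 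Consequently the biprojection generated by $p_{V,\ell}$ is $b_{G_{(\ell)}}$, and w-cyclicity of $\mathcal{P}$ amounts to the existence of some irreducible $V$ and some line $\ell \subseteq V^H$ with $G_{(\ell)} = H$. Since a finite union of proper subspaces of $V^H$ cannot cover $V^H$ over $\mathbb{C}$, this is in turn equivalent to $G_{(V^H)} = H$ for some irreducible $V$, which is the conclusion of the corollary.

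The principal obstacle is the dictionary translation invoked under hypothesis (1), namely verifying that the double-coset symmetry $HgK = KgH$ is exactly the group-theoretic incarnation of Teruya normality of the intermediate $R^K$ in $R^G \subseteq R^H$; this should come out of unpacking the commuting-conditional-expectations definition on the Jones tower of $R^G \subseteq R^K \subseteq R^H$, but requires some care.
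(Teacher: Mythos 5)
Your proposal is correct and follows essentially the same route as the paper: pass to $(R^G \subseteq R^H)$ via the Galois correspondence, use Teruya's dictionary for hypothesis (1) and Theorem \ref{le2intro} for hypothesis (2) to get w-cyclicity, then translate back through $\mathcal{P}_{2,+}(R^G\subseteq R^H)\simeq\bigoplus_V \mathrm{End}(V^H)$. The ``principal obstacle'' you flag is exactly the content of the paper's Lemma \ref{teruya}, which is not reproved there but cited directly from Teruya's Proposition 3.3, so no further care is needed beyond that reference.
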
 
 We also describe others applications of Theorem \ref{introthm}. First for an irreducible finite index subfactor planar algebra, we get an upper bound, called the \textit{cyclic length}, for the minimal number of minimal projections generating the identity biprojection. The reformulation of that in the  subfactors, quantum groups and finite group theories gives applications in each of these fields. For example, in the subfactors theory, the cyclic length of a subfactor $(N \subseteq M)$ gives a non-trivial upper bound for the minimal number of algebraic irreducible sub-$N$-$N$-bimodules of $M$ generating $M$ (as von Neumann algebra); in particular, if $(N \subseteq M)$ is cyclic then $M$ can be generated by one irreducible sub-$N$-$N$-bimodule. For a finite group $G$, the cyclic length gives a non-trivial upper bound for the minimal number of elements generating $G$, and of irreducible complex representations of $G$ generating the left regular representation.
So we have found a bridge linking combinatorics and representations, built in the language of subfactor planar algebras.\vspace*{0mm} \footnotesize
\tableofcontents
\normalsize 
\newpage
\section{Ore's theorem for groups and intervals} 

\begin{definition}
A lattice $(L, \wedge , \vee)$ is a partially ordered set (or poset) $L$  in which every two elements $a,b$ have a unique supremum (or join) $a \vee b$ and a unique infimum (or meet) $a \wedge b$.
\end{definition}  
\begin{examples} \hspace*{1cm}
\begin{center}
\underline{Lattices} $\hspace{3cm}$ \underline{Non-lattice} \\
$\begin{tikzpicture}
\node (A1) at (0,0) {\small $a$};
\node (A3) at (0,-1) {\small $b$};
\node (A5) at (0,-2) {\small $c$};
\tikzstyle{segm}=[-,>=latex, semithick]
\draw [segm] (A1)to(A3); \draw [segm] (A3)to(A5);
\end{tikzpicture}$ $\hspace{0.5cm}$ $\begin{tikzpicture}
\node (A1) at (0,0) {a};
\node (A2) at (-1,-1) {b};
\node (A3) at (0,-1) {c};
\node (A4) at (1,-1) {d};
\node (A5) at (0,-2) {e};
\tikzstyle{segm}=[-,>=latex, semithick]
\draw [segm] (A1)to(A2); \draw [segm] (A1)to(A3);\draw [segm] (A1)to(A4);
\draw [segm] (A2)to(A5);\draw [segm] (A3)to(A5);\draw [segm] (A4)to(A5);
\end{tikzpicture}$ $\hspace{2cm}$  $\begin{tikzpicture}
\node (A1) at (0,0) {$a$};
\node (A2) at (-1,-1) {$b$};
\node (A4) at (1,-1) {$c$};
\node (A5) at (-1,-2) {$d$};
\node (A7) at (1,-2) {$e$};
\node (A8) at (0,-3) {$f$};
\tikzstyle{segm}=[-,>=latex, semithick]
\draw [segm] (A1)to(A2); \draw [segm] (A1)to(A4);
\draw [segm] (A2)to(A5); \draw [segm] (A2)to(A7);
\draw [segm] (A4)to(A7);  \draw [segm] (A4)to(A5);
\draw [segm] (A5)to(A8);\draw [segm] (A7)to(A8);
\end{tikzpicture}$ 
\end{center}
\end{examples}

\begin{definition}
A sublattice of $(L, \wedge , \vee)$ is a subposet $L' \subseteq L$ such that $(L', \wedge , \vee)$ is also a lattice. Let $a,b \in L$ with $a \le b$, then the interval $[a,b]$ is the sublattice $\{c \in L \ \vert \ a \le c \le b \}$. \end{definition}

\begin{definition}
A finite lattice $L$ admits a smallest and a greatest element, called $0$ and $1$. 
\end{definition} 

\begin{definition} \label{top}
The top interval of a finite lattice $L$ is  $[B,1]$ with $B= \bigwedge_i B_i$ and  $B_1, \dots, B_n$ the maximal elements.
\end{definition}

 \begin{definition} \label{bottom}
The bottom interval of a finite lattice $L$ is  $[0,b]$ with $b= \bigvee_i b_i$ and  $b_1, \dots, b_m$ the minimal elements.
\end{definition}

\begin{definition}
The height $h(L)$ of a finite lattice $L$ is the greatest length $\ell$ of a chain $ 0 < a_1 < a_2 < \cdots < a_{\ell} = 1$ with $a_i \in \mathcal{L}$.
\end{definition} 

\begin{definition}  Let $G$ be a finite group. The set of subgroups $ K \subseteq G$ is a lattice, noted $\mathcal{L}(G)$, ordered by $\subseteq$, with $K_1 \vee K_2 = \langle K_1,K_2 \rangle$ and $K_1 \wedge K_2 =  K_1 \cap K_2 $. Let $\mathcal{L}(H \subseteq G)$ be the interval lattice $[H , G]$. \end{definition}
\begin{examples} We give the lattices $\mathcal{L}(\mathbb{Z}/6)$, $\mathcal{L}(S_3)$ and  $[S_2 , S_4]$.
\begin{center}
$\begin{tikzpicture}
\node (A1) at (0,0) {\small $\mathbb{Z}/6$};
\node (A2) at (-1,-1) {\scriptsize $\mathbb{Z}/2$};
\node (A4) at (1,-1) {\scriptsize $\mathbb{Z}/3$};
\node (A5) at (0,-2) {\small $\{ 1 \}$};
\tikzstyle{segm}=[-,>=latex, semithick]
\draw [segm] (A1)to(A2); \draw [segm] (A1)to(A4);
\draw [segm] (A2)to(A5);\draw [segm] (A4)to(A5);
\end{tikzpicture}$  $\hspace{0.5cm}$  
$\begin{tikzpicture}
\node (A1) at (0,0) {\small $S_3$};
\node (A2) at (-1.5,-1) {\scriptsize $\langle (12) \rangle$};
\node (A3) at (-0.5,-1) {\scriptsize $\langle (13) \rangle$};
\node (A4) at (0.5,-1) {\scriptsize $\langle (23) \rangle$};
\node (A4b) at (1.5,-1) {\scriptsize $\langle (123) \rangle$};
\node (A5) at (0,-2) {\small $\{ 1 \}$};
\tikzstyle{segm}=[-,>=latex, semithick]
\draw [segm] (A1)to(A2); \draw [segm] (A1)to(A3);\draw [segm] (A1)to(A4);
\draw [segm] (A1)to(A4b);  \draw [segm] (A4b)to(A5);
\draw [segm] (A2)to(A5);\draw [segm] (A3)to(A5);\draw [segm] (A4)to(A5);
\end{tikzpicture}$  $\hspace{0.5cm}$ 
$\begin{tikzpicture}
\node (A1) at (0,0) {\small $S_4$};
\node (A2) at (-1,-1) {\scriptsize $S_3$};
\node (A3a) at (0,-2/3) {\scriptsize $D_4$};
\node (A3b) at (0,-4/3) {\scriptsize $S_2^2$};
\node (A4) at (1,-1) {\scriptsize $S_3$};
\node (A5) at (0,-2) {\small $S_2$};
\tikzstyle{segm}=[-,>=latex, semithick]
\draw [segm] (A1)to(A2); \draw [segm] (A1)to(A3a);\draw [segm] (A1)to(A4); \draw [segm] (A3a)to(A3b); \draw [segm] (A2)to(A5);\draw [segm] (A3b)to(A5);\draw [segm] (A4)to(A5);
\end{tikzpicture}$ 
\end{center} 
\end{examples}

\begin{remark}
Every finite lattice can't be realized as a subgroups lattice $\mathcal{L}(G)$, the first counter-example is the pentagon lattice (defined in Theorem \ref{dipe}) but it can be realized as an intermediate subgroups lattice $[H,G]$ (see \cite{wa} p 331). The realizability of every finite lattice as an intermediate subgroups lattice is an open problem\footnote{http://mathoverflow.net/q/196033/34538}, but the Shareshian's conjecture (see \cite{asch} p72) states that the following lattice (among others)
 
\begin{center} $\begin{tikzpicture}
\node (A1) at (0,0) {$\bullet$};
\node (A2) at (-2.5,-1) {$\bullet$};
\node (A3) at (-1.5,-1) {$\bullet$};
\node (A4) at (-0.5,-1) {$\bullet$};
\node (A5) at (0.5,-1) {$\bullet$};
\node (A6) at (1.5,-1) {$\bullet$};
\node (A7) at (2.5,-1) {$\bullet$};
\node (A8) at (-2.5,-2) {$\bullet$};
\node (A9) at (-1.5,-2) {$\bullet$};
\node (A10) at (-0.5,-2) {$\bullet$};
\node (A11) at (0.5,-2) {$\bullet$};
\node (A12) at (1.5,-2) {$\bullet$};
\node (A13) at (2.5,-2) {$\bullet$};
\node (A14) at (0,-3) {$\bullet$};
\tikzstyle{segm}=[-,>=latex, semithick]
\draw [segm] (A1)to(A2); \draw [segm] (A3)to(A8); \draw [segm] (A14)to(A13); 
\draw [segm] (A1)to(A3); \draw [segm] (A3)to(A10); \draw [segm] (A14)to(A12); 
\draw [segm] (A1)to(A4); \draw [segm] (A4)to(A9); \draw [segm] (A14)to(A11); 
\draw [segm] (A1)to(A5); \draw [segm] (A4)to(A10); \draw [segm] (A14)to(A10); 
\draw [segm] (A1)to(A6); \draw [segm] (A5)to(A11); \draw [segm] (A14)to(A9); 
\draw [segm] (A1)to(A7); \draw [segm] (A5)to(A12); \draw [segm] (A14)to(A8); 
\draw [segm] (A2)to(A8); \draw [segm] (A6)to(A11); \draw [segm] (A7)to(A12); 
\draw [segm] (A2)to(A9); \draw [segm] (A6)to(A13); \draw [segm] (A7)to(A13); 
\end{tikzpicture} 
$ \end{center}
is not realizable as an intermediate subgroups lattice (and so its realizability as an intermediate subfactors lattice is an interesting problem).    
\end{remark}

\begin{definition}
A lattice $(L, \wedge , \vee)$ is distributive if $\forall a,b,c \in L$: 
$$a \vee (b \wedge c) = (a \vee b) \wedge (a \vee c)$$  
 {\scriptsize \textnormal{ [or equivalently: $a \wedge (b \vee c) = (a \wedge b) \vee (a \wedge c)$]}} \end{definition}
 \begin{examples} \hspace*{1cm}
 \begin{center}
 \underline{Non-distributive} $\hspace{4cm}$ \underline{Distributive}  \\
$\begin{tikzpicture}
\node (A1) at (0,1) {\small $(\mathbb{Z}/2)^2$};
\node (A2) at (-1,0) {\small $a$};
\node (A3) at (0,0) {\small $b$};
\node (A4) at (1,0) {\small $c$};
\node (A5) at (0,-1) {\small $\{ 1 \}$};
\node (A6) at (0,-1.7) {\scriptsize with $a,b,c \simeq \mathbb{Z}/2$};
\tikzstyle{segm}=[-,>=latex, semithick]
\draw [segm] (A1)to(A2); \draw [segm] (A1)to(A3);\draw [segm] (A1)to(A4);
\draw [segm] (A2)to(A5);\draw [segm] (A3)to(A5);\draw [segm] (A4)to(A5);
\end{tikzpicture}$ $\hspace{2.5cm}$
$\begin{tikzpicture}
\node (A1) at (0,0) {\small $2 \cdot 3 \cdot 5$};
\node (A2) at (-1,-1) {\small $2 \cdot 3$};
\node (A3) at (0,-1) {\small $2 \cdot 5$};
\node (A4) at (1,-1) {\small $3 \cdot 5$};
\node (A5) at (-1,-2) {\small $2$};
\node (A6) at (0,-2) {\small $3$};
\node (A7) at (1,-2) {\small $5$};
\node (A8) at (0,-3) {\small $1$};
\tikzstyle{segm}=[-,>=latex, semithick]
\draw [segm] (A1)to(A2); \draw [segm] (A1)to(A3);\draw [segm] (A1)to(A4);
\draw [segm] (A2)to(A5);\draw [segm] (A2)to(A6);
\draw [segm] (A3)to(A5);\draw [segm] (A3)to(A7);
\draw [segm] (A4)to(A6);\draw [segm] (A4)to(A7);
\draw [segm] (A5)to(A8);\draw [segm] (A6)to(A8);\draw [segm] (A7)to(A8);
\end{tikzpicture}$
 $\hspace{0.25cm}$ 
 $\begin{tikzpicture}
\node (A1) at (0,0) {\small $\mathbb{Z}/4$};
\node (A3) at (0,-1) {\small $\mathbb{Z}/2$};
\node (A5) at (0,-2) {\small $\{ 1 \}$};
\node (A6) at (0,-2.7) {\scriptsize };
\tikzstyle{segm}=[-,>=latex, semithick]
\draw [segm] (A1)to(A3); \draw [segm] (A3)to(A5);
\end{tikzpicture}$
 \end{center}
\begin{scriptsize} 
proof:  $a \wedge (b \vee c) = a \neq $ $ \{ 1\} = (a \wedge b) \vee (a \wedge c) $ 
\end{scriptsize}
\end{examples}
 
 \begin{theorem} \label{dipe}
A lattice is distributive if and only if it has no sublattice equivalent to the diamond lattice $M_3$ or the pentagon lattice $N_5$,  below. \end{theorem}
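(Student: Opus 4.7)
The plan is to treat the two directions separately. The forward direction is routine: both $M_3$ and $N_5$ are visibly non-distributive, and since the distributive identity is universally quantified it descends to any sublattice. Explicitly, in $M_3$ with middle atoms $a,b,c$, one has $a \wedge (b \vee c) = a$ whereas $(a \wedge b) \vee (a \wedge c) = 0$; in $N_5$ with the chain $0 < a < b < 1$ together with the side element $c$, one has $b \wedge (a \vee c) = b$ whereas $(b \wedge a) \vee (b \wedge c) = a$. So a lattice containing either $M_3$ or $N_5$ as a sublattice is automatically non-distributive.

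For the converse I would argue the contrapositive: assuming $L$ non-distributive, produce an $N_5$ or $M_3$ sublattice. The case split is on whether $L$ is \emph{modular}, i.e.\ whether $a \le b \Rightarrow a \vee (c \wedge b) = (a \vee c) \wedge b$; the $\le$ direction always holds, so non-modularity means strict $<$ at some $a \le b$ and $c$. In the non-modular case, taking such witnesses, I would claim that the five elements
$$c \wedge b, \quad a \vee (c \wedge b), \quad (a \vee c) \wedge b, \quad c, \quad a \vee c$$
form a sublattice isomorphic to $N_5$. The required joins and meets of $c$ with the two ``middle'' elements reduce to absorption identities, and the distinctness of the five elements follows because any collapse among them would upgrade the strict inequality $a \vee (c \wedge b) < (a \vee c) \wedge b$ to an equality, contradicting the choice of witnesses.

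If instead $L$ is modular but not distributive, pick witnesses $x,y,z$ with $x \wedge (y \vee z) > (x \wedge y) \vee (x \wedge z)$ and symmetrize by setting
$$d = (x \wedge y) \vee (y \wedge z) \vee (z \wedge x), \qquad u = (x \vee y) \wedge (y \vee z) \wedge (z \vee x),$$
$$a' = d \vee (x \wedge u), \quad b' = d \vee (y \wedge u), \quad c' = d \vee (z \wedge u).$$
Repeated applications of the modular law should yield the six identities $a' \wedge b' = a' \wedge c' = b' \wedge c' = d$ and $a' \vee b' = a' \vee c' = b' \vee c' = u$; together with pairwise distinctness of $a', b', c'$ these force $\{d, a', b', c', u\}$ to be a sublattice isomorphic to $M_3$.

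The main obstacle is precisely this modular case: each of the six meet/join identities demands a careful choreography of the modular law (several of them are obtained by applying modularity to an inclusion hidden inside the definitions of $d$ and $u$), and the subtlest step is proving the three middle elements pairwise distinct, since any collapse among them would force the failed distributive identity to hold at $x,y,z$, contradicting the choice of witnesses. By contrast, the non-modular case is essentially formal, and the forward direction is a one-line calculation.
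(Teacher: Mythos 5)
Your outline is correct: this is the classical Dedekind--Birkhoff argument, and the paper itself gives no proof at all, deferring to Gr\"atzer's book (Theorem 101 there), which proves the statement by exactly the route you describe. The forward direction and the non-modular case are as easy as you say (the five elements $c \wedge b$, $a \vee (c \wedge b)$, $(a \vee c) \wedge b$, $c$, $a \vee c$ do form an $N_5$, with the side meets and joins collapsing by absorption). In the modular case your construction is the standard one; the one place worth pinning down is that the ``collapse'' argument really runs through the single inequality $d < u$: one shows by modularity that $x \wedge u = x \wedge (y \vee z)$ while $x \wedge d = (x \wedge y) \vee (x \wedge z)$, so $d = u$ would force the distributive identity at the chosen witnesses; conversely $d \ne u$ together with the six identities $a' \wedge b' = d$, $a' \vee b' = u$ (etc.) forces all five elements to be distinct, since any identification of two of $a', b', c'$ would give $d = u$. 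With that detail supplied, the proof is complete and is precisely the one the paper outsources to its reference.
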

\begin{center} $\begin{tikzpicture}
\node (A1) at (0,0) {$\bullet$};
\node (A2) at (-1,-1) {$\bullet$};
\node (A3) at (0,-1) {$\bullet$};
\node (A4) at (1,-1) {$\bullet$};
\node (A5) at (0,-2) {$\bullet$};
\node (A6) at (0,-2.4) {};
\tikzstyle{segm}=[-,>=latex, semithick]
\draw [segm] (A1)to(A2); \draw [segm] (A1)to(A3);\draw [segm] (A1)to(A4);
\draw [segm] (A2)to(A5);\draw [segm] (A3)to(A5);\draw [segm] (A4)to(A5);
\end{tikzpicture} 
\hspace*{2cm}
\begin{tikzpicture}
\node (A1) at (0,0) {$\bullet$};
\node (A2) at (-1,-0.85) {$\bullet$};
\node (A3) at (-1,-1.75) {$\bullet$};
\node (A4) at (1,-1.3) {$\bullet$};
\node (A5) at (0,-2.6) {$\bullet$};
\tikzstyle{segm}=[-,>=latex, semithick]
\draw [segm] (A1)to(A2); \draw [segm] (A2)to(A3);\draw [segm] (A1)to(A4);
\draw [segm] (A3)to(A5);\draw [segm] (A4)to(A5);
\end{tikzpicture}
$ \end{center}
\begin{proof}
See \cite{gr} Theorem 101 p109.
\end{proof} 
\begin{remark}  \label{modular} The lattices admitting no sublattice equivalent to (just) the pentagon lattice $N_5$ are called modular in the literature.
\end{remark}
\begin{lemma} \label{distri}
 The distributivity is self-dual and hereditary, i.e. for $L$ distributive, its reverse lattice and its sublattices are also distributive. Moreover it is stable by concatenation and direct product. \end{lemma}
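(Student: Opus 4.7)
The plan is to verify each of the four assertions directly from the definition of distributivity.

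For self-duality, I would observe that the two identities appearing in the definition---$a \vee (b \wedge c) = (a \vee b) \wedge (a \vee c)$ and $a \wedge (b \vee c) = (a \wedge b) \vee (a \wedge c)$---are duals of one another: each is obtained from the other by swapping $\wedge \leftrightarrow \vee$. Since they are equivalent (the parenthetical remark in the definition), any distributive $L$ satisfies both, and so does its reverse lattice $L^{\mathrm{op}}$, in which meets and joins are interchanged. For heredity, a sublattice $L' \subseteq L$ uses the same meet and join operations as $L$; hence any distributive identity in $L$ restricts, on triples from $L'$, to the same identity in $L'$.

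For stability under direct product, in $L_1 \times L_2$ both $\wedge$ and $\vee$ are computed coordinate-wise, so the distributive law in $L_1 \times L_2$ follows immediately coordinate-by-coordinate from the distributive laws in $L_1$ and in $L_2$.

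For stability under concatenation (ordinal sum) $L_1 \oplus L_2$---obtained by placing $L_2$ entirely above $L_1$, identifying the top of $L_1$ with the bottom of $L_2$ if needed---I would do a case analysis on which component each of $a, b, c$ lies in. If all three lie in a single component, distributivity holds by hypothesis on that component. Otherwise, for $x \in L_1$ and $y \in L_2$ one has $x \wedge y = x$ and $x \vee y = y$, so the meets and joins collapse and both sides of the distributive identity reduce to easy comparisons. The main obstacle is purely bookkeeping: one must run through the $2^3 = 8$ ways of distributing $a, b, c$ between $L_1$ and $L_2$ (up to the obvious symmetries in $b, c$), but no machinery beyond the collapse rule just noted is needed. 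Alternatively, one could invoke Theorem~\ref{dipe} to reformulate each clause as the absence of an $M_3$ or $N_5$ sublattice and argue by contradiction, but the direct verification above seems shorter.
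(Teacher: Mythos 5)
Your proposal is correct and matches the paper's approach: the paper simply states that the lemma is ``immediate from the definition,'' and your argument is exactly the direct verification being alluded to, with the details (coordinate-wise computation for products, the collapse rule $x\wedge y = x$, $x\vee y = y$ for the concatenation case analysis) filled in.
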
   
 \begin{proof}
Immediate from the definition.
\end{proof} 

\begin{definition}
The subsets lattice of $\{ 1, \dots , n \}$ is called the boolean lattice of rank $n$ and noted $\mathcal{B}_n$ (see the lattice $\mathcal{B}_3$ below).
\begin{center} $\begin{tikzpicture}
\node (A1) at (0,0) {$\bullet$};
\node (A2) at (-1,-1) {$\bullet$};
\node (A3) at (0,-1) {$\bullet$};
\node (A4) at (1,-1) {$\bullet$};
\node (A8) at (-1,-2) {$\bullet$};
\node (A9) at (0,-2) {$\bullet$};
\node (A10) at (1,-2) {$\bullet$};
\node (A14) at (0,-3) {$\bullet$};
\tikzstyle{segm}=[-,>=latex, semithick]
\draw [segm] (A1)to(A2); \draw [segm] (A3)to(A8); 
\draw [segm] (A1)to(A3); \draw [segm] (A3)to(A10);  
\draw [segm] (A1)to(A4); \draw [segm] (A4)to(A9);  
\draw [segm] (A14)to(A9); \draw [segm] (A14)to(A10); 
 \draw [segm] (A14)to(A8); \draw [segm] (A2)to(A9);
\draw [segm] (A2)to(A8); \draw [segm] (A4)to(A10); 

\end{tikzpicture} 
$ \end{center}
 \end{definition}
 
 \begin{remark}
 A boolean lattice is distributive. 
 \end{remark}
 
 \begin{definition} \label{comp}
Let $b^{\complement}$ be the complement of the element $b$ in a boolean lattice, i.e. $b \wedge b^{\complement} = 0$ and $b \vee b^{\complement} = 1$ (existence and unicity of the complement is immediate by boolean structure).
 \end{definition}   

\begin{lemma} \label{topBn}
The top interval of a distributive lattice is boolean.
 \end{lemma}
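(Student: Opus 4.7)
The plan is to prove $[B,1]$ is boolean by showing it is a finite distributive complemented lattice and invoking the classical characterization (see e.g.\ \cite{gr}) that any such lattice is boolean. Distributivity is free: $[B,1]$ is a sublattice of $L$, so Lemma~\ref{distri} applies. Its coatoms are precisely $B_1,\dots,B_n$, and any two distinct ones satisfy $B_i \vee B_j = 1$, because their join strictly exceeds the coatom $B_i$.

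The key step is the canonical form $z = \bigwedge_{i \in S(z)} B_i$ for each $z \in [B,1]$, where $S(z) = \{i : z \le B_i\}$. To see this I would rewrite $z = z \vee B = z \vee \bigwedge_i B_i$ and apply the finite distributive law $a \vee \bigwedge_i b_i = \bigwedge_i (a \vee b_i)$; the factors with $i \in S(z)$ collapse to $B_i$, while the factors with $i \notin S(z)$ must equal $1$, since $z \vee B_i$ strictly exceeds the coatom $B_i$. This both identifies the elements of $[B,1]$ and suggests how to build a complement: set $y := \bigwedge_{i \notin S(z)} B_i$, with the empty meet interpreted as $1$. Then $z \wedge y = \bigwedge_{i=1}^n B_i = B$, and $z \vee y = 1$ follows from the iterated distributive identity $(\bigwedge_i a_i) \vee (\bigwedge_j b_j) = \bigwedge_{i,j}(a_i \vee b_j)$ combined with $B_i \vee B_j = 1$ for $i \ne j$.

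I do not foresee any real obstacle. The only mild subtleties are the boundary cases $z = 1$ and $z = B$ (handled by the empty-meet convention, giving complements $B$ and $1$ respectively) and the justification of the $n$-fold distributive laws, which follow by straightforward induction from the binary version in the definition of distributivity.
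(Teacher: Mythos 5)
Your proof is correct. The paper itself gives no argument for this lemma: it simply cites \cite{sta} (items a--i, pp.\ 254--255), where the statement is read off from Birkhoff's representation of a finite distributive lattice as the lattice $J(P)$ of order ideals of a finite poset --- the coatoms are the ideals $P\setminus\{p\}$ with $p$ maximal, their meet is $P\setminus\max(P)$, and the interval above it is visibly the lattice of subsets of $\max(P)$. Your route is more elementary and self-contained: inherited distributivity (Lemma~\ref{distri}), the computation $z = z\vee\bigwedge_i B_i = \bigwedge_i(z\vee B_i) = \bigwedge_{i\in S(z)}B_i$ using that each $B_i$ is a coatom, and the explicit complement $\bigwedge_{i\notin S(z)}B_i$, which works because distinct coatoms join to $1$; you then close with the classical fact that a finite complemented distributive lattice is boolean. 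All the steps check out, including the boundary cases and the $n$-fold distributive laws. One small economy worth noting: the same distributivity trick shows $S\bigl(\bigwedge_{i\in T}B_i\bigr)=T$ (if $j\notin T$ and $\bigwedge_{i\in T}B_i\le B_j$, then $B_j=\bigwedge_{i\in T}(B_j\vee B_i)=1$, absurd), so $T\mapsto\bigwedge_{i\in T}B_i$ is an order-reversing bijection from $\mathcal{B}_n$ onto $[B,1]$ and you get $[B,1]\cong\mathcal{B}_n$ directly, without invoking the characterization of complemented distributive lattices from \cite{gr}.
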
   
 \begin{proof}
See \cite{sta} items a-i on pages 254-255. \end{proof} 

The following theorem is due to Oystein Ore (1938). It's the starting point of this work. 
\begin{theorem} \label{ore1}
 
A finite group $G$ is cyclic if and only if its subgroups lattice $\mathcal{L}(G)$ is distributive. 
\end{theorem}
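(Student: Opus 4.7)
The plan is to prove each direction separately; the forward direction is routine, while the converse requires a reduction to Z-groups.

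For the forward direction, if $G\cong\mathbb{Z}/n$ then $H\mapsto |H|$ identifies $\mathcal{L}(G)$ with the lattice of divisors of $n$ under divisibility, in which meet is $\gcd$ and join is $\mathrm{lcm}$. Writing $n=\prod p_i^{e_i}$ and sending each divisor to its tuple of $p_i$-adic valuations, this lattice is isomorphic to the direct product of chains $\prod_i C_{e_i+1}$. Each chain is trivially distributive, and Lemma \ref{distri} propagates distributivity across the direct product.

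For the converse, assume $\mathcal{L}(G)$ is distributive. First I would show every Sylow subgroup is cyclic: for any $H\leq G$, $\mathcal{L}(H)=[\{1\},H]$ is an interval sublattice, hence distributive by Lemma \ref{distri}; if a Sylow $p$-subgroup $P$ were non-cyclic, a standard $p$-group fact gives $\mathbb{Z}/p\times\mathbb{Z}/p\leq P$, whose $p+1$ order-$p$ subgroups together with $\{1\}$ and the top form the diamond $M_{p+1}$, any three atoms of which produce an $M_3$, contradicting Theorem \ref{dipe}. So $G$ is a Z-group, and the H\"older--Burnside--Zassenhaus structure theorem gives a presentation $G=\langle a,b\mid a^m=b^n=1,\ bab^{-1}=a^r\rangle$ with $\gcd(m,n)=1$ and $r^n\equiv 1\pmod m$; moreover $G$ is cyclic iff $r\equiv 1\pmod m$.

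It remains to rule out $r\not\equiv 1\pmod m$. In that case some prime $q\mid n$ has non-normal Sylow $q$-subgroup (otherwise every Sylow would be normal, $G$ would be their direct product, and cyclic groups of coprime orders multiply to a cyclic group), so Sylow's theorem yields at least $q+1\geq 3$ conjugate cyclic Sylow $q$-subgroups $Q_1,\dots,Q_k$; among these I would exhibit a sublattice isomorphic to $M_3$ or $N_5$, again contradicting Theorem \ref{dipe}. The main obstacle is precisely this last bookkeeping: one must track how the $\langle a\rangle$-conjugation orbit of $\langle b\rangle$ produces enough mutually transverse cyclic subgroups to force a forbidden pattern. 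A cleaner alternative would be to prove the stronger classical characterization ``$\mathcal{L}(G)$ distributive $\iff$ $G$ locally cyclic,'' after which the conclusion is immediate for finite $G$.
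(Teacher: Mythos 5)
Your forward direction is fine and essentially matches the paper's (divisor lattice, $\gcd$/$\mathrm{lcm}$, distributivity of chains and products). The converse, however, has a genuine gap — in fact two. First, the ``standard $p$-group fact'' you invoke is false as stated: a non-cyclic finite $p$-group need not contain $\mathbb{Z}/p\times\mathbb{Z}/p$; the generalized quaternion groups are the exception. This is repairable ($Q_8$ already exhibits $M_3$ directly, via its three cyclic subgroups of order $4$ over the center), but as written the Sylow-cyclic step does not go through for $p=2$. Second, and more seriously, the crucial last step — showing that a non-abelian Z-group with presentation $\langle a,b\mid a^m=b^n=1,\ bab^{-1}=a^r\rangle$, $r\not\equiv 1\pmod m$, forces an $M_3$ or $N_5$ among the conjugate Sylow subgroups — is not carried out; you yourself flag it as ``the main obstacle.'' Three conjugates $Q_1,Q_2,Q_3$ of a non-normal Sylow subgroup do not automatically have equal pairwise joins and equal pairwise meets, so no forbidden sublattice is exhibited, and the proof is incomplete exactly where the real content lies.

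The paper avoids all of this: its converse is a one-line application of Theorem \ref{ore2} (the interval version, $H=\{1\}$), which it proves by reducing to the boolean case via Lemmas \ref{topBn} and \ref{topred} and inducting on the height of the lattice, using the complement $M^{\complement}$ of a maximal subgroup and the identity $\langle H,g\rangle \vee (M\wedge M^{\complement}) = (\langle H,g\rangle\vee M)\wedge(\langle H,g\rangle\vee M^{\complement})$ with $g=ab$. That argument requires no classification of Z-groups and no Sylow analysis, and it is the template the whole paper generalizes to planar algebras. If you want to keep your structural route, you must either complete the forbidden-sublattice bookkeeping for metacyclic Z-groups (including the quaternion repair) or switch to the locally-cyclic characterization you mention at the end — but the latter is precisely the classical proof you would then need to supply, not a shortcut.
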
 
\begin{proof}
See \cite{or} Theorem 4 p 267 or \cite{sc} Theorem 1.2.3 p12 for a proof of the more general statement ``$G$ locally cyclic if and only if $\mathcal{L}(G)$ distributive''. We give an indirect short proof assuming $G$ finite:  \\ 
 ($\Leftarrow$): By applying Theorem \ref{ore2} with $H = \{1\}$.   \\
 ($\Rightarrow$): A finite cyclic group $G = \mathbb{Z}/n$ has exactly one subgroup of order $d$, noted $\mathbb{Z}/d$, for every divisor $d$ of $ord(G)$; now $\mathbb{Z}/d_1 \vee \mathbb{Z}/d_2 = \mathbb{Z}/lcm(d_1,d_2)$ and $\mathbb{Z}/d_1 \wedge \mathbb{Z}/d_2 = \mathbb{Z}/gcd(d_1,d_2)$, but lcm and gcd are distributive, so the result follows.
\end{proof}  
 
 \begin{remark}
Theorem \ref{ore1} admits the following surprising corollary: any distributive finite lattice which is not equivalent to $\mathcal{L}(\mathbb{Z}/n)$, is not realizable as a subgroup lattice: for example the concatenation lattices $\mathcal{L}(\mathbb{Z}/m) * \mathcal{L}(\mathbb{Z}/n)$ with $m$ or $n$ not a prime power, as   $\mathcal{L}(\mathbb{Z}/6) * \mathcal{L}(\mathbb{Z}/6)$ below.
\begin{center} $\begin{tikzpicture}
\node (A1) at (0,0) {$\bullet$};
\node (A2) at (-1,-1) {$\bullet$};
\node (A4) at (1,-1) {$\bullet$};
\node (A5) at (0,-2) {$\bullet$};
\node (A6) at (-1,-3) {$\bullet$};
\node (A8) at (1,-3) {$\bullet$};
\node (A9) at (0,-4) {$\bullet$};
\tikzstyle{segm}=[-,>=latex, semithick]
\draw [segm] (A1)to(A2); \draw [segm] (A1)to(A4);
\draw [segm] (A2)to(A5);\draw [segm] (A4)to(A5);
\draw [segm] (A5)to(A6);\draw [segm] (A5)to(A8);
\draw [segm] (A6)to(A9);\draw [segm] (A8)to(A9);
\end{tikzpicture} 
$ \end{center}
 Nevertheless all the distributive finite lattices are realizable as intermediate subgroups lattices (see \cite{palfy} Corollary 5.3. p 448).
\end{remark}

\begin{definition} 
Let $G$ be a finite group and $H$ a subgroup, then $G$ is called $H$-cyclic if there is $g \in G$ such that $\langle H,g \rangle = G$.
\end{definition}

\begin{lemma} \label{max} Let $G$ be a finite group and $M$ a maximal subgroup, then $G$ is $M$-cyclic.
\end{lemma}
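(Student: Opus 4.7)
The plan is essentially one line: exploit the definition of maximality directly. Since $M$ is a maximal subgroup of $G$, by convention $M$ is a proper subgroup, so the complement $G \setminus M$ is non-empty. I would pick any element $g \in G \setminus M$ and consider the subgroup $K = \langle M, g \rangle$.

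By construction $M \subseteq K$, and because $g \in K$ but $g \notin M$, the inclusion is strict: $M \subsetneq K$. Now $K$ is a subgroup of $G$ lying strictly above $M$ in the lattice $\mathcal{L}(G)$. Since $M$ is maximal, the interval $[M,G]$ contains only $M$ and $G$, so the only element of $\mathcal{L}(G)$ strictly containing $M$ is $G$ itself. Hence $K = G$, which by definition means $G$ is $M$-cyclic.

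There is no real obstacle; the only thing to be careful about is the convention that a maximal subgroup is understood to be a proper subgroup (otherwise the statement is vacuous for $G = M$, but then one can pick $g = 1$ and $\langle M, g \rangle = M = G$ anyway, so the conclusion still holds). This matches the usage of ``maximal subfactor'' in the introduction, where the trivial case is excluded. No further machinery (not even distributivity or Ore's theorem) is needed — this lemma is the easy base case that will feed into the more substantive arguments later.
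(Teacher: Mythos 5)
Your proof is correct and is essentially identical to the paper's one-line argument: pick $g \in G \setminus M$ and conclude $\langle M,g\rangle = G$ by maximality. The extra care about $M$ being proper is a reasonable clarification but not a difference in method.
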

\begin{proof}
Let $g \in G$ such that $g \not \in M$, then $\langle M,g \rangle = G$ by maximality.
\end{proof}

\begin{lemma} \label{topred} 
Let $[H,G]$ be an interval of finite groups and $[K,G]$ be its top interval (see Section \ref{2}). If $G$ is $K$-cyclic, then it is $H$-cyclic
\end{lemma}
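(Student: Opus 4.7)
The plan is to show that the very same element $g$ witnessing $K$-cyclicity of $G$ already witnesses $H$-cyclicity, with essentially no extra work.

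The first step is to identify the coatoms (maximal proper elements) of the top interval $[K,G]$. Since every subgroup in $[K,G]$ automatically lies in $[H,G]$ (as $H \le K$), and conversely each coatom $K_i$ of $[H,G]$ contains $K = \bigwedge_j K_j$ and is therefore in $[K,G]$, the coatoms of $[K,G]$ are exactly $K_1, \ldots, K_n$.

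The second step uses the hypothesis $\langle K, g \rangle = G$ to conclude that $g \notin K_i$ for every $i$. Indeed, if $g$ were to lie in some $K_i$, then because $K \le K_i$ we would have $\langle K, g \rangle \subseteq K_i \lneq G$, contradicting $K$-cyclicity.

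The third and final step is to consider the subgroup $\langle H, g \rangle \in [H,G]$. If it were proper in $G$, it would have to be contained in one of the coatoms $K_i$ of $[H,G]$, which would force $g \in K_i$ — ruled out by the previous step. Hence $\langle H, g \rangle = G$ and $G$ is $H$-cyclic, witnessed by the same $g$.

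There is no genuine obstacle in this argument; the whole statement is essentially a bookkeeping consequence of the fact that the coatoms of $[H,G]$ and of its top interval $[K,G]$ coincide. The only thing to double-check is that the definition of the top interval indeed yields $K = \bigwedge_i K_i$ with the $K_i$ the \emph{proper} maximal elements (so that $K \le K_i \lneq G$ holds throughout), which is guaranteed by Definition~\ref{top}.
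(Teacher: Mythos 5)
Your argument is correct and is essentially the paper's own proof: both rest on the observation that every coatom of $[H,G]$ contains $K$ by definition of the top interval, hence cannot contain $g$, hence cannot contain $\langle H,g\rangle$, forcing $\langle H,g\rangle = G$. The extra first step identifying the coatoms of $[K,G]$ with those of $[H,G]$ is harmless bookkeeping that the paper leaves implicit.
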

\begin{proof}
Let $g \in G$ such that $\langle K,g \rangle = G$. Now for any $M$ maximal subgroup of $G$ containing $H$, we have $K \subset M$ by definition, and so $g \not \in M$ (otherwise $\langle K,g \rangle \subset M$),  then a fortiori $\langle H,g \rangle \not \subset M$, for any such $M$. It follows that $\langle H,g \rangle = G$.
\end{proof} 

O. Ore has extended one side of Theorem \ref{ore1} to interval of finite groups as follows (see Theorem 7 p269 in \cite{or}). It's precisely this theorem that this paper generalizes to the subfactor planar algebras. The new proof above is a translation of our planar algebraic proof to the group theoretic framework.
\begin{theorem} \label{ore2}
Let $[H,G]$ be a distributive interval of finite groups. Then $\exists g \in G$ such that $\langle H,g \rangle = G$ (i.e. $G$ is $H$-cyclic).
\end{theorem}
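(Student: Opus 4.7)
The plan is first to reduce to the boolean case. By Lemma \ref{topBn}, the top interval $[K,G]$ of $[H,G]$ is boolean, and by Lemma \ref{topred} it suffices to show $G$ is $K$-cyclic; so I may assume $[H,G] \cong \mathcal{B}_n$ for some $n \geq 0$, and then proceed by induction on $n$. The base case $n=0$ is trivial ($H=G$, take $g=1$), and $n=1$ is immediate from Lemma \ref{max} since $H$ is then maximal in $G$.

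For the inductive step with $n \geq 2$, let $M_1,\dots,M_n$ be the coatoms of $[H,G]$ and $A_1,\dots,A_n$ the atoms, labelled so that each $A_i$ is the complement of $M_i$ in the boolean sense, i.e.\ $A_i \wedge M_i = H$ and $A_i \vee M_i = G$. From the structure of $\mathcal{B}_n$ one reads off that the inclusion $A_i \subseteq M_j$ holds if and only if $i \neq j$. The interval $[A_1, G]$ is a sublattice isomorphic to $\mathcal{B}_{n-1}$, so the induction hypothesis applied to $[A_1, G]$ produces an element $g \in G$ with $\langle A_1, g \rangle = G$.

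The key observation is that this $g$ is almost what is required: for every $j \neq 1$, the inclusion $A_1 \subseteq M_j$ forces $g \notin M_j$, for otherwise $\langle A_1, g \rangle \subseteq M_j \neq G$. Hence $g$ lies outside every coatom except possibly $M_1$. If $g \notin M_1$, then $\langle H, g \rangle$ lies in no coatom of $[H,G]$ and therefore equals $G$. If instead $g \in M_1$, pick any $a \in A_1 \setminus H$ (which exists since $A_1 \neq H$) and set $g' := ga$. For $j \neq 1$ we have $a \in A_1 \subseteq M_j$ and $g \notin M_j$, so $g' \notin M_j$; and for $j=1$, we have $g \in M_1$ while $a \notin M_1$ (since $A_1 \cap M_1 = H$), so $g' \notin M_1$. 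Thus $g'$ avoids every coatom, forcing $\langle H, g' \rangle = G$.

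The main subtlety is the case $g \in M_1$ in the inductive step: the element supplied by the induction hypothesis may fail to generate $G$ together with $H$ on its own, and the fix is the perturbation $g \mapsto ga$ for $a \in A_1 \setminus H$. This maneuver uses crucially that $a$ lies in every coatom except $M_1$, a property guaranteed by boolean complementation but unavailable in a merely distributive interval; this is precisely why the preliminary reduction via Lemma \ref{topred} to a boolean top interval is essential.
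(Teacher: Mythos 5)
Your proof is correct, but it takes a genuinely different route from the paper's in the inductive step. Both arguments begin with the same reduction to the boolean case via Lemmas \ref{topBn} and \ref{topred}. From there the paper inducts \emph{downward}: it applies the induction hypothesis to the lower intervals $[H,M]$ and $[H,M^{\complement}]$ to obtain $a,b$ with $\langle H,a\rangle=M$ and $\langle H,b\rangle=M^{\complement}$, sets $g=ab$, and then invokes the distributive law itself in the key computation $\langle H,g\rangle=\langle H,g\rangle\vee(M\wedge M^{\complement})=(\langle H,g\rangle\vee M)\wedge(\langle H,g\rangle\vee M^{\complement})=G$. You instead induct \emph{upward} on the interval $[A_1,G]\cong\mathcal{B}_{n-1}$ above an atom, and close the argument by a coatom-avoidance count: the generator $g$ of $G$ over $A_1$ already misses every $M_j$ with $j\neq 1$, and if it lands in $M_1$ the perturbation $g\mapsto ga$ with $a\in A_1\setminus H$ fixes this, using the incidence $A_i\leq M_j\Leftrightarrow i\neq j$. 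Your final step is more elementary (no further appeal to distributivity, only to the boolean atom--coatom structure and the fact that every proper element of a finite lattice lies under a coatom), and all the verifications ($A_1\cap M_1=H$ forces $a\notin M_1$, etc.) are sound. What the paper's version buys, and yours does not, is transferability: the computation $g=ab$ together with the distributive splitting is exactly what survives translation to the planar-algebra setting (Theorem \ref{centheo}), where $ab$ becomes a minimal projection $c\preceq u*v$ and there is no analogue of multiplying a generator by a group element to dodge a single maximal biprojection. So your argument is a clean, self-contained proof at the group level, but it is not the one that generalizes to the main theorem of the paper.
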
 
\begin{proof}
By Lemmas \ref{topBn} and \ref{topred} we can assume $[H,G]$ to be boolean.
Let $M$ be a maximal subgroup of $G$ containing $H$, and $M^{\complement}$ the complement of $M$ in $[H,G]$ (see Definition \ref{comp}). By Lemma \ref{max} and induction on the height of the lattice, we can assume $M$ and $M^{\complement}$  both $H$-cyclic, i.e. there are $a, b \in G$ such that $\langle H,a \rangle = M$ and $\langle H,b \rangle = M^{\complement}$. Let $g=a  b$ then $a=g  b^{-1}$ and $b=a^{-1}g$, so $\langle H,a,g \rangle = \langle H,g,b \rangle = \langle H,a,b \rangle =  M \vee M^{\complement} = G$.    
Now, $\langle H,g \rangle =  \langle H,g \rangle \vee H = \langle H,g \rangle \vee (M \wedge M^{\complement})$ but by \textit{distributivity} $\langle H,g \rangle \vee (M \wedge M^{\complement}) = (\langle H,g \rangle \vee M \rangle) \wedge (\langle H,g \rangle \vee M^{\complement} \rangle)$. So $ \langle H,g \rangle = \langle H,a,g \rangle \wedge \langle H,g,b \rangle = G$. The result follows.
\end{proof}
 
The lattice $[S_2 , S_4]$ is not distributive but $\langle S_2,(1234) \rangle = S_4$, so the converse is  false.  We are looking for a complete equivalent characterization of the distributivity property.

\section{Subfactors and planar algebras} 
\subsection{Short introduction to subfactors}  \hspace*{1cm} \\
For more details see the book of V. Jones and V.S. Sunder \cite{js}. Let $B(H)$ be the algebra of bounded operators on $H$  a separable Hilbert space. A $\star$-algebra $M \subseteq B(H)$ is a \textit{von Neumann algebra} if it has a unit element and is equal to its bicommutant ($I \in M = M^{\star}=M'' $).
It is \textit{hyperfinite} if it's a ``limit'' of finite dimensional von Neumann algebras (see \cite{js} p18). It's a \textit{factor} if its center is trivial ($M' \cap M = \mathbb{C}I$). A factor $M$ is type ${\rm II}_1$ if it admits a \textit{trace} $tr$ such that the set of projections maps to $[0,1]$. From $tr$ we get the space $L^2(M,tr)$. Every factor here will be of type ${\rm II}_1$ (there is a unique hyperfinite one called $R$). A \textit{subfactor} is an inclusion of factors $(N \subseteq M)$. It's \textit{irreducible}  if the relative commutant is trivial ($N' \cap M = \mathbb{C}I$). Let $e^M_N : L^2(M) \to L^2(N)$ be the orthogonal projection and let $M_1 = \langle M,e^M_N \rangle$, then the \textit{index} of $(N \subseteq M)$ is $$[M : N] = dim_N(L^2(M)) =  (tr_{M_1}(e^M_N))^{-1}$$
By \cite{jo2} the set of indices of subfactors is  %Jones, 1983) 
$$\{ 4 cos^2(\frac{\pi}{n}) \vert n \geq 3  \} \cup [4,\infty]$$

An irreducible finite index subfactor has a finite intermediate subfactors \textit{lattice} \cite{wa} (as for an interval of finite groups). Any finite group $G$  acts outerly on the hyperfinite   ${\rm II}_1$ factor $R$, and the fixed point subfactor $(R^G \subseteq R)$, of index $\vert G \vert$, is irreducible and remembers $G$ \cite{su}, which is a complete invariant  (because two outer actions are outer conjugate \cite{jo}; and this is true in general if and only if $G$ is amenable \cites{jo3, oc}). % Jones 1980)
This means that for $G$ \textit{finite},  $(R^{G} \subseteq R)$ is the ``same thing'' than $G$.
 %Jones 1983; Ocneanu 1985)
  The Galois correspondence  \cites{nk, ilp} means that for every intermediate subfactor $R^{G} \subseteq P \subseteq R $ there is $H \le G$ such that $  P=R^{H}$. %Suzuki 1958; Nakamura-Takeda 1960 
In general $(R^{G} \subseteq R^{H})$ does \textit{not} remember $[H,G]$ up to equivalence (see Definition \ref{equiv}) \cite{sk}. %Kodiyalam-Sunder 2000)   
The subfactor $(R^{G} \subseteq R^{H})$ is the dual of $(R \rtimes H \subseteq R \rtimes G)$ whose lattice of intermediate subfactors is exactly $[H,G]$.   

The basic construction is the following tower
$$N=M_{-1} \subseteq M=M_0 \subseteq M_1 \subseteq M_2 \subseteq \cdots \subseteq M_n \subseteq \cdots  $$ 
with $M_{n+1}:=\langle M_n, e_n \rangle$ and $e_n: L^2(M_n) \to L^2(M_{n-1})$ Jones projection. At \textit{finite index} the higher relative commutants $\mathcal{P}_{n,+}= N' \cap M_{n-1}$ and $\mathcal{P}_{n,-}= M' \cap M_{n}$, are finite-dimensional ${\rm C}^{\star}$-algebras.  The subfactor is \textit{finite depth} if the number of factors of $\mathcal{P}_{n,+}$ is bounded, and irreducible depth $2$ if $\mathcal{P}_{3,+}$ is a factor. The standard invariant of $(N \subseteq M)$ is the following grid 
\begin{center}
$\mathbb{C} = \mathcal{P}_{0,+} \subseteq \mathcal{P}_{1,+} \subseteq \mathcal{P}_{2,+} \subseteq \cdots \subseteq \mathcal{P}_{n,+} \subseteq \cdots  $  \\
\hspace*{2.7cm} $ \cup   \hspace{1cm}  \cup \hspace{2cm}  \cup$ \newline
\hspace*{1.6cm} $\mathbb{C} = \mathcal{P}_{0,-} \subseteq \mathcal{P}_{1,-} \subseteq \cdots \subseteq \mathcal{P}_{n-1,-} \subseteq \cdots  $  \end{center}
which is a complete invariant on the amenable case (\cite{po}). \\ %Popa94) 
Every finite depth subfactor of the hyperfinite ${\rm II}_1$ factor is amenable. 

\subsection{Short introduction to planar algebras} \label{planar} \hspace*{1cm} \\
The idea of the planar algebra is to be a diagrammatic axiomatization of the standard invariant.  For more details, see \cite{jo4} by V. Jones  and \cite{sk2} by  V.S. Sunder and V. Kodiyalam, with recorded lectures\footnote{\tiny https://www.youtube.com/playlist?list=PLjudt2gd3iAe40juwS3dKocAoooXF91Vi}. The diagrams of this subsection come from this paper \cite{ep} of E. Peters.  

 A (shaded) \textit{planar tangle} is the data of  finitely many ``input'' disks, one ``output'' disk, non-intersecting strings giving $2n$ intervals per disk and one $\star$-marked  interval per disk. A tangle is defined up to ``isotopy''.  
  
 \begin{center}
$\begin{tikzpicture}[scale=.64]
	\clip (0,0) circle (3cm);
	
	\begin{scope}[shift=(10:1cm)]
		\draw[shaded] (0,0)--(0:6cm)--(90:6cm)--(0,0);	
		\draw[shaded] (0,0) .. controls ++(180:2cm) and ++(-90:2cm) .. (0,0);
	\end{scope}
	
	\draw[shaded] (-150:1cm) -- (120:4cm) -- (180:4cm) -- (-150:1cm);
	\draw[shaded] (-150:1cm) -- (-120:4cm) -- (-60:4cm) -- (-150:1cm);
	
	\begin{scope}[shift=(10:1cm)]	
		\node at (0,0) [Tbox, inner sep=2mm] {};
		\node at (90:1.5cm) [Tbox, inner sep=2mm] {};
		\node at (-45:.7cm) {$\star$};
		\node at (120:1.6cm) {$\star$};
	\end{scope}
	\node at (-150:1cm) [Tbox, inner sep=2mm] {};
	\node at (-120:1.6cm) {$\star$};
	\node at (-30:2.7cm) {$\star$};
	
	\draw[very thick] (0,0) circle (3cm);
\end{tikzpicture} $
 \end{center}

To \textit{compose} two planar tangles, put the output disk of one into an input of the other, having as many intervals, same shading of marked intervals and such that the $\star$-marked intervals coincide. Finally we remove the coinciding circles (possibly zero, one or several compositions).
 \begin{center}
$\begin{tikzpicture}[scale=.8, PAdefn]
	\clip (0,0) circle (3cm);
	
	\begin{scope}[shift=(10:1cm)]	
		\draw[shaded] (0,0)--(0:6cm)--(90:6cm)--(0,0);	
		\draw[shaded] (0,0) .. controls ++(180:2cm) and ++(-90:2cm) .. (0,0);
	\end{scope}
	
	\draw[shaded] (-150:1cm) -- (120:4cm) -- (180:4cm) -- (-150:1cm);
	\draw[shaded] (-150:1cm) -- (-120:4cm) -- (-60:4cm) -- (-150:1cm);
	
	\begin{scope}[shift=(10:1cm)]	
		\node at (0,0) [Tbox, inner sep=1.4mm] {\tiny{\textcolor{gray}{2}}};
		\node at (90:1.5cm) [Tbox, inner sep=1.4mm] {\tiny{\textcolor{gray}{1}}};
		\node at (-45:.8cm) {$\star$};
		\node at (120:1.6cm) {$\star$};
	\end{scope}
	\node at (-150:1cm) [Tbox, inner sep=1.4mm] {\tiny{\textcolor{gray}{3}}};
	\node at (-120:1.6cm) {$\star$};
	\node at (-30:2.7cm) {$\star$};
	
	\draw[very thick] (0,0) circle (3cm);

\end{tikzpicture}
\,
\circ_{2}
\,
\begin{tikzpicture}[scale=.8, PAdefn]
	\clip (0,0) circle (2cm);

	\draw[shaded] (0:4cm)--(0,0)--(90:4cm);
	\draw[shaded] (180:4cm)--(180:2cm) .. controls ++(0:1cm) and ++(90:1cm) .. (270:2cm) -- (270:4cm);
	
		\node at (0,0) [Tbox, inner sep=2mm] {};

	\node at (-45:1.7cm) {$\star$};
	\node at (-45:.7cm) {$\star$};	

	\draw[very thick] (0,0) circle (2cm);

\end{tikzpicture}
\,
=
\,
\begin{tikzpicture}[scale=.8, PAdefn]
	\clip (0,0) circle (3cm);
	
	\begin{scope}[shift=(10:1cm)]	
		\draw[shaded] (0:6cm)--(90:6cm)--(90:2cm) .. controls ++(-90:1cm) and ++(180:.5cm) .. (0:1cm)--(0:6cm);	
		\draw[shaded] (-135:.6cm) circle (.5cm);
	\end{scope}
	
	\draw[shaded] (-150:1cm) -- (120:4cm) -- (180:4cm) -- (-150:1cm);
	\draw[shaded] (-150:1cm) -- (-120:4cm) -- (-60:4cm) -- (-150:1cm);
	
	\begin{scope}[shift=(10:1cm)]	
		\node at (0:.8cm) [Tbox, inner sep=2mm] {};
		\node at (90:1.5cm) [Tbox, inner sep=2mm] {};
		\node at (-45:1cm) {$\star$};
		\node at (120:1.4cm) {$\star$};
	\end{scope}
	\node at (-150:1cm) [Tbox, inner sep=2mm] {};
	\node at (-120:1.5cm) {$\star$};
	\node at (-30:2.7cm) {$\star$};
	\draw[dashed, ultra thin] (1.75,0.15) circle (1cm);
	
	\draw[very thick] (0,0) circle (3cm);

\end{tikzpicture}
$  
\end{center}

The \textit{planar operad} is the set of all the planar tangles (up to isomorphism) with such compositions. A \textit{planar algebra} is a family of vector spaces $(\mathcal{P}_{n,\pm})_{n \in \mathbb{N}}$, called $n$-box spaces, on which \textit{acts} the planar operad, i.e. for any tangle $T$ (with one output disk and $r$ input disks with $2n_0$ and $2n_1, \dots, 2n_r$ intervals respectively) there is a linear map $$Z_T : \mathcal{P}_{n_1,\epsilon_1} \otimes \cdots \otimes  \mathcal{P}_{n_r,\epsilon_r} \to \mathcal{P}_{n_0,\epsilon_0}$$ with $\epsilon_i \in \{+,-\}$ according to the shading of the $\star$-marked intervals, and these maps (also called partition functions) respect the composition of tangle in such a way that all the diagrams as below commute.
\begin{center} $\begin{tikzpicture}
\node[inner sep=0pt] at (7,0.5)
    {  
    
    \begin{tikzpicture}[scale=.33]
	\clip (0,0) circle (3cm);
	
	\begin{scope}[shift=(10:1cm)]	
		\draw[shaded] (0,0)--(0:6cm)--(90:6cm)--(0,0);	
		\draw[shaded] (0,0) .. controls ++(180:2cm) and ++(-90:2cm) .. (0,0);
	\end{scope}
	
	\draw[shaded] (-150:1cm) -- (120:4cm) -- (180:4cm) -- (-150:1cm);
	\draw[shaded] (-150:1cm) -- (-120:4cm) -- (-60:4cm) -- (-150:1cm);
	
	\begin{scope}[shift=(10:1cm)]	
		\node at (0,0) [Tbox, inner sep=1.4mm] {};
		\node at (90:1.5cm) [Tbox, inner sep=1.4mm] {};
		\node at (-45:.92cm) {\tiny{$\star$}};
		\node at (120:1.6cm) {\tiny{$\star$}};
	\end{scope}
	\node at (-150:1cm) [Tbox, inner sep=1.4mm] {};
	\node at (-120:1.6cm) {\tiny{$\star$}};
	\node at (-30:2.7cm) {\tiny{$\star$}};
	
	\draw[very thick] (0,0) circle (3cm);

\end{tikzpicture}
    };
\node[inner sep=0pt] at (1,0.5)
    {
    
    \begin{tikzpicture}[scale=.5]
	\clip (0,0) circle (2cm);

	\draw[shaded] (0:4cm)--(0,0)--(90:4cm);
	\draw[shaded] (180:4cm)--(180:2cm) .. controls ++(0:1cm) and ++(90:1cm) .. (270:2cm) -- (270:4cm);
	
		\node at (0,0) [Tbox, inner sep=2mm] {};

	\node at (-45:1.7cm) {\tiny{$\star$}};
	\node at (-45:.8cm) {\tiny{$\star$}};

	\draw[very thick] (0,0) circle (2cm);

\end{tikzpicture}
    
    };
    \node[inner sep=0pt] at (4,1.9)
    {
    
  \begin{tikzpicture}[scale=.35]
	\clip (0,0) circle (3cm);
	
	\begin{scope}[shift=(10:1cm)]	
		\draw[shaded] (0:6cm)--(90:6cm)--(90:2cm) .. controls ++(-90:1cm) and ++(180:.5cm) .. (0:1cm)--(0:6cm);	
		\draw[shaded] (-135:.7cm) circle (.5cm);
	\end{scope}
	
	\draw[shaded] (-150:1cm) -- (120:4cm) -- (180:4cm) -- (-150:1cm);
	\draw[shaded] (-150:1cm) -- (-120:4cm) -- (-60:4cm) -- (-150:1cm);
	
	\begin{scope}[shift=(10:1cm)]	
		\node at (0:.8cm) [Tbox, inner sep=1.8mm] {};
		\node at (90:1.5cm) [Tbox, inner sep=1.8mm] {};
		\node at (-44:1.4cm) {\tiny{$\star$}};
		\node at (120:2.1cm) {\tiny{$\star$}};
	\end{scope}
	\node at (-150:1cm) [Tbox, inner sep=1.8mm] {};
	\node at (-110:1.7cm) {\tiny{$\star$}};
	\node at (-40:2.7cm) {\tiny{$\star$}};
	
	\draw[very thick] (0,0) circle (3cm);

\end{tikzpicture} 
    
    };
\node (A1) at  (0,3) {\small $\mathcal{P}_{2,-} \otimes \mathcal{P}_{1,+} \otimes \mathcal{P}_{1,+}$};
\node (A2) at (4,-1) {\small $\mathcal{P}_{2,-} \otimes \mathcal{P}_{2,+} \otimes \mathcal{P}_{1,+}
$};
\node (A3) at (8,3) {\small $\mathcal{P}_{3,+}$};
\node (A4) at (4,3.25) {\tiny $Z_T$};
\node (A5) at (-0.2,2) {\tiny $id \otimes Z_V \otimes id$};
\node (A6) at (7.5,2) {\tiny $Z_U$};
\tikzstyle{segm}=[->,>=latex, semithick]
\draw [segm] (A1)to(A2); 
\draw [segm] (A1)to(A3); 
\draw [segm] (A2)to(A3);
\end{tikzpicture}$
\end{center}

For example, the family of vector spaces $(\mathcal{T}_{n,\pm})_{n \in \mathbb{N}}$ generated by the planar tangles having $2n$ intervals on their ``output'' disk and a white (or black) shaded $\star$-marked interval, admits a planar algebra structure. The Temperley-Lieb-Jones planar algebra $\mathcal{TLJ}(\delta)$ is generated by the tangles without input disk; its $3$-box space $\mathcal{TLJ}_{3,+}(\delta)$ is generated by 
\begin{center} 
$ \{ \begin{tikzpicture}[TLEG, scale=1.5]
	\filldraw[shaded]  (30:1cm) arc (30:90:1cm) arc (30:-150:5mm) arc (150:210:1cm) -- cycle; 
	\filldraw[shaded]  (0,-1) arc (-90:-30:1cm) arc (30:210:5mm);
	\draw[thick] (0,0) circle (1cm);
	\node at (120:1.3cm) {$\star$};
\end{tikzpicture},
\begin{tikzpicture}[TLEG, scale=1.5]
	\filldraw[shaded]  (0,1) arc (90:30:1cm) arc (90:270:5mm) arc (-30:-90:1cm) -- cycle; 
	\filldraw[shaded]  (-1,0) arc (180:210:1cm) arc (-90:90:5mm) arc (150:180:1cm);
	\draw[thick] (0,0) circle (1cm);
	\node at (120:1.3cm) {$\star$};
\end{tikzpicture},
\begin{tikzpicture}[TLEG, rotate=180, scale=1.5]
	\filldraw[shaded]  (150:1cm) arc (150:90:1cm) arc (-210:-30:5mm) arc (30:-30:1cm)--cycle;
	\filldraw[shaded] (-90:1cm) arc (-90:-150:1cm) arc (150:-30:5mm);
	\draw[thick] (0,0) circle (1cm);
	\node at (-60:1.3cm) {$\star$};
\end{tikzpicture},
\begin{tikzpicture}[TLEG, scale=1.5]
	\filldraw[shaded]  (0,-1) arc (-90:-30:1cm) arc (30:210:5mm);
	\filldraw[shaded]  (-1,0) arc (180:210:1cm) arc (-90:90:5mm) arc (150:180:1cm);
	\filldraw[shaded] (90:1cm) arc (90:30:1cm) arc (-30:-210:5mm);
	\draw[thick] (0,0) circle (1cm);
	\node at (120:1.3cm) {$\star$};
\end{tikzpicture},
\begin{tikzpicture}[TLEG, scale=1.5]
	\filldraw[shaded]  (90:1cm) arc (30:-150:5mm) arc (150:210:1cm) arc (150:-30:5mm) arc (-90:-30:1cm) arc (-90:-270:5mm) arc (30:90:1cm);
	\draw[thick] (0,0) circle (1cm);
	\node at (120:1.3cm) {$\star$};
\end{tikzpicture}   \}$
\end{center}   
moreover, a closed string is replaced by a multiplication by $\delta$.\\
Note that the dimension of $\mathcal{TLJ}_{n,\pm}(\delta)$ is the Catalan number $\frac{1}{n+1}\binom{2n}{n}$.
\begin{center}  

$\begin{tikzpicture}[TLEG]
	\filldraw[shaded] (90:3cm) -- (-90:3cm) arc (-90:90:3cm);
	\filldraw[unshaded] (0:1cm) circle (.5cm);
	\filldraw[shaded] (180:1cm) circle (.5cm);
	\node[Tbox,inner sep=3.5mm] at (0,0) {};
	\draw[thick] (0,0) circle (3cm);
	\node at (120:1.3cm) {$\star$};
	\node at (120:3.3cm) {$\star$};
\end{tikzpicture}
\circ
\begin{tikzpicture}[TLEG]
	\filldraw[shaded]  (0,1) arc (90:30:1cm) arc (90:270:5mm) arc (-30:-90:1cm) -- cycle; 
	\filldraw[shaded]  (-1,0) arc (180:210:1cm) arc (-90:90:5mm) arc (150:180:1cm);
	\draw[thick] (0,0) circle (1cm);
	\node at (120:1.3cm) {$\star$};
\end{tikzpicture}
=
\begin{tikzpicture}[TLEG]
	\filldraw[shaded] (90:3cm) -- (-90:3cm) arc (-90:90:3cm);
	\filldraw[unshaded] (0:1cm) circle (.5cm);
	\filldraw[shaded] (180:1cm) circle (.5cm);
	\draw[thick] (0,0) circle (3cm);
	\node at (120:3.3cm) {$\star$};
\end{tikzpicture}
=\delta^2
\begin{tikzpicture}[TLEG]
	\draw[thick]  (-3,0)arc (180:-180: 3cm);
	\filldraw[shaded]  (0,3)--(0,-3) arc (-90:90:3cm);
	\node at (120:3.3cm) {$\star$};
\end{tikzpicture}$

 \end{center} 
\subsection{Subfactor planar algebras} \hspace*{1cm} \\
A \textit{subfactor planar algebra} is a planar $\star$-algebra $(\mathcal{P}_{n,\pm})_{n \in \mathbb{N}}$ which is: 
\begin{itemize}
\item[(1)] Finite-dimensional: $dim (\mathcal{P}_{n,\pm}) < \infty$
\item[(2)] Evaluable:  $\mathcal{P}_{0,\pm} = \mathbb{C}$
\item[(3)] Spherical: $tr:=tr_r = tr_l$
\item[(4)] Positive:  $\langle a \vert b \rangle = tr(b^{\star}a)$ defines an inner product.
\end{itemize} 
Note that by (2) and (3), any closed string (shaded or not) counts for the same constant $\delta$.
\begin{center} $ a=
\begin{tikzpicture}[scale=.35, baseline]
\node at (150:4.3cm) {\tiny{$\star$}};
	\clip (0,0) circle (4cm);
  \draw[shaded] (-1,6)--(-0.5,6)--(-0.5,-6)--(-1,-6)--(-1,6);
	 \draw[shaded] (1,6)--(0.5,6)--(0.5,-6)--(1,-6)--(1,6);
	\node at (0,0) [Tbox, inner sep=2mm] {$a$};
	\node at (180:1.5cm) {\tiny{$\star$}};
	\node at (0,2.5) {...};
	\node at (0,-2.5) {...};
	\draw[very thick] (0,0) circle (4cm);	
\end{tikzpicture} \in \mathcal{P}_{n,\pm}
 \hspace{1cm} ab=
\begin{tikzpicture}[scale=.35, baseline]
\node at (150:4.3cm) {\tiny{$\star$}};
	\clip (0,0) circle (4cm);
\draw[shaded] (-0.8,6)--(-0.5,6)--(-0.5,-6)--(-0.8,-6)--(-0.8,6);
\draw[shaded] (0.8,6)--(0.5,6)--(0.5,-6)--(0.8,-6)--(0.8,6);
    \node at (0,3.2) {...};
	\node at (0,-3.2) {...};
	\node at (0,-1.5) [Tbox, inner sep=1.5mm] {{$a$} };
	\node at (0,1.5) [Tbox, inner sep=1.4mm] {{$b$} };
	\node at (-130:2cm) {\tiny{$\star$}};
	\node at (130:2cm) {\tiny{$\star$}};
	\draw[very thick] (0,0) circle (4cm);
\end{tikzpicture} $  \end{center}
\begin{center} 
$
tr_{r}(a)=\delta^{-n}
\begin{tikzpicture}[scale=.35, baseline]
\node at (150:4.3cm) {\tiny{$\star$}};
	\clip (0,0) circle (4cm);
    \draw[shaded] (0,0) circle (3.4cm);
	\draw[fill=white] (0,0) circle (3.1cm);
	\draw[shaded] (0,0) circle (2.3cm);
	\draw[fill=white] (0,0) circle (2cm);
	\node at (-2.6,0) [Tbox, inner sep=1.2mm] {$a$};
	\node at (180:3.65cm) {\tiny{$\star$}};
	\node at (2.7,0) {\tiny{...}};
	\draw[very thick] (0,0) circle (4cm);	
\end{tikzpicture} \hspace{1cm}
tr_{l}(a)=\delta^{-n}
\begin{tikzpicture}[scale=.35, baseline]
\node at (150:4.3cm) {\tiny{$\star$}};
	\clip (0,0) circle (4cm);
    \draw[shaded] (0,0) circle (3.4cm);
	\draw[fill=white] (0,0) circle (3.1cm);
	\draw[shaded] (0,0) circle (2.3cm);
	\draw[fill=white] (0,0) circle (2cm);
	\node at (2.7,0) [Tbox, inner sep=1.2mm] {$a$};
	\node at (180:-1.65cm) {\tiny{$\star$}};
	\node at (-2.7,0) {\tiny{...}};
	\draw[very thick] (0,0) circle (4cm);	
\end{tikzpicture}$
 \end{center}
%Popa95, Jones99, GuiJonShly07, Kodiyalam-Sunder09  
\begin{remark}[\cite{sk2} p229] \label{adjoint}
The tangle action deals with the adjoint by:  
$$Z_T(a_1 \otimes a_2 \otimes \cdots \otimes a_r)^{\star} = Z_{T^{\star}}(a_1^{\star} \otimes a_2^{\star} \otimes \cdots \otimes a_r^{\star})$$
with $T^{\star}$ the mirror image of $T$ and $a_i^{\star}$ the adjoint of $a_i$ in $\mathcal{P}_{n_i,\epsilon_i}$.
\end{remark}

A planar algebra $(\mathcal{P}_{n,\pm})$ is a subfactor planar algebra if and only if it is the standard invariant of an extremal subfactor $(N \subseteq M)$ with $\delta = [M:N]^{\frac{1}{2}}$ (see \cites{po2, jo4, gjs, sk3}). A finite depth or irreducible subfactor is extremal ($tr_{N'} = tr_{M}$ on $N' \cap M$).

\subsection{Basic ingredients of the $2$-box space}  \hspace*{1cm} \\
Let $(N \subseteq M)$ be a finite index irreducible subfactor. The $n$-box spaces $\mathcal{P}_{n,+}$ and $\mathcal{P}_{n,-}$ of the planar algebra $\mathcal{P}=\mathcal{P}(N \subseteq M)$, are isomorphic to $N' \cap M_{n-1}$ and $M' \cap M_{n}$ (as ${\rm C^{\star}}$-algebras).

\begin{remark}
The space $\mathcal{P}_{0,\pm} = \mathbb{C}$ because $N$ and $M$ are factors, $\mathcal{P}_{1,\pm} = \mathbb{C}$ because $(N \subseteq M)$ is irreducible, and finally $\mathcal{P}_{n,\pm}$ is finite dimensional because the index $[M:N] = \delta^2$ is finite.
\end{remark} 

\begin{definition}
Let $R(a)$ be the range projection of $a \in \mathcal{P}_{2,+}$. \\ We define the relations $a \prec b$  by $R(a) < R(b)$, $a \preceq b$  by $R(a) \le R(b)$, and $a \sim b$ by $R(a) = R(b)$.
\end{definition}    

\begin{definition}
Let $N \subseteq K \subseteq M$ be an intermediate subfactor. \\ Let the projection $e^M_K: L^2(M) \to L^2(K)$, let $id:=e^M_M$ and $e_1:=e^M_N$. 
\begin{center} 
$e_1 = \frac{1}{\delta}
\begin{tikzpicture}[scale=.35, baseline]
\node at (150:4.3cm) {\tiny{$\star$}};
	\clip (0,0) circle (4cm);
  \draw[shaded] (0,4) circle (3cm);
	\draw[shaded] (0,-4) circle (3cm);
	\draw[very thick] (0,0) circle (4cm);	
\end{tikzpicture} 
 \hspace{1cm} id = 
\begin{tikzpicture}[scale=.35, baseline]
\node at (150:4.3cm) {\tiny{$\star$}};
	\clip (0,0) circle (4cm);
  \draw[shaded] (0,0) ellipse (2cm and 6cm);
	\draw[very thick] (0,0) circle (4cm);
\end{tikzpicture} $  \end{center}

\end{definition}  
Note that $tr(e_1) = \delta^{-2} = [M:N]^{-1}$ and $tr(id) = 1$.   

\begin{definition}
Let the bijective linear map $\mathcal{F}: \mathcal{P}_{2,\pm} \to  \mathcal{P}_{2,\mp}$ be the Ocneanu-Fourier transform, also called $1$-click (of the outer star) or $90^{\circ}$ rotation; and let $a * b$ be the coproduct of $a$ and $b$.
\begin{center} $ \mathcal{F}(a) := 
\begin{tikzpicture}[scale=.35, baseline]
\node at (150:4.3cm) {\tiny{$\star$}};
	\clip (0,0) circle (4cm);
	\draw[shaded] (0,0) circle (4cm);
  \draw[fill=white] (-1.2,2.5) ellipse (1cm and 4cm);
	\draw[fill=white] (1.2,-2.5) ellipse (1cm and 4cm);
	\node at (0,0) [Tbox, inner sep=1.5mm] {$a$};
	\node at (180:1.3cm) {\tiny{$\star$}};
	\draw[very thick] (0,0) circle (4cm);	
\end{tikzpicture} 
 \hspace{1cm} a * b = 
\begin{tikzpicture}[scale=.35, baseline]
\node at (150:4.3cm) {\tiny{$\star$}};
	\clip (0,0) circle (4cm);
  \draw[shaded] (0,0) ellipse (2cm and 6cm);
	\draw[fill=white] (0,0) ellipse (1.2cm and 2cm);
	\node at (-1.4,0) [Tbox, inner sep=1.5mm] {{$a$} };
	\node at (1.4,0) [Tbox, inner sep=1.4mm] {{$b$} };
	\node at (180:-0.07cm) {\tiny{$\star$}};
	\node at (180:2.65cm) {\tiny{$\star$}};
	\draw[very thick] (0,0) circle (4cm);
\end{tikzpicture} $  \end{center}
Let $\overline{a} := \mathcal{F}(\mathcal{F}(a))$ be the contragredient of $a$ (also called $180^{\circ}$ rotation).   
\end{definition} 
\begin{lemma} \label{Fourier}
Let $a \in \mathcal{P}_{2,+}$ then:
\begin{itemize}
\item $\overline{\overline{a}} = \mathcal{F}^{4}(a) = a$
\item $ \overline{\mathcal{F}(a)} = \mathcal{F}(\overline{a})  = \mathcal{F}^{-1}(a)$  
\item $ \mathcal{F}(a)^{\star} = \mathcal{F}^{-1}(a^{\star})$ and $ \mathcal{F}^{-1}(a)^{\star} = \mathcal{F}(a^{\star})$
\end{itemize} 
\end{lemma}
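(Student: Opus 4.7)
The plan is to read off all three bullets from the diagrammatic interpretation of $\mathcal{F}$ as the $90^{\circ}$ rotation tangle on the $2$-box space. Write $F$ for the planar tangle whose partition function is $\mathcal{F}$, i.e.\ the annular $2$-box tangle that one obtains from the identity $2$-tangle by shifting the outer $\star$-marked interval one click. By definition, $\overline{a} = \mathcal{F}^2(a)$, so $\overline{\overline{a}} = \mathcal{F}^4(a)$, and the first bullet reduces to showing $\mathcal{F}^4 = \mathrm{id}$ on $\mathcal{P}_{2,+}$.

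For $\mathcal{F}^4 = \mathrm{id}$, I would compose the $F$-tangle with itself four times. The resulting annular tangle is a $2$-box tangle whose only feature is a closed loop of the $\star$-marker around the inner disk together with two through-strings from each of the four intervals of the inner disk to the corresponding interval of the outer disk; after an isotopy this is literally the identity tangle on $2$-boxes (sphericity, axiom~(3), is what lets one slide the $\star$-marker all the way back to its starting interval without picking up a correction). This yields $\mathcal{F}^4(a) = a$, which together with the definition $\overline{a} = \mathcal{F}^2(a)$ settles the first bullet. The second bullet is then purely algebraic: since $\mathcal{F}^4 = \mathrm{id}$, one has $\mathcal{F}^{-1} = \mathcal{F}^3$, and both $\overline{\mathcal{F}(a)} = \mathcal{F}^2(\mathcal{F}(a))$ and $\mathcal{F}(\overline{a}) = \mathcal{F}(\mathcal{F}^2(a))$ collapse to $\mathcal{F}^3(a) = \mathcal{F}^{-1}(a)$.

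For the third bullet I would invoke Remark~\ref{adjoint}, which gives $\mathcal{F}(a)^{\star} = Z_F(a)^{\star} = Z_{F^{\star}}(a^{\star})$ where $F^{\star}$ is the mirror image of the Fourier tangle. Reflecting the rotation tangle $F$ across a vertical axis reverses the sense of rotation: the $90^{\circ}$ rotation of the $\star$-marker becomes a $-90^{\circ}$ rotation, so $F^{\star}$ is (isotopic to) the tangle implementing $\mathcal{F}^{-1}$. Hence $\mathcal{F}(a)^{\star} = \mathcal{F}^{-1}(a^{\star})$, and the companion identity $\mathcal{F}^{-1}(a)^{\star} = \mathcal{F}(a^{\star})$ follows by the same reflection argument applied to $F^{-1}$, or equivalently by taking adjoints on both sides of the first identity and using $a^{\star\star}=a$.

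The only genuine step here is the sphericity-based isotopy $\mathcal{F}^4 = \mathrm{id}$; everything else is formal manipulation of the resulting rotation symmetry together with the one input from Remark~\ref{adjoint}. I expect that step to be the main point a reader actually wants to see, since all the other identities are immediate consequences of it.
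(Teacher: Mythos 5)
Your proof is correct and follows essentially the same route as the paper's: four $1$-clicks of the outer star compose to the identity tangle (giving the first two bullets formally), and the third bullet comes from Remark~\ref{adjoint} together with the observation that the rotation tangle and its inverse are mirror images. The only cosmetic difference is your appeal to sphericity for the isotopy $\mathcal{F}^4=\mathrm{id}$, which is not actually needed — planar isotopy alone returns the $\star$-marker to its starting interval after four quarter-turns of a $2$-box.
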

\begin{proof}
The map $\mathcal{F}^{4}$ corresponds to four $1$-clicks of the outer star, so it's the identity map, and so $\mathcal{F}^{3} = \mathcal{F}^{-1}$, but $\overline{\mathcal{F}(a)} = \mathcal{F}(\overline{a}) = \mathcal{F}^{3}(a)$.
The last follows by Remark \ref{adjoint}, because $\mathcal{F}^{-1}$ and  $\mathcal{F}$ are mirror images.
 \end{proof}
\begin{lemma} \label{costar}
Let $a,b \in \mathcal{P}_{2,+}$ then:
\begin{itemize}
\item $\overline{a \cdot b} = \overline{b} \cdot \overline{a}$  
\item $\overline{a * b} = \overline{b} * \overline{a}$
\item $(a \cdot b)^{\star} = b^{\star} \cdot a^{\star}$  
\item $(a * b)^{\star} = a^{\star} * b^{\star}$
\end{itemize}
\end{lemma}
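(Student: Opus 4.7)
All four identities are diagrammatic tautologies in the $2$-box space, once one interprets the operations as planar tangles. My plan is to fix the pictures of multiplication and coproduct from Section \ref{planar} and then apply, respectively, the $180^\circ$ rotation (for the contragredient) and the horizontal mirror (for the adjoint, as prescribed by Remark \ref{adjoint}) to the combined tangle. Each identity then reduces to checking that the symmetry maps the composite tangle to the composite of the transformed building blocks in the claimed order.

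For the two multiplicative identities, recall that $a\cdot b$ is the vertical stacking tangle (with $b$ on top of $a$, or vice-versa according to convention). Rotating the whole stack by $180^\circ$ exchanges top and bottom and simultaneously rotates each input box by $180^\circ$, so the $a$-box becomes $\overline{a}$ sitting below the $b$-box which becomes $\overline{b}$; reading the result again as a product in the original orientation gives $\overline{b}\cdot \overline{a}$. The adjoint identity $(a\cdot b)^\star = b^\star\cdot a^\star$ is the same argument with the horizontal mirror in place of the rotation: the mirror swaps top and bottom but does not swap left and right, and it sends each input box to its own adjoint, yielding the usual $C^\star$-algebra identity.

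For the coproduct identities the situation is analogous but now the two boxes sit side by side inside the tangle defining $*$ (with the appropriate caps and cups joining their strings, as in the figure defining $a*b$). A $180^\circ$ rotation sends the left-hand box to the right and conversely, and rotates each one; thus the tangle for $a*b$ becomes the tangle for $\overline{b}*\overline{a}$, giving $\overline{a*b}=\overline{b}*\overline{a}$. The horizontal mirror, on the other hand, preserves the left/right positions of the two input disks because the coproduct tangle is symmetric under this mirror up to replacing each input by its mirror image; this yields $(a*b)^\star = a^\star*b^\star$, with no swap of the arguments.

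The only place where any care is needed is in keeping track of $\star$-marked intervals under these symmetries, which determines whether the transformed box lands in $\mathcal{P}_{2,+}$ or $\mathcal{P}_{2,-}$ and hence which of $\overline{\cdot}$ or $\mathcal{F}^{\pm 1}$ appears; here Lemma \ref{Fourier} (in particular $\mathcal{F}^4=\mathrm{id}$ and the compatibility of $\mathcal{F}$ with $\star$) ensures that the bookkeeping closes up consistently. I expect this is the only minor obstacle, and once pictures are drawn the four identities read off immediately.
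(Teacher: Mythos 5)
Your proposal is correct and is essentially the paper's own argument: the paper disposes of all four identities with "Immediate by diagrams computation and Remark \ref{adjoint}", and your write-up simply carries out that diagram computation, correctly identifying the contragredient with the $180^{\circ}$ rotation (which reverses both the vertical stacking of $\cdot$ and the horizontal juxtaposition of $*$) and the adjoint with the mirror of Remark \ref{adjoint} (which reverses $\cdot$ but preserves the order in $*$). The bookkeeping you flag at the end is indeed the only point of care, and it closes up exactly as you say via Lemma \ref{Fourier}.
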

\begin{proof}
Immediate by diagrams computation and Remark \ref{adjoint}.
 \end{proof}
 
% By Lemma \ref{2coprod}, $a * b = \mathcal{F}(\mathcal{F}^{-1}(a) \cdot \mathcal{F}^{-1}(b))$, so $$   (a * b)^{\star} = \mathcal{F}(\mathcal{F}^{-1}(a) \cdot \mathcal{F}^{-1}(b))^{\star} = \mathcal{F}^{-1}((\mathcal{F}^{-1}(a) \cdot \mathcal{F}^{-1}(b))^{\star}) $$ $$= \mathcal{F}^{-1}((\mathcal{F}^{-1}(b))^{\star} \cdot (\mathcal{F}^{-1}(a))^{\star}) = \mathcal{F}^{-1}(\mathcal{F}(b^{\star}) \cdot \mathcal{F}(a^{\star})) = a^{\star}  *  b^{\star} $$ Next, $$\overline{(a * b)} = \mathcal{F}^{-2}(\mathcal{F}(\mathcal{F}^{-1}(a) \cdot \mathcal{F}^{-1}(b))) = \mathcal{F}^{-1}(\mathcal{F}^{-1}(a) \cdot \mathcal{F}^{-1}(b))$$  $$= \mathcal{F}^{-1}(\mathcal{F}(\mathcal{F}^{-2}(a)) \cdot \mathcal{F}(\mathcal{F}^{-2}(b))) = \mathcal{F}^{-1}(\mathcal{F}(\overline{a}) \cdot \mathcal{F}(\overline{b})) = \overline{b} * \overline{a}$$   There is an alternative proof by diagram computations.
Note that
 $ \delta \mathcal{F}(e_1) = 
\begin{tikzpicture}[scale=.35, baseline]
\node at (150:4.3cm) {\tiny{$\star$}};
	\clip (0,0) circle (4cm);
	\draw[shaded] (0,0) circle (4cm);
  \draw[fill=white] (-1.2,2.5) ellipse (1cm and 4cm);
	\draw[fill=white] (1.2,-2.5) ellipse (1cm and 4cm);
	\draw[fill=white, dashed, ultra thin] (0,0) circle (1cm);
	\draw[shaded] (-0.27,1) .. controls ++(-100:1cm) and ++(-100:1cm) .. (0.65,0.9);
	\draw[shaded] (0.27,-1) .. controls ++(80:1cm) and ++(80:1cm) .. (-0.65,-0.9);
	\draw[very thick] (0,0) circle (4cm);	
\end{tikzpicture} 
 = 
\begin{tikzpicture}[scale=.35, baseline]
\node at (150:4.3cm) {\tiny{$\star$}};
	\clip (0,0) circle (4cm);
  \draw[fill=white] (0,0) ellipse (2cm and 9cm);
	\draw[shaded] (0,0) circle (4cm);
	 \draw[fill=white] (0,0) ellipse (2cm and 9cm);
	% \draw  (1,1)--(1,-1)--(-1,-1)--(-1,1)--(1,1);
	 \draw[very thick] (0,0) circle (4cm);
\end{tikzpicture} 
= id \in \mathcal{P}_{2,-}$ 

\begin{lemma}  \label{2coprod} The coproduct satisfies the following equality
$$ a * b = \mathcal{F}(\mathcal{F}^{-1}(a) \cdot \mathcal{F}^{-1}(b)) = \mathcal{F}^{-1}(\mathcal{F}(b) \cdot \mathcal{F}(a)).$$
It follows that  $\mathcal{F}(a * b) = \mathcal{F}(b) \cdot \mathcal{F}(a) $ and $\mathcal{F}(ab) = \mathcal{F}(a) * \mathcal{F}(b) $. 
\end{lemma}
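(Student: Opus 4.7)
The plan is to prove the lemma by a direct diagrammatic computation, then derive the two consequences by elementary manipulations using Lemma \ref{Fourier}.

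First I would recall the tangle definitions. The coproduct $a*b$ is the 2-box tangle obtained by placing $a$ on the left and $b$ on the right, joining the top pair of their strings to the top boundary and the bottom pair to the bottom boundary, while the two inner strings of $a$ and $b$ cap off against each other in the middle; the outer $\star$ sits in the shaded region. The Fourier transform $\mathcal{F}$ is one click of the outer $\star$, rotating the diagram by $90^\circ$; equivalently, one can implement it by the 2-box tangle that bends the top pair of strings to the left and the bottom pair to the right (and $\mathcal{F}^{-1}$ is the mirror image). Multiplication $a \cdot b$ stacks $b$ above $a$ with strings matched vertically.

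The heart of the proof is then to show that the tangle for $\mathcal{F}(\mathcal{F}^{-1}(a) \cdot \mathcal{F}^{-1}(b))$ is planar-isotopic to the tangle for $a*b$. Inserting the diagrams for $\mathcal{F}^{-1}(a)$ and $\mathcal{F}^{-1}(b)$ into the two input disks of the multiplication tangle, and then surrounding by the $\mathcal{F}$ tangle, produces a 2-box diagram in which each of the two internal $\mathcal{F}^{-1}$ rotations is cancelled by half of the outer $\mathcal{F}$ rotation. After pulling the internal disks apart by an isotopy, the remaining strings realize precisely the coproduct pattern — the two inner strands of $a$ and $b$ are capped off against each other, while the outer strands run to the boundary — and the $\star$-markings agree. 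The same kind of diagram chase, with $\mathcal{F}$ and $\mathcal{F}^{-1}$ swapped and the roles of $a,b$ reversed (because the outer rotation goes the other way), gives the second equality $a*b = \mathcal{F}^{-1}(\mathcal{F}(b) \cdot \mathcal{F}(a))$.

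For the consequences, applying $\mathcal{F}$ to the identity $a*b = \mathcal{F}^{-1}(\mathcal{F}(b) \cdot \mathcal{F}(a))$ yields immediately $\mathcal{F}(a*b) = \mathcal{F}(b) \cdot \mathcal{F}(a)$. Substituting $\mathcal{F}(a)$ for $a$ and $\mathcal{F}(b)$ for $b$ in $a*b = \mathcal{F}(\mathcal{F}^{-1}(a) \cdot \mathcal{F}^{-1}(b))$, and then applying $\mathcal{F}^{-1}$, gives $\mathcal{F}^{-1}(\mathcal{F}(a)*\mathcal{F}(b)) = ab$, i.e.\ $\mathcal{F}(ab) = \mathcal{F}(a)*\mathcal{F}(b)$, using $\mathcal{F}^{-1}\mathcal{F} = \mathrm{id}$ from Lemma \ref{Fourier}.

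The main obstacle is not conceptual but bookkeeping: one has to check that the $\star$-markings and the shadings match correctly under the isotopies, so that the asymmetry between the two equalities (the order swap $a,b \to b,a$ in the $\mathcal{F}$ version versus the original order in the $\mathcal{F}^{-1}$ version) comes out on the right side. Once the diagrams are drawn carefully, both identities are visible at a glance.
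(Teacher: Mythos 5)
Your proposal is correct and matches the paper's approach: the paper's entire proof is ``Immediate by diagrams computation,'' and your elaboration of the tangle isotopy (the outer rotation cancelling the two inner $\mathcal{F}^{\pm 1}$ rotations) together with the routine substitutions for the two consequences is exactly what that one-liner leaves to the reader. The only cosmetic remark is that the second consequence follows directly by substituting $\mathcal{F}(a),\mathcal{F}(b)$ into the first displayed equality without the extra application of $\mathcal{F}^{-1}$.
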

\begin{proof} Immediate by diagrams computation.    \end{proof}

\begin{lemma} \label{constar} The contragredient is a linear $\star$-map.
\end{lemma}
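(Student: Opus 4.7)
The statement claims two properties of the contragredient map $a \mapsto \overline{a} := \mathcal{F}^2(a)$: linearity and compatibility with the $\star$-involution. Both should follow immediately from the properties of the Ocneanu-Fourier transform already established in Lemma \ref{Fourier}, so my plan is simply to unpack the definition and apply those identities.

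\textbf{Linearity.} Since $\mathcal{F}: \mathcal{P}_{2,\pm} \to \mathcal{P}_{2,\mp}$ is defined as a linear map (it is the partition function of a fixed planar tangle applied to a single input), its square $\mathcal{F}^2$ is linear as well. So $\overline{\lambda a + \mu b} = \lambda \overline{a} + \mu \overline{b}$ is automatic.

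\textbf{Compatibility with $\star$.} I would compute $\overline{a}^{\star}$ directly using the two identities from Lemma \ref{Fourier}, namely $\mathcal{F}(x)^{\star} = \mathcal{F}^{-1}(x^{\star})$ and $\mathcal{F}^{-1}(x)^{\star} = \mathcal{F}(x^{\star})$, together with $\mathcal{F}^4 = \mathrm{id}$. The chain of equalities is
\[
\overline{a}^{\star} = \mathcal{F}^2(a)^{\star} = \mathcal{F}(\mathcal{F}(a))^{\star} = \mathcal{F}^{-1}\bigl(\mathcal{F}(a)^{\star}\bigr) = \mathcal{F}^{-1}\bigl(\mathcal{F}^{-1}(a^{\star})\bigr) = \mathcal{F}^{-2}(a^{\star}) = \mathcal{F}^{2}(a^{\star}) = \overline{a^{\star}},
\]
where the penultimate step uses $\mathcal{F}^{-2} = \mathcal{F}^{2}$, a consequence of $\mathcal{F}^4 = \mathrm{id}$.

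\textbf{Obstacles.} There really are none: this is a one-line consequence of Lemma \ref{Fourier}, and I expect the author's proof to be essentially the computation above (perhaps phrased diagrammatically by noting that reflecting the $180^{\circ}$-rotation tangle across the horizontal axis gives back a $180^{\circ}$-rotation tangle, so the mirror-image rule of Remark \ref{adjoint} yields the $\star$-compatibility directly). I would include a one-sentence remark that the diagrammatic picture confirms this: the tangle defining the contragredient is invariant under the mirror operation $T \mapsto T^{\star}$, so the formula in Remark \ref{adjoint} gives $\overline{a}^{\star} = \overline{a^{\star}}$ at once.
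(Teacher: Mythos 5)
Your proposal is correct and follows essentially the same route as the paper: the author likewise derives $\mathcal{F}^{2} = \mathcal{F}^{-2}$ from $\mathcal{F}^{4} = \mathrm{id}$ and then chains the identities $\mathcal{F}(x)^{\star} = \mathcal{F}^{-1}(x^{\star})$ from Lemma \ref{Fourier} to get $\overline{a^{\star}} = \overline{a}^{\star}$, with linearity inherited from that of $\mathcal{F}$. The only difference is cosmetic (the paper starts the chain from $\overline{a^{\star}}$ rather than $\overline{a}^{\star}$).
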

\begin{proof} By Lemma \ref{Fourier}, $\mathcal{F}^4(a) = a$, so $\mathcal{F}^{2} = \mathcal{F}^{-2}$.  It follows that $$ \overline{a^{\star}} = \mathcal{F}^{2}(a^{\star}) = \mathcal{F}^{-2}(a)^{\star} = \mathcal{F}^{2}(a)^{\star} = \overline{a}^{\star} $$ The linearity comes from the linearity of $\mathcal{F}$.  \end{proof}

\begin{proposition}
There is the isomorphism of von Neumann algebra $$\mathcal{F}: (\mathcal{P}_{2,\pm},+,\cdot,(\cdot)^{\star}) \to (\mathcal{P}_{2,\mp},+, *,\overline{(\cdot)}^{\star})$$ 
\end{proposition}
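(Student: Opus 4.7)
The plan is to read off the three structural properties required of a von Neumann algebra isomorphism directly from Lemmas \ref{Fourier} and \ref{2coprod}, since the Ocneanu--Fourier transform $\mathcal{F}$ has already been identified as a linear bijection $\mathcal{P}_{2,\pm}\to\mathcal{P}_{2,\mp}$ with inverse $\mathcal{F}^{-1}=\mathcal{F}^{3}$. Concretely, I need to check: (i) multiplicativity with respect to the transported product, i.e.\ $\mathcal{F}(a\cdot b)=\mathcal{F}(a)*\mathcal{F}(b)$; (ii) compatibility with the transported involution, i.e.\ $\mathcal{F}(a^{\star})=\overline{\mathcal{F}(a)}^{\,\star}$; and (iii) that the transported operations $(+,*,\overline{(\cdot)}^{\,\star})$ actually endow $\mathcal{P}_{2,\mp}$ with a $C^{\star}$-algebra (hence von Neumann algebra, by finite-dimensionality) structure.

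For (i), this is precisely the second identity of Lemma \ref{2coprod}, so nothing further is required. For (ii), I combine the Fourier identities in Lemma \ref{Fourier}: on the one hand $\mathcal{F}(a^{\star})=\mathcal{F}^{-1}(a)^{\star}$, and on the other hand $\overline{\mathcal{F}(a)}=\mathcal{F}(\overline{a})=\mathcal{F}^{3}(a)=\mathcal{F}^{-1}(a)$, so that
\[
\overline{\mathcal{F}(a)}^{\,\star}\;=\;\mathcal{F}^{-1}(a)^{\star}\;=\;\mathcal{F}(a^{\star}),
\]
which is the desired equality. Note that Lemma \ref{constar} guarantees that $\overline{(\cdot)}^{\,\star}$ is indeed a conjugate-linear involution on $\mathcal{P}_{2,\mp}$, so the candidate target structure makes sense.

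For (iii), once (i) and (ii) are in hand the claim is automatic: transport of structure along a linear bijection sends the $C^{\star}$-algebra $(\mathcal{P}_{2,\pm},+,\cdot,(\cdot)^{\star})$ to a $C^{\star}$-algebra on the target vector space, and the transported operations are by definition exactly $*$ and $\overline{(\cdot)}^{\,\star}$. Finite-dimensionality (guaranteed by the subfactor planar algebra axioms) then upgrades this to a von Neumann algebra isomorphism. Since each step is a direct consequence of the preceding lemmas, there is no genuine obstacle; the only delicate point is being careful that the involution in the target is the composite $\overline{(\cdot)}^{\,\star}$ and not simply $(\cdot)^{\star}$, which would fail because $\mathcal{F}$ does not commute with $(\cdot)^{\star}$ but rather intertwines it with $\mathcal{F}^{-2}=\mathcal{F}^{2}$, i.e.\ with the contragredient.
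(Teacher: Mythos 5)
Your proof is correct and follows essentially the same route as the paper, which simply cites Lemmas \ref{Fourier}, \ref{costar}, \ref{2coprod} and \ref{constar}; you have spelled out the same verifications (multiplicativity from Lemma \ref{2coprod}, the intertwining $\mathcal{F}(a^{\star})=\overline{\mathcal{F}(a)}^{\star}$ from Lemma \ref{Fourier}, and transport of structure for the rest) in more detail.
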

\begin{proof}
Immediate by Lemmas \ref{Fourier}, \ref{costar}, \ref{2coprod}, \ref{constar}.
\end{proof}

\begin{remark}[\cite{kls} p39] In the irreducible finite index depth $2$ case, the contragredient is exactly the antipode of the Hopf ${\rm C}^*$-algebra. \end{remark}
\begin{lemma}
Let $a \in \mathcal{P}_{2,+}$ then $tr(\overline{a})=tr(a)$.
\end{lemma}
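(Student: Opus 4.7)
The plan is to reduce this identity directly to the sphericality axiom. First I would unfold the definition of the contragredient: by Lemma~\ref{Fourier}, $\overline{a}=\mathcal{F}^{2}(a)$ is obtained from $a$ by two consecutive $1$-clicks of the outer star, i.e.\ by a $180^{\circ}$ rotation of the $2$-box $a$. Hence the planar tangle computing $tr_{r}(\overline{a})$ is, after a planar isotopy that undoes the $180^{\circ}$ rotation of the inner input disc, the very tangle that computes $tr_{l}(a)$: attaching the right-cap closure to the rotated $a$ produces, up to isotopy, the same picture as attaching the left-cap closure to the un-rotated $a$.

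Next I would invoke axiom~(3) of a subfactor planar algebra, namely sphericality $tr_{r}=tr_{l}$, to conclude
\[
tr(\overline{a}) \;=\; tr_{r}(\overline{a}) \;=\; tr_{l}(a) \;=\; tr_{r}(a) \;=\; tr(a).
\]
The normalization constants $\delta^{-n}$ appearing in $tr_{r}$ and $tr_{l}$ agree on both sides, so the comparison really does reduce to the equality of two closed unnormalized diagrams, and this equality is exactly what sphericality asserts.

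I anticipate no real obstacle: once one has the diagrammatic description $\overline{a}=\mathcal{F}^{2}(a)$ as a $180^{\circ}$ rotation, the statement is essentially a tautology modulo sphericality. The one thing to verify carefully is that the isotopy identifying the two closed tangles is genuinely planar (so that it is allowed by the planar operad before we ever appeal to sphericality); this is a routine manipulation entirely analogous to the diagrammatic verifications in Lemmas~\ref{Fourier} and~\ref{2coprod}.
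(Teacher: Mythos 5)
Your argument is correct and is essentially the paper's own proof: the paper writes $tr(\overline{a})=tr_l(\overline{a})=tr_r(a)=tr(a)$, which is exactly your identification of the closed tangle of the $180^{\circ}$-rotated box with the opposite closure of $a$, followed by sphericality. The only cosmetic difference is that you close $\overline{a}$ on the right and compare with $tr_l(a)$, while the paper closes it on the left and compares with $tr_r(a)$; these are the same observation.
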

\begin{proof}
By  sphericality $tr(\overline{a})=tr_l(\overline{a})=tr_r(a)=tr(a)$.
\end{proof}
\begin{definition}
Let $ \tau = \begin{tikzpicture}[scale=.35, baseline]
\node at (150:4.3cm) {\tiny{$\star$}};
	\clip (0,0) circle (4cm);
  \draw[shaded] (3,0) ellipse (4cm and 12cm);
	\draw[fill=white] (0.9,0) ellipse (1.2cm and 2cm);
	\node at (-0.5,0) [Tbox, inner sep=2.5mm] {};
	\node at (180:1.8cm) {\tiny{$\star$}};
	\draw[very thick] (0,0) circle (4cm);
\end{tikzpicture}$ and $e_0 = \begin{tikzpicture}[scale=.35, baseline]
\node at (150:4.3cm) {\tiny{$\star$}};
	\clip (0,0) circle (4cm);
  \draw[shaded] (0,5)--(5,5)--(5,-5)--(0,-5)--(0,5);
  \draw[very thick] (0,0) circle (4cm);
\end{tikzpicture}$  
\end{definition}
In the literature, $\tau$ can be called the conditional expectation tangle.
\begin{lemma} \label{2>1}
Let $a \in \mathcal{P}_{2,+}$ then $\tau(a)= \delta tr(a) e_0$. 
\end{lemma}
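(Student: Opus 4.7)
The plan is to reduce the identity to its scalar content and then determine that scalar by normalized closure.

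First I would observe that the tangle $\tau$ takes the four strands emerging from the input $2$-box and connects the two right-hand strands to each other, leaving only two intervals on the output disk (one unshaded on the left carrying $\star$, one shaded on the right). Therefore $\tau(a) \in \mathcal{P}_{1,+}$. By the irreducibility remark following the axioms of a subfactor planar algebra, $\mathcal{P}_{1,+} = \mathbb{C}$, and the single-through-strand diagram $e_0$ is the identity element of this one-dimensional $*$-algebra. Hence there is a unique scalar $c(a)$, linear in $a$, with
\[
\tau(a) = c(a) \cdot e_0.
\]

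Next I would pin down the scalar by applying the trace $tr_1$ on $\mathcal{P}_{1,+}$ to both sides. The normalization of the trace gives $tr_1(e_0) = \delta^{-1} \cdot \delta = 1$ (one through-strand closed produces one loop of value $\delta$, compensated by the $\delta^{-n}$ factor for $n=1$), so $c(a) = tr_1(\tau(a))$. Interpreting this composition diagrammatically, $\tau$ closes the two right strands of $a$ and $tr_1$ then closes the two remaining left strands, carrying the factor $\delta^{-1}$. Altogether this is the full closure of all four strands of $a$, which by definition of $tr_2$ equals $\delta^2 \cdot tr_2(a) = \delta^2 \cdot tr(a)$. Multiplying by the $\delta^{-1}$ from the $tr_1$-normalization yields
\[
c(a) = \delta^{-1} \cdot \delta^2 \cdot tr(a) = \delta \cdot tr(a),
\]
which gives the claimed equality.

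There is no real obstacle; the only thing to be careful about is bookkeeping the normalization factors $\delta^{-n}$ in the definitions of $tr_1$ and $tr_2$ and confirming, via sphericality, that the right-closure and left-closure of $a$ give the same scalar so that the diagrammatic identification of $tr_1 \circ \tau$ with the full closure of $a$ is unambiguous. Once the scalar is correctly tracked, the lemma follows immediately from the one-dimensionality of $\mathcal{P}_{1,+}$.
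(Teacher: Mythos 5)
Your proof is correct and follows essentially the same route as the paper: irreducibility forces $\tau(a)$ into the one-dimensional space $\mathbb{C}e_0$, and the scalar is then fixed by closing up the diagram, which in the paper appears as the equation $\delta^2 tr(a) = \delta c$. Your normalization bookkeeping ($tr_1(e_0)=1$ and $tr_1(\tau(a)) = \delta^{-1}\cdot\delta^2 tr(a)$) is consistent with that computation.
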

\begin{proof} By irreducibility $\mathcal{P}_{1,+} = \mathbb{C}e_0$, so $\tau(a) = c e_0$ and $\delta^{2}tr(a) = \delta c$.
\end{proof}
\begin{lemma} \label{*id} Let $a \in \mathcal{P}_{2,+}$ then $a*e_1 = \delta^{-1}a$ and $a*id  = \delta tr(a) id$.
\end{lemma}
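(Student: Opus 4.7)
Both identities will follow from Lemma~\ref{2coprod}, the relation $\delta\mathcal{F}(e_1)=id$ established diagrammatically just before that lemma, and its shading-reversed counterpart $\mathcal{F}(id)=\delta e_1^-$, where $e_1^-\in\mathcal{P}_{2,-}$ denotes the Jones projection of the reversed shading (this counterpart is obtained by applying $\mathcal{F}$ to $\delta\mathcal{F}(e_1^-)=id$ and using the rotation-invariance $\overline{e_1^-}=e_1^-$, equivalently $\mathcal{F}^2(e_1^-)=e_1^-$, via Lemma~\ref{Fourier}).

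For the first identity, apply the formula $a*b=\mathcal{F}^{-1}(\mathcal{F}(b)\cdot\mathcal{F}(a))$ of Lemma~\ref{2coprod} with $b=e_1$. Since $\mathcal{F}(e_1)=\delta^{-1}id$ and $id$ is the multiplicative unit of $\mathcal{P}_{2,-}$,
\[
a*e_1 \;=\; \mathcal{F}^{-1}\bigl(\delta^{-1}id\cdot\mathcal{F}(a)\bigr) \;=\; \delta^{-1}\mathcal{F}^{-1}(\mathcal{F}(a)) \;=\; \delta^{-1}a.
\]

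For the second identity, the main ingredient is the Jones-type absorption
\[
e_1^-\cdot y \;=\; \delta\,tr(y)\,e_1^- \qquad\text{for all } y\in\mathcal{P}_{2,-},
\]
which I would prove by direct diagram: stacking $e_1^-$ above $y$, the lower cap of $e_1^-$ closes the top two strands of $y$; this partial closure lies in the one-dimensional $C^*$-algebra $\mathcal{P}_{1,-}=\mathbb{C}e_0^-$ and equals $\delta\,tr(y)\,e_0^-$ by the shading-reversed form of Lemma~\ref{2>1}, after which the upper cap of $e_1^-$ together with the pass-through of $y$'s remaining two bottom strands rebuilds $e_1^-$ exactly. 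Granted this, apply $\mathcal{F}(a*b)=\mathcal{F}(b)\cdot\mathcal{F}(a)$ (Lemma~\ref{2coprod}), $\mathcal{F}(id)=\delta e_1^-$, and the sphericality identity $tr(\mathcal{F}(a))=tr(a)$ (proved like $tr(\overline{a})=tr(a)$) to obtain
\[
\mathcal{F}(a*id) \;=\; \mathcal{F}(id)\cdot\mathcal{F}(a) \;=\; \delta e_1^-\cdot\mathcal{F}(a) \;=\; \delta^2\,tr(a)\,e_1^-.
\]
Finally apply $\mathcal{F}^{-1}$ and use $\mathcal{F}^{-1}(e_1^-)=\delta^{-1}id$, which follows from $\mathcal{F}(e_1^-)=\delta^{-1}id$ combined with $\mathcal{F}^2(e_1^-)=e_1^-$:
\[
a*id \;=\; \delta^2\,tr(a)\,\mathcal{F}^{-1}(e_1^-) \;=\; \delta\,tr(a)\,id.
\]

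The only genuinely new step is the diagrammatic verification of $e_1^-\cdot y=\delta\,tr(y)\,e_1^-$; everything else is formal shuffling of the Fourier transform against relations already established in the excerpt. If one prefers, both identities can also be attacked directly by drawing the coproduct tangle, exploiting that $e_1$ is a pair of cup-arcs (whose insertion contracts the tangle to $a$ with a scalar $\delta^{-1}$) and that $id$ is a pair of through-strings (whose insertion converts the tangle into a partial closure of $a$, evaluating to $\delta\,tr(a)$ by Lemma~\ref{2>1} and leaving the output $id$ pattern).
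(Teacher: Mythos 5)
Your treatment of the first identity is correct and takes a genuinely different route from the paper's: you read $a*e_1=\mathcal{F}^{-1}(\mathcal{F}(e_1)\cdot\mathcal{F}(a))$ off Lemma \ref{2coprod} and use $\delta\mathcal{F}(e_1)=id$, whereas the paper simply contracts the cups of $e_1$ inside the coproduct tangle. Likewise, the parenthetical alternative you sketch at the very end (insert $id$ into the coproduct tangle, recognize the resulting side closure of $a$ as $\delta\,tr(a)\,e_0$ via Lemma \ref{2>1}, and read off the $id$ pattern) is essentially the paper's actual proof of the second identity.

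The main argument you give for $a*id=\delta\,tr(a)\,id$, however, rests on two false statements. First, the absorption identity $e_1^{-}\cdot y=\delta\,tr(y)\,e_1^{-}$ fails already for $y=id$: the left side is $e_1^{-}$, the right side is $\delta e_1^{-}$. The correct pull-down is $e_1^{-}\cdot y=\delta^{2}\,tr(e_1^{-}y)\,e_1^{-}$; the point is that the cup of $e_1^{-}$ joins one pair of boundary points of $y$ \emph{to each other}, which is a vertical partial closure and not the side closure $\tau$ of Lemma \ref{2>1}, so the scalar it produces is $\delta^{2}tr(e_1^{-}y)$ rather than $\delta\,tr(y)$ (test it on $y=e_1^{-}$: the vertical closure gives coefficient $1$, not $\delta\,tr(e_1^{-})=\delta^{-1}$). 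Second, $tr(\mathcal{F}(a))=tr(a)$ is false: $tr(\mathcal{F}(id))=tr(\delta e_1^{-})=\delta^{-1}\neq 1=tr(id)$; sphericality only yields $tr(\mathcal{F}^{2}(a))=tr(\overline{a})=tr(a)$, not invariance under a single click. Your final formula comes out right only because these two errors cancel: the correct chain is $\mathcal{F}(a*id)=\delta e_1^{-}\cdot\mathcal{F}(a)=\delta^{3}tr(e_1^{-}\mathcal{F}(a))\,e_1^{-}$, and one must still prove $tr(e_1^{-}\mathcal{F}(a))=\delta^{-1}tr(a)$, which requires either the unitarity of $\mathcal{F}$ for the inner product (not established in the excerpt) or a direct diagram computation equivalent to the paper's. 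As written, the second half of the proof is not valid; the direct tangle computation you mention in closing is the one to carry out.
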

\begin{proof}
$a*e_1 = \delta^{-1} \begin{tikzpicture}[scale=.35, baseline]
\node at (150:4.3cm) {\tiny{$\star$}};
	\clip (0,0) circle (4cm);
  \draw[shaded] (0,0) ellipse (2cm and 6cm);
	\draw[fill=white] (0,0) ellipse (1.2cm and 2cm);
	\node at (-1.4,0) [Tbox, inner sep=1.5mm] {{$a$} };
	\draw[fill=white, dashed, ultra thin] (1.4,0) circle (1.05cm);
\draw[shaded] (1.05,1) .. controls ++(-78:1cm) and ++(-87:1cm) .. (1.98,1);
\draw[shaded] (1.05,-1) .. controls ++(78:1cm) and ++(87:1cm) .. (1.98,-1);
	\node at (180:2.65cm) {\tiny{$\star$}};
	\draw[very thick] (0,0) circle (4cm);
\end{tikzpicture} = \delta^{-1} a$ \text{,} 
$a*id = \begin{tikzpicture}[scale=.35, baseline]
\node at (150:4.3cm) {\tiny{$\star$}};
	\clip (0,0) circle (4cm);
  \draw[shaded] (0,0) ellipse (2cm and 6cm);
	\draw[fill=white] (0,0) ellipse (1.2cm and 2cm);
	\node at (-1.4,0) [Tbox, inner sep=1.5mm] {{$a$} };
	\draw[dashed, ultra thin] (1.4,0) circle (1.05cm);
	\node at (180:2.65cm) {\tiny{$\star$}};
	\draw[very thick] (0,0) circle (4cm);
\end{tikzpicture}$
Then by Lemma \ref{2>1}, $a*id = \delta tr(a)id$. 	
\end{proof}

 \begin{lemma} \label{tr(a*b)} \label{e1.} Let $a_1, a_2 \in \mathcal{P}_{2,+}$ then
 
 \begin{itemize}
 \item[(1)] $tr(a_1 * a_2) = \delta tr(a_1) tr(a_2)$
 \item[(2)] $e_1 \cdot (a_1 * \overline{a_2}^{\star}) = \delta \langle a_1 \vert a_2 \rangle e_1$
  \end{itemize}
 \end{lemma}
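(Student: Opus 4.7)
The proof of both parts is by direct planar-algebra computation, reducing in each case to standard identities about closed strings and the Jones projection. Both follow once the relevant tangles are drawn carefully.

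For (1), I would form the trace tangle and compose it with the tangle for $a_1 * a_2$. Reading off the picture for the coproduct, closing the outer strings isolates the two boxes into disjoint closed regions --- one containing $a_1$ and one containing $a_2$ --- and produces one additional closed string coming from the inner white oval that separates the two boxes inside the coproduct. Each closed region around $a_i$ evaluates to $tr(a_i)$, and the extra closed string contributes a factor of $\delta$. Hence $tr(a_1 * a_2) = \delta tr(a_1) tr(a_2)$. As a consistency check, taking $a_2 = id$ and using $tr(id)=1$ recovers $tr(a_1 * id) = \delta tr(a_1)$, which is exactly the trace of the identity from Lemma \ref{*id}.

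For (2), I would compose $e_1$ on the left with the tangle $a_1 * \overline{a_2}^\star$. Using Lemma \ref{constar}, $\overline{a_2}^\star = \overline{a_2^\star}$, so diagrammatically the right input of the coproduct is a $180^\circ$ rotation of $a_2^\star$. Stacking $e_1$ above the coproduct, the bottom cup of $e_1$ joins the two middle strings emerging from the top of the coproduct tangle. Combined with the $180^\circ$ rotation inside $\overline{a_2^\star}$ and the internal routing of the coproduct, this cup closes off a region enclosing $a_1$ and $a_2^\star$ contracted along all their strings --- a configuration evaluating to the scalar $tr(a_2^\star a_1) = \langle a_1 \vert a_2 \rangle$. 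What remains of the outer diagram is precisely the Jones projection $e_1$, with an overall factor of $\delta$ coming from the $1/\delta$ normalization in the definition of $e_1$ together with the closed loops produced by the cap--cup contractions. This gives $e_1 \cdot (a_1 * \overline{a_2}^\star) = \delta \langle a_1 \vert a_2 \rangle e_1$.

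The main obstacle is the diagrammatic bookkeeping: tracking how the $180^\circ$ rotation inside $\overline{a_2^\star}$ cooperates with the cup of $e_1$ to produce exactly the contraction $tr(a_2^\star a_1)$, and counting the closed loops accurately so that the overall scalar comes out to $\delta$ (and not, say, $1$ or $\delta^2$). Once the simplified picture for $e_1 \cdot (a_1 * \overline{a_2}^\star)$ is drawn, the identity is read off directly, and part (1) follows by the same kind of loop-counting argument applied to the simpler trace tangle.
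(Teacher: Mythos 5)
There is a genuine gap: your diagrammatic reductions assert separations that do not actually happen by isotopy, and the ingredient that makes them happen --- irreducibility, i.e.\ $\dim \mathcal{P}_{1,\pm}=1$ --- is never invoked. In the closure of $a_1 * a_2$, each box keeps two of its four legs joined to the other box through the middle of the coproduct tangle, so the closed diagram is \emph{connected}: it does not fall apart into ``disjoint closed regions, one containing $a_1$ and one containing $a_2$, plus one extra loop.'' What is true is that each box sits in the diagram with two adjacent legs capped off, producing a $1$-box; only because $\mathcal{P}_{1,\pm}=\mathbb{C}e_0$ (Lemma \ref{2>1}) does that $1$-box collapse to $\delta\, tr(a_i)\,e_0$, after which the two surviving strands and the two connecting arcs form the single loop contributing $\delta$. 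The same issue recurs in (2): the cap of $e_1$ contracts only one further pair of legs, so $a_1$ and $a_2^{\star}$ are joined along three pairs of legs, not ``all their strings''; what remains is a $1$-box with two free endpoints, not a closed scalar region, and again one needs $\mathcal{P}_{1,\pm}=\mathbb{C}$ to extract the scalar $tr(a_2^{\star}a_1)$. This is not a cosmetic point --- the identities are false without irreducibility --- so a proof that never uses it cannot be complete.

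Your overall strategy (direct tangle evaluation) can be repaired by inserting the irreducibility step at the right moment, but note that the paper sidesteps the delicate loop-counting entirely: it packages the use of irreducibility once and for all in Lemma \ref{*id} ($a*id=\delta\, tr(a)\,id$, itself a consequence of Lemma \ref{2>1}), then gets (1) from associativity of the coproduct applied to $(a_1*a_2)*id$, and gets (2) from the diagrammatic identity $e_1\cdot(a_1*\overline{a_2}^{\star})=e_1\cdot((a_2^{\star}\cdot a_1)*id)$ followed by Lemma \ref{*id}. If you want to keep a purely pictorial proof, you should at minimum cite where $\mathcal{P}_{1,\pm}=\mathbb{C}$ is used and verify that the capped sub-diagrams you collapse really have exactly two free legs.
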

\begin{proof} (1) By Lemma \ref{*id}, $(a_1 * a_2)*id = \delta tr(a_1 * a_2) id$, but $$(a_1 * a_2)*id =a_1*(a_2 * id) =  \delta tr(a_2) a_1 * id = \delta^2 tr(a_2) tr(a_1)id$$  by associativity; it follows that $tr(a_1 * a_2) = \delta tr(a_1) tr(a_2)$.  
    
(2)  By diagram computations we get that $e_1 \cdot (a_1 * \overline{a_2}^{\star}) = e_1 \cdot ((a_2^{\star} \cdot  a_1) * id)$, and by Lemma \ref{*id},  $(a_2^{\star} \cdot a_1) * id = \delta tr(a_2^{\star} \cdot a_1) id $, but $tr(a_2^{\star} \cdot a_1) = \langle a_1 \vert a_2 \rangle$  and $e_1 \cdot id  = e_1$.
\end{proof}  

\begin{definition}
A projection $p \in \mathcal{P}_{2,+}$ is a
\begin{itemize}
\item minimal projection if for all projection $q \neq 0$, $q \cdot p = q \Rightarrow p=q$
\item minimal central projection if it is central and for all central projection $q \neq 0$, $q \cdot p = q \Rightarrow p=q$
\end{itemize}
\end{definition}

\begin{lemma} \label{overline}
If $p$ is a projection, then so is $\overline{p}$. Moreover if $p$ is a minimal (resp. minimal central) projection, then so is $\overline{p}$.
\end{lemma}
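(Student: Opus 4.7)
The plan is to handle the three claims in order, leveraging the formal properties of the contragredient established in Lemmas \ref{Fourier}, \ref{costar} and \ref{constar} (namely that $\overline{\overline{a}} = a$, that $\overline{a\cdot b} = \overline{b}\cdot \overline{a}$, and that $\overline{a^{\star}} = \overline{a}^{\star}$).

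First, for the projection claim, I check both defining properties of a projection. Self-adjointness transfers directly: by Lemma \ref{constar}, $\overline{p}^{\star} = \overline{p^{\star}} = \overline{p}$. Idempotence also transfers, because the order-reversal in Lemma \ref{costar} is harmless when the two factors coincide: $\overline{p}\cdot \overline{p} = \overline{p\cdot p} = \overline{p}$. Thus $\overline{p}$ is a projection.

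Second, for the minimality claim, I use the fact that $a \mapsto \overline{a}$ is an involutive bijection (Lemma \ref{Fourier}), so it maps projections bijectively onto projections, sends nonzero elements to nonzero elements, and preserves the relation ``$q\cdot p = q$'' up to adjoints. Concretely, suppose $q' \in \mathcal{P}_{2,+}$ is a nonzero projection with $q'\cdot \overline{p} = q'$. Set $q := \overline{q'}$, which is a nonzero projection by the first part and bijectivity of $\overline{\,\cdot\,}$. Applying the contragredient to $q'\cdot \overline{p} = q'$ gives $p\cdot q = q$ by Lemma \ref{costar}; taking $\star$ and using that $p,q$ are self-adjoint yields $q\cdot p = q$. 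Minimality of $p$ then forces $p = q = \overline{q'}$, whence $\overline{p} = q'$, as required.

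Third, for minimal centrality, I first show $\overline{p}$ is central. Given any $a \in \mathcal{P}_{2,+}$, write $a = \overline{b}$ with $b = \overline{a}$; then by Lemma \ref{costar}
\[
\overline{p}\cdot a - a\cdot \overline{p} \;=\; \overline{p}\cdot \overline{b} - \overline{b}\cdot \overline{p} \;=\; \overline{b\cdot p} - \overline{p\cdot b},
\]
which vanishes since $p$ is central. Hence $\overline{p}$ is central. The minimality-among-central-projections step is then identical to the previous paragraph, restricted to central projections: if $q'$ is a nonzero central projection with $q'\cdot \overline{p} = q'$, then $q := \overline{q'}$ is a nonzero central projection (by the argument just given applied to $q'$) with $q\cdot p = q$, so minimal centrality of $p$ gives $p = \overline{q'}$ and thus $\overline{p} = q'$.

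No step looks genuinely difficult; the only point requiring care is the order reversal in $\overline{a\cdot b} = \overline{b}\cdot \overline{a}$, which must be compensated by a $\star$-operation (harmless for self-adjoint elements such as projections) in the minimality argument.
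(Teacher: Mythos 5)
Your proof is correct and follows essentially the same route as the paper's: transfer idempotence and self-adjointness via Lemmas \ref{costar} and \ref{constar}, undo the order reversal with a $\star$ in the minimality step, and use bijectivity of the contragredient for centrality. The only cosmetic difference is that the paper compresses your two-step ``apply the contragredient, then take adjoints'' into the single computation $\overline{q} = (\overline{q\cdot\overline{p}})^{\star} = \overline{q}\cdot p$.
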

\begin{proof}
First, $\overline{p}^{\star}=\overline{p^{\star}} = \overline{p}$ and $\overline{p} \cdot \overline{p} = \overline{p \cdot p}=\overline{p}$. Next if $p$ is minimal, and if $q \cdot \overline{p} = q  $ (with $q \neq 0$ projection) then $$\overline{q} = (\overline{q \cdot \overline{p}})^{\star}= (p \cdot \overline{q})^{\star} = \overline{q} \cdot p$$ so $\overline{q}=p$ and $\overline{p}=q$. If $p$ is minimal central, then $\overline{p}$ is central because $p \cdot a=a \cdot p$ $\forall a$ if and only if $\overline{a} \cdot \overline{p}=\overline{p} \cdot \overline{a}$  $\forall a$  if and only if $a \cdot \overline{p}=\overline{p} \cdot a$  $\forall a$ (because the contragredient is bijective) if and only if $\overline{p}$ central, and it's also minimal central by the argument as above.
\end{proof}

\begin{theorem} \label{th} Let $a,b,c,d \in \mathcal{P}_{2,+}$ then

\begin{itemize}
\item[(1)] if $a,b >0$ then $a*b > 0$ 
\item[(2)] if $a, b, c, d > 0$,  $a \preceq b$ and $c \preceq d$ then $a*c \preceq b*d$
\end{itemize} 
\begin{proof} It's precisely Theorem 4.1 and Lemma 4.4 p18 of the paper \cite{li} of Z. Liu. The proof of (1) by  diagrams is the following:\\
$a * b = \ 
\begin{tikzpicture}[scale=.35, baseline]
\node at (150:4.3cm) {\tiny{$\star$}};
	\clip (0,0) circle (4cm);
  \draw[shaded] (0,0) ellipse (2cm and 6cm);
	\draw[fill=white] (0,0) ellipse (1.2cm and 2cm);
	\node at (-1.4,0) [Tbox, inner sep=1.5mm] {{$a$} };
	\node at (1.4,0) [Tbox, inner sep=1.4mm] {{$b$} };
	\node at (180:-0.05cm) {\tiny{$\star$}};
	\node at (180:2.65cm) {\tiny{$\star$}};
	\draw[very thick] (0,0) circle (4cm);
\end{tikzpicture}  \ = \ 
\begin{tikzpicture}[scale=.35, baseline]
\node at (150:4.3cm) {\tiny{$\star$}};
	\clip (0,0) circle (4cm);
  \draw[shaded] (0,0) ellipse (2cm and 6cm);
	\draw[fill=white] (0,0) ellipse (1cm and 3cm);
	\node at (-1.4,-1.2) [Tbox, inner sep=0.2mm] {\tiny{$a^{1/2}$} };
	\node at (1.4,-1.2) [Tbox, inner sep=0.2mm] {\tiny{$b^{1/2}$} };
	\node at (-1.4,1.4) [Tbox, inner sep=0.2mm] {\tiny{$a^{1/2}$} };
	\node at (1.4,1.4) [Tbox, inner sep=0.2mm] {\tiny{$b^{1/2}$} };
	\node at (82:1.4cm) {\tiny{$\star$}};
	\node at (-82:1.4cm) {\tiny{$\star$}};
	\node at (153:3cm) {\tiny{$\star$}};
	\node at (-153:3cm) {\tiny{$\star$}};
	\draw[very thick] (0,0) circle (4cm);
\end{tikzpicture} 
\ =X^{\star}X > 0$ \\     
 
For (2), by spectral theorem, for $\lambda > 0$ large enough,  $a< \lambda b$ and $c< \lambda d$;  
then by (1), $[\lambda b -a] * c > 0$  so  $a * c < \lambda b * c$; idem $b * [\lambda d -c] > 0$, so $b * c  < \lambda b * d$. It follows that $a * c < \lambda^2 b * d$, and so $a*c \preceq b*d$.
  \end{proof}
\end{theorem}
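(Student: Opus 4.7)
The plan for (1) is to exploit positivity via functional calculus together with the diagrammatic structure of the coproduct. Since $a,b \ge 0$, the spectral theorem produces self-adjoint square roots $a^{1/2}, b^{1/2} \in \mathcal{P}_{2,+}$ with $(a^{1/2})^{\star} a^{1/2} = a$ and similarly for $b$. I would substitute these into the coproduct tangle for $a*b$ and split each of the two internal disks horizontally into an upper half and a lower half, each labeled by the appropriate square root. The coproduct tangle possesses a horizontal reflection symmetry interchanging these two halves, and by Remark~\ref{adjoint} this symmetry identifies the upper half of the diagram as the adjoint of the lower half. Consequently $a*b = X^{\star} X$ for some element $X$ in a larger box space (closed up via caps into $\mathcal{P}_{2,+}$), whence $a*b \ge 0$. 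To upgrade to $a*b > 0$ when $a,b \ne 0$, I would invoke Lemma~\ref{tr(a*b)}(1), which gives $tr(a*b) = \delta\, tr(a)\, tr(b) > 0$, ruling out $a*b = 0$.

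For (2), I would reduce to (1) by a scalar comparison. Since $b > 0$ and $a \preceq b$, the restriction of $b$ to $R(b) \supseteq R(a)$ is bounded below by a positive constant on its range, so there exists $\lambda > 0$ with $\lambda b - a \ge 0$; after enlarging $\lambda$ we also obtain $\lambda d - c \ge 0$. Applying (1) to the pair $(\lambda b - a, c)$ together with the bilinearity of the coproduct gives $a*c \le \lambda (b*c)$, and similarly $(b, \lambda d - c)$ yields $b*c \le \lambda (b*d)$. Combining, $a*c \le \lambda^2 (b*d)$, and since both sides are positive this forces $R(a*c) \le R(b*d)$, i.e., $a*c \preceq b*d$.

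The hard part will be step (1): making the diagrammatic claim $a*b = X^{\star} X$ genuinely rigorous, which requires careful bookkeeping of the $\star$-marked intervals and shadings under the mirror-image operation so that the reflected tangle really implements the adjoint in the sense of Remark~\ref{adjoint}. Once (1) is secured, step (2) is purely formal, drawing only on bilinearity of $*$, functional calculus, and the definition of $\preceq$ through range projections.
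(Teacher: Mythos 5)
Your proposal follows essentially the same route as the paper: part (1) is the same $a*b = X^{\star}X$ decomposition obtained by splitting $a$ and $b$ into square roots inside the coproduct tangle and using the mirror-image/adjoint correspondence of Remark \ref{adjoint}, and part (2) is the identical scalar-domination argument ($\lambda b - a \ge 0$, $\lambda d - c \ge 0$, apply (1) twice, conclude $a*c \le \lambda^2\, b*d$ and pass to range projections). Your extra appeal to $tr(a*b)=\delta\, tr(a)\,tr(b)>0$ to rule out $a*b=0$ is a harmless (and slightly more careful) addition that the paper leaves implicit, and Lemma \ref{tr(a*b)} indeed precedes Theorem \ref{th}, so there is no circularity.
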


All the subfactor planar algebras here are irreducible and finite index.
\begin{definition}[\cite{li} Definition 2.7 p7]
A biprojection is a projection $b \in \mathcal{P}_{2,+}$ with $\mathcal{F}(b)$ a (positive) multiple of a projection. 
\end{definition}
By immediate diagrams computation, $e_1$ and $id$ are biprojections.
\begin{theorem}A projection $b \in \mathcal{P}_{2,+}$ is a biprojection if and only if $b*b \preceq b$.
\end{theorem}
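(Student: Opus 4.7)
My plan is to handle the two directions separately. The forward direction is quick; the reverse reduces (non-trivially) to first upgrading $b*b \preceq b$ to the sharp equality $b*b = \delta\, tr(b)\, b$, and then using Fourier duality plus an $L^{2}$-norm trick to extract self-adjointness.

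For ($\Rightarrow$), write $\mathcal{F}(b) = \lambda p$ with $p$ a projection and $\lambda > 0$. By Lemma \ref{2coprod}, $\mathcal{F}(b*b) = \mathcal{F}(b) \cdot \mathcal{F}(b) = \lambda^{2} p = \lambda \mathcal{F}(b) = \mathcal{F}(\lambda b)$, so $b * b = \lambda b$ by injectivity of $\mathcal{F}$; hence $R(b*b) = R(b)$ and $b*b \preceq b$.

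For ($\Leftarrow$), set $c := b*b$. Since $b \geq 0$ and $id - b \geq 0$, Theorem \ref{th}(1) gives $b*(id - b) \geq 0$, which by Lemma \ref{*id} reads $\delta\, tr(b)\, id - c \geq 0$. Compressing by $b$ and using $bcb = c$ (consequence of $R(c) \leq b$), we obtain $c \leq \delta\, tr(b)\, b$. As $c$ and $b$ commute ($bc = cb = c$), we further get $c^{2} \leq \delta\, tr(b)\, c$ and hence $tr(c^{2}) \leq \delta\, tr(b)\, tr(c) = \delta^{2}\, tr(b)^{3}$ by Lemma \ref{tr(a*b)}(1). Expanding the non-negative quantity
\[
0 \leq tr\bigl((\delta\, tr(b)\, b - c)^{2}\bigr) = \delta^{2}\, tr(b)^{3} - 2\, \delta\, tr(b)\, tr(c) + tr(c^{2}) = tr(c^{2}) - \delta^{2}\, tr(b)^{3} \leq 0
\]
forces equality, and faithfulness of the trace then gives $c = \delta\, tr(b)\, b$.

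It remains to deduce the biprojection property. Applying $\mathcal{F}$ and using Lemma \ref{2coprod}, $\mathcal{F}(b)^{2} = \delta\, tr(b)\, \mathcal{F}(b)$; setting $y := \mathcal{F}(b)$, the element $q := y / (\delta\, tr(b))$ is an idempotent. To promote it to a projection I must check $y^{\star} = y$. By Lemma \ref{Fourier}, $y^{\star} = \overline{y}$; via the algebra isomorphism $\mathcal{F}: (\mathcal{P}_{2,-}, \cdot) \to (\mathcal{P}_{2,+}, *)$ and Fourier-invariance of the trace (from sphericality),
\[
tr(y^{\star} y) = tr(\overline{y}\, y) = tr(b * \overline{b}) = \delta\, tr(b)^{2}
\]
by Lemma \ref{tr(a*b)}(1), and similarly $tr(y^{2}) = \delta\, tr(b)\cdot tr(y) = \delta\, tr(b)^{2}$. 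Hence
\[
\| y - y^{\star} \|^{2} = 2\, tr(y^{\star} y) - 2\, tr(y^{2}) = 0,
\]
so $y = y^{\star}$ and $\mathcal{F}(b) = \delta\, tr(b)\, q$ with $q$ a projection; thus $b$ is a biprojection. The main obstacle is the sharpening step: extracting the equality $b*b = \delta\, tr(b)\, b$ from the one-sided range condition $b*b \preceq b$. The key is the $L^{2}$-identity $tr((\delta\, tr(b)\, b - c)^{2}) = tr(c^{2}) - \delta^{2}\, tr(b)^{3}$, whose sign is pinned down only after combining the positivity $\delta\, tr(b)\, id - c \geq 0$ with the commutativity $bc = cb = c$; once this is established, the Fourier-side self-adjointness check is routine.
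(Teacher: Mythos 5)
Your forward direction coincides with the paper's, and the first half of your converse (upgrading $b*b \preceq b$ to the equality $b*b = \delta\, tr(b)\, b$ via compression by $b$ and the $L^2$ equality case) is correct and rather elegant; the paper itself does not argue the converse at all but simply cites Theorem 4.8 of Liu \cite{li}. However, your final step --- the self-adjointness of $y=\mathcal{F}(b)$ --- contains a genuine error. You invoke ``Fourier-invariance of the trace (from sphericality)'' to write $tr(y^{\star}y)=tr(\mathcal{F}(b*\overline{b}))=tr(b*\overline{b})$ and $tr(y)=tr(b)$. Sphericality gives invariance of the trace under the two-click rotation $\mathcal{F}^{2}$ (the contragredient), not under the one-click rotation $\mathcal{F}$. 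Concretely, $\mathcal{F}(e_1)=\delta^{-1}id$, so $tr(\mathcal{F}(e_1))=\delta^{-1}$ while $tr(e_1)=\delta^{-2}$; more generally $tr(\mathcal{F}(x))=\delta\, tr(e_1 x)\neq tr(x)$.

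With the correct values the step collapses: since the rotation is a unitary for the trace inner product, $tr(y^{\star}y)=\|\mathcal{F}(b)\|^{2}=\|b\|^{2}=tr(b)$, whereas $tr(y^{2})=\delta\, tr(b)\, tr(\mathcal{F}(b))=\delta^{2}\,tr(b)\,tr(e_{1}b)$. These agree exactly when $tr(e_{1}b)=\delta^{-2}$, i.e.\ when $e_{1}\le b$. But $e_{1}\le b$ is not available at this point: from $e_{1}\cdot(b*b)=\delta\langle b\,\vert\,\overline{b}\rangle e_{1}$ together with $b*b=\delta\,tr(b)\,b$ one gets $e_{1}b=\frac{tr(\overline{b}b)}{tr(b)}e_{1}$, so $e_{1}\le b$ is equivalent to $tr(\overline{b}b)=tr(b)$, i.e.\ to $\overline{b}=b$ --- which is essentially the self-adjointness of $\mathcal{F}(b)$ you are trying to prove. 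So the argument is circular at the decisive moment, and the converse remains unproved by your method; this is precisely the nontrivial content for which the paper defers to Liu's Theorem 4.8.
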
 
\begin{proof} 
Let $b$ be a biprojection then by Lemma \ref{2coprod}  $$b * b = \mathcal{F}^{-1}(\mathcal{F}(b) \cdot \mathcal{F}(b))=\mathcal{F}^{-1}(\lambda \mathcal{F}(b))= \lambda b \preceq b$$The converse is exactly Theorem 4.8 p19 of \cite{li}.
    \end{proof}
      
 \begin{remark}[\cite{la} p12] \label{biproj} A biprojection $b$ satisfies
$$e_1 \le b=b^2=b^{\star}=\overline{b} = \lambda b * b, (\lambda >0)$$ and the exchange relations: 
\begin{center}
$
\begin{tikzpicture}[scale=.35, baseline]
\node at (150:4.3cm) {\tiny{$\star$}};
	\clip (0,0) circle (4cm);
	\draw[shaded] (0,3.5) ellipse (0.5cm and 5cm);
  \draw[shaded] (0,-4.3) ellipse (2cm and 5cm);
	\draw[fill=white] (0,-3.5) ellipse (1.2cm and 3cm);
	\node at (-1.4,-1.4) [Tbox, inner sep=1.3mm] {{$b$} };
	\node at (0,1.4) [Tbox, inner sep=1.3mm] {{$b$} };
	\node at (130:2cm) {\tiny{$\star$}};
	\node at (205:3cm) {\tiny{$\star$}};
	\draw[very thick] (0,0) circle (4cm);
\end{tikzpicture}  \ = \
\begin{tikzpicture}[scale=.35, baseline]
\node at (150:4.3cm) {\tiny{$\star$}};
	\clip (0,0) circle (4cm);
	\draw[shaded] (0,3.5) ellipse (0.5cm and 5cm);
  \draw[shaded] (0,-4.3) ellipse (2cm and 5cm);
	\draw[fill=white] (0,-3.5) ellipse (1.2cm and 3cm);
	\node at (1.4,-1.4) [Tbox, inner sep=1.3mm] {{$b$} };
	\node at (0,1.4) [Tbox, inner sep=1.3mm] {{$b$} };
	\node at (130:2cm) {\tiny{$\star$}};
	\node at (-90:1.5cm) {\tiny{$\star$}};
	\draw[very thick] (0,0) circle (4cm);
\end{tikzpicture} 
 \ = \
\begin{tikzpicture}[scale=.35, baseline]
\node at (150:4.3cm) {\tiny{$\star$}};
	\clip (0,0) circle (4cm);
	\draw[shaded] (0,3.5) ellipse (0.5cm and 5cm);
  \draw[shaded] (0,-4.3) ellipse (2cm and 5cm);
	\draw[fill=white] (0,-3.5) ellipse (1.2cm and 3cm);
	\node at (1.4,-1.4) [Tbox, inner sep=1.3mm] {$b$};
	\node at (-1.4,-1.4) [Tbox, inner sep=1.3mm] {$b$};
	\node at (208:3cm) {\tiny{$\star$}};
	\node at (-86:1.5cm) {\tiny{$\star$}};
	 \fill[shaded] (0,0.65) circle (0.4cm);
	\draw[very thick] (0,0) circle (4cm);
\end{tikzpicture} 
$
\end{center}

\end{remark}
\begin{lemma} \label{prodcoprod} Let $a_1,a_2,b \in \mathcal{P}_{2,+}$ with $b$ a biprojection, then 
$$(b \cdot a_1 \cdot b) * (b \cdot a_2 \cdot b) = b \cdot (a_1 * (b \cdot a_2 \cdot b)) \cdot b = b \cdot ((b \cdot a_1  \cdot b) * a_2) \cdot b$$
$$(b*a_1*b) \cdot (b*a_2*b) = b*(a_1 \cdot (b*a_2*b))*b = b*((b*a_1*b) \cdot a_2)*b$$  
\end{lemma}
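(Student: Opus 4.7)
The plan is to establish both equalities by direct diagrammatic computation, using only two facts about the biprojection $b$: its idempotency ($b \cdot b = b$ and $b * b$ proportional to $b$, see Remark \ref{biproj}) and the exchange relations displayed in Remark \ref{biproj}.

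For the first set of identities, I would draw the diagram for $(b \cdot a_1 \cdot b) * (b \cdot a_2 \cdot b)$ as a coproduct tangle in which $a_1$ and $a_2$ sit side-by-side, each sandwiched by a copy of $b$, with two strings passing through the inner shaded region between them. The four $b$'s then sit adjacent to strings of the coproduct. Applying the exchange relation to the two outer $b$'s (the one to the left of $a_1$ and the one to the right of $a_2$), which are both adjacent to the outermost strings, lets me absorb them into a single sandwich $b \cdot (-) \cdot b$ around the whole coproduct. Each of the two inner $b$'s can then be slid across the adjacent inner string by another application of the exchange relation, after which, using $b \cdot b = b$, it merges with the $b$ already sitting on the other side of that string. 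Depending on which direction I slide these two inner $b$'s (onto $a_1$ or onto $a_2$), I obtain the two claimed forms $b \cdot (a_1 * (b \cdot a_2 \cdot b)) \cdot b$ and $b \cdot ((b \cdot a_1 \cdot b) * a_2) \cdot b$.

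For the second set of identities, I would apply the Fourier transform $\mathcal{F}$ to the first set. By Lemma \ref{2coprod}, $\mathcal{F}$ intertwines $\cdot$ and $*$ (with an order swap in one direction), and by definition $\mathcal{F}(b)$ is a positive multiple of a biprojection in $\mathcal{P}_{2,-}$. Since $(\mathcal{P}_{n,\mp})_n$ is itself a subfactor planar algebra (that of the dual subfactor), the first identities hold there for the biprojection proportional to $\mathcal{F}(b)$, and pulling the resulting equalities back to $\mathcal{P}_{2,+}$ via $\mathcal{F}^{-1}$ gives the second set. Alternatively, one can redo the diagrammatic argument from the first paragraph verbatim in the coproduct picture, using the ``coproduct version'' of the exchange relations obtained from Remark \ref{biproj} by a $90^\circ$ rotation.

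The main obstacle is the geometric bookkeeping: I must verify that the orientations and positions of the $b$'s inside the coproduct tangle really match the configurations in the exchange relations of Remark \ref{biproj}, and that the sliding of each inner $b$ across the inner string is justified in both subcases (left and right). Once the picture is drawn carefully, the identities are essentially forced by idempotency and the exchange relations, so no further ingredient beyond what is collected in Remark \ref{biproj} is needed.
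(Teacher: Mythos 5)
Your proposal is correct and matches the paper's proof, which is exactly the one-line "By exchange relation on $b$ and $\mathcal{F}(b)$": your diagrammatic sliding argument for the first set is the exchange relation on $b$, and your second paragraph (whether via $\mathcal{F}$ and Lemma \ref{2coprod} or via the rotated exchange relation) is the exchange relation on $\mathcal{F}(b)$. The extra bookkeeping you flag is real but routine, and no ingredient beyond Remark \ref{biproj} and Lemma \ref{2coprod} is needed.
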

\begin{proof} 
By exchange relation on $b$ and $\mathcal{F}(b)$. %: \textcolor{red}{add diagrams?}
    \end{proof}

\begin{theorem}[\cite{bi} p212] \label{bisch}
An operator $b$ is a biprojection if and only if it is the Jones projection $e^M_K$ of an intermediate subfactor $N \subseteq K \subseteq M$. 
\end{theorem}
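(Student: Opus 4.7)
The plan is to prove the two implications separately, with the forward direction being essentially a diagrammatic verification and the converse being the genuine content.

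For $(\Leftarrow)$, suppose $b = e^M_K$ is the Jones projection of an intermediate subfactor $N \subseteq K \subseteq M$. Then $b \in N' \cap M_1 \cong \mathcal{P}_{2,+}$ as a projection since $N \subseteq K$. To see that $\mathcal{F}(b)$ is a positive multiple of a projection, I would use the equivalent characterization $b * b \preceq b$ established earlier and verify it directly via the Pimsner--Popa type identity $e^M_K \cdot M \cdot e^M_K = K \cdot e^M_K$, translated into the planar-algebra setting. Alternatively, one computes $\mathcal{F}(e^M_K)$ diagrammatically and recognises it as (a multiple of) the Jones projection for the dual intermediate inclusion $M \subseteq \langle M,e^M_K\rangle \subseteq M_1$ in the basic-construction tower.

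For $(\Rightarrow)$, suppose $b \in \mathcal{P}_{2,+}$ is a biprojection. By Remark \ref{biproj} we have $e_1 \le b = b^{\star} = \overline{b} \le id$, the normalisation $b*b = \lambda^{-1} b$ for some $\lambda > 0$, and the exchange relations between $b$ and $\mathcal{F}(b)$. Viewing $b$ as a projection in $N' \cap M_1$, I would define the candidate intermediate algebra by its $L^2$-completion
\[
L^2(K) \;:=\; b\, L^2(M) \;\subseteq\; L^2(M),
\]
and then set $K = \{\, x \in M : x\widehat{1} \in L^2(K)\,\}$. The proof then reduces to checking:
\begin{itemize}
\item[(i)] $K$ is a $\star$-subalgebra of $M$ closed in the weak operator topology;
\item[(ii)] $N \subseteq K$ and $K$ is a factor;
\item[(iii)] the Jones projection $e^M_K$ coincides with $b$ under the identification.
\end{itemize}
Item (ii) is short: $e_1 \le b$ forces $N\widehat{1} \subseteq L^2(K)$, hence $N \subseteq K$, and the inclusion $K' \cap K \subseteq N' \cap M = \mathbb{C}$ (using irreducibility) makes $K$ a factor. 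Item (iii) is a normalisation check once (i) is in place.

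The main obstacle is item (i), the multiplicative closure of $K$. Translated through $\mathcal{F}$, the statement that $b L^2(M) \cdot b L^2(M) \subseteq b L^2(M)$ is essentially the coproduct relation $b * b \propto b$; closure under the involution and under the left/right $N$-action is the content of the exchange relations of Remark \ref{biproj}. This is precisely the step where the biprojection hypothesis is used beyond mere projectivity: being a projection alone gives a closed subspace, but only the biprojection relation guarantees that this subspace is actually of the form $L^2(K)$ for an intermediate von Neumann subalgebra. A clean way to carry this out is to express any $x \in M$ via a Pimsner--Popa basis, apply $b$ and use the exchange relations to rewrite $bx$ as a sum from $K \cdot b$, and then verify that the resulting $K$ is independent of the basis and stable under products.
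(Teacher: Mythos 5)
The paper does not prove this statement at all: it is quoted verbatim from Bisch's article \cite{bi} (p.~212) and used as a black box, so there is no internal proof to compare against. Your sketch is, in outline, the standard argument (and essentially Bisch's original one): for ($\Leftarrow$) one checks diagrammatically that $\mathcal{F}(e^M_K)$ is a multiple of the Jones projection of the complementary inclusion, and for ($\Rightarrow$) one passes to the $N$-$N$-sub-bimodule $bL^2(M)$, shows it is the $L^2$-closure of a weakly closed $\star$-subalgebra $K \supseteq N$, uses irreducibility to see $Z(K) \subseteq N' \cap M = \mathbb{C}$, and identifies $b$ with $e^M_K$. So the strategy is sound and correctly locates the crux in your item (i).

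Two cautions. First, in the forward direction you should not route the argument through the characterization ``$b$ is a biprojection iff $b*b \preceq b$'': the nontrivial implication of that equivalence (Liu's Theorem 4.8, cited in the paper) is proved precisely by constructing an intermediate subfactor from $b$, i.e.\ it is essentially the hard direction of the very theorem you are proving, so that route is circular; stick with your alternative of computing $\mathcal{F}(e^M_K)$ directly. Second, item (i) is where all the content sits and your proposal only gestures at it: you need to actually prove that the coproduct relation $b*b \propto b$ (together with $b=b^\star=\overline{b}$ and $e_1 \le b$) translates into $X_b \cdot X_b \subseteq X_b$ and $X_b^\star = X_b$ for the algebraic sub-bimodule $X_b = bL^2(M)\cap M$, and that $X_b$ is $\sigma$-weakly dense in the $K$ you define, so that $e^M_K$ really has range $bL^2(M)$ and not something smaller. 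As written this is a plausible roadmap rather than a proof; filling in that translation (Bisch does it via the pull-down identity and a Pimsner--Popa basis, much as you suggest) is the genuine work.
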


\begin{definition} \label{inter}
Let the intermediate subfactors $N \subseteq P \subseteq Q \subseteq M$ and let the projections $e^M_P: L^2(M) \to L^2(P)$ and $e^M_Q: L^2(M) \to L^2(Q)$ then $e^M_P \le e^M_Q$ and $e^M_P \cdot e^M_Q = e^M_P$. Let the index $[e^M_{Q}:e^M_{P}]$ be $[Q:P] = tr(e^M_{Q})/tr(e^M_{P})$, and the planar algebra $\mathcal{P}_{n,\pm}(e^M_{P} \le e^M_{Q})$ be $\mathcal{P}_{n,\pm}(P \subseteq Q)$. 
\end{definition}

%Next using $e^{N'}_{P'} \ge e^{N'}_{Q'}$  $$e^M_P * e^M_Q  = \mathcal{F}^{-1}(\mathcal{F}(e^M_Q) \mathcal{F}(e^M_P)) = \lambda \mathcal{F}^{-1}(\mathcal{F}(e^M_Q)) = \lambda e^M_Q$$ because $(M \subseteq M_1) \simeq (M' \subseteq N')$ (see \cite{js} p34), so $\mathcal{P}_{2,-}(N \subseteq M)$ is isomorphic to  $\mathcal{P}_{2,+}(M' \subseteq N')$ with $\mathcal{F}(e^M_P)$ mapping to $e^{N'}_{P'}$.
% $JM'J = M$ and $JN'J = M_1$

    \begin{lemma} \label{precobi}  Let $b$ be a biprojection and $p \le b$ a projection  then: 
\begin{itemize}
\item[(1)] $p*b = \delta tr(p)b$, and so $\mathcal{F}(b) \preceq \mathcal{F}(p)$
\item[(2)] $p*b^{\perp}  =  \delta tr(p)b^{\perp}$
\item[(3)] $  p^{\perp}*b = \delta tr(b-p)b +  \delta tr(b)b^{\perp}$
\item[(4)] $p^{\perp}*b^{\perp} = \delta [1-tr(b)]b +\delta [1-tr(p+b)] b^{\perp}$
\end{itemize}
 \end{lemma}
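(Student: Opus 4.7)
The plan is to prove (1) as the core identity using Lemma~\ref{prodcoprod} and Lemma~\ref{*id}, and then to obtain (2), (3), (4) by expanding $id = b + b^{\perp}$ and $p^{\perp} = id - p$ bilinearly in the coproduct.

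For (1), the key observation is that $p \le b$ together with $b^{2} = b$ forces $p = b \cdot p \cdot b$ and $b = b \cdot id \cdot b$. Applying Lemma~\ref{prodcoprod} to these ``squeezed'' expressions (with $a_{1}=p$, $a_{2}=id$) gives
\[
p * b \;=\; (b \cdot p \cdot b) * (b \cdot id \cdot b) \;=\; b \cdot (p * id) \cdot b,
\]
and by Lemma~\ref{*id}, $p * id = \delta\, tr(p)\, id$, so the right-hand side collapses to $\delta\, tr(p)\, b$. For the second half of (1), the symmetric computation (with $a_{1}=id$, $a_{2}=p$) yields $b * p = \delta\, tr(p)\, b$; applying $\mathcal{F}$ and invoking Lemma~\ref{2coprod} we obtain $\mathcal{F}(p) \cdot \mathcal{F}(b) = \delta\, tr(p)\, \mathcal{F}(b)$. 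Since $\mathcal{F}(b)$ is a positive multiple of the projection $R(\mathcal{F}(b))$, this identity says that $\mathcal{F}(p)$ acts as the scalar $\delta\, tr(p)$ on the range of $\mathcal{F}(b)$; that range is therefore contained in the range of $\mathcal{F}(p)$, i.e.\ $\mathcal{F}(b) \preceq \mathcal{F}(p)$.

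For (2)--(4), the bilinearity of the coproduct, the absorbing identity $a * id = \delta\, tr(a)\, id$, and the decompositions $b^{\perp} = id - b$ and $p^{\perp} = id - p$ do all the work. Item (2) follows at once from $p * id = p * b + p * b^{\perp}$ combined with (1). For (3), expand $p^{\perp} * b = b * id - p * b$ and rewrite $b * id = \delta\, tr(b)(b + b^{\perp})$; the coefficient of $b$ becomes $\delta\, tr(b) - \delta\, tr(p) = \delta\, tr(b - p)$, matching the claim. For (4), expand $p^{\perp} * b^{\perp}$ into the four coproducts $id*id$, $id*b$, $p*id$, $p*b$, each now explicit, and collect the coefficients on $b$ and $b^{\perp}$ to recover $\delta[1-tr(b)]b + \delta[1-tr(p+b)]b^{\perp}$.

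I do not anticipate a genuine obstacle here: the entire content of the lemma is Lemma~\ref{prodcoprod} (the exchange-relation ``squeeze'' $b \cdot (\,\cdot\,) \cdot b$) together with the absorbing property of $id$ under $*$. The only step requiring some care is the bookkeeping of the four coefficients in (4), which must rearrange precisely into $\delta[1-tr(b)]$ in front of $b$ and $\delta[1-tr(p+b)]$ in front of $b^{\perp}$.
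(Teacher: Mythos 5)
Your proof is correct and follows essentially the same route as the paper: part (1) is the exchange relation (the paper applies it directly to $[(p \cdot b) * b] \cdot b$ in two ways, while you invoke it in the packaged form of Lemma~\ref{prodcoprod}), and (2)--(4) are the same bilinear expansions in $b^{\perp} = id - b$ and $p^{\perp} = id - p$. The only slip is cosmetic: in (3) the expansion should read $p^{\perp} * b = id * b - p * b$ rather than $b * id - p * b$, though both equal $\delta\, tr(b)\, id - p*b$ since $id * a = \overline{\overline{a} * id} = \delta\, tr(a)\, id$; on the other hand you actually supply the justification of $\mathcal{F}(b) \preceq \mathcal{F}(p)$, which the paper's proof leaves implicit.
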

\begin{proof} We apply exchange relation on $[(p \cdot b) * b] \cdot b$ by two ways, we get on one hand $[(p \cdot b) * b] \cdot id = p * b$  and on the other hand   $[(p \cdot b) * id] \cdot b = \delta tr(p)b$, because  $p*id =  \delta tr(p)id$ by Lemma \ref{*id}. It follows that $p * b= \delta tr(p)b$. Now $ b^{\perp} = id-b$ so $p*b^{\perp} = p * id - p * b$, idem $p^{\perp}*b = id * b - p * b$, and finally, $p^{\perp}*b^{\perp} = id * id - p * b - p * b^{\perp} - p^{\perp}*b   $. 
\end{proof}  
    \begin{lemma} \label{cobi}  Let $b$ be a biprojection  then: 
\begin{itemize}
\item[(1)] $b*b = \delta tr(b)b$
\item[(2)] $b*b^{\perp} = b^{\perp}*b =  \delta tr(b)b^{\perp}$
\item[(3)] $b^{\perp}*b^{\perp} = \delta [1-tr(b)]b+\delta[1-2tr(b)]b^{\perp}$
\end{itemize}
 \end{lemma}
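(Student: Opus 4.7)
The entire statement is obtained by specializing Lemma \ref{precobi} to the case $p = b$, which is legitimate since $b$ is itself a projection with $b \le b$.

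For (1), setting $p = b$ in Lemma \ref{precobi}(1) gives immediately $b*b = \delta\, tr(b)\, b$.

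For (2), setting $p = b$ in Lemma \ref{precobi}(2) yields $b*b^{\perp} = \delta\, tr(b)\, b^{\perp}$. To get the other equality, set $p = b$ in Lemma \ref{precobi}(3):
$$b^{\perp}*b = \delta\, tr(b-b)\, b + \delta\, tr(b)\, b^{\perp} = \delta\, tr(b)\, b^{\perp},$$
since $tr(b-b) = 0$. Hence $b*b^{\perp} = b^{\perp}*b = \delta\, tr(b)\, b^{\perp}$.

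For (3), setting $p = b$ in Lemma \ref{precobi}(4) gives
$$b^{\perp}*b^{\perp} = \delta[1 - tr(b)]\, b + \delta[1 - tr(b+b)]\, b^{\perp} = \delta[1-tr(b)]\, b + \delta[1-2\,tr(b)]\, b^{\perp},$$
which is exactly the claimed identity.

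The only step requiring any thought is checking that $p = b$ is an admissible choice in Lemma \ref{precobi}; there is no genuine obstacle, so the result is essentially a direct corollary.
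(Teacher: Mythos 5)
Your proof is correct and is exactly the paper's argument: the paper also proves this lemma simply by specializing Lemma \ref{precobi} to $p=b$. Your explicit verification of each item (including $tr(b-b)=0$ and $tr(b+b)=2\,tr(b)$) matches what the paper leaves implicit.
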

\begin{proof} Immediate by Lemma \ref{precobi} with $p = b$. 
\end{proof}
Note that $tr(b) = [id:b]^{-1} \le 1/2$  if $b < id$.   

\begin{remark} We observe that $b^{\perp}*b^{\perp} \sim \begin{cases} b \text{ if }  [id:b] = 2 \\ id \text{ if }  [id:b] > 2 \end{cases}$ \end{remark}
 
 \begin{definition}[T. Teruya \cite{teru}] \label{normal} A biprojection $b$ is normal if it is bicentral (i.e. $b$ and $\mathcal{F}(b)$ are central).
 \end{definition}
 
\textbf{Group-like structures on the $2$-box space}, i.e. like  $(R \subseteq R \rtimes G)$:
$$
\begin{array}{l|l}
\text{group } G  & \text{Jones projection } id \text{ of } \mathcal{P}_{2,+}  \\
\hline 
\text{element } g \in G  & \text{minimal projection } u \le id \\
\hline 
\text{composition } gh  & \text{coproduct } u * v  \\
\hline 
\text{neutral } eg = ge = g & \text{Jones projection } e_1 * u = u * e_1 \sim u \\
\hline 
\text{inverse } g^{-1}g=e & \text{contragredient }  \overline{u} * u \succeq e_1   \\
\hline 
\text{subgroup } H \subseteq G  & \text{(bi)projection $p$ with }    e_1 \le p \sim p*p \sim \overline{p}  
\\
\hline 
\text{normal subgroup } H \trianglelefteq G  & \text{normal biprojection }
\\
\hline 
\text{irreducible representation }  & \text{minimal central projection } p \in \mathcal{P}_{2,-}
\end{array} $$ 

The following lemma generalizes the existence of an inverse; there is uniqueness only in the minimal central case.
\begin{lemma}    \label{pre2} \label{2}  
Let $p \in \mathcal{P}_{2,+}$ be a projection, then $\overline{p}$ is also a projection, $e_1 \preceq p *\overline{p} $, and for $q$ a projection, $e_1 \preceq p *\overline{q} $ if and only if $pq \neq 0$ (so in the case $p,q$ minimal central projections, $p=q$). 
\end{lemma}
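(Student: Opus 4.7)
The plan is to reduce the whole lemma to a single scalar identity in $\mathcal{P}_{2,+}$ coming from Lemma \ref{tr(a*b)}(2), from which the desired support relation between $e_1$ and $p * \overline{q}$ can be read off. That $\overline{p}$ is itself a projection is already the content of Lemma \ref{overline}, so the real work lies in the second half.

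The key step is to apply Lemma \ref{tr(a*b)}(2) with $a_1 = p$ and $a_2 = q$. Using self-adjointness of the projections $p, q$ together with Lemma \ref{constar} to rewrite $\overline{q}^{\star} = \overline{q^{\star}} = \overline{q}$, this produces
\[
e_1 \cdot (p * \overline{q}) \;=\; \delta \, tr(qp) \, e_1 \;=:\; c \, e_1.
\]
A routine positivity check, namely $tr(qp) = tr(pqp) = tr((pq)(pq)^{\star})$, shows that $c \ge 0$ with $c = 0$ if and only if $pq = 0$. So the scalar $c$ is already a direct proxy for the non-vanishing of $pq$.

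Next I would translate the identity $e_1 \cdot X = c \, e_1$, with $X := p * \overline{q} \ge 0$ by Theorem \ref{th}(1), into a statement about the range projection $R(X)$. The general fact I want to use is that for a positive element $X$ in a finite-dimensional $\mathrm{C}^{\star}$-algebra the principal left ideal $\mathcal{P}_{2,+} \cdot X$ coincides with $\mathcal{P}_{2,+} \cdot R(X)$, because $X$ is invertible inside the corner $R(X) \mathcal{P}_{2,+} R(X)$. When $c > 0$, the identity rearranges to $e_1 = c^{-1}(e_1 X) \in \mathcal{P}_{2,+} \cdot R(X)$, whence $e_1 R(X) = e_1$ and $e_1 \preceq X$. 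When $c = 0$ the identity reads $e_1 R(X) \cdot X = 0$, and the same invertibility forces $e_1 R(X) = 0$; since $e_1 \ne 0$ this gives $e_1 \not\preceq X$.

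The two parenthetical consequences then fall out at once: taking $q = p$ gives $pq = p \ne 0$ (for $p\ne 0$), so $e_1 \preceq p * \overline{p}$; and if $p, q$ are two distinct minimal central projections they are orthogonal, so $pq = 0$, meaning $e_1 \preceq p * \overline{q}$ forces $p = q$. The main obstacle I anticipate is precisely the passage from the scalar identity $e_1 X = c \, e_1$ to the support relation $e_1 \le R(X)$: the weaker two-sided relation $e_1 X e_1 = c e_1$ alone does \emph{not} suffice in a general $\mathrm{C}^{\star}$-algebra (a $2 \times 2$ example already exhibits this), so it is essential that Lemma \ref{tr(a*b)}(2) delivers the full one-sided equation rather than merely a compressed one.
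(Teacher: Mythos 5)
Your proposal is correct and follows essentially the same route as the paper: both invoke Lemma \ref{tr(a*b)}(2) to get $e_1\cdot(p*\overline{q})=\delta\langle p\,\vert\,q\rangle e_1$ and then identify $\langle p\,\vert\,q\rangle=\langle pq\,\vert\,pq\rangle$, which vanishes exactly when $pq=0$. The only difference is that you spell out the final passage from the one-sided identity $e_1X=c\,e_1$ to the support relation $e_1\preceq X$ (via corner-invertibility of the positive element $X$), a step the paper compresses into ``the result follows''; your observation that the full one-sided equation, and not merely its compression $e_1Xe_1$, is what makes this work is a worthwhile clarification but not a departure from the paper's argument.
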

 \begin{proof}
 By Lemma \ref{overline}, $\overline{p}$ is a projection, next by Lemma \ref{e1.}, $e_1 \cdot (p *\overline{p} ) = \delta \langle p \vert p^{\star} \rangle e_1 = \delta \langle p \vert p \rangle e_1 >0$, so $e_1 \preceq p *\overline{p}$. Next $e_1 \cdot (p *\overline{q}) = \delta \langle p \vert q \rangle e_1$, but $$\langle p \vert q \rangle = \langle p^{\star}p \vert qq^{\star} \rangle = \langle pq \vert pq \rangle  \neq 0 \Leftrightarrow pq \neq 0$$ The result follows.\end{proof}    
 
 \begin{remark} If $u,v$ are minimal projections then $uv \neq 0$ if and only if $u$ and $v$ have the same central support and are not perpendicular. \end{remark}
 
The definition below generalizes the notion of subgroup generated.  
\begin{definition} \label{gener}
Let $a \in \mathcal{P}_{2,+}$ positive, and let $p_n$ be the range projection of $\sum_{k=1}^n a^{*k}$. By finiteness there exists $N$ such that for all $m \ge N$, $p_m = p_N$; and by \cite{li} Lemma 4.11 p20, $\langle a \rangle:=p_N$ is a biprojection (\textbf{generated} by $a$), the smallest biprojection $b \succeq a$. For $S$ a finite set of positive operators, let $\langle S \rangle$ be the smallest biprojection such that $b \succeq s \  \forall s\in S$. By Theorem \ref{th}, $\langle S \rangle = \langle \sum_{s \in S}s \rangle$. 
\end{definition}      

\begin{lemma} \label{equigene} Let $a, b \in \mathcal{P}_{2,+}$ with $a,b >0$ then
$$ a \sim b \Rightarrow  \langle a \rangle = \langle b \rangle$$
\end{lemma}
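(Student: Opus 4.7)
The plan is to reduce the equality $\langle a \rangle = \langle b \rangle$ to an equality of range projections of the partial sums $\sum_{k=1}^n a^{*k}$ and $\sum_{k=1}^n b^{*k}$ at every level $n$, since by Definition \ref{gener} these sequences stabilize to $\langle a \rangle$ and $\langle b \rangle$ respectively.

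First I would unpack $a \sim b$ as the conjunction $a \preceq b$ and $b \preceq a$, which is legitimate since $a,b$ are assumed positive. The key tool is then Theorem \ref{th}: part (1) guarantees that each iterated coproduct $a^{*k}$ (resp.\ $b^{*k}$) is positive, so that part (2) can be applied inductively. Feeding $a \preceq b$ into both slots of $*$ and iterating yields $a^{*n} \preceq b^{*n}$ for all $n \ge 1$, and the symmetric inequality gives $a^{*n} \sim b^{*n}$.

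Next I would use the elementary $C^\star$-algebraic fact that for positive operators $x, y$ one has $R(x+y) = R(x) \vee R(y)$. The inequality $R(x+y) \ge R(x), R(y)$ is immediate from $x+y \ge x$ and $x+y \ge y$; conversely, if $q = R(x) \vee R(y)$, then $q^\perp x q^\perp = 0 = q^\perp y q^\perp$, so $q^\perp (x+y) q^\perp = 0$, forcing $R(x+y) \le q$. Iterating this gives
$$R\Bigl(\sum_{k=1}^n a^{*k}\Bigr) = \bigvee_{k=1}^n R(a^{*k}) = \bigvee_{k=1}^n R(b^{*k}) = R\Bigl(\sum_{k=1}^n b^{*k}\Bigr)$$
for every $n$. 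Letting $n \to \infty$ and using that both sequences stabilize (by finite-dimensionality of $\mathcal{P}_{2,+}$) yields $\langle a \rangle = \langle b \rangle$.

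I do not expect a genuine obstacle here: the statement is essentially a repackaging of the monotonicity of the coproduct (Theorem \ref{th}(2)) together with the fact that the range projection of a sum of positives is the join of the range projections. The only mild care needed is to verify positivity of the iterated coproducts before applying Theorem \ref{th}(2), which is exactly what Theorem \ref{th}(1) provides.
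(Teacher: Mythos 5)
Your proof is correct, but it takes a different route from the paper's. The paper's proof is a one-liner: by the spectral theorem, $a\sim b$ with $a,b>0$ gives $a<\lambda b$ and $b<\lambda a$ for $\lambda$ large, and then one invokes the characterization of $\langle\,\cdot\,\rangle$ recorded in Definition \ref{gener} (via \cite{li} Lemma 4.11), namely that $\langle x\rangle$ is the \emph{smallest biprojection} $\succeq x$ --- a condition that visibly depends only on the range projection $R(x)$, so $\langle a\rangle\le\langle b\rangle$ and conversely. You instead work directly with the constructive definition: you propagate $a\sim b$ through the iterated coproducts via Theorem \ref{th}(1)--(2) to get $a^{*n}\sim b^{*n}$, and then use the (correct) finite-dimensional fact that $R(x+y)=R(x)\vee R(y)$ for positive $x,y$ to conclude that the stabilizing range projections of $\sum_{k\le n}a^{*k}$ and $\sum_{k\le n}b^{*k}$ coincide. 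Your argument is longer but more self-contained, in that it never appeals to the ``smallest biprojection dominating $a$'' characterization, only to the defining sequence and to the positivity/monotonicity of the coproduct; the paper's argument is shorter but leans on that extra characterization. Both are sound; the only small points worth making explicit in your write-up are the implicit use of associativity of $*$ (so that $a^{*k}$ is well defined) and the base case of the induction, neither of which is an obstacle.
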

% spectral theorem on positive operator
\begin{proof} By spectral theorem, for $\lambda > 0$ large enough, $a < \lambda b$ and $b < \lambda a$, then $\langle a \rangle \le \langle b \rangle$ and $\langle a \rangle \ge \langle b \rangle$; the result follows.  \end{proof}

\begin{definition} Let $p \neq e_1$ be a biprojection, then a maximal sub-biprojection of $p$ is a biprojection $b < p$ such that $\forall b'$ biprojection $$b \le b' < p \Rightarrow b'=b$$ Let $p \neq id$ be a biprojection, then a minimal over-biprojection of $p$ is a biprojection $b > p$ such that $\forall b'$ biprojection $$b \ge b' > p \Rightarrow b'=b$$ A biprojection is called maximal (resp. minimal) if it is a maximal sub-biprojection of $id$ (resp. a minimal over-biprojection of $e_1$).
 \end{definition}  

\begin{theorem} \label{mini}
Let $p \in \mathcal{P}_{2,+}$ be a minimal central projection, then there exists $v \le p$ minimal projection such that $\langle v \rangle = \langle p \rangle$.
\end{theorem}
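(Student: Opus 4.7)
The plan is to exploit the matrix-algebra structure of the block in which $p$ lives and show that the set of ``bad'' minimal projections under $p$ (those whose generated biprojection is strictly smaller than $\langle p \rangle$) is a finite union of proper projective subspaces, hence cannot exhaust all minimal projections under $p$.

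First, since $\mathcal{P}_{2,+}$ is a finite-dimensional $C^*$-algebra and $p$ is a minimal central projection, the corner $p\mathcal{P}_{2,+}p$ is isomorphic to $M_n(\mathbb{C})$ for some $n$, and the minimal projections $v \le p$ are exactly the rank-one projections in this matrix block, which I parametrize by the complex projective space $\mathbb{P}^{n-1}$. Clearly $\langle v \rangle \le \langle p \rangle$ for any such $v$, so the map $\phi \colon v \mapsto \langle v \rangle$ takes values in the (finite, by finite-dimensionality, cf.\ Theorem~\ref{bisch} and \cite{wa}) set of biprojections $\le \langle p \rangle$.

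Next, I analyze the preimages. For any biprojection $b$, the condition $\langle v \rangle \le b$ is equivalent to $v \le b$, and since $p$ is central, to $v \le p \wedge b = pb$. If moreover $b < \langle p \rangle$, then $p \not\le b$ (otherwise $\langle p \rangle \le b$ by minimality of $\langle p \rangle$ among biprojections dominating $p$), so $pb$ is a projection of rank strictly less than $n$ in $p\mathcal{P}_{2,+}p \cong M_n(\mathbb{C})$. Consequently the set of minimal projections $v \le p$ with $v \le pb$ is $\mathbb{P}(\operatorname{range}(pb))$, a proper linear subspace of $\mathbb{P}^{n-1}$.

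Finally, the ``bad'' set
\[
\{v \le p \text{ minimal} : \langle v \rangle < \langle p \rangle\} = \bigcup_{b} \mathbb{P}(\operatorname{range}(pb)),
\]
where $b$ ranges over the finitely many maximal biprojections strictly below $\langle p \rangle$, is a finite union of proper projective subspaces of $\mathbb{P}^{n-1}$. Since $\mathbb{C}$ is infinite, such a union cannot cover $\mathbb{P}^{n-1}$, so there exists a minimal projection $v \le p$ with $\langle v \rangle = \langle p \rangle$, as required. The main conceptual step is the third paragraph's reduction of the condition $v \le b$ to a linear condition on the matrix block (using centrality of $p$), after which the conclusion is a standard ``generic point'' argument; I expect no serious obstacle, only a careful bookkeeping of the equivalence $\langle v \rangle \le b \Leftrightarrow v \le b$ for biprojections $b$.
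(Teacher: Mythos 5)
Your proof is correct and is essentially the paper's argument: both reduce to the finitely many maximal sub-biprojections $b_1,\dots,b_n$ of $\langle p \rangle$, observe that each cuts out a proper subspace of $\operatorname{range}(p)$, and conclude because a finite union of proper subspaces of a complex vector space cannot be everything. Your version is slightly tidier in that centrality of $p$ lets you identify that subspace directly as $\operatorname{range}(pb_i)$, avoiding the paper's case split on whether $p \preceq \sum_i b_i$.
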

\begin{proof}
 If $p$ is a minimal projection, then it's ok. Else, let $b_1, \dots , b_n$ be the maximal sub-biprojections of $\langle p \rangle$ ($n$ is finite by \cite{wa}). If $p \not \preceq \sum_i b_i$ then there exists $v \le p $ minimal projection such that $ v  \not \le b_i \ \forall i$, so $\langle v \rangle = \langle p \rangle$ and the  results follows. Else $p \preceq \sum_{i=1}^n b_i$ (with $n>1$, otherwise $p \le b_1$ and $\langle p \rangle \le b_1$, contradiction); let $E_i=range(b_i)$ and $F=range(p)$, then $F=\sum_i E_i \cap F$ (because $p$ is a minimal central projection) with $1<n<\infty$ and $E_i \cap F \subsetneq F \ \forall i$ (otherwise $\exists i$ with $p \le b_i$, contradiction), so $\dim (E_i \cap F)< \dim (F)$ and there exists $V \subseteq F$ one-dimensional subspace  such that $V \not \subseteq E_i \cap F$  $\forall i$, and so a fortiori $V \not \subseteq E_i$   $\forall i$. It follows that $v=p_{V} \le p$  is a minimal projection such that $\langle v \rangle = \langle p \rangle$.
\end{proof}

The following lemma generalizing ``$ab=c \Rightarrow b=a^{-1}c$ and $a=cb^{-1}$" can be seen as a weak Frobenius reciprocity. It was inspired by \cite{sw}.
\begin{lemma} \label{pfr}
Let $a,b,c \in \mathcal{P}_{2,+}$ be projections with $c \preceq a*b$, then $\exists a'  \preceq c*\overline{b}$ and $\exists b'  \preceq \overline{a}*c $ such that $a'$, $b'$ are projections and $aa', bb' \neq 0$.
\end{lemma}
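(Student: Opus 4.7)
The plan is to take $a' := R(c*\overline{b})$ and $b' := R(\overline{a}*c)$, the range projections of the positive operators $c*\overline{b}$ and $\overline{a}*c$ (positivity by Theorem~\ref{th}(1); we use Lemmas~\ref{overline} and~\ref{constar} to know that $\overline{a},\overline{b}$ are themselves self-adjoint projections). With this choice, $a' \preceq c*\overline{b}$ and $b' \preceq \overline{a}*c$ hold by the definition of range projection. Moreover, since $a'$ is the range projection of $c*\overline{b}$, the product $a\cdot a'$ vanishes if and only if $a \cdot (c*\overline{b})$ does, and analogously for $bb'$. So, assuming $c\neq 0$ (otherwise the lemma is vacuous), it is enough to show
$$tr\bigl(a\cdot(c*\overline{b})\bigr)>0 \quad\text{and}\quad tr\bigl(b\cdot(\overline{a}*c)\bigr)>0,$$
since all four operators $a$, $b$, $c*\overline{b}$, $\overline{a}*c$ are positive and a vanishing product of positive operators yields a vanishing trace.

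The main tool I would invoke is the Frobenius-type identity
$$tr\bigl((a*b)\cdot c\bigr) \;=\; tr\bigl(a\cdot(c*\overline{b})\bigr) \;=\; tr\bigl(b\cdot(\overline{a}*c)\bigr),$$
valid whenever $a,b,c$ are self-adjoint elements of $\mathcal{P}_{2,+}$. For $c=e_1$ this reduces (using $\overline{\overline{b}}^{\star}=b$) to Lemma~\ref{e1.}(2); the general case is obtained by the same sort of diagrammatic manipulation applied to the closed tangle defining $tr((a*b)c)$: one slides the two strands issuing from $b$ around the back of the trace closure, which is permitted by the sphericality axiom of a subfactor planar algebra and which converts $b$ into $\overline{b}$ while rearranging the coproduct so as to produce the tangle for $tr(a\cdot(c*\overline{b}))$. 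A symmetric slide around $a$ yields the third expression.

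Granting the identity, the conclusion is immediate. Because $R(c)\le R(a*b)$ and $(a*b)^{1/2}$ is injective on its range,
$$tr\bigl((a*b)\cdot c\bigr) \;=\; tr\bigl((a*b)^{1/2}\,c\,(a*b)^{1/2}\bigr) \;>\; 0$$
whenever $c\neq 0$. By Frobenius reciprocity, $tr(a(c*\overline{b}))>0$ and $tr(b(\overline{a}*c))>0$, hence $a\cdot(c*\overline{b})\neq 0$ and $b\cdot(\overline{a}*c)\neq 0$, completing the argument with $a'=R(c*\overline{b})$ and $b'=R(\overline{a}*c)$. The main (and essentially only) obstacle is nailing down the Frobenius identity above: the paper records only the instance $c=e_1$ in Lemma~\ref{e1.}(2), so either an explicit diagrammatic slide has to be written out carefully (keeping track of shadings and of the $\star$-behavior via Lemma~\ref{constar}) or a reference to the general Frobenius reciprocity for subfactor planar algebras in \cite{li} has to be cited; the remainder is pure range-projection bookkeeping combined with the positivity of the coproduct.
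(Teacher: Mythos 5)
Your argument is correct, but it follows a genuinely different route from the paper's. You reduce everything to the Frobenius-type trace identity $tr((a*b)\cdot c)=tr(a\cdot(c*\overline{b}))=tr(b\cdot(\overline{a}*c))$ together with faithfulness of the trace; that identity is indeed true (it is the spherical isotopy of the closed three-box ``triangle'' diagram, and it is consistent with the recorded special cases: $c=e_1$ recovers Lemma \ref{e1.}(2) via $e_1*x=\delta^{-1}x$, and $c=id$ recovers Lemma \ref{*id}), but as you note it is nowhere stated in this generality in the paper, so you would have to write out the isotopy or cite it from \cite{li}. The paper avoids needing any new identity: it chains $e_1\preceq c*\overline{c}\preceq (a*b)*\overline{c}=a*(b*\overline{c})$ using Lemma \ref{pre2}, the positivity and monotonicity of the coproduct (Theorem \ref{th}) and associativity, then applies Lemma \ref{pre2} a second time (with $\overline{b*\overline{c}}=c*\overline{b}$ from Lemma \ref{costar}) to conclude $aa'\neq 0$ for $a'$ the relevant range projection, and symmetrically for $b'$. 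So the paper's proof is a pure $\preceq$-bookkeeping argument on already-proven lemmas, while yours trades that for one clean trace identity that makes the conclusion a one-liner; both ultimately rest on the same positivity of the pairing, and both (yours explicitly, the paper's implicitly through Lemma \ref{pre2}) require $c\neq 0$, without which the statement degenerates. Your range-projection and positivity bookkeeping (e.g.\ $aX=0\Leftrightarrow aR(X)=0$ for $X\geq 0$, and $tr((a*b)^{1/2}c(a*b)^{1/2})>0$ from $R(c)\leq R(a*b)$ and faithfulness of $tr$) is sound.
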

\begin{proof}
If $a,b,c$ are projections with $ c \preceq a*b$, by Lemma \ref{2} and Theorem \ref{th}, $e_1 \preceq c*\overline{c} \preceq  (a*b)* \overline{c}  = a*(b* \overline{c})$ by associativity, then by  Lemma \ref{2} again, $\exists a'$ projection with $a' \preceq \overline{b * \overline{c}} = c*\overline{b}$ and $aa' \ne 0$. Idem  $e_1 \preceq \overline{c} * (a*b)$, so $\exists b'$ projection with $bb' \ne 0$ and $ b' \preceq \overline{a}*c$.
\end{proof}   

\begin{remark} Lemma \ref{pfr} is optimal in general, because by Remark \ref{rkf}, the property ($F_0$) of Definition \ref{F} is not satisfied in general. \end{remark}

\subsubsection{Intermediate planar algebras and $2$-box spaces} \label{interpa} \hspace*{1cm} \\
Let $(N \subseteq M)$ be an irreducible finite index subfactor, and let $N \subseteq K \subseteq M$ be an intermediate subfactor. According to Z. Landau's PhD thesis \cite{la0}, the planar algebras $\mathcal{P}(N \subseteq K)$ and $\mathcal{P}(K \subseteq M)$ can be seen as sub-planar algebras of $\mathcal{P}(N \subseteq M)$, up to a renormalization of the partition function (see \cite{la0} 3. p98 and p105). Let the intermediate subfactors $N \subseteq P \subseteq K \subseteq Q \subseteq M$.
\begin{theorem}
The $2$-box spaces $\mathcal{P}_{2,+}(N \subseteq K)$ and $\mathcal{P}_{2,+} (K \subseteq M)$ are isomorphic to $e^M_K \mathcal{P}_{2,+}(N \subseteq M) e^M_K$ and $e^M_K * \mathcal{P}_{2,+}(N \subseteq M) * e^M_K$ respectively. 
\end{theorem}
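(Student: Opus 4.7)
The plan is to identify each of the two $2$-box spaces with the appropriate corner of $\mathcal{P}_{2,+}(N \subseteq M)$, handling the first by a direct basic-construction argument and the second by Fourier duality.

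For the first isomorphism, I would work with the concrete models $\mathcal{P}_{2,+}(N \subseteq K) \cong N' \cap K_1$ and $\mathcal{P}_{2,+}(N \subseteq M) \cong N' \cap M_1$. Since $N \subseteq K$, every $n \in N$ commutes with $e^M_K$, so $e^M_K \in N' \cap M_1$. Next, I would invoke the standard identification of the Jones basic construction $K_1 = \langle K, e^K_N \rangle$ with the compressed algebra $e^M_K M_1 e^M_K$ acting on $L^2(K) = e^M_K L^2(M)$, as in Landau \cite{la0}. Taking the commutant with $N$ inside this corner, and using $e^M_K \in N'$, gives
\[
N' \cap K_1 \;\cong\; N' \cap (e^M_K M_1 e^M_K) \;=\; e^M_K (N' \cap M_1) e^M_K,
\]
which is the claimed corner, as a $\star$-algebra with unit $e^M_K$.

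For the second isomorphism, I would pass through the Fourier transform $\mathcal{F}$ of Proposition~3.35. Using $\mathcal{F}(a*b) = \mathcal{F}(b) \cdot \mathcal{F}(a)$ from Lemma~\ref{2coprod}, the coproduct-compression transports to an ordinary compression on the $-$ side:
\[
\mathcal{F}\bigl(e^M_K * \mathcal{P}_{2,+}(N \subseteq M) * e^M_K\bigr) \;=\; \mathcal{F}(e^M_K) \cdot \mathcal{P}_{2,-}(N \subseteq M) \cdot \mathcal{F}(e^M_K).
\]
By definition of biprojection, $\mathcal{F}(e^M_K)$ is (up to normalization) a projection in $\mathcal{P}_{2,-} \cong M' \cap M_2$, and Bisch's theorem (Theorem~\ref{bisch}) applied in the dual tower identifies it with the Jones projection of the intermediate subfactor $M \subseteq \langle M, e^M_K \rangle \subseteq M_1$. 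Applying the first isomorphism to this dual inclusion, and then using that the subfactors $(K \subseteq M)$ and $(M \subseteq \langle M, e^M_K \rangle)$ share the same planar algebra (they are adjacent in the Jones tower), yields the second isomorphism. Alternatively, one may argue diagrammatically: a $2$-box in $\mathcal{P}_{2,+}(K \subseteq M)$ is a four-string box whose strings are $K$-colored; converting them to $N$-colored strings forces the insertion of a biprojection $e^M_K$ around each external string, which in $\mathcal{P}_{2,+}(N \subseteq M)$ is exactly the two-sided coproduct with $e^M_K$.

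The main obstacle I expect is the careful bookkeeping of the renormalization constants of the partition function when embedding the sub-planar algebras $\mathcal{P}(N \subseteq K)$ and $\mathcal{P}(K \subseteq M)$ inside $\mathcal{P}(N \subseteq M)$, as flagged by Landau; and verifying that the algebra structures (product versus coproduct) transport compatibly under the Fourier-duality step. Once these normalizations are pinned down, each isomorphism reduces to a standard corner-algebra identification.
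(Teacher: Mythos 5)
The paper does not actually prove this theorem: it simply cites \cite{li} (Proposition 2.6 and Theorem 2.7), which in turn rest on Landau's thesis \cite{la0} and the unpublished Bisch--Jones free product paper \cite{bjunp}. Your proposal therefore supplies genuine content that the paper outsources, and its two halves are the standard way those references establish the result: the first isomorphism via the realization of the basic construction of $N \subseteq K$ as the compression $e^M_K M_1 e^M_K$ (your identity $N' \cap (e^M_K M_1 e^M_K) = e^M_K (N' \cap M_1) e^M_K$ is correct, using $e^M_K \in N'$ for both inclusions), and the second by Fourier duality reducing to the first isomorphism applied to the dual tower. One point in the second half needs sharpening: $(M \subseteq \langle M, e^M_K \rangle)$ does not ``share the same planar algebra'' as $(K \subseteq M)$ --- it carries the \emph{dual} (shading-reversed) one, so $\mathcal{P}_{2,+}(M \subseteq \langle M, e^M_K\rangle) = \mathcal{P}_{2,-}(K \subseteq M)$, and you must apply $\mathcal{F}$ once more to land in $\mathcal{P}_{2,+}(K \subseteq M)$. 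This extra Fourier transform is not a defect but the whole point: it converts the ordinary-product corner on the $-$ side into the coproduct corner $e^M_K * \mathcal{P}_{2,+}(N\subseteq M) * e^M_K$ on the $+$ side, and since $\mathcal{F}$ intertwines $\cdot$ and $*$ twice along the chain, the composite $r_K = \mathcal{F}^{-1}\circ \iota \circ \mathcal{F}$ ends up preserving the \emph{usual} product, exactly as asserted in the note following Corollary \ref{landau1} (that the coproduct corner is closed under $\cdot$ is Lemma \ref{prodcoprod}). With that correction, and the renormalization bookkeeping you already flag, your argument is a legitimate self-contained proof, arguably more informative than the paper's bare citation.
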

\begin{proof} 
By \cite{li} Proposition 2.6 and Theorem 2.7 p8 (citing \cites{bjunp, la0}).
\end{proof} 
\begin{remark} These isomorphisms preserve $+$, $\times$ and $()^{\star}$; the coproduct $*$ is also preserved but up to renormalize it by $[M:K]^{1/2}$ for the first map and by $[K:N]^{-1/2}$ for the second. \end{remark}  
\begin{corollary} \label{landau1} \label{landau2}
There are $l_K$ and $r_K$, isomorphisms of  ${\rm C}^{\star}$-algebras $$l_K: \mathcal{P}_{2,+}(N \subseteq K) \to e^M_K \mathcal{P}_{2,+}(N \subseteq M) e^M_K$$ 
$$r_K : \mathcal{P}_{2,+} (K \subseteq M) \to e^M_K * \mathcal{P}_{2,+}(N \subseteq M) * e^M_K$$
with $l_K(e^K_P) = e^M_P$, $r_K(e^M_Q) = e^M_Q$ and $$\langle m(a_1), \dots, m(a_n) \rangle = m (\langle a_1, \dots, a_n \rangle)$$ for $m \in \{l_K, r_K, l^{-1}_K, r^{-1}_K \}$ and for any $a_i > 0$ in the domain of $m$.
\end{corollary}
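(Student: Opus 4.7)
The plan is to bootstrap from the preceding theorem (and its associated remark), which already supplies $\mathrm{C}^{\star}$-algebra isomorphisms onto the compressed / $\ast$-compressed corners and tells us the coproduct is preserved up to an explicit positive scalar. Let $l_K$ and $r_K$ denote these isomorphisms. Since $l_K$ is a $\mathrm{C}^{\star}$-algebra isomorphism, it preserves $+, \times, ()^{\star}$, positivity, and range projections; and it satisfies $l_K(a \ast b) = [M:K]^{1/2} \, l_K(a) \ast l_K(b)$ (and symmetrically for $r_K$ with factor $[K:N]^{-1/2}$). With this in hand, only the two specific properties $l_K(e^K_P)=e^M_P$, $r_K(e^M_Q)=e^M_Q$ and the compatibility with the generation operation $\langle \cdot \rangle$ remain to be verified.

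For the identification of Jones projections, I would argue as follows. The characterization of a biprojection via $b \ast b \preceq b$ (the theorem following Definition \ref{gener}'s block) is preserved by $l_K$: if $b$ is a biprojection in $\mathcal{P}_{2,+}(N\subseteq K)$, then $b\ast b \preceq b$ implies $l_K(b) \ast l_K(b) = [M:K]^{-1/2} l_K(b \ast b) \preceq l_K(b)$ (range projections are insensitive to positive rescaling), so $l_K(b)$ is a biprojection of $\mathcal{P}_{2,+}(N \subseteq M)$ lying in $e^M_K \mathcal{P}_{2,+}(N\subseteq M) e^M_K$. By Bisch's theorem \ref{bisch}, biprojections of $\mathcal{P}(N\subseteq K)$ are in bijection with the intermediate subfactors of $N \subseteq K$, and the biprojections of $\mathcal{P}(N \subseteq M)$ lying below $e^M_K$ are precisely the $e^M_P$ for $N\subseteq P \subseteq K$; so $l_K$ induces a bijection between these two sets. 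Since both $e^K_P$ and $e^M_P$ correspond to the same intermediate subfactor $P$, the Landau identification forces $l_K(e^K_P)=e^M_P$. The argument for $r_K$ is dual: biprojections of $\mathcal{P}(K\subseteq M)$ correspond under $r_K$ to biprojections of $\mathcal{P}(N\subseteq M)$ lying above $e^M_K$ (in the $\ast$-convolution sense), i.e.\ to intermediate subfactors $K \subseteq Q \subseteq M$, and matching the labels gives $r_K(e^M_Q)=e^M_Q$.

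For the compatibility with generation, recall that for positive $a$ we have $\langle a \rangle = $ range projection of $\sum_{k=1}^N a^{\ast k}$ for $N$ large enough (Definition \ref{gener}, using \cite{li}), and $\langle a_1, \dots, a_n \rangle = \langle \sum_i a_i \rangle$. Applying $l_K$ to this finite sum, and using $l_K(a \ast b) = [M:K]^{1/2} \, l_K(a) \ast l_K(b)$ iteratively, one gets
\[
l_K\Bigl(\sum_{k=1}^{N}\bigl(\textstyle\sum_i a_i\bigr)^{\ast k}\Bigr) \;=\; \sum_{k=1}^{N} [M:K]^{(k-1)/2} \bigl(\textstyle\sum_i l_K(a_i)\bigr)^{\ast k}.
\]
By Theorem \ref{th} each term $\bigl(\sum_i l_K(a_i)\bigr)^{\ast k}$ is positive, and a sum of positives with strictly positive coefficients has the same range projection as the unweighted sum; combined with the fact that $l_K$ preserves range projections, this yields $l_K(\langle a_1,\dots,a_n\rangle) = \langle l_K(a_1),\dots,l_K(a_n)\rangle$. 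The identical argument applies to $r_K$ (with $[K:N]^{-1/2}$) and, since $l_K^{-1}, r_K^{-1}$ are $\mathrm{C}^\star$-isomorphisms with coproduct preserved up to the reciprocal positive scalar, to the inverses as well.

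The main obstacle I expect is the first step: pinning down that $l_K$ actually sends \emph{the} Jones projection $e^K_P$ to \emph{the} Jones projection $e^M_P$, rather than merely to some biprojection corresponding to $P$. This is resolved precisely because Bisch's theorem \ref{bisch} gives \emph{uniqueness} of the biprojection attached to an intermediate subfactor, so once one checks that $l_K$ maps biprojections to biprojections and respects the underlying lattice of intermediate subfactors, the equalities $l_K(e^K_P)=e^M_P$ and $r_K(e^M_Q)=e^M_Q$ follow automatically. The remainder of the corollary is bookkeeping built on the positive-scalar invariance of range projections.
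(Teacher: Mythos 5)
The paper offers no proof of this corollary at all: it is stated as an immediate consequence of the preceding theorem (itself proved by citation to Liu's Proposition 2.6 and Theorem 2.7, which rest on Landau's thesis and Bisch--Jones) together with the remark on the renormalized coproduct. So your proposal is an attempt to derive the corollary from the abstract interface of that theorem, and the second half of your argument --- compatibility of $m$ with $\langle\,\cdot\,\rangle$ via the formula $\langle a\rangle=R\bigl(\sum_{k\le N}a^{*k}\bigr)$, the insensitivity of range projections to strictly positive rescalings, and the preservation of range projections by a ${\rm C}^{\star}$-isomorphism --- is correct and is a genuine verification that the paper leaves implicit. (One point you gloss over: for $r_K$ the image $e^M_K * \mathcal{P}_{2,+}(N\subseteq M) * e^M_K$ is a \emph{unital} subalgebra containing $id$, not a hereditary corner, so the range projection computed inside it could a priori exceed the one computed ambiently; this is repaired by observing that the ambient range projection of $\sum_k r_K(a)^{*k}$ is, for large $k$, a biprojection dominating $e^M_K$ and hence does lie in the $*$-corner by Lemma \ref{precobi}.)

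The genuine gap is in the first half, exactly where you anticipated it, and your proposed resolution does not close it. From the abstract data (a ${\rm C}^{\star}$-isomorphism preserving the coproduct up to a positive scalar) you can indeed conclude that $l_K$ maps biprojections to biprojections and restricts to an order isomorphism between the biprojection lattice of $\mathcal{P}(N\subseteq K)$ and the biprojections of $\mathcal{P}(N\subseteq M)$ below $e^M_K$. But an order isomorphism between two copies of the lattice $\mathcal{L}(N\subseteq K)$ need not be the identity --- the lattice can have nontrivial automorphisms --- and Bisch's uniqueness (Theorem \ref{bisch}) only says that \emph{within each} planar algebra the map $P\mapsto e_P$ is a bijection; it says nothing about which biprojection the isomorphism $l_K$ sends $e^K_P$ to. Your sentence ``once one checks that $l_K$ \ldots{} respects the underlying lattice of intermediate subfactors, the equalities follow automatically'' is circular: respecting that correspondence \emph{is} the assertion $l_K(e^K_P)=e^M_P$. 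The only way to pin the labels down is to use the concrete construction of $l_K$ and $r_K$ in Landau's thesis and Liu's paper, where $l_K$ is essentially the canonical identification $N'\cap K_1\hookrightarrow N'\cap M_1$ under which $e^K_P$ and $e^M_P$ are visibly the same operator (both being built from the projection onto $L^2(P)$); an abstract isomorphism with the stated properties cannot single this out, as two distinct intermediate subfactors of $(N\subseteq K)$ can even share the same index and trace.
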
  
Note that these isomorphisms are for the usual product in both sides.
% \textcolor{red}{add the short no-extra proof and remove the big from appendix + ref}
\section{Cyclic subfactor planar algebras} \label{cycl}
In this section, we define the class of cyclic subfactor planar algebras, we show that it contains plenty of examples, and we prove that it is stable by dual, intermediate, free composition and tensor product ``generically''. Next we define the w-cyclic subfactor planar algebras.

Let $\mathcal{P}$ be a finite index irreducible subfactor  planar algebra. 
\begin{definition}
The planar algebra $\mathcal{P}$ is called
\begin{itemize}
\item distributive if the biprojections lattice is distributive.
\item Dedekind if all the biprojections are normal (Definition \ref{normal}).
\item cyclic if it is both Dedekind and distributive.
\end{itemize}
\end{definition} 

\begin{examples} A group subfactor is cyclic if and only if the group is cyclic; the maximal subfactors are cyclic, in particular  all the $2$-supertransitive subfactors, as the Haagerup subfactor \cite{asha,izha,ep}, are cyclic. Up to equivalence, exactly $23279$ among $34503$ inclusions of groups of index $< 30$, give a cyclic subfactor (more than $65\%$).
\end{examples}

\begin{definition} \label{equiv} Let $G$ be a finite group and $H$ be a subgroup. The core $H_G$ is the largest normal subgroup of $G$ contained in $H$. The subgroup $H$ is called core-free if $H_G = \{ 1 \}$; if so, the inclusion $(H \subseteq G)$ is also called core-free. Two inclusions of finite groups $(A \subseteq B)$ and $(C \subseteq D)$ are called equivalent if there is a group isomorphism $\phi: B/A_B \to D/C_D$ such that $\phi(A/A_B) = C/C_D$.   
\end{definition}

\begin{remark} A finite group subfactor remembers the group \cite{jo}, but a finite group-subgroup subfactor does not remember the (core-free) inclusion in general because (as proved by V.S. Sunder and V. Kodiyalam \cite{sk}) the inclusions $(\langle (1234) \rangle \subseteq S_4)$ and $(\langle (12)(34) \rangle \subseteq S_4)$ are not equivalent whereas their corresponding subfactors are isomorphic; but thanks to the complete characterization \cite{iz} by M. Izumi, it remembers the inclusion in the maximal case, because the intersection of a core-free maximal subgroup with an abelian normal subgroup is trivial\footnote{http://math.stackexchange.com/a/738780/84284}. \end{remark}

\begin{theorem} \label{th2}
 The free composition of irreducible finite index subfactors has no extra intermediate.
\end{theorem}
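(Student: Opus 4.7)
The plan is to reformulate the statement in terms of biprojections via Theorem \ref{bisch} and then exploit the distinguished role of $b_P := e^M_P$ in the planar algebra $\mathcal{P}(N \subseteq M)$ of the free composition. Any intermediate subfactor $N \subseteq K \subseteq M$ corresponds to a biprojection $b_K \in \mathcal{P}_{2,+}(N \subseteq M)$, so it suffices to show that every such $b_K$ is comparable with $b_P$; once $b_K \le b_P$, Corollary \ref{landau1} identifies $b_K$ via $l_P^{-1}$ with a biprojection of $\mathcal{P}(N \subseteq P)$, and dually if $b_K \ge b_P$, so $K$ is already an intermediate of one of the two original pieces and not ``extra''.

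To establish comparability, I would first replace $b_K$ by $b := b_K \vee b_P$ and $b' := b_K \wedge b_P$, both still biprojections, and reduce the question to ruling out the genuinely mixed case $b' < b_P < b$. The structural input is the characterisation of the free composition at the $2$-box level: $\mathcal{P}_{2,+}(N \subseteq M)$ is generated, under product and coproduct, by the images of $l_P$ and $r_P$ of Corollary \ref{landau1}, and in the free regime every element admits an ``alternating word'' normal form whose letters lie alternately in the two sides and are separated by copies of $b_P$, with no hidden relations between the sides. Imposing on such a normal form simultaneously the projection identities $b = b^2 = b^{\star}$ and the coproduct criterion $b * b \preceq b$ characterising biprojections, one expects to conclude by induction on word length that only length-one words survive, i.e.\ $b$ lies in the image of either $l_P$ or $r_P$, which is exactly $b \le b_P$ or $b \ge b_P$.

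The main obstacle is this induction step: ruling out mixed words of length $\ge 2$ that are simultaneously projections and satisfy $b * b \preceq b$. The tools developed above make it tractable: the trace identity $tr(a_1 * a_2) = \delta\, tr(a_1)\, tr(a_2)$ of Lemma \ref{tr(a*b)}, the exchange relations for $b_P$ (Remark \ref{biproj}), and the strict positivity and monotonicity of the coproduct (Theorem \ref{th}) together turn the coproduct condition on an alternating word into scalar inequalities that, combined with the freeness hypothesis (absence of cross-relations between the two sides), force all but one letter to collapse to a scalar. Applying Corollary \ref{landau1} then identifies $K$ as an intermediate subfactor of either $(N \subseteq P)$ or $(P \subseteq M)$, giving the desired ``no extra intermediate'' conclusion.
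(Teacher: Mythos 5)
Your reduction of the statement to the comparability of every biprojection with $b_P=e^M_P$ is the right reformulation, and the use of Corollary \ref{landau1} to conclude once comparability is known is fine. The difficulty is that the entire mathematical content of the theorem is concentrated in the step you yourself call ``the main obstacle'', and what you offer there is a plan rather than an argument. Two concrete gaps. First, the structural input you rely on --- that $\mathcal{P}_{2,+}(N\subseteq M)$ is generated under product and coproduct by the images of $l_P$ and $r_P$, with every element admitting an alternating-word normal form separated by copies of $b_P$ and no hidden relations --- is not established anywhere and does not follow from the definition of free composition. Freeness is defined at the level of the tensor category of $P$-$P$ bimodules (the set $\Xi$ of irreducibles occurring in $(\beta\overline{\beta}\overline{\alpha}\alpha)^n$ is the free product $\Xi_{\alpha}\star\Xi_{\beta}$); this does not hand you a word decomposition of individual elements of the finite-dimensional ${\rm C}^{\star}$-algebra $N'\cap M_1$, and turning the categorical freeness into such a normal form is itself a substantial claim needing proof. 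Second, even granting a normal form, the induction ``forcing all but one letter to collapse to a scalar'' from $b=b^2=b^{\star}$ and $b*b\preceq b$ is precisely the hard part of Liu's Theorem 2.11 of \cite{li}, which the paper cites as the first (planar-algebraic) proof of this result; listing Lemma \ref{tr(a*b)}, Remark \ref{biproj} and Theorem \ref{th} as tools that ``make it tractable'' does not carry it out. A smaller point: replacing $b_K$ by $b_K\vee b_P$ and $b_K\wedge b_P$ achieves no reduction, since incomparability of $b_K$ with $b_P$ is exactly the situation to be excluded and passing to the join and meet merely restates it.

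For contrast, the paper's own proof (Appendix \ref{noextra}) avoids the $2$-box algebra entirely and argues with bimodules and principal graphs: writing $\rho=\alpha\beta=\lambda\mu$, it uses Lemma \ref{previous} (alternating words of colored bimodules are irreducible and uniquely determined) to identify the depth-two vertices of $\Gamma_{\rho}$ as the $\eta_j$ and the $\alpha\xi_i\overline{\alpha}$, shows there is no square $[\rho,\eta_i,\zeta,\alpha\xi_j\overline{\alpha}]$ through a depth-three vertex, and deduces that if $\lambda\overline{\lambda}\not\le\alpha\overline{\alpha}$ then $\alpha\overline{\alpha}\le(\lambda\overline{\lambda})^2$ already forces $\alpha\overline{\alpha}\le\lambda\overline{\lambda}$, i.e.\ $P\subseteq L$. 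To make your approach complete you would need to either prove the normal-form and collapse statements you are assuming (essentially redoing Liu's argument) or switch to this bimodule/principal-graph reasoning.
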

\begin{proof}
The theorem was first proved by Z. Liu (\cite{li} Theorem 2.11 p9) in the planar algebra framework. We found (independently) an other proof in the subfactor and bimodules framework, see appendix \ref{noextra}.  
\end{proof}

\begin{corollary} \label{corofree}
 The class of finite index irreducible cyclic subfactors is stable by free composition.
\end{corollary}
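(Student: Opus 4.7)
Let $(N \subseteq P)$ and $(P \subseteq M)$ be two cyclic (i.e.\ Dedekind and distributive) finite index irreducible subfactors and let $(N \subseteq M)$ denote their free composition. I want to verify the two defining properties of cyclicity for $(N \subseteq M)$: distributivity of the biprojection lattice and normality of every biprojection. The overall strategy is to transport the data of the two factors through the Landau embeddings of Corollary~\ref{landau2} and to exploit Theorem~\ref{th2} to control the intermediates of $(N \subseteq M)$.

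First, for distributivity I would invoke Theorem~\ref{th2}: the free composition introduces no extra intermediate subfactor, so the lattice $\mathcal{L}(N \subseteq M)$ is the concatenation at $P$ of the sublattices $\mathcal{L}(N \subseteq P)$ and $\mathcal{L}(P \subseteq M)$. By Bisch's Theorem~\ref{bisch} this concatenation is also the biprojection lattice of $\mathcal{P}(N \subseteq M)$. Both pieces are distributive by assumption, and distributivity is stable by concatenation by Lemma~\ref{distri}, so $\mathcal{L}(N \subseteq M)$ is distributive.

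Second, for the Dedekind property I would argue biprojection by biprojection. Any biprojection $b \in \mathcal{P}_{2,+}(N \subseteq M)$ corresponds to an intermediate $K$ with either $N \subseteq K \subseteq P$ (so $b \le e^M_P$) or $P \subseteq K \subseteq M$ (so $b \ge e^M_P$). In the first case $l_P^{-1}(b)$ is a biprojection of the cyclic planar algebra $\mathcal{P}(N \subseteq P)$, hence bicentral in $\mathcal{P}_{2,\pm}(N \subseteq P)$ by hypothesis; symmetrically for the second case with $r_P^{-1}$. The task is then to lift this bicentrality to centrality of $b$ and of $\mathcal{F}(b)$ inside $\mathcal{P}_{2,\pm}(N \subseteq M)$.

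This lifting step is where I expect the real work. The idea is to show that in a free composition the $2$-box space $\mathcal{P}_{2,+}(N \subseteq M)$ decomposes along $e^M_P$ so that an element commuting with the image of $l_P$ (respectively $r_P$) on its side and ``looking trivial'' across $e^M_P$ on the other side automatically lies in the centralizer of $b$; an analogous statement, transported by $\mathcal{F}$, handles $\mathcal{F}(b)$. The main obstacle is precisely this structural statement: a priori one only knows centrality of $b$ with respect to the corner $e^M_P \mathcal{P}_{2,+}(N \subseteq M) e^M_P$ (or its $*$-dual corner), and one must rule out noncentral interaction with the complementary piece. I would rely on the no-extra-intermediate conclusion of Theorem~\ref{th2} together with the compatibility of the Landau isomorphisms with generation (Corollary~\ref{landau2}) and with the Fourier transform, to reduce commutation of $b$ with an arbitrary $2$-box element to commutation with pieces coming from one side, where the Dedekind hypothesis applies. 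Once bicentrality of every biprojection is established, $\mathcal{P}(N \subseteq M)$ is Dedekind, and combined with Step~1 it is cyclic.
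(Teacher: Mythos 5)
Your first step (distributivity) is exactly the paper's argument: Theorem \ref{th2} identifies the intermediate lattice of the free composition with the concatenation at $P$ of the two component lattices, and Lemma \ref{distri} gives stability of distributivity under concatenation. That part is complete and correct.

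The Dedekind part, however, contains a genuine gap, and you flag it yourself. After reducing, via Theorem \ref{th2} and the Landau isomorphisms, to the fact that a biprojection $b \le e^M_P$ (say) is central in the corner $e^M_P\,\mathcal{P}_{2,+}(N\subseteq M)\,e^M_P$, and $\mathcal{F}(b)$ in the corresponding coproduct corner, you never prove that corner-centrality propagates to centrality in the full $2$-box spaces; you only name this ``the main obstacle'' and sketch a hoped-for reduction. That propagation is precisely the content that must be supplied, and it is where the freeness (not just the absence of extra intermediates) enters. Concretely: writing $M = P \oplus \bigoplus_{i\neq 0} n_i\otimes \alpha\xi_i\overline{\alpha}$ as an $N$-$N$ bimodule (Appendix \ref{noextra}), Lemma \ref{previous} shows that no constituent $\alpha\xi_i\overline{\alpha}$ with $i \neq 0$ is isomorphic to an irreducible constituent of $P = \alpha\overline{\alpha}$; hence $e^M_P$ is itself a \emph{central} projection of $\mathcal{P}_{2,+}(N\subseteq M) \simeq \bigoplus_i End(V_i)$, the corner is a direct summand, and an element supported under $e^M_P$ that is central in the corner commutes with the complementary summand for trivial reasons, so it is central in the whole algebra. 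A dual argument, for the coproduct corner and using the exchange relations of Lemma \ref{prodcoprod}, is then still needed to treat $\mathcal{F}(b)$ and the biprojections lying above $e^M_P$. The paper compresses all of this into the citations of Corollary \ref{landau1} and Lemma \ref{prodcoprod}; your write-up stops exactly where that argument has to begin.
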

\begin{proof}
By Theorem \ref{th2}, the intermediate subfactors lattice of a free composition is the concatenation of the lattice of the two components, but by Lemma \ref{distri}, the distributivity is stable by concatenation. By Corollary \ref{landau1} and Lemma \ref{prodcoprod}, the biprojections remain normal.
\end{proof}

\begin{theorem} \label{th3}
Let $(N_i \subset M_i)$, $i=1,2$, be irreducible finite index subfactors. Then $$  \mathcal{L}(N_1  \subset M_1) \times \mathcal{L}(N_2 \subset  M_2) \subsetneq \mathcal{L}(N_1 \otimes N_2 \subset M_1 \otimes M_2)$$ if and only if there are intermediate subfactors $N_i \subseteq P_i \subset Q_i \subseteq M_i$,  $i=1,2$, such that $(P_i \subset Q_i)$ is depth $2$ and isomorphic to $(R^{\mathbb{A}_i} \subset R)$, with $\mathbb{A}_2 \simeq \mathbb{A}_1^{cop}$ which is the Kac algebra $\mathbb{A}_1$ with the opposite coproduct.
\end{theorem}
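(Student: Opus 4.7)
The plan is a Goursat-type reduction to the depth-$2$ (Kac algebra) case: analyse non-product intermediate subfactors of a tensor product by squeezing them between an "image" $Q_1 \otimes Q_2$ and a "kernel" $P_1 \otimes P_2$, and then apply the classification of intermediate subfactors of tensor products of Kac algebra subfactors.

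For the sufficient direction $(\Leftarrow)$, given intermediate subfactors $(P_i \subset Q_i) \cong (R^{\mathbb{A}_i} \subset R)$ with $\mathbb{A}_2 \cong \mathbb{A}_1^{cop}$, I first construct an extra intermediate between $P_1 \otimes P_2$ and $Q_1 \otimes Q_2$. The tensor subfactor $P_1 \otimes P_2 \subset Q_1 \otimes Q_2$ is depth $2$ with associated Kac algebra $\mathbb{A}_1 \otimes \mathbb{A}_2 \cong \mathbb{A}_1 \otimes \mathbb{A}_1^{cop}$, and by Galois correspondence for depth-$2$ subfactors its intermediates correspond to sub-Kac-algebras. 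The coproduct (composed with the identification $\mathbb{A}_2 \cong \mathbb{A}_1^{cop}$) produces a non-trivial sub-Kac-algebra $\Delta(\mathbb{A}_1) \subset \mathbb{A}_1 \otimes \mathbb{A}_1^{cop}$, whose fixed point subfactor is a "diagonal" intermediate, strictly between $P_1 \otimes P_2$ and $Q_1 \otimes Q_2$, that is not a tensor product. Since $N_1 \otimes N_2 \subseteq P_1 \otimes P_2$ and $Q_1 \otimes Q_2 \subseteq M_1 \otimes M_2$, this intermediate lies in $\mathcal{L}(N_1 \otimes N_2 \subset M_1 \otimes M_2)$ and is not in $\mathcal{L}(N_1 \subset M_1) \times \mathcal{L}(N_2 \subset M_2)$.

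For the necessary direction $(\Rightarrow)$, let $L \in \mathcal{L}(N_1 \otimes N_2 \subset M_1 \otimes M_2)$ not be of the form $L_1 \otimes L_2$. Using the biprojection formalism of Section~\ref{planar} (notably Lemma~\ref{prodcoprod} and Corollary~\ref{landau1}), define $Q_i$ as the smallest intermediate of $(N_i \subset M_i)$ such that $L \subseteq Q_1 \otimes Q_2$, and $P_i$ as the largest intermediate with $P_1 \otimes P_2 \subseteq L$; the hypothesis that $L$ is not a product forces $P_i \subsetneq Q_i$ for $i=1,2$. Replacing the ambient subfactor by $(P_1 \otimes P_2 \subset Q_1 \otimes Q_2)$, we may assume $L$ is a non-product intermediate that is both minimal over and maximal inside this tensor product in the sense above. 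A Goursat-type argument then forces $(P_i \subset Q_i)$ to be depth $2$, with associated Kac algebras $\mathbb{A}_i$; the cross-identification provided by $L$ yields a canonical Kac algebra isomorphism $\mathbb{A}_2 \cong \mathbb{A}_1^{cop}$, the $cop$ appearing because the coproduct on the second factor is read with opposite orientation when passing between the two tensor slots.

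The main obstacle is the Kac-algebra analogue of Goursat's lemma in the subfactor (or planar algebra) framework: showing that a non-product intermediate $L$ with the stated minimality and maximality properties forces $(P_i \subset Q_i)$ to be individually depth $2$, and then extracting from $L$ the precise identification $\mathbb{A}_2 \cong \mathbb{A}_1^{cop}$. In the group case this is classical and follows from the usual Goursat correspondence, but in the Kac algebra setting it requires either invoking the established decomposition theory for intermediate subfactors of depth-$2$ tensor products, or a direct planar-algebraic argument using normal biprojections, their images under $\mathcal{F}$, and the exchange relations of Remark~\ref{biproj} to reconstruct the Hopf-algebraic pairing.
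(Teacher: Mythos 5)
Your overall architecture matches the paper's: for the forward direction you sandwich a non-product intermediate $L$ between the closest tensor-product intermediates $P_1\otimes P_2$ and $Q_1\otimes Q_2$, and aim to show the resulting ``minimal non-product sandwich'' forces $(P_i\subset Q_i)$ to be depth $2$ with $\mathbb{A}_2\simeq\mathbb{A}_1^{cop}$; for the converse you build a diagonal intermediate inside $(P_1\otimes P_2\subset Q_1\otimes Q_2)$. The paper does exactly this squeezing, and then disposes of the decisive step by citing Feng Xu's work on tensor products of subfactors: Proposition 3.5(2) of \cite{xu} gives that the $(P_i\subseteq Q_i)$ are depth $2$, Definition 3.6 and Proposition 3.10 of \cite{xu} give the identification $\mathbb{A}_2\simeq\mathbb{A}_1^{cop}$, and Theorem 3.14(2) of \cite{xu} gives the converse.

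The genuine gap in your proposal is that this decisive step --- the ``Kac-algebra Goursat lemma'' --- is precisely what you flag as ``the main obstacle'' and then do not carry out. That step \emph{is} the content of the theorem: everything before it (the existence of closest tensor-product intermediates above and below $L$, and the reduction to that sandwich) is soft, and everything after it is bookkeeping. Saying that one could ``either invoke the established decomposition theory for intermediate subfactors of depth-$2$ tensor products, or give a direct planar-algebraic argument'' names two possible exits without taking either; in particular no argument is offered for why $(P_i\subset Q_i)$ must be depth $2$ (rather than merely admitting some non-product intermediate structure), nor for why the cross-identification carried by $L$ is implemented by a Kac algebra isomorphism onto the \emph{co-opposite}. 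If you want to complete the proof along your lines, the missing ingredient is exactly Xu's Proposition 3.5(2) together with his Definition 3.6 and Proposition 3.10; absent that citation or a worked-out substitute, the forward direction is not proved. Your converse is also only a sketch: you would still need to check that the image of the coproduct really is a left coideal $\star$-subalgebra of $\mathbb{A}_1\otimes\mathbb{A}_1^{cop}$ (it is the coideal, not sub-Kac-algebra, structure that the Galois correspondence of \cite{ilp} requires) and that the corresponding fixed-point algebra is not of tensor product form; the paper again delegates this to Xu's Theorem 3.14(2).
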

\begin{proof}
This theorem was proved  in the $2$-supertransitive case by Y. Watatani \cite{wa}. The general case was conjectured by the author, and proved by a discussion with Feng Xu as follows: \\  
Let the intermediate subfactors $$N_1 \otimes N_2 \subseteq P_1 \otimes P_2 \subset R \subset Q_1 \otimes Q_2 \subseteq M_1 \otimes M_2$$ with $R$ not of tensor product form, $P_1 \otimes P_2$ and $Q_1 \otimes Q_2$ the closest (below and above resp.) to $R$ among them of tensor product form. Now using \cite[Proposition 3.5 (2)]{xu}, $(P_i \subseteq Q_i)$, $i=1,2$,  are depth $2$, there corresponding Kac algebras, $\mathbb{A}_i$, $i=1,2$, are very simple and $\mathbb{A}_2 \simeq \mathbb{A}_1^{cop}$ \cite[Definition 3.6 and Proposition 3.10]{xu}. The converse is given by \cite[Theorem 3.14 (2)]{xu}.
\end{proof}

\begin{remark} \label{coroprod} By Theorem \ref{th3}, the class of (finite index irreducible) cyclic subfactors is stable by tensor product ``generically'' (i.e. if they don't have cop-isomorphic depth $2$ intermediate), because by Lemma \ref{distri}, the distributivity is stable by direct product, and by Corollary \ref{landau1} and Lemma \ref{prodcoprod}, the biprojections remain normal.  \end{remark}

\begin{lemma} \label{c*} If a subfactor is cyclic then the intermediate and dual subfactors are also cyclic.
\end{lemma}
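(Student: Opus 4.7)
My plan is to verify each of the two assertions (intermediate and dual) by working directly at the level of the biprojection lattice and invoking the structural results already in the paper, namely Landau's description of intermediate planar algebras (Corollary \ref{landau1}), the hereditary and self-dual nature of distributivity (Lemma \ref{distri}), and the self-dual form of Definition \ref{normal}.

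For the intermediate case, fix $N \subseteq K \subseteq M$ with corresponding biprojection $e^M_K$. By Corollary \ref{landau1}, the biprojection lattice of $\mathcal{P}(N \subseteq K)$ is identified via $l_K$ with the interval $[e_1, e^M_K]$ of the biprojection lattice $L$ of $\mathcal{P}$, and that of $\mathcal{P}(K \subseteq M)$ with $[e^M_K, id]$ via $r_K$. Both intervals inherit distributivity from $L$ by Lemma \ref{distri}. For normality, let $b$ be a biprojection of $\mathcal{P}$ sitting in one of these intervals. Since $b$ is central in $\mathcal{P}_{2,+}$ by cyclicity and $l_K$ (resp. $r_K$) is a $C^\star$-algebra isomorphism for the usual product, the image of $b$ is central in $\mathcal{P}_{2,+}(N\subseteq K)$ (resp.\ $\mathcal{P}_{2,+}(K\subseteq M)$). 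The $\mathcal{F}$-side of normality, i.e.\ centrality in $\mathcal{P}_{2,-}(N \subseteq K)$, is obtained by running the same argument on the dual subfactor (whose intermediates are again given by Landau's theorem), using that $\mathcal{F}(b)$ is central in $\mathcal{P}_{2,-}$ by hypothesis.

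For the dual case, recall that the dual planar algebra $\tilde{\mathcal{P}}$ satisfies $\tilde{\mathcal{P}}_{n,\pm} = \mathcal{P}_{n,\mp}$, and its biprojection lattice is the reverse of $L$ (the biprojections of $\tilde{\mathcal{P}}$ are the operators $\mathcal{F}(b)$, up to normalization, as $b$ ranges over biprojections of $\mathcal{P}$). Distributivity is preserved by reversal (Lemma \ref{distri}). For normality, Definition \ref{normal} is manifestly symmetric under the swap $(\mathcal{P}_{2,+}, \mathcal{P}_{2,-}, \mathcal{F}) \leftrightarrow (\mathcal{P}_{2,-}, \mathcal{P}_{2,+}, \mathcal{F}^{-1})$: a biprojection $\tilde{b} = \mathcal{F}(b)$ of $\tilde{\mathcal{P}}$ is central in $\tilde{\mathcal{P}}_{2,+} = \mathcal{P}_{2,-}$ precisely because $\mathcal{F}(b)$ is central in $\mathcal{P}_{2,-}$ by normality of $b$, and its Fourier transform in $\tilde{\mathcal{P}}$ is (a scalar multiple of) $b$ by Lemma \ref{Fourier}, which is central in $\mathcal{P}_{2,+} = \tilde{\mathcal{P}}_{2,-}$. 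So bicentrality passes to $\tilde{b}$.

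The routine parts are the distributivity transfers; the one point demanding care is the compatibility between the Landau C$^\star$-isomorphisms and the Fourier transform, since $l_K, r_K$ preserve the ordinary product but rescale the coproduct (see the remark after Corollary \ref{landau1}). The cleanest way to handle this, which I would use, is to avoid trying to track how $\mathcal{F}$ itself restricts to the corner, and instead apply the Landau correspondence twice: once on $(N \subseteq M)$ to get centrality of $b$ in the intermediate $\mathcal{P}_{2,+}$, and once on the dual subfactor (whose intermediate subfactors biject with those of $(N \subseteq M)$ via the anti-isomorphism Galois correspondence) to get centrality of $\mathcal{F}(b)$ in the intermediate $\mathcal{P}_{2,-}$. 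This turns the single potentially delicate step into two applications of the $C^\star$-algebraic part of Corollary \ref{landau1}, which is what makes the lemma essentially formal.
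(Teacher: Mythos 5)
Your proof is correct, and the conclusion is reached by the same formal mechanism the paper uses: distributivity is hereditary and self-dual (Lemma \ref{distri}), and bicentrality survives passage to the corner algebras and to the dual. Where you differ is in how the normality transfer is justified. The paper's entire proof is the citation of Lemma \ref{distri} for the distributive part and Lemma \ref{prodcoprod} for the Dedekind part: the exchange relations of Lemma \ref{prodcoprod} show that the product corner $b\,\mathcal{P}_{2,+}\,b$ is closed under the coproduct and the coproduct corner $b*\mathcal{P}_{2,+}*b$ is closed under the product, so a biprojection that is central for both $\cdot$ and $*$ on all of $\mathcal{P}_{2,+}$ (which is exactly what bicentrality says, since centrality of $\mathcal{F}(b)$ in $(\mathcal{P}_{2,-},\cdot)$ translates to $*$-centrality of $b$) remains central for both operations in either corner; self-duality of normality is then immediate from $\overline{b}=b$. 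You instead obtain the $\cdot$-centrality from the ${\rm C}^{\star}$-isomorphisms of Corollary \ref{landau1} and outsource the $\mathcal{F}$-side to a second application of Landau on the dual subfactor. That closes, but it quietly relies on identifying $\mathcal{P}_{2,-}(N\subseteq K)$ with a coproduct corner of $\mathcal{P}_{2,-}(N\subseteq M)$, i.e., on the fact that the dual of $(N\subseteq K)$ appears as an intermediate-generated piece of the dual of $(N\subseteq M)$ --- true, but not stated in the paper and exactly the kind of bookkeeping the exchange relations let one skip: centrality of $\mathcal{F}_{N\subseteq K}(b')$ downstairs is just $*$-centrality of $b'$ in the corner, inherited directly from $*$-centrality in $\mathcal{P}_{2,+}$. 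So your route is valid and a little more structural, while the paper's citation of Lemma \ref{prodcoprod} is the more economical packaging of the one step you yourself flagged as delicate.
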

\begin{proof} The result follows by Lemma \ref{distri} for distributive and Lemma  \ref{prodcoprod} for Dedekind. 
\end{proof}

Thanks to Theorem \ref{mini} we can give the following definition:  
\begin{definition}  \label{wcy}  The planar algebra $\mathcal{P}$ is weakly cyclic (or w-cyclic) if it satisfies one of the following equivalent assertion:  
\begin{itemize}
\item $\exists u \in  \mathcal{P}_{2,+}$  minimal projection  such that $\langle u \rangle=id$.
\item $\exists p \in  \mathcal{P}_{2,+}$  minimal central projection  such that $\langle p \rangle=id$.
\end{itemize}
Moreover, $(N \subseteq M)$ is called w-cyclic if its planar algebra is w-cyclic.
\end{definition} 

These remarks justify the choice of the words ``cyclic'' and ``w-cyclic''.
\begin{remark}
Let's call a ``group subfactor'', a subfactor of the form $(R^G \subseteq R)$ or $(R \subseteq R \rtimes G)$. Then the cyclic ``group subfactors" are exactly the ``cyclic group" subfactors.   \end{remark}
\begin{proof}
By Galois correspondence, a ``group subfactor" is cyclic if the subgroups lattice is distributive (the distributivity is invariant by taking the reversed lattice by Lemma \ref{distri}), if and only if the group is cyclic by Ore's Theorem \ref{ore1}. The normal biprojections of a group subfactor corresponds to the normal subgroups \cite{teru}, but all the subgroups of a cyclic group are normal.
\end{proof}  

\begin{remark} By Corollary \ref{wgrp2}, a finite group subfactor $(R^G \subset R)$ is w-cyclic if an only if $G$ is linearly primitive, which is strictly weaker than cyclic (see for example $S_3$), nevertheless the notion of w-cyclic is a singly generated notion in the sense that ``there is a minimal projection generating the identity biprojection''. We can also see the weakness of this assumption by the fact that the minimal projection does not necessarily generate a basis for the set of positive operators, but just the support of it, i.e. the identity. \end{remark}  

\begin{problem} Does cyclic implies w-cyclic for the planar algebra $\mathcal{P}$?
\end{problem} 

The answer is \textbf{yes} by the Theorem \ref{thm}. See the Section \ref{distex} for some extensions of this theorem.

\begin{problem} \label{cykac}
Are the depth $2$ irreducible finite index cyclic subfactor, exactly the cyclic group subfactors?
\end{problem}
The answer could be \textbf{no} because the following fusion ring (discovered by the author\footnote{http://mathoverflow.net/q/132866/34538}), the first known to be simple integral and non-trivial, \textit{could be} the Grothendieck ring of a maximal Kac algebra (see Definition \ref{maxkac}) of dimension $210$ and type $(1,5,5,5,6,7,7)$. 

\begin{scriptsize}
$$\begin{smallmatrix}
1 & 0 & 0 & 0& 0& 0& 0 \\
0 & 1 & 0 & 0& 0& 0& 0 \\
0 & 0 & 1 & 0& 0& 0& 0 \\
0 & 0 & 0 & 1& 0& 0& 0 \\
0 & 0 & 0 & 0& 1& 0& 0 \\
0 & 0 & 0 & 0& 0& 1& 0 \\
0 & 0 & 0 & 0& 0& 0& 1 
\end{smallmatrix}, \  
\begin{smallmatrix}
0 & 1 & 0 & 0& 0& 0& 0 \\
1 & 1 & 0 & 1& 0& 1& 1 \\
0 & 0 & 1 & 0& 1& 1& 1 \\
0 & 1 & 0 & 0& 1& 1& 1 \\
0 & 0 & 1 & 1& 1& 1& 1 \\
0 & 1 & 1 & 1& 1& 1& 1 \\
0 & 1 & 1 & 1& 1& 1& 1 
\end{smallmatrix}, \  
\begin{smallmatrix}
0 & 0 & 1 & 0& 0& 0& 0 \\
0 & 0 & 1 & 0& 1& 1& 1 \\
1 & 1 & 1 & 0& 0& 1& 1 \\
0 & 0 & 0 & 1& 1& 1& 1 \\
0 & 1 & 0 & 1& 1& 1& 1 \\
0 & 1 & 1 & 1& 1& 1& 1 \\
0 & 1 & 1 & 1& 1& 1& 1 
\end{smallmatrix}, \  
\begin{smallmatrix}
0 & 0 & 0 & 1& 0& 0& 0 \\
0 & 1 & 0 & 0& 1& 1& 1 \\
0 & 0 & 0 & 1& 1& 1& 1 \\
1 & 0 & 1 & 1& 0& 1& 1 \\
0 & 1 & 1 & 0& 1& 1& 1 \\
0 & 1 & 1 & 1& 1& 1& 1 \\
0 & 1 & 1 & 1& 1& 1& 1 
\end{smallmatrix}, \  
\begin{smallmatrix}
0 & 0 & 0 & 0& 1& 0& 0 \\
0 & 0 & 1 & 1& 1& 1& 1 \\
0 & 1 & 0 & 1& 1& 1& 1 \\
0 & 1 & 1 & 0& 1& 1& 1 \\
1 & 1 & 1 & 1& 1& 1& 1 \\
0 & 1 & 1 & 1& 1& 2& 1 \\
0 & 1 & 1 & 1& 1& 1& 2 
\end{smallmatrix}, \  
\begin{smallmatrix}
0 & 0 & 0 & 0& 0& 1& 0 \\
0 & 1 & 1 & 1& 1& 1& 1 \\
0 & 1 & 1 & 1& 1& 1& 1 \\
0 & 1 & 1 & 1& 1& 1& 1 \\
0 & 1 & 1 & 1& 1& 2& 1 \\
1 & 1 & 1 & 1& 2& 1& 2 \\
0 & 1 & 1 & 1& 1& 2& 2 
\end{smallmatrix}, \  
\begin{smallmatrix}
0 & 0 & 0 & 0& 0& 0& 1 \\
0 & 1 & 1 & 1& 1& 1& 1 \\
0 & 1 & 1 & 1& 1& 1& 1 \\
0 & 1 & 1 & 1& 1& 1& 1 \\
0 & 1 & 1 & 1& 1& 1& 2 \\
0 & 1 & 1 & 1& 1& 2& 2 \\
1 & 1 & 1 & 1& 2& 2& 1
\end{smallmatrix}$$  
\end{scriptsize}

\section{Ore's theorem for cyclic subfactor planar algebras} 
Let $\mathcal{P}$ be an irreducible finite index subfactor planar algebra. 

\begin{definition} \label{deflm}
A biprojection $b \in \mathcal{P}_{2,+}$ is lw-cyclic (resp. rw-cyclic) if $\exists u \in \mathcal{P}_{2,+}$ minimal projection such that $\langle u \rangle = b$ (resp. $\langle u,b \rangle = id$). Moreover it is called lrw-cyclic if it is both lw-cyclic and rw-cyclic.
\end{definition}
Let $(N \subseteq M)$  be an irreducible finite index subfactor, and let the intermediate subfactor $N \subseteq K \subseteq  M$.
\begin{theorem} \label{thmlm}
The biprojection $e^M_K \in \mathcal{P}_{2,+}(N \subseteq M)$ is lw-cyclic (resp. rw-cyclic) if and only if $(N \subseteq K)$ (resp. $(K \subseteq M)$) is w-cyclic.
\end{theorem}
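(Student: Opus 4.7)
The approach is to transfer a witnessing minimal projection through the Landau isomorphisms $l_K$ and $r_K$ of Corollary~\ref{landau1}, which realize $\mathcal{P}_{2,+}(N\subseteq K)$ and $\mathcal{P}_{2,+}(K\subseteq M)$ as the product corner $e^M_K\mathcal{P}_{2,+}(N\subseteq M)e^M_K$ and the coproduct corner $e^M_K*\mathcal{P}_{2,+}(N\subseteq M)*e^M_K$ respectively, both as $C^{\star}$-algebras under usual multiplication and both preserving biprojection generation $\langle\cdots\rangle$.

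For the lw-cyclic equivalence, the transfer through $l_K$ is formal. Given a minimal projection $v\in\mathcal{P}_{2,+}(N\subseteq K)$ with $\langle v\rangle=id_{N\subseteq K}$, set $u:=l_K(v)$. Since $l_K(id_{N\subseteq K})=l_K(e^K_K)=e^M_K$ one has $u\le e^M_K$, so every projection $q\le u$ automatically sits in the product corner, and the $C^{\star}$-iso property transports minimality of $v$ to minimality of $u$ in $\mathcal{P}_{2,+}(N\subseteq M)$; bracket preservation then yields $\langle u\rangle=l_K(id_{N\subseteq K})=e^M_K$, so $e^M_K$ is lw-cyclic. The converse runs through $l_K^{-1}$, using that $\langle u\rangle=e^M_K$ forces $u\preceq e^M_K$, placing $u$ in the product corner.

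For the rw-cyclic equivalence, the analogous argument with $r_K$ shows that $u_0:=r_K(w)$ satisfies $\langle u_0,e^M_K\rangle=r_K(\langle w,e_1^{K\subseteq M}\rangle)=r_K(id_{K\subseteq M})=id$ by bracket preservation, but $u_0$ is generally not minimal in $\mathcal{P}_{2,+}(N\subseteq M)$ because the coproduct corner is not closed under product subprojections. The bridge is a key lemma: for any nonzero projection $u\le u_0$ in $\mathcal{P}_{2,+}(N\subseteq M)$, $e^M_K*u*e^M_K=c\,u_0$ for some $c>0$. Granting the lemma, pick any minimal projection $u\le u_0$ (available by finite dimensionality); by Definition~\ref{gener} the biprojection $\langle u,e^M_K\rangle$ is closed under coproducts, so it contains $e^M_K*u*e^M_K=c\,u_0$, hence $u_0$, hence $\langle u_0,e^M_K\rangle=id$. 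Combined with the trivial inclusion $\langle u,e^M_K\rangle\subseteq\langle u_0,e^M_K\rangle$, equality $\langle u,e^M_K\rangle=id$ follows, proving $e^M_K$ rw-cyclic. The converse starts from a minimal $u$ with $\langle u,e^M_K\rangle=id$, sets $y:=e^M_K*u*e^M_K$ and $w:=r_K^{-1}(R(y))$; the key lemma read in reverse shows $R(y)$ is minimal in the coproduct corner, so $w$ is a minimal projection of $\mathcal{P}_{2,+}(K\subseteq M)$ with $\langle w\rangle=r_K^{-1}(\langle R(y),e^M_K\rangle)=r_K^{-1}(id)=id_{K\subseteq M}$.

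The main obstacle is the key lemma. I would prove it as follows: by Lemma~\ref{prodcoprod} and the identity $u\cdot u_0=u$ (from $u\le u_0$), $(e^M_K*u*e^M_K)\cdot u_0=e^M_K*(u\cdot u_0)*e^M_K=e^M_K*u*e^M_K$; taking adjoints gives the symmetric identity from the left, hence $u_0\cdot(e^M_K*u*e^M_K)\cdot u_0=e^M_K*u*e^M_K$. Because $w$ is minimal in $\mathcal{P}_{2,+}(K\subseteq M)$ and $r_K$ is a $C^{\star}$-iso, $u_0$ is a minimal projection of the coproduct corner $\mathcal{A}:=e^M_K*\mathcal{P}_{2,+}(N\subseteq M)*e^M_K$, so $u_0\mathcal{A}u_0=\mathbb{C}u_0$, forcing $e^M_K*u*e^M_K=c\,u_0$ for some $c\in\mathbb{R}$. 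Positivity of both sides gives $c\ge 0$, and the iterated trace identity $tr(e^M_K*u*e^M_K)=\delta^2\,tr(e^M_K)^2\,tr(u)>0$ coming from Lemma~\ref{tr(a*b)} excludes $c=0$.
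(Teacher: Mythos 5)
Your treatment of the lw-cyclic equivalence and of the forward implication for rw-cyclicity is correct and follows essentially the paper's route: transfer through $l_K$ and $r_K$, with the coproduct corner handled via the exchange relation of Lemma~\ref{prodcoprod}. Your key lemma (that $e^M_K*u*e^M_K$ is a positive scalar multiple of $u_0$ for every nonzero projection $u\le u_0$ when $u_0$ is minimal in the corner) is a clean, exact version of the paper's step ``$e^M_K*v*e^M_K\sim e^M_K*c*e^M_K$'', and concluding from $\langle u,e^M_K\rangle\ge u_0$ together with $\langle u_0\rangle=id$ lets you bypass the paper's two-sided identity $\langle e^M_K*v*e^M_K\rangle=\langle e^M_K,v\rangle$. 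That part is sound.

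The converse of the rw-cyclic equivalence, however, contains a step that fails. The key lemma does not ``read in reverse'': its proof uses essentially that $u$ lies \emph{below a minimal projection of the corner}, which is what gives $u\cdot u_0=u$ and hence $u_0yu_0=y$; for an arbitrary minimal projection $u$ of $\mathcal{P}_{2,+}(N\subseteq M)$ no such $u_0$ is available, and $R(e^M_K*u*e^M_K)$ is in general \emph{not} minimal in the coproduct corner. Concretely, take $(N\subseteq M)=(R^{S_3}\subseteq R)$ and $K=R^{\langle(12)\rangle}$, so that $\mathcal{P}_{2,+}(N\subseteq M)\simeq\mathbb{C}S_3$ and the coproduct corner is $\mathrm{span}\{\pi_V(h)\,:\,h\in\langle(12)\rangle\}\simeq\mathbb{C}[\mathbb{Z}/2]\simeq\mathbb{C}\oplus\mathbb{C}$. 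For $u=p_x$ with $x$ a generic unit vector in the $2$-dimensional irreducible representation, one computes $e^M_K*p_x*e^M_K\propto\pi_V(e)+c\,\pi_V((12))$ with $c=\langle\pi((12))x,x\rangle\in(-1,1)$, whose range projection is the unit $\pi_V(e)$ of the corner rather than one of its two minimal projections $\frac{1}{2}(\pi_V(e)\pm\pi_V((12)))$. Your $w=r_K^{-1}(R(y))$ is then the identity of $\mathcal{P}_{2,+}(K\subseteq M)$ and witnesses nothing. (The paper's own converse is also very terse here, applying $r_K^{-1}$ directly to $v$, which only makes sense after replacing $v$ by $e^M_K*v*e^M_K$; but in your write-up the burden is carried by the explicit minimality claim, and that claim is false.) To repair the converse you need a genuinely different idea for extracting a single minimal projection of $\mathcal{P}_{2,+}(K\subseteq M)$ generating $id$ from the datum $\langle r_K^{-1}(R(y))\rangle=id_{K\subseteq M}$.
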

\begin{proof}
Suppose that $(N \subseteq K)$ is w-cyclic, then there exists $a \in \mathcal{P}_{2,+}(N \subseteq K)$ minimal projection such that $\langle a \rangle =  e^K_K$, but by Corollary \ref{landau1}, $\langle l_K(a) \rangle=e^M_K$ and $u=l_K(a)$ is a minimal projection on $\mathcal{P}_{2,+}(N \subseteq M)$ because $l_K$ is an isomorphism of ${\rm C}^{\star}$-algebra.  

Conversely, if $e_K^M$ is lw-cyclic, then there exists $u \in \mathcal{P}_{2,+}$ minimal projection such that $\langle u \rangle = e_K^M$, so $$e_K^K = l_K^{-1}(e_K^M) = l_K^{-1}(\langle u \rangle) = \langle l_K^{-1}(u) \rangle$$ and $a = l_K^{-1}(u)$ is a minimal projection. \\

 Now suppose that $(K \subseteq M)$ is w-cyclic, then there exists $u \in \mathcal{P}_{2,+}(K \subseteq M)$ minimal projection such that $\langle u \rangle =id=  e^M_M$, but by Corollary \ref{landau2}, $\langle r_K(u) \rangle = e^M_M=id$ and  $r_K(u)$ is a minimal projection of $e^M_K * \mathcal{P}_{2,+}(N \subseteq M) * e^M_K $, so there exists $c \in \mathcal{P}_{2,+}(N \subseteq M)$ such that $r_K(u) = e^M_K * c * e^M_K$, and by Lemma \ref{cobi}, 
 $$e^M_K * r_K(u) * e^M_K = (e^M_K * e^M_K) * c * (e^M_K * e^M_K) =   [\delta tr(e^M_K)]^2 r_K(u)$$ so we can take $c= \frac{[M:K]}{[K:N]}r_K(u)$. Note that a positive operator on $e^M_K * \mathcal{P}_{2,+}(N \subseteq M) * e^M_K$ is of the form $[e^M_K * x * e^M_K][e^M_K * x * e^M_K]^{\star}$, and so it is also positive on $\mathcal{P}_{2,+}(N \subseteq M)$; it follows that $c= \frac{[M:K]}{[K:N]}r_K(u)$ is positive on $\mathcal{P}_{2,+}(N \subseteq M)$.   But for any $v \preceq c$ with $v$ minimal projection on $\mathcal{P}_{2,+}(N \subseteq M)$, by minimality and Theorem \ref{th}, $e^M_K * c * e^M_K \sim e^M_K * v * e^M_K$, and so by Lemma \ref{equigene}, $\langle e^M_K * v * e^M_K \rangle = \langle r_K(u) \rangle = id$. Finally, $\langle e^M_K * v * e^M_K \rangle =  \langle e^M_K , v \rangle$ because first $\langle e^M_K * v * e^M_K \rangle \le \langle e^M_K , v \rangle$, and next $e_1=e^M_N \le e^M_K$ so $v \preceq e^M_K * v * e^M_K$, moreover by Remark \ref{biproj} $\overline{v} \preceq \langle e^M_K * v * e^M_K \rangle$, but $$\overline{v} * e^M_K * v * e^M_K \succeq \overline{v} * e_1 * v * e^M_K \sim \overline{v} * v * e^M_K \succeq e_1 * e^M_K \sim e^M_K$$ by Lemma \ref{pre2}; conclusion   $v, e^M_K  \le  \langle e^M_K * v * e^M_K \rangle$, so we also have $\langle v, e^M_K \rangle \le \langle e^M_K * v * e^M_K \rangle$.  

 Conversely, if $e_K^M$ is rw-cyclic, then there exists $v \in \mathcal{P}_{2,+}$ minimal projection such that $\langle v, e_K^M \rangle = e_M^M$, so $$e_M^M = r_K^{-1}(e_M^M) = r_K^{-1}(\langle v,e_K^M \rangle) = \langle r_K^{-1}(v),r_K^{-1}(e_K^M) \rangle$$ but $r_K^{-1}(e_K^M) =  e_K^M$ is the trivial biprojection of $\mathcal{P}_{2,+}(K \subseteq M)$, so $e_M^M = \langle u \rangle $ with $u=r_K^{-1}(v)$ a minimal projection.  \end{proof}
  
  \begin{definition} \label{topdef}
The top intermediate subfactor planar algebra is the intermediate corresponding to the top interval (see Definition \ref{top}) of the biprojections lattice.
\end{definition}
 
  \begin{definition}
An irreducible finite index subfactor planar algebra is top w-cyclic if its top intermediate subfactor planar algebra is w-cyclic.
\end{definition}

\begin{lemma} \label{Topw} Top w-cyclic implies w-cyclic.
\end{lemma}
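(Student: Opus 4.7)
The plan is to unpack the definition of ``top w-cyclic'' using Theorem \ref{thmlm}, and then exploit the key property of the bottom of the top interval, namely that it lies below every maximal biprojection. I expect the proof to be short once the right observation about the lattice is made.

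First I would translate the hypothesis into a concrete statement inside $\mathcal{P}_{2,+}$. By Definition \ref{topdef}, the top intermediate corresponds to the interval $[B, id]$ where $B = \bigwedge_i B_i$ and $B_1,\dots,B_n$ are the maximal biprojections (the maximal sub-biprojections of $id$). Saying that $\mathcal{P}$ is top w-cyclic therefore means that the subfactor planar algebra associated to the intermediate $(K \subseteq M)$ corresponding to $B$ is w-cyclic. By Theorem \ref{thmlm}, this is equivalent to saying that the biprojection $B \in \mathcal{P}_{2,+}$ is rw-cyclic, i.e.\ there exists a minimal projection $u \in \mathcal{P}_{2,+}$ such that $\langle u, B\rangle = id$.

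Next I would show that such a $u$ must already generate $id$ on its own. Suppose for contradiction that $\langle u \rangle < id$. Then $\langle u \rangle$ is a proper biprojection, and since the $B_i$'s are the maximal sub-biprojections of $id$ and the biprojection lattice is finite (by \cite{wa}), any proper biprojection sits below at least one $B_i$: indeed, by iterating the passage to a minimal over-biprojection, starting from $\langle u \rangle$ one eventually reaches a maximal sub-biprojection of $id$, giving some $i$ with $\langle u \rangle \le B_i$. But $B = \bigwedge_j B_j \le B_i$ as well, so
\[
\langle u, B \rangle = \langle u \rangle \vee B \;\le\; B_i \;<\; id,
\]
contradicting the equality $\langle u, B \rangle = id$ delivered by rw-cyclicity of $B$.

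Therefore $\langle u \rangle = id$, which is exactly Definition \ref{wcy}, so $\mathcal{P}$ is w-cyclic. I do not anticipate a serious obstacle: the only non-routine ingredient is Theorem \ref{thmlm} (to convert ``the top intermediate subfactor planar algebra is w-cyclic'' into ``$B$ is rw-cyclic''), and after that the argument is purely lattice-theoretic and uses only the defining property of $B$ as the meet of the maximal biprojections.
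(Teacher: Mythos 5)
Your proof is correct and follows essentially the same route as the paper: translate top w-cyclicity into rw-cyclicity of $b=\bigwedge_i b_i$ via Theorem \ref{thmlm}, then use that $b$ lies below every maximal biprojection to conclude that the minimal projection $u$ cannot sit under any of them, hence $\langle u\rangle = id$. The only cosmetic difference is that you argue by contradiction on $\langle u\rangle < id$ while the paper argues directly that $u\not\le b_i$ for all $i$; these are the same observation.
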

\begin{proof}
Let $b_1, \dots , b_n$ be the maximal biprojections, by assumption and Theorem \ref{thmlm}, $b = \bigwedge_i b_i $ is rw-cyclic, i.e.  there is a minimal projection $c$ with $\langle b,c \rangle = id$, but if $\exists i $ such that $c \le b_i$ then $\langle b,c \rangle \le b_i$, contradiction, so $\forall i $, $c \not \le b_i$ and then $\langle c \rangle = id$. 
\end{proof}

\begin{definition}
The height $h(\mathcal{P})$ is the maximal $l$ for an ordered chain  $$e_1 < b_1 < \cdots < b_{l}=id$$ of biprojections. Note that $h(\mathcal{P})<\infty$ because the index is finite.
\end{definition}    

\begin{theorem} \label{centheo}
If the biprojections of $\mathcal{P}_{2,+}$ are central and form a distributive lattice then $\mathcal{P}$ is w-cyclic.
\end{theorem}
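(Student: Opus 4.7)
The plan is to translate Ore's group-theoretic proof of Theorem~\ref{ore2} into the $2$-box space, with the coproduct $*$ playing the role of the group product and Lemma~\ref{pfr} (weak Frobenius reciprocity) replacing the identity $g=ab$. First, by Lemma~\ref{topBn} the top interval of the biprojection lattice is boolean, and by Lemma~\ref{Topw} it suffices to produce the desired minimal projection inside the top intermediate subfactor planar algebra; via Corollary~\ref{landau1} together with Lemma~\ref{prodcoprod} that intermediate inherits centrality of biprojections, and via Lemma~\ref{distri} it inherits distributivity. So I reduce to the case where the entire biprojection lattice is $\mathcal{B}_n$ and induct on $n$. The base $n\le 1$ is handled by Theorem~\ref{mini} applied to any minimal central projection not lying below $e_1$, which must exist because $e_1<id$.

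For the inductive step, fix a coatom $B$ and let $B^\complement$ be its boolean complement, an atom. The induction hypothesis applied to the intermediate subfactor planar algebra whose biprojection lattice is $[e_1,B]\cong\mathcal{B}_{n-1}$, combined with Theorem~\ref{thmlm} and the equivalence inside Definition~\ref{wcy}, yields a minimal projection $a_0$ with $\langle a_0\rangle=B$; its central support $a=c(a_0)$ is then a minimal central projection still satisfying $\langle a\rangle=B$ (since the central biprojection $B$ dominates $c(a_0)$). Similarly the rank-$1$ lattice $[e_1,B^\complement]$ provides a minimal central projection $b$ with $\langle b\rangle=B^\complement$. By Theorem~\ref{th}(1) the coproduct $a*b$ is strictly positive; pick any minimal projection $u\preceq a*b$.

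The key step is to show $\langle u,a\rangle=\langle u,b\rangle=id$. Lemma~\ref{pfr} applied to $u\preceq a*b$ produces a projection $a'\preceq u*\overline b$ with $a\cdot a'\ne 0$. Since $\langle u,b\rangle$ is a biprojection containing $b$ and therefore $\overline b$ (Remark~\ref{biproj}), Theorem~\ref{th}(2) yields $u*\overline b\preceq\langle u,b\rangle*\langle u,b\rangle\sim\langle u,b\rangle$, hence $a'\le\langle u,b\rangle$. The centrality hypothesis now enters decisively: $\langle u,b\rangle$ is central, so $a\cdot\langle u,b\rangle$ is a central subprojection of the minimal central projection $a$; combined with $a\cdot a'\le a\cdot\langle u,b\rangle$ and $aa'\ne 0$, minimality forces $a\cdot\langle u,b\rangle=a$, giving $B=\langle a\rangle\le\langle u,b\rangle$. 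Together with $\langle b\rangle=B^\complement\le\langle u,b\rangle$ this yields $\langle u,b\rangle=id$, and symmetrically $\langle u,a\rangle=id$. Distributivity then closes the argument: using $e_1=B\wedge B^\complement$ and the obvious identity $\langle u,x\rangle=\langle u\rangle\vee\langle x\rangle$,
\[
\langle u\rangle=\langle u\rangle\vee e_1=(\langle u\rangle\vee B)\wedge(\langle u\rangle\vee B^\complement)=\langle u,a\rangle\wedge\langle u,b\rangle=id.
\]

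The step I expect to be the main obstacle, and where the centrality hypothesis is genuinely used, is the passage from ``$aa'\ne 0$'' to ``$a\le\langle u,b\rangle$''. Without centrality of both $a$ (as a minimal central projection) and $\langle u,b\rangle$ (as a biprojection), the relation $aa'\ne 0$ only records that $a$ and $a'$ share a common minimal subprojection in some factor, which is strictly weaker than $a\le\langle u,b\rangle$. This is precisely the obstruction one must circumvent when weakening the centrality hypothesis, and explains why the purely distributive case has to be approached differently, as in Theorem~\ref{le2intro}.
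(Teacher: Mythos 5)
Your proposal is correct and follows essentially the same route as the paper's proof: reduce to the boolean case via Lemmas~\ref{topBn} and~\ref{Topw}, induct to make a complementary pair of biprojections lw-cyclic, pick a minimal projection under the coproduct of the two generators, use Lemma~\ref{pfr} plus centrality to show each of $\langle u,a\rangle$, $\langle u,b\rangle$ equals $id$, and close with the distributive law. The only cosmetic difference is that you work with minimal \emph{central} generators from the outset, while the paper keeps minimal projections and passes through central supports ($Z(u')=Z(u)$ from $uu'\neq 0$); the two devices are interchangeable here.
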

\begin{proof} 
By Lemma \ref{c*} we can apply an induction on $h(\mathcal{P})$.  If $h(\mathcal{P})=1$ then the subfactor is maximal, so $\langle u \rangle = id$ $\forall u \in \mathcal{P}_{2,+}$ minimal projection with $u \neq e_1$. Now suppose it's true for $h(\mathcal{P})<n$, we will prove it for $h(\mathcal{P})=n \ge 2 $. By  Lemmas \ref{topBn} and \ref{Topw} we can assume the biprojections lattice to be boolean.  \\

Let $b$ be a biprojection with $e_1 < b < id$, and $b^{\complement}$ its complementary (i.e. $b \wedge b^{\complement} = e_1$ and $b \vee b^{\complement} = id$, see Definition \ref{comp}), then $e_1 < b^{\complement} < id$, so by induction and Theorem \ref{thmlm}, $b$ and $b^{\complement}$ are lw-cyclic, i.e. there are minimal projections $u, v$  such that $b = \langle u \rangle$ and  $b^{\complement} = \langle v \rangle$. Take any minimal projection $c \preceq u * v$ , then $\langle c \rangle = \langle c \rangle \vee e_1 = \langle c \rangle \vee (b \wedge b^{\complement})$  but \textit{by distributivity} $$\langle c \rangle = \langle c \rangle \vee (\langle u \rangle \wedge \langle v \rangle) = (\langle c \rangle \vee \langle u \rangle) \wedge (\langle c \rangle \vee \langle v \rangle) = \langle c,u \rangle \wedge \langle c , v \rangle$$
So by Lemma \ref{pfr}, $\langle c \rangle = \langle u',c,v \rangle \wedge \langle u,c,v' \rangle$ with $u', v'$  minimal projections and $u  u', v  v' \neq 0$, so in particular the central support $Z(u') = Z(u)$ and $Z(v') = Z(v)$. Now by assumption all the biprojections are central, so $u \le Z(u') \le \langle u',c,v \rangle$ and $v \le Z(v')  \le \langle u,c,v' \rangle$, then $\langle c \rangle =id$.  \end{proof} 

The following is the main theorem of the paper: 
\
\begin{theorem} \label{thm}
If $\mathcal{P}$ is cyclic then it is w-cyclic.
\end{theorem}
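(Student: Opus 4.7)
The plan is to derive Theorem~\ref{thm} as an immediate corollary of Theorem~\ref{centheo}. Recall from Definition~\ref{normal} that a biprojection $b$ is \emph{normal} precisely when both $b$ and its Ocneanu--Fourier transform $\mathcal{F}(b)$ are central (``bicentral''). In particular, normality implies centrality of $b$ itself.  Hence if $\mathcal{P}$ is cyclic, i.e.\ every biprojection is normal and the biprojection lattice is distributive, then every biprojection is central and the lattice is distributive, which is exactly the hypothesis of Theorem~\ref{centheo}.

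Applying Theorem~\ref{centheo} directly produces a minimal projection $u \in \mathcal{P}_{2,+}$ with $\langle u \rangle = \mathrm{id}$, which by Definition~\ref{wcy} is precisely the w-cyclicity of $\mathcal{P}$. So the proof is a one-line invocation: $\text{cyclic} \Rightarrow \text{central biprojections} + \text{distributive} \Rightarrow \text{w-cyclic}$ by Theorem~\ref{centheo}.

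Since there is no real obstacle left to overcome here, it is worth emphasising where the actual difficulty was hidden (in Theorem~\ref{centheo}): the induction on the height $h(\mathcal{P})$ using Lemma~\ref{c*} (stability under intermediate and dual), the reduction to the boolean top interval via Lemmas~\ref{topBn} and~\ref{Topw}, the distributive manipulation $\langle c \rangle = \langle c,u\rangle \wedge \langle c,v\rangle$ on a minimal sub-projection $c \preceq u*v$, the application of the weak Frobenius reciprocity of Lemma~\ref{pfr} to extract $u',v'$ with $uu',vv' \neq 0$, and the final step $u \le Z(u) = Z(u') \le \langle u'\rangle \le \langle u',c,v\rangle$ where the centrality of the biprojection $\langle u'\rangle$ is used in an essential way. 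Thus the content of the main theorem really lives in Theorem~\ref{centheo}; the Fourier-side centrality of $\mathcal{F}(b)$ demanded by the definition of ``cyclic'' is, in fact, not needed for the conclusion, and the statement of Theorem~\ref{thm} is essentially a repackaging of Theorem~\ref{centheo} in the clean language of cyclic planar algebras.
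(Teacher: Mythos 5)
Your proposal is correct and is exactly the paper's own argument: Theorem \ref{thm} is deduced in one line from Theorem \ref{centheo} by observing that a normal (bicentral) biprojection is in particular central, so the cyclic hypothesis implies the hypotheses of Theorem \ref{centheo}. Your additional commentary on where the real work resides accurately reflects the structure of the paper's proof of Theorem \ref{centheo}.
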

\begin{proof} Immediate by Theorem \ref{centheo} because a normal biprojection is  by definition bicentral, so a fortiori central.
\end{proof}
Recall that the converse is not true because the group $S_3$ is linearly primitive but not cyclic.
\begin{problem} What's the natural additional assumption (A) such that $\mathcal{P}$ is cyclic if and only if it is w-cyclic and (A)?
\end{problem}

\begin{definition}
The planar algebra $\mathcal{P}$ is called w$^{\star}$-cyclic if all the biprojections of $\mathcal{P}_{2,\pm}$ are lrw-cyclic (i.e. lw-cyclic and rw-cyclic).
\end{definition}

\begin{corollary} Cyclic implies w$^{\star}$-cyclic.
\end{corollary}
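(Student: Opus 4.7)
The plan is to deduce this as a direct consequence of Theorem \ref{thm}, Lemma \ref{c*}, Bisch's Theorem \ref{bisch}, and Theorem \ref{thmlm}. The strategy is that ``cyclic'' is a hereditary property with respect to taking intermediates and duals, so every biprojection gives rise to two cyclic intermediate planar algebras, each of which is w-cyclic by the main theorem.

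Concretely, let $\mathcal{P} = \mathcal{P}(N \subseteq M)$ be cyclic and fix a biprojection $b \in \mathcal{P}_{2,+}$. By Theorem \ref{bisch}, $b = e^M_K$ for some intermediate subfactor $N \subseteq K \subseteq M$. By Lemma \ref{c*}, both intermediate subfactors $(N \subseteq K)$ and $(K \subseteq M)$ are cyclic, so by Theorem \ref{thm} (the main theorem) each of them is w-cyclic. Applying the two directions of Theorem \ref{thmlm} (w-cyclicity of $(N \subseteq K)$ giving lw-cyclicity of $e^M_K$, and w-cyclicity of $(K \subseteq M)$ giving rw-cyclicity of $e^M_K$), we conclude that $b$ is lrw-cyclic in $\mathcal{P}_{2,+}$.

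For a biprojection in $\mathcal{P}_{2,-}$, we pass to the dual subfactor $(M \subseteq M_1)$: biprojections of $\mathcal{P}_{2,-}(N \subseteq M)$ are identified, via the basic construction, with biprojections of $\mathcal{P}_{2,+}(M \subseteq M_1)$, and the latter planar algebra is again cyclic by Lemma \ref{c*} (which asserts cyclicity is preserved under taking the dual). Hence the same argument, applied to $(M \subseteq M_1)$ in place of $(N \subseteq M)$, shows that every biprojection of $\mathcal{P}_{2,-}$ is lrw-cyclic.

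There is no real obstacle here: the whole content lies in Theorem \ref{thm} together with the self-duality and hereditariness of the cyclic property. The only minor point to check is that applying Lemma \ref{c*} to the intermediates of a cyclic subfactor really does preserve both the Dedekind property (that all biprojections are normal) and the distributivity of the biprojection lattice, but this is exactly what Lemma \ref{c*} records, using Lemma \ref{distri} for distributivity and Lemma \ref{prodcoprod} (through Corollary \ref{landau1}) for normality of biprojections in intermediate planar algebras.
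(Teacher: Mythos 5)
Your proof is correct and is exactly the intended unpacking of the paper's one-line argument ("Immediate by Theorem \ref{thm} and Lemma \ref{c*}"): you identify each biprojection with an intermediate subfactor via Theorem \ref{bisch}, invoke the hereditariness and self-duality of the cyclic property (Lemma \ref{c*}) together with the main Theorem \ref{thm}, and translate back through Theorem \ref{thmlm}. Nothing is missing; your version simply makes explicit the roles of Theorems \ref{bisch} and \ref{thmlm} that the paper leaves implicit.
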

\begin{proof} Immediate by Theorem \ref{thm} and Lemma \ref{c*}.
\end{proof}

\begin{remark} The converse is also not true, because as first observed by Z. Liu,  $(R^{S_4} \subset R^{S_2})$ is w$^{\star}$-cyclic but not cyclic.
\end{remark}

Note that about the depth $2$ case, $(R^{S_3} \subset R)$ is w-cyclic, not cyclic, and also not w$^{\star}$-cyclic because its dual $(R \subset R \rtimes S_3)$ is not w-cyclic, because $S_3$ is not cyclic (see Corollary \ref{wgrp}).
\begin{question} Is cyclic equivalent to w$^{\star}$-cyclic for the depth $2$ case?
\end{question}

\begin{definition} A subfactor planar algebra is called $w_{+}$-cyclic if the maximal biprojections $b_1, \dots , b_n$  satisfy $(\sum_i b_i)^{\perp} \neq 0$. \end{definition}

\begin{lemma} \label{w+w} The property $w_{+}$-cyclic implies $w$-cyclic.
\end{lemma}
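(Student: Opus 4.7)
The plan is to extract, from the hypothesis, a minimal projection $u \in \mathcal{P}_{2,+}$ that is orthogonal to every maximal biprojection, and then to deduce that the biprojection $\langle u \rangle$ it generates must be $id$. The strategy is purely lattice-theoretic once the orthogonality is in hand: a biprojection not below any coatom of the biprojection lattice must be the top.

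First I would set $q := (\sum_i b_i)^{\perp}$, which is a nonzero projection by the $w_{+}$-cyclic assumption. Since each $b_i$ is dominated by the range projection of $\sum_j b_j$, we get $b_i \cdot q = 0$ for every $i$. Using finite-dimensionality of $\mathcal{P}_{2,+}$ as a $\mathrm{C}^\star$-algebra, $q$ dominates some minimal projection $u$, and then $u \cdot b_i = 0$ for every $i$. In particular $u \not\le b_i$: if $u \le b_i$ we would get $u = u \cdot b_i = 0$, contradicting $u \neq 0$.

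Next I would lift this to $\langle u \rangle$. By Definition \ref{gener}, $\langle u \rangle$ is a biprojection with $\langle u \rangle \succeq u$, and since $u$ is itself a projection this means $u \le \langle u \rangle$ as projections. Consequently $\langle u \rangle \le b_i$ would force $u \le b_i$, which we just excluded. Hence $\langle u \rangle \not\le b_i$ for any $i$.

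Finally, I would invoke the finiteness of the biprojection lattice (Theorem \ref{bisch} combined with \cite{wa}) together with the observation that the $b_i$ are, by definition, the coatoms of this lattice. In a finite lattice every element strictly below the top is dominated by some coatom; applied here, $\langle u \rangle < id$ would force $\langle u \rangle \le b_i$ for some $i$, which is impossible. Therefore $\langle u \rangle = id$, so the minimal projection $u$ witnesses that $\mathcal{P}$ is w-cyclic. I do not expect any real obstacle: the only point worth double-checking is the orthogonality $b_i \cdot u = 0$, which follows from the elementary fact that in a von Neumann algebra the range projection of $\sum_j b_j$ dominates each summand $b_i$.
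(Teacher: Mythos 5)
Your proposal is correct and follows exactly the paper's own (one-line) argument: take a minimal projection $u$ below $(\sum_i b_i)^{\perp}$, note $u \not\le b_i$ for every $i$, and conclude $\langle u \rangle = id$ since the $b_i$ are the coatoms of the finite biprojection lattice. You have merely filled in the routine details (existence of $u$ via finite-dimensionality, the orthogonality $b_i \cdot u = 0$, and the coatom argument), all of which are sound.
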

\begin{proof}
Let $u \le (\sum_i b_i)^{\perp}$ minimal projection, then $\langle u \rangle = id$ because $u \not \le b_i \ \forall i$.
\end{proof}

The converse is false  because $(R^{S_3} \subset R)$ is w-cyclic but not $w_{+}$-cyclic.

\begin{proposition} For a Dedekind subfactor planar algebra $w_+$-cyclic is equivalent to w-cyclic.
\end{proposition}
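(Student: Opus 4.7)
The proposition asserts an equivalence, one direction of which—$w_+$-cyclic implies $w$-cyclic—is exactly Lemma \ref{w+w} and holds without any Dedekind hypothesis. So the plan is to establish only the converse: assuming $\mathcal{P}$ is Dedekind and $w$-cyclic, produce a nonzero projection below $(\sum_i b_i)^{\perp}$, where $b_1,\dots,b_n$ are the maximal biprojections.

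Let $u \in \mathcal{P}_{2,+}$ be a minimal projection witnessing $w$-cyclicity, so $\langle u \rangle = id$. Because $\mathcal{P}$ is Dedekind, every maximal biprojection $b_i$ is normal, and in particular central in $\mathcal{P}_{2,+}$. Then for each $i$ the product $u b_i = b_i u$ is a projection dominated by $u$, and by minimality of $u$ it equals either $u$ or $0$. The first alternative forces $u \le b_i$, hence $\langle u \rangle \le b_i < id$, contradicting $\langle u \rangle = id$. Therefore $u b_i = 0$, i.e.\ $u \le b_i^{\perp}$, for every $i$.

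Since the $b_i$ are pairwise commuting projections, $\prod_i b_i^{\perp}$ is a projection, equal to $\bigwedge_i b_i^{\perp} = \bigl(\bigvee_i b_i\bigr)^{\perp}$, where $\bigvee_i b_i$ (the join in the projection lattice of the von Neumann algebra $\mathcal{P}_{2,+}$) coincides with the range projection of the positive operator $\sum_i b_i$. Hence $(\sum_i b_i)^{\perp} \ge u > 0$, so $\mathcal{P}$ is $w_+$-cyclic.

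The argument is essentially immediate once the Dedekind hypothesis is invoked to make the maximal biprojections central; the only point that requires a moment of care is the interpretation of $(\sum_i b_i)^{\perp}$ as the orthogonal complement of the range projection of the (not itself projection-valued) operator $\sum_i b_i$, which in the commuting case is the standard join-of-projections $\bigvee_i b_i$. No serious obstacle is anticipated.
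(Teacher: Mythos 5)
Your proof is correct and takes essentially the same route as the paper's: both use the Dedekind hypothesis to make the maximal biprojections central, then combine centrality with minimality of the generating projection to conclude it is orthogonal to every $b_i$ and hence lies under $(\sum_i b_i)^{\perp}$. The only cosmetic difference is that the paper phrases the conclusion via the central support $Z(c)$, while you work with the minimal projection $u$ directly.
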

\begin{proof} The first implication is true in general by Lemma \ref{w+w}.  
For the other implication, just observe that ``w-cyclic'' is equivalent to ``$\exists c$ minimal projection such that $\forall i \ c \not \preceq b_i$'', then by Dedekind assumption we get $\forall i \ Z(c)b_i = 0$, so $Z(c) \preceq (\sum_i b_i)^{\perp}$.
\end{proof}

It follows that cyclic implies $w_{+}$-cyclic, but the converse is also false. Let $Q$ be the quaternion group, it is linearly primitive and Dedekind, so $(R^Q \subset R)$ is w-cyclic Dedekind and so $w_{+}$-cyclic, but not cyclic; nevertheless it is also not $w^{*}$-cyclic, and $(R^{S_4} \subset R^{S_2})$ is not Dedekind. 

\begin{question}
For a Dedekind subfactor planar algebra, is $w^*$-cyclic equivalent to distributive?
\end{question}

\section{Extensions to distributive subfactor planar algebras}  \label{distex}
In this section, we describe several extensions of Theorem \ref{thm}.
\subsection{The boolean subfactor planar algebras} \label{B_n} \hspace*{1cm} \\
The subfactor planar algebras are supposed irreducible and finite index.\\
By Lemma \ref{topBn}, the top lattice of a distributive lattice is boolean.

\begin{definition}
A subfactor planar algebra is called boolean (resp. $\mathcal{B}_n$) if its biprojections lattice is boolean (resp. $\mathcal{B}_n$).
\end{definition}

\begin{lemma} \label{maximal} A maximal subfactor planar algebra (i.e. $\mathcal{B}_1)$ is w-cyclic
\end{lemma}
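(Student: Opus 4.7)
The plan is to exhibit a minimal projection $u \in \mathcal{P}_{2,+}$ with $\langle u \rangle = id$, using only the fact that in the $\mathcal{B}_1$ case the biprojection lattice has exactly two elements, $e_1$ and $id$.

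First I would observe that $\mathcal{P}_{2,+}$ is a finite-dimensional ${\rm C}^{\star}$-algebra (part of the axioms of a subfactor planar algebra, together with irreducibility), so every nonzero projection in it decomposes as an orthogonal sum of minimal projections. Since the biprojection lattice is nontrivial ($e_1 < id$), the projection $e_1^{\perp} = id - e_1$ is nonzero, hence admits a minimal subprojection $u \le e_1^{\perp}$ in $\mathcal{P}_{2,+}$.

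Next I would verify that this $u$ generates the identity biprojection. As $u \le e_1^{\perp}$ and $u \neq 0$, we have $u \cdot e_1 = 0 \neq u$, i.e.\ $u \not\preceq e_1$. By Definition \ref{gener}, $\langle u \rangle$ is a biprojection with $\langle u \rangle \succeq u$, so $\langle u \rangle \neq e_1$. Since the biprojection lattice consists of exactly the two elements $e_1$ and $id$, we must have $\langle u \rangle = id$; by Definition \ref{wcy}, $\mathcal{P}$ is w-cyclic.

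There is no real obstacle here: the only subtlety is to pick $u$ below $e_1^{\perp}$ rather than an arbitrary minimal projection of $\mathcal{P}_{2,+}$, since a minimal projection sitting under $e_1$ would generate only $e_1$.
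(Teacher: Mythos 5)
Your proof is correct and is essentially the paper's argument: the paper simply notes that by maximality $\langle u\rangle = id$ for any minimal projection $u \neq e_1$, which is the same two-element-lattice observation you make (your choice of $u \le e_1^{\perp}$ just makes explicit why such a $u$ exists and cannot generate $e_1$).
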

\begin{proof}
By maximality $\langle u \rangle = id$ for any minimal projection $u \neq e_1$.
\end{proof}

\begin{proposition} \label{l<} A subfactor planar algebra with the maximal biprojections $b_1, \dots, b_n$  satisfying $\sum_i \frac{1}{[id:b_i]} \le 1$, is w-cyclic.
 \end{proposition}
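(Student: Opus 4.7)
My plan is to reach the (stronger) conclusion that $\mathcal{P}$ is $w_+$-cyclic, after which w-cyclicity follows from Lemma \ref{w+w}. Writing $P \in \mathcal{P}_{2,+}$ for the projection-theoretic supremum $\bigvee_i b_i$ inside the finite-dimensional von Neumann algebra $\mathcal{P}_{2,+}$ (equivalently, the range projection of $\sum_i b_i$), one has $(\sum_i b_i)^{\perp} = id - P$, so it suffices to prove $P \ne id$.

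To bound $tr(P)$ I would use Kaplansky's parallelogram identity $tr(p \vee q) + tr(p \wedge q) = tr(p) + tr(q)$, valid for projections in any finite von Neumann algebra. Iterating gives
$$tr(P) \;\le\; \sum_i tr(b_i) \;=\; \sum_i \tfrac{1}{[id:b_i]} \;\le\; 1$$
by hypothesis. Since the trace on $\mathcal{P}_{2,+}$ is faithful, strict inequality $tr(P) < 1 = tr(id)$ already forces $P \ne id$.

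The only slightly delicate point is the boundary case $\sum_i tr(b_i) = 1$. If $n = 1$, then $tr(b_1) = 1$ forces $b_1 = id$, contradicting that $b_1$ is a proper (maximal) biprojection. If $n \ge 2$, I exploit Remark \ref{biproj}: every biprojection dominates $e_1$, so $b_1 \wedge b_2 \ge e_1$ in the projection lattice, and Kaplansky sharpens to $tr(b_1 \vee b_2) \le tr(b_1) + tr(b_2) - tr(e_1)$. Iterating this across the remaining $b_i$'s gives $tr(P) \le \sum_i tr(b_i) - (n-1) tr(e_1) < 1$ since $tr(e_1) = 1/\delta^2 > 0$. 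In every case, therefore, $P \ne id$.

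With $P \ne id$ established, the conclusion is one line, essentially reproducing the argument of Lemma \ref{w+w}: any minimal projection $u \le P^{\perp} = (\sum_i b_i)^{\perp}$ satisfies $u \cdot b_i = 0$ for all $i$ (as $b_i \le P$ and $u \le P^{\perp}$), so $u \not\le b_i$ for any $i$, and hence $\langle u \rangle$ is not contained in any maximal biprojection, forcing $\langle u \rangle = id$. The main (minor) obstacle is really only arranging the trace estimate to be strict; that is exactly where the structural fact $e_1 \le b$ for every biprojection $b$ does the nontrivial work, ruling out equality in Kaplansky's inequality.
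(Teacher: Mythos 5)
Your proof is correct, and it pursues the same overall strategy as the paper: both establish the stronger conclusion that $(\sum_i b_i)^{\perp}\neq 0$ (i.e.\ $w_{+}$-cyclicity) by a trace estimate, and then conclude exactly as in Lemma \ref{w+w}. The two arguments diverge in how the trace estimate is obtained. The paper argues that if $\sum_i b_i \sim id$ then $\sum_i b_i \ge id$, deduces $\sum_i b_i = id$ from $\sum_i tr(b_i)\le tr(id)$, and gets a contradiction from $ne_1 \le id$ with $n>1$ (the reduction to $n>1$ going through Lemmas \ref{Topw} and \ref{maximal}). You instead bound $tr\bigl(\bigvee_i b_i\bigr)$ directly by Kaplansky's parallelogram law, sharpened by the observation that $e_1 \le b_i \wedge b_j$, obtaining $tr\bigl(\bigvee_i b_i\bigr) \le \sum_i tr(b_i) - (n-1)tr(e_1) < 1$ for $n \ge 2$, with $n=1$ handled by faithfulness of the trace. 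Your route is the more robust of the two: the implication ``range projection equal to $id$ implies $\sum_i b_i \ge id$'' is not valid for arbitrary finite families of projections (a sum of two projections can have full range while having an eigenvalue strictly below $1$), so the paper's version needs a justification it does not supply, whereas the parallelogram-law computation is airtight and makes explicit that the only structural input is $e_1\le b$ for every biprojection (Remark \ref{biproj}) --- the same fact the paper's argument ultimately relies on. The price is invoking one extra standard fact about traces on finite von Neumann algebras; what it buys is a self-contained, strictly quantitative proof that also dispenses with the preliminary reduction to $n>1$.
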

 \begin{proof} First, by Lemma \ref{Topw}  and Lemma \ref{maximal}, we can assume $n>1$. \\  By Definition $[id : b_i] = \frac{tr(id)}{tr(b_i)}$ so by assumption $ \sum_i tr(b_i) \le tr(id)$. \\ If $\sum_i b_i \sim id$ then $\sum_i b_i \ge id$  but $ \sum_i tr(b_i) \le tr(id)$ so  $ \sum_i b_i = id$. Now $\forall i \ e_1 \le b_i$, so $ne_1 \le \sum_i b_i = id$, contradiction with $n>1$. \\ So $\sum_i b_i \prec  id$, which implies the existence of a minimal projection $u \not \le b_i$ $\forall i $, which means that $\langle u \rangle = id$.   
\end{proof}

\begin{remark} The converse of Proposition \ref{l<} is not true, $(R \subset R \rtimes \mathbb{Z}/30)$ is a counter-example, because $1/2+1/3+1/5 = 31/30 >1$. \end{remark}
\begin{corollary} \label{two} If $\mathcal{P}$ admits at most two maximal biprojections then it is  w-cyclic.
 \end{corollary}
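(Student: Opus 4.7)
The plan is to reduce the statement to Proposition \ref{l<} by observing that the hypothesis ``at most two maximal biprojections'' is strong enough to force the inequality $\sum_i \tfrac{1}{[id:b_i]} \le 1$ automatically. First I would dispense with the degenerate case in which there are no maximal biprojections at all: this can only happen when $e_1 = id$, i.e.\ when $\mathcal{P}$ is the trivial planar algebra, in which case w-cyclicity is vacuous (or follows from Lemma \ref{maximal} once we note there is nothing to check).

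Next I would treat the remaining cases $n = 1$ and $n = 2$ uniformly. The key observation is that every maximal biprojection $b_i$ satisfies $b_i < id$ properly, so under the identification of Definition \ref{inter} with the corresponding intermediate subfactor $N \subseteq K_i \subsetneq M$, Jones' index theorem gives $[id:b_i] = [M:K_i] \ge 2$, since $2$ is the smallest value in $\{4\cos^2(\pi/n) : n\ge 3\} \cup [4,\infty]$ strictly larger than $1$. Consequently
\[
\sum_{i=1}^{n} \frac{1}{[id:b_i]} \;\le\; \frac{n}{2} \;\le\; 1
\]
whenever $n \le 2$, so the hypothesis of Proposition \ref{l<} is met and $\mathcal{P}$ is w-cyclic.

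There is essentially no obstacle; the only subtle point is making sure one invokes the right lower bound $[id:b_i] \ge 2$ rather than the trivial $[id:b_i] > 1$ (which would only yield $\sum_i 1/[id:b_i] < 2$, insufficient for Proposition \ref{l<}). Alternatively, for $n = 1$ one could bypass the index bound entirely and just observe that any minimal projection $u \not\preceq b_1$ must satisfy $\langle u \rangle = id$ by maximality of $b_1$, which is even quicker; but uniformly invoking Proposition \ref{l<} is cleaner and highlights Corollary \ref{two} as simply the first nontrivial instance of that proposition.
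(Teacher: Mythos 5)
Your proposal is correct and is essentially identical to the paper's own proof: the paper likewise bounds $\sum_i \frac{1}{[id:b_i]} \le \frac{1}{2}+\frac{1}{2} = 1$ using the fact (noted after Lemma \ref{cobi}) that $[id:b] \ge 2$ for any proper biprojection, and then invokes Proposition \ref{l<}. Your extra care with the degenerate cases $n=0,1$ is harmless but not needed.
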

\begin{proof} $\sum_i \frac{1}{ [id : b_{i}] } \le 1/2+1/2$, the result follows by Proposition \ref{l<}. \end{proof}
\begin{examples} All the $\mathcal{B}_2$ subfactor planar algebras are w-cyclic. 
$$\begin{tikzpicture}
\node (A1) at (0,0) {\small $id$};
\node (A2) at (-1,-1) {\small $b_1$};
\node (A4) at (1,-1) {\small  $b_2$};
\node (A5) at (0,-2) {\small $ e_1$};
\tikzstyle{segm}=[-,>=latex, semithick]
\draw [segm] (A1)to(A2); \draw [segm] (A1)to(A4);
\draw [segm] (A2)to(A5);\draw [segm] (A4)to(A5);
\end{tikzpicture}$$ 
  \end{examples}

 \begin{lemma} \label{wmin} 
 Let $u,v \in \mathcal{P}_{2,+}$ be minimal projections. If $v \not \le \langle u \rangle$ then $\exists c \preceq u * v$ and $\exists w \preceq \overline{u} * c$ minimal projections such that $w \not \le \langle u \rangle$.
  \end{lemma}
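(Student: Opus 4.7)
The plan is to argue by contradiction: set $b := \langle u \rangle$ and assume that for every minimal projection $c \preceq u*v$, every minimal projection $w \preceq \overline{u}*c$ satisfies $w \le b$. My goal will then be to derive $v \le b$, contradicting the hypothesis $v \not\le \langle u \rangle$.

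First I would exploit finite-dimensionality of $\mathcal{P}_{2,+}$: the range projection $R(x)$ of any positive operator $x$ decomposes as a sum of mutually orthogonal minimal projections, each of them being $\preceq x$. Hence the implication ``every minimal $w \preceq x$ is $\le b$'' $\Rightarrow$ $R(x) \le b$, i.e.\ $x \preceq b$. Applied to $x = \overline{u}*c$, the contradiction hypothesis yields $\overline{u}*c \preceq b$ for every minimal $c \preceq u*v$.

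Next I would extend this control from the minimal pieces to $u*v$ itself. Decompose $R(u*v) = \sum_j c_j$ into mutually orthogonal minimal projections; each $c_j \preceq u*v$, so $\overline{u}*c_j \preceq b$ by the previous step. Bilinearity of the coproduct gives $\overline{u}*R(u*v) = \sum_j \overline{u}*c_j$, and since the range of a sum of positive operators is the join of the individual ranges (all of which are $\le b$), we get $\overline{u}*R(u*v) \preceq b$. Finally, since $u*v \sim R(u*v)$, Theorem \ref{th}(2) yields $\overline{u}*(u*v) \preceq \overline{u}*R(u*v) \preceq b$.

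To conclude, I would invoke Lemma \ref{2} (which gives $e_1 \preceq \overline{u}*u$) and Theorem \ref{th}(2) to obtain $e_1*v \preceq (\overline{u}*u)*v = \overline{u}*u*v$ by associativity of $*$. A short application of Lemma \ref{2coprod}, combined with $\mathcal{F}(e_1) = \delta^{-1}\,id$ and $\overline{e_1} = e_1$ (cf.\ Remark \ref{biproj}), shows $e_1*v = \delta^{-1}v$, whence $v \sim e_1*v \preceq b$, i.e.\ $v \le b$, the required contradiction. The main subtle point I anticipate is the very first step, namely the passage from pointwise (minimal-projection-wise) control to the global statement $x \preceq b$; it is routine but rests essentially on the finite-dimensional C$^\star$-algebra structure of $\mathcal{P}_{2,+}$. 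Everything after that is bookkeeping with associativity, bilinearity of $*$, and Theorem \ref{th}(2).
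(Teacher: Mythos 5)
Your proposal is correct and follows essentially the same route as the paper: assume the negation, decompose $u*v$ (resp.\ $\overline{u}*c$) into minimal projections to conclude $\overline{u}*(u*v) \preceq \langle u \rangle$, and then derive the contradiction from $v \sim e_1 * v \preceq (\overline{u}*u)*v = \overline{u}*(u*v)$. Your version merely spells out more carefully the passage from minimal-projection-wise control to $\overline{u}*(u*v)\preceq \langle u\rangle$, which the paper compresses into the two equivalences $u*v\sim\sum_i c_i$ and $\overline{u}*c_i\sim\sum_j w_{i,j}$.
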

\begin{proof} Assume that $\forall c \preceq u * v$ and $\forall w \preceq \overline{u} * c$ we have  $w \le \langle u \rangle$. Now there are minimal projections $(c_i)_i$ and $(w_{i,j})_{i,j}$ such that $u * v \sim \sum_i c_i$ and $\overline{u} * c_i \sim \sum_j w_{i , j}$. It follows that $u * v \sim \sum_{i,j} w_{i,j} \preceq \langle u \rangle$, but 
$$v \sim e_1 * v \preceq (\overline{u} * u)*v = \overline{u} * (u*v) \preceq  \langle u \rangle$$
which is in contradiction with $v \not \le \langle u \rangle$. \end{proof} 
  
In the distributive case, we can upgrade Proposition \ref{l<} as follows:

\begin{theorem} \label{le2}
A distributive subfactor planar algebra with the maximal biprojections $b_1, \dots, b_n$  satisfying $\sum_i \frac{1}{[id:b_i]} \le 2$, is w-cyclic.
\end{theorem}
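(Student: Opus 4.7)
The plan is to reduce to the boolean case and induct on the rank, mirroring Theorem \ref{centheo} but replacing centrality by a combination of Lemma \ref{wmin}, Lemma \ref{pfr}, and the quantitative bound $\sum_i 1/[id:b_i]\le 2$.

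First I reduce: by Lemma \ref{topBn} the top interval of the biprojection lattice is boolean and keeps the list $b_1,\dots,b_n$ of maximal biprojections and their indices unchanged, and by Lemma \ref{Topw} it suffices to establish w-cyclicity there, so I assume the lattice is $\mathcal{B}_n$. The subcase $\sum_i 1/[id:b_i]\le 1$ is handled by Proposition \ref{l<}, and $n\le 2$ by Lemma \ref{maximal} and Corollary \ref{two}; so I assume $n\ge 3$ and $1<\sum_i 1/[id:b_i]\le 2$, and induct on $n$.

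For the inductive step, fix a coatom $b_i$. The sub-planar algebra $\mathcal{P}(N\subseteq K_i)$ has biprojection lattice $[e_1,b_i]\cong \mathcal{B}_{n-1}$, and in the boolean setting the complementarity $b_i\vee b_j=id$ together with the modular identity $[b_i\vee b_j:b_i]=[b_j:b_i\wedge b_j]$ yields $[b_i:b_i\wedge b_j]=[id:b_j]$, so
\[
\sum_{j\ne i}\frac{1}{[b_i:b_i\wedge b_j]}\;=\;\sum_{j\ne i}\frac{1}{[id:b_j]}\;\le\;2-\frac{1}{[id:b_i]}\;<\;2,
\]
and the inductive hypothesis followed by Theorem \ref{thmlm} produces a minimal projection $u_i\in\mathcal{P}_{2,+}$ with $\langle u_i\rangle=b_i$. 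Likewise each atom $m_i=b_i^{\complement}$ corresponds to a maximal sub-planar algebra (its biprojection lattice being $\{e_1,m_i\}$), so Lemma \ref{maximal} together with Theorem \ref{thmlm} yields a minimal projection $v_i$ with $\langle v_i\rangle=m_i$.

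To combine, set $u=u_1$ and $v=v_1$, so that $\langle u\rangle=b_1$ and $\langle v\rangle=m_1$ are complementary coatom/atom in $\mathcal{B}_n$, and pick a minimal $c\preceq u*v$. Following Theorem \ref{centheo}, distributivity gives $\langle c\rangle=\langle c,u\rangle\wedge\langle c,v\rangle$, and Lemma \ref{pfr} produces minimal projections $u'\preceq c*\overline{v}$ and $v'\preceq\overline{u}*c$ with $uu',vv'\ne 0$. Without centrality I cannot directly conclude $u\le\langle c,v\rangle$; instead I argue by contradiction: if $\langle c\rangle<id$, then $\langle c\rangle\le b_k$ for some $k$, and iterating Lemma \ref{wmin} with the various pairs $(u_j,v_j)$ forces the minimal projections built on each side to sit inside $b_k$ in configurations that are ruled out by a trace/index count using $\sum_i\mathrm{tr}(b_i)\le 2\,\mathrm{tr}(id)$. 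The main obstacle is exactly this last step: Theorem \ref{centheo} closed its argument with the clean inclusion $u\le Z(u')\le\langle u',c,v\rangle$, which relied on every biprojection being central, and here I must replace it by a quantitative counting argument tuned exactly to $\sum_i 1/[id:b_i]\le 2$; the bound is tight at $\mathcal{B}_4$ with all $[id:b_i]=2$, matching the paper's remark that the corollary settles $\mathcal{B}_n$ for $n\le 4$.
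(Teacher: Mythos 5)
Your reduction to the boolean case and the dispatching of $n\le 2$ and of $\sum_i 1/[id:b_i]\le 1$ are fine, but the proposal has two genuine gaps. First, the inductive step rests on the ``modular identity'' $[b_i\vee b_j:b_i]=[b_j:b_i\wedge b_j]$, which is not available: nothing in the paper gives $tr(b_i)\,tr(b_j)=tr(id)\,tr(b_i\wedge b_j)$ for complementary coatoms of a boolean biprojection lattice, and even in the group case the analogue $[\langle M,M'\rangle:M]=[M':M\cap M']$ requires $MM'=G$ as sets, which is not automatic. Without it you cannot propagate the hypothesis $\sum_j 1/[\,\cdot\,]\le 2$ to the coatom sub-planar algebras, so the generators $u_i$ of the coatoms are not secured by your induction. (The paper never needs such generators: it only uses a generator of an atom $b_i^{\complement}$, which comes for free from Lemma \ref{maximal} and Theorem \ref{thmlm}, together with a carefully chosen minimal projection $u$.)

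Second, and more seriously, the step you yourself flag as ``the main obstacle'' is precisely the content of the theorem, and you do not supply it. The paper's substitute for centrality is a dichotomy on the projection $K=\bigwedge_{i\ne j}(b_i\wedge b_j)^{\perp}$. If $K\ne 0$, one takes a minimal $u\le K$ with $\langle u\rangle=b_i$; the point of choosing $u$ under $K$ is that the projection $u'$ produced by Lemma \ref{pfr}, having the same central support as $u$, cannot sit under any $b_i\wedge b_j$ and hence still generates $b_i$, after which the distributivity identity $\langle c\rangle=\langle u,c\rangle\wedge\langle c,v\rangle$ closes the argument as in Theorem \ref{centheo}. If $K=0$, one decomposes each $b_i=\bigoplus_s p_{i,s}$ along the minimal central projections $p_s$, deduces $p_s=\bigvee_{j\ne i}p_{j,s}$ for all $i,s$, and, unless some $p_s$ already generates $id$, obtains $\sum_i tr(b_i)\ge 2\,tr(id)+(n-2)\,tr(e_1)$, contradicting $\sum_i 1/[id:b_i]\le 2$ once $n>2$. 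Your sketch contains neither the definition of $K$ nor this trace computation; the phrase ``configurations that are ruled out by a trace/index count'' is exactly the missing argument, so the proof as written does not go through.
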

\begin{proof}

By Lemma \ref{Topw} and Lemma \ref{topBn}, we can assume the subfactor planar algebra to be boolean.

If $K:=\bigwedge_{i,j, i \neq j} (b_i \wedge b_j)^{\perp} \neq 0$, then let $u \le K$ a minimal projection. If  $\langle Z(u) \rangle = id$ then ok, else $\exists i$  such that $\langle u \rangle = \langle Z(u) \rangle = b_i$. Now  $b_i^{\complement}$ (see Definition \ref{comp})  is a minimal biprojection, so $b_i^{\complement} = \langle v \rangle$ with $v$ minimal projection. Now $b_i \wedge b_i^{\complement} = e_1$ so $v \not \le \langle u \rangle$, so by Lemma \ref{wmin} there are minimal projections $c \preceq u * v$ and $w \preceq \overline{u} * c$ such that $w \not \preceq \langle u \rangle$ (and $\langle u,w \rangle = id$ by maximality). Now by Lemma \ref{pfr}, $\exists u' \preceq c * \overline{u}$ with $Z(u') = Z(u)$ and $u' \not \perp u$, but $u \le K$ so $\forall j \neq i$, $u' \not \le b_i \wedge b_j$, now $u' \le Z(u) \le b_i$,  so $\langle u' \rangle = b_i$. By the distributivity argument (as for Theorem \ref{centheo}) we conclude as follows:  $$\langle c \rangle =  \langle u,c \rangle \wedge \langle c,v \rangle \ge \langle u,w \rangle \wedge \langle u',v \rangle =  id \wedge id = id$$

Else $K = 0$, but $\forall i$, $(b_i \wedge b_j)^{\perp} \ge b_j^{\perp}$, so $\forall i$, $\bigwedge_{j \neq i} b_j^{\perp} = 0$. Let $p_1, \dots ,  p_r$ be the minimal central projections, then $b_i = \bigoplus_{s=1}^{r} p_{i,s}$ with $p_{i,s} \le p_s $ and $p_{i,1} = p_1 = e_1$. Now $b_i^{\perp} = \bigoplus_{s=1}^{r} (p_s-p_{i,s})$, so by assumption, $$ 0 = \bigwedge_{j \neq i} \bigoplus_{s=1}^{r} (p_s-p_{j,s}) = \bigoplus_{s=1}^{r} \bigwedge_{j \neq i} (p_s-p_{j,s}), \ \forall i $$ It follows that  $\forall i \forall s$, $p_s = \bigvee_{j \neq i} p_{j,s} $, so $ tr(p_s)  \le \sum_{j \neq i} tr(p_{j,s})$.  Now if $\exists s  \forall i \ p_{i,s} < p_s$, then $\langle p_s \rangle = id$, ok; else $\forall s   \exists i $ with $ \ p_{i,s} = p_s$, but $\sum_{j \neq i} tr(p_{j,s}) \ge tr(p_s)$, so $\sum_{j} tr(p_{j,s}) \ge 2tr(p_s)$. It follows that $$\sum_i tr(b_i) \ge  n \cdot tr(e_1) + 2 \sum_{s \neq 1} tr(p_s) = 2tr(id) + (n-2)tr(e_1)$$  
Now $[id:b_i] = tr(id)/tr(b_i)$ so $$\sum_i \frac{1}{[id:b_i]} \ge  2 + \frac{n-2}{[id:e_1]}$$
which contradicts the assumption, because we can assume $n>2$ by Corollary \ref{two}. The result follows.
\end{proof}

\begin{remark} The converse is not true because there exists w-cyclic distributive subfactor planar algebras with $\sum_i \frac{1}{[id:b_i]}>2$, for example, the $(S^n_2 \subset S^n_3)$ subfactor is w-cyclic and boolean but $\sum_i \frac{1}{[id:b_i]} = n/3$.
\end{remark}

We can get applications of Theorem \ref{le2} as in Section \ref{appli}, for example:

\begin{corollary}
Let $[H,G]$ be a distributive interval of finite groups. Let $H_1, \dots, H_n$ be the minimal overgroups of $H$. If $\sum_i \frac{1}{\vert H_i : H \vert} \le 2$, then there exists an irreducible complex representation $V$ of $G$ such that $G_{(V^H)} = H$ (see Definition \ref{fixstab}).
\end{corollary}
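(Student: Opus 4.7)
The plan is to apply Theorem \ref{le2} to the group-subgroup subfactor $(R^G \subseteq R^H)$ arising from an outer action of $G$ on the hyperfinite $\mathrm{II}_1$ factor $R$. By the Galois correspondence recalled in Section~3, its intermediate subfactor lattice is precisely $[H,G]$, which is distributive by hypothesis. Moreover, the maximal biprojections of $\mathcal{P}_{2,+}(R^G\subseteq R^H)$ are the $b_i := e^{R^H}_{R^{H_i}}$ indexed by the minimal overgroups $H_i$ of $H$, and by Definition \ref{inter} one has $[id:b_i]=[R^H:R^{H_i}]=|H_i:H|$. The hypothesis $\sum_i 1/|H_i:H|\le 2$ is therefore exactly $\sum_i 1/[id:b_i]\le 2$, so Theorem \ref{le2} applies and yields that the planar algebra of $(R^G\subseteq R^H)$ is w-cyclic.

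I next translate the w-cyclic conclusion into representation theory. By Definition \ref{wcy} there is a minimal central projection $p\in\mathcal{P}_{2,+}(R^G\subseteq R^H)$ with $\langle p\rangle=id$. By Corollary \ref{landau1}, $\mathcal{P}_{2,+}(R^G\subseteq R^H)\cong e^R_{R^H}\,\mathcal{P}_{2,+}(R^G\subseteq R)\,e^R_{R^H}$; combining this with the standard identification $\mathcal{P}_{2,+}(R^G\subseteq R)\cong \mathbb{C}[G]\cong\bigoplus_{V\in\hat G}\mathrm{End}(V)$ under which $e^R_{R^K}$ acts on each $V$ as the projection onto $V^K$, one obtains
\[
\mathcal{P}_{2,+}(R^G\subseteq R^H)\;\cong\;\bigoplus_{V\in\hat G,\,V^H\neq 0}\mathrm{End}(V^H),
\]
so the minimal central projections $p_V$ are in bijection with the irreducibles $V\in\hat G$ satisfying $V^H\neq 0$, and each biprojection $b_K$ ($K\in[H,G]$) acts on the $V$-summand as the projection onto $V^K\subseteq V^H$.

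Taking $p=p_V$ as above, we have for each $K\in[H,G]$ the equivalences $p_V\le b_K$ iff $V^K=V^H$ iff $K$ fixes every vector of $V^H$ iff $K\subseteq G_{(V^H)}$. The set $\{K\in[H,G]:p_V\le b_K\}$ therefore equals $[H,G_{(V^H)}]$, whose maximum is $G_{(V^H)}$, and since $K\mapsto b_K$ reverses inclusion, one gets $\langle p_V\rangle=b_{G_{(V^H)}}$. The Galois correspondence pairs $id\in\mathcal{P}_{2,+}(R^G\subseteq R^H)$ with $H\in[H,G]$, so the hypothesis $\langle p_V\rangle=id$ is equivalent to $G_{(V^H)}=H$, which produces the desired irreducible representation $V$.

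The main obstacle is the dictionary step: one must verify carefully that minimal central projections in $\mathcal{P}_{2,+}(R^G\subseteq R^H)$ correspond to irreducibles $V$ of $G$ with $V^H\neq 0$ and that the biprojection they generate is exactly $b_{G_{(V^H)}}$. This is the analogue, for intermediate subfactors, of the standard identification used in the paper for $(R^G\subseteq R)$; once this dictionary is set up, the remainder is a direct combination of the Galois correspondence with Theorem \ref{le2}.
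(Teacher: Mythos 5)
Your proposal is correct and follows essentially the same route as the paper: the paper's proof is ``as for Corollary \ref{dualore2}, but using Theorem \ref{le2}'', i.e.\ apply Theorem \ref{le2} to $(R^G\subseteq R^H)$ (whose biprojection lattice is the reverse of $[H,G]$, with maximal biprojections of index $|H_i:H|$) and then translate w-cyclicity into linear primitivity of the inclusion. The ``dictionary step'' you single out as the main obstacle is precisely the content of Corollary \ref{wgrp} (proved via Lemma \ref{corrminstab} and the identification $\mathcal{P}_{2,+}(R^G\subseteq R^H)\simeq\bigoplus_{V^H\neq 0}\mathrm{End}(V^H)$), so it is already available in the paper and your verification of it is sound.
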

\begin{proof} As for the proof of Corollary \ref{dualore2}, but using Theorem \ref{le2}.
\end{proof}

\begin{corollary} \label{n/2}
Every $\mathcal{B}_n$ subfactor planar algebra with $[id:b] \ge n/2$ for every maximal biprojection $b$, is w-cyclic.
\end{corollary}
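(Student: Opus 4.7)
The plan is to reduce Corollary \ref{n/2} directly to Theorem \ref{le2}. Since the biprojections lattice is $\mathcal{B}_n$, which is boolean and therefore distributive (as noted in the Remark after the definition of $\mathcal{B}_n$), the planar algebra satisfies the distributivity hypothesis of Theorem \ref{le2}. So what remains is to check the numerical hypothesis $\sum_i \frac{1}{[id:b_i]} \le 2$.

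The key step is entirely elementary: by the standing hypothesis $[id:b_i] \ge n/2$ for each $i \in \{1, \dots, n\}$, we get $\frac{1}{[id:b_i]} \le \frac{2}{n}$, and summing over the $n$ maximal biprojections yields
\[
\sum_{i=1}^n \frac{1}{[id:b_i]} \le n \cdot \frac{2}{n} = 2.
\]
Theorem \ref{le2} then applies and gives w-cyclicity.

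There is no real obstacle here; the content of the statement is purely the bookkeeping that the pointwise bound $[id:b_i] \ge n/2$ is precisely what is needed so that the average $\frac{1}{n}\sum_i \frac{1}{[id:b_i]}$ does not exceed $\frac{2}{n}$. The only minor thing to note is that if $n=1$ the claim follows already from Lemma \ref{maximal}, and if $n=2$ it follows from Corollary \ref{two}, so the statement is genuinely new only for $n \ge 3$, which is exactly the regime where Theorem \ref{le2} is needed.
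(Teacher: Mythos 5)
Your proof is correct and is essentially identical to the paper's own argument: both verify that a $\mathcal{B}_n$ lattice is distributive with exactly $n$ maximal biprojections, sum the pointwise bound $\frac{1}{[id:b_i]} \le \frac{2}{n}$ to get $\sum_i \frac{1}{[id:b_i]} \le 2$, and invoke Theorem \ref{le2}. The extra remarks about the small cases $n=1,2$ are harmless but not needed.
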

\begin{proof}
By assumption (following the notations of Theorem \ref{le2}) $$\sum_i \frac{1}{[id:b_i]} \le \sum_i \frac{2}{n} = 2$$ and the result follows.
\end{proof}

\begin{corollary} \label{B4w}
For $n \le 4$, every $\mathcal{B}_n$ subfactor planar algebra is w-cyclic.
\end{corollary}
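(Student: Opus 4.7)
The plan is to reduce Corollary \ref{B4w} to the previously established results by splitting on $n$ and showing that the hypothesis of Corollary \ref{n/2} is automatic for $n\le 4$.

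First I would handle the small cases directly. For $n=1$, the planar algebra is maximal, so w-cyclicity is immediate from Lemma \ref{maximal}. For $n=2$, there are exactly two maximal biprojections, and Corollary \ref{two} applies. So it remains to treat $n=3$ and $n=4$, and for both I would invoke Corollary \ref{n/2}, which gives w-cyclicity as soon as $[id:b]\ge n/2$ for every maximal biprojection $b$.

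The key observation is that for a maximal biprojection $b$ in a $\mathcal{B}_n$ lattice with $n\ge 1$, one has $b<id$, and by Theorem \ref{bisch} the pair $(b,id)$ corresponds to a non-trivial intermediate subfactor inclusion, whose index $[id:b]$ lies in the Jones set $\{4\cos^2(\pi/n):n\ge 3\}\cup[4,\infty]$. The only element of this set equal to $1$ is $4\cos^2(\pi/3)=1$ (trivial inclusion), and the next value is $4\cos^2(\pi/4)=2$. Hence $[id:b]\ge 2$ for every maximal biprojection $b<id$. For $n=3$ this gives $[id:b]\ge 2 > 3/2 = n/2$, and for $n=4$ it gives $[id:b]\ge 2 = n/2$, so the hypothesis of Corollary \ref{n/2} is verified in both cases and w-cyclicity follows.

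There is no real obstacle here beyond recording the Jones index lower bound; the work has already been done in Lemma \ref{maximal}, Corollary \ref{two}, and Corollary \ref{n/2}. The one subtlety to flag is that the argument stops working at $n=5$, since then $n/2=5/2>2$ and a maximal biprojection of index exactly $2$ would fail the hypothesis of Corollary \ref{n/2}, which is consistent with the fact that Corollary \ref{B4w} is stated only for $n\le 4$.
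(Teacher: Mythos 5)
Your proof is correct and follows essentially the same route as the paper, which simply observes that $[id:b]\ge 2$ holds for every proper biprojection (the paper records $tr(b)=[id:b]^{-1}\le 1/2$ for $b<id$ right after Lemma \ref{cobi}) and then applies Corollary \ref{n/2} since $n/2\le 2$ for $n\le 4$. Your separate treatment of $n=1,2$ and the appeal to the Jones index rigidity are harmless but unnecessary, as the bound $[id:b]\ge 2$ already covers all $n\le 4$ uniformly.
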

\begin{proof}
Immediate by Corollary \ref{n/2} because in general $[id:b] \ge 2$, so $n \le 4$ is working.
\end{proof}

\begin{corollary}  \label{32}
Any distributive subfactor planar algebra with $<32$ biprojections or index $<32$, is w-cyclic. 
\end{corollary}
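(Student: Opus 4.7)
The plan is to reduce to the boolean case via Lemmas~\ref{topBn} and~\ref{Topw}, then invoke Corollary~\ref{B4w}, which handles $\mathcal{B}_n$ with $n \le 4$. By Lemma~\ref{topBn} the top interval of the biprojections lattice is boolean, so it equals $\mathcal{B}_n$ for some $n$; via Corollary~\ref{landau2}, this $\mathcal{B}_n$ is the biprojections lattice of the top intermediate planar algebra $\mathcal{P}(K\subseteq M)$, where $K$ is the meet of the maximal intermediates. By Lemma~\ref{Topw} it suffices to show $\mathcal{P}(K\subseteq M)$ is w-cyclic, and by Corollary~\ref{B4w} this reduces, under either hypothesis, to proving $n\le 4$.

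In the case of fewer than $32$ biprojections, I would simply note that the biprojections of the top are exactly those of the original that lie in $[K,M]$, so $2^n<32$ and hence $n\le 4$.

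In the case of index $<32$, the plan is to establish the inequality $[M:K]\ge 2^n$, which combined with $[M:K]\le[M:N]<32$ again forces $n\le 4$. I would prove this bound by induction on $n$: fix an atom $c_1$ of $\mathcal{B}_n$, corresponding to an intermediate $K\subsetneq K'\subseteq M$. The sub-interval $[c_1,id]\cong\mathcal{B}_{n-1}$ is the biprojections lattice of $\mathcal{P}(K'\subseteq M)$ by Corollary~\ref{landau2}, so by induction $[M:K']\ge 2^{n-1}$, while $[K':K]\ge 2$ because Jones' restriction on index values $\{4\cos^2(\pi/n)\}\cup[4,\infty]$ excludes $(1,2)$. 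Multiplicativity of the Jones index then yields $[M:K]=[M:K']\,[K':K]\ge 2^n$.

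Combining the two cases with Corollary~\ref{B4w} and Lemma~\ref{Topw} concludes the argument. The only substantive step is the inductive inequality $[id:e_1]\ge 2^n$ for a $\mathcal{B}_n$ planar algebra; this is the main (and only mild) obstacle, as it depends on realising the sub-intervals of the biprojection lattice as genuine subfactor planar algebras via the results of Section~\ref{interpa}.
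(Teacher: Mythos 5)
Your proposal is correct and follows essentially the same route as the paper, which reduces to the top interval via Lemmas~\ref{topBn} and~\ref{Topw} and applies Corollary~\ref{B4w} after observing that $32=2^5$ forces $n\le 4$ in both cases. Your inductive argument that a $\mathcal{B}_n$ top interval forces $[M:K]\ge 2^n$ (using $[K':K]\ge 2$ from the Jones index restriction, equivalently $tr(b)\le 1/2$ for a proper biprojection) is exactly the detail the paper leaves implicit.
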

\begin{proof}
In this case, the top lattice is $\mathcal{B}_n$ with $n < 5$ because $32 = 2^5$, so we apply Corollary \ref{B4w} on the top intermediate, then Lemma \ref{Topw}.
\end{proof}

\begin{conjecture} \label{conjext}
A distributive subfactor planar algebra is w-cyclic.
\end{conjecture}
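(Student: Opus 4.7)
The plan is to reduce to the boolean case and then proceed by induction on rank. By Lemma \ref{Topw} and Lemma \ref{topBn}, it suffices to prove the conjecture for $\mathcal{B}_n$ subfactor planar algebras. Induction on $n$: the base cases $n \le 4$ are Corollary \ref{B4w}, and the case $n \le 2$ also follows from Corollary \ref{two}. For the inductive step, apply the induction hypothesis to each intermediate planar algebra $(N \subseteq P_i)$ (with biprojections lattice $[e_1,b_i] \cong \mathcal{B}_{n-1}$) and to each $(P_{a_i} \subseteq M)$ (with lattice $[a_i,id] \cong \mathcal{B}_{n-1}$). Combined with Theorem \ref{thmlm}, this gives: for each coatom $b_i$, a minimal projection $u_i$ with $\langle u_i \rangle = b_i$; and for each atom $a_i$, a minimal projection $v_i$ with $\langle v_i, a_i \rangle = id$. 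In the boolean setting the latter forces $\langle v_i \rangle \supseteq b_i$, so either we are already done or $\langle v_i \rangle = b_i$.

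Next I would attempt the core distributivity argument, modeled on Theorem \ref{centheo}. Fix two distinct coatoms $b_1,b_2$; since $b_1\vee b_2 = id$, any minimal projection $c \preceq u_1 * u_2$ satisfies $\langle u_1, c\rangle \supseteq b_1$ and $\langle c,u_2\rangle \supseteq b_2$. By Lemma \ref{pfr}, there exist minimal projections $u_1' \preceq c*\overline{u_2}$ and $u_2' \preceq \overline{u_1}*c$ with $u_i u_i' \ne 0$. Using distributivity iteratively over the collection of coatom meets,
\[
\langle c\rangle \;=\; \langle c\rangle \vee e_1 \;=\; \langle c\rangle \vee \bigl(\textstyle\bigwedge_i b_i\bigr) \;=\; \bigwedge_i (\langle c\rangle \vee b_i),
\]
and more generally $\langle c\rangle \vee (b_i\wedge b_j) = (\langle c\rangle \vee b_i)\wedge(\langle c\rangle\vee b_j)$ for every pair $i\ne j$. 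Combining with iterated applications of Lemma \ref{wmin} to produce, from any pair $(u,v)$ of minimal projections with $v\not\le\langle u\rangle$, a new minimal $w\not\le\langle u\rangle$ with $\langle u,w\rangle$ strictly larger than $\langle u\rangle$, one builds a chain of strict inclusions that terminates by finiteness of $h(\mathcal{P})$ at a minimal projection generating $id$.

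The main obstacle — and the reason this remains a conjecture — is the failure of centrality: from $u_i u_i' \ne 0$ one only extracts $Z(u_i) = Z(u_i')$, which in the non-Dedekind case does \emph{not} yield $\langle u_i\rangle \le \langle u_i'\rangle$. This is precisely the step where the proof of Theorem \ref{centheo} used the centrality hypothesis to pass from $Z(u_i')$ to $\langle u_i'\rangle$. The most promising route to bypass this is to replace minimal projections by minimal \emph{central} projections at the inductive step, running the distributivity argument at the central level and then extracting a minimal projection via Theorem \ref{mini}. This requires strengthening the inductive hypothesis to produce, for each coatom $b_i$, a minimal \emph{central} projection whose biprojection closure equals $b_i$ — which does not follow from lw-cyclicity alone.

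If the central-projection strengthening fails, an alternative is to exploit the evidence already accumulated: Theorem \ref{le2} disposes of the case $\sum_i 1/[id:b_i]\le 2$, so one may assume many small coatoms and try a counting/positivity argument on the $2$-box algebra. In that regime the analysis of minimal central projections $p_s$ and the values $p_{i,s} = b_i \wedge p_s$ carried out in the proof of Theorem \ref{le2} gives fine control on the decomposition of $id$, and one can try to upgrade the ``$\exists s,\forall i: p_{i,s} < p_s$'' conclusion (which already forces w-cyclicity there) using only distributivity — without the explicit bound on $\sum_i 1/[id:b_i]$. This second route looks technically lighter but requires new combinatorial input to rule out the remaining configurations.
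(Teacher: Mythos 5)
This statement is labeled a \emph{conjecture} in the paper, and the paper offers no proof of it; it only establishes partial cases (reduction to the boolean case via Lemmas \ref{topBn} and \ref{Topw}, Theorem \ref{le2} for $\sum_i 1/[id:b_i]\le 2$, Corollary \ref{B4w} for $\mathcal{B}_n$ with $n\le 4$, and the Dedekind/($\tilde{Z}$) cases). Your proposal is likewise not a proof, and to your credit you say so explicitly. Your reductions are correct and coincide with what the paper already has: the passage to $\mathcal{B}_n$, the identification of the base cases, and the observation that the induction hypothesis plus Theorem \ref{thmlm} yields, for each coatom $b_i$, a minimal projection $u_i$ with $\langle u_i\rangle=b_i$, and for each atom a minimal projection $v_i$ with $\langle v_i\rangle$ equal to $id$ or to the complementary coatom.

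The gap you name is the real one, and it is exactly the point where Theorem \ref{centheo} consumes its centrality hypothesis: Lemma \ref{pfr} only gives $u'$ with $uu'\ne 0$, hence $Z(u')=Z(u)$, and without all biprojections being central one cannot conclude $u\le Z(u')\le\langle u',c,v\rangle$; the inequality $\langle u\rangle\le\langle u'\rangle$ simply need not hold. Two further steps in your sketch would also not close even granting more: (i) the identity $\langle c\rangle=\bigwedge_i(\langle c\rangle\vee b_i)$ over \emph{all} coatoms requires showing $\langle c\rangle\vee b_i=id$ for every $i$, whereas your construction only controls the two coatoms entering $c\preceq u_1*u_2$ (the paper avoids this by pairing a coatom with its boolean complement, whose meet is $e_1$); and (ii) iterating Lemma \ref{wmin} to build a strictly increasing chain $\langle u\rangle<\langle u,w_1\rangle<\cdots$ terminating at $id$ only bounds the w-cyclic length by the height of the lattice — it produces a finite \emph{family} of minimal projections generating $id$, not a single one, so it does not yield w-cyclicity. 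Your fallback suggestions (strengthening the induction to minimal central projections, or extending the counting analysis of Theorem \ref{le2} beyond the bound $\le 2$) are reasonable directions but, as you note, require new input not present in the paper.
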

Note that by Lemmas \ref{topBn} and \ref{Topw} we can reduce to the boolean case. Moreover, we can upgrade the conjecture by replacing distributive by top boolean (i.e. with a boolean top interval, which is more general).   

 Assuming Conjecture \ref{conjext}, we get the following statement\footnote{http://mathoverflow.net/q/192046/34538}:
\begin{statement} \label{statgroup}
Let $[H,G]$ be a distributive interval of finite groups. Then there is an irreducible complex representation $V$ of $G$ such that $G_{(V^H)} = H$; if moreover $H$ is core-free, then $G$ is linearly primitive (see Definitions \ref{linprimdef} and \ref{fixstab}).
\end{statement}
\begin{proof} By Corollary \ref{wgrp}, the first part is just a group theoretic reformulation of Conjecture \ref{conjext}. So there is an irreducible complex representation $V$ with $G_{(V^{H})} = H$. Now, $V^H \subset V$ so $G_{(V)} \subset G_{(V^H)}$, but $ker(\pi_V) =  G_{(V)}$, it follows that $ker(\pi_V) \subset H$; but $H$ is a core-free subgroup of $G$, and $ker(\pi_V)$ a normal subgroup of $G$, so $ker(\pi_V)= \{ e \}$, which means that $V$ is faithful on $G$, i.e. $G$ is linearly primitive.
\end{proof}
\begin{remark} By Corollary \ref{32}, Statement \ref{statgroup} is true without assuming Conjecture \ref{conjext}, if  $\left\vert{[H,G]}\right\vert $ or $[G:H]<32$.  \end{remark}
 
In the same way, we upgrade the statement by replacing distributive by bottom boolean. We can ask if we get an equivalence:
\begin{question}
Is a finite group $G$ linealry primitive iff it admits a core-free subgroup $H$ such that the lattice $[H,F]$ is boolean, with $F$ the group generated by the minimal overgroups of $H$ in $G$? 
\end{question}

\subsection{The properties ($F_1$) and ($\tilde{Z}$)}

\begin{definition} \label{F0} \label{F}
The planar algebra $\mathcal{P}$ satisfies:
\begin{itemize}
\item ($F_0$) if for all minimal projections $a,b \in \mathcal{P}_{2,+}$, there exists a minimal projection $c \preceq a*b$ such that  $ a  \preceq c*\overline{b}  \text{ and }   b \preceq \overline{a}*c$.
\item ($F_1$) if for all minimal projections $a,b \in \mathcal{P}_{2,+}$, there exists a minimal projection $c \in \mathcal{P}_{2,+} $ such that $\langle a,c \rangle \text{ and } \langle c,b \rangle \ge \langle a,b \rangle$
\item ($F_2$)  if for all $p,q$ minimal central projections, there exists a minimal central projection $r$, such that $\langle p,r \rangle \text{ and } \langle r,q \rangle \ge \langle p,q \rangle$.
\end{itemize}
 
\end{definition} 

\begin{proposition} \label{AbF0}
If $\mathcal{P}_{2,+}$ is abelian, then $\mathcal{P}$ satisfies ($F_0$).
\end{proposition}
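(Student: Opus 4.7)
The plan is to combine Lemma \ref{pfr} (the weak Frobenius reciprocity) with abelianness: the lemma gives us projections $a'$, $b'$ only up to non-orthogonality with $a$, $b$, and in an abelian $2$-box space the non-orthogonality of $a$ (minimal) with $a'$ forces $a \leq a'$. So the whole argument should be a direct upgrade of Lemma \ref{pfr} in the abelian setting.

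First, I would fix any minimal projection $c$ with $c \preceq a*b$. Such $c$ exists because $a, b > 0$ implies $a*b > 0$ by Theorem \ref{th}(1), so its range projection $R(a*b)$ is a nonzero projection in the finite-dimensional abelian algebra $\mathcal{P}_{2,+}$, and therefore decomposes as an orthogonal sum of atomic (i.e.\ minimal) projections; pick $c$ to be any one of them. This already secures the first required condition, $c \preceq a*b$, built into the choice of $c$.

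Next, I would apply Lemma \ref{pfr} to the triple $(a, b, c)$, which is legal because $c \preceq a*b$. It yields projections $a' \preceq c*\overline{b}$ and $b' \preceq \overline{a}*c$ with $aa' \neq 0$ and $bb' \neq 0$. The key step is then to upgrade these non-vanishing conditions into order relations using abelianness: since $\mathcal{P}_{2,+}$ is commutative, $a$ and $a'$ commute, so $aa'$ is self-adjoint and idempotent, hence a projection, and it satisfies $aa' \leq a$. As $a$ is minimal and $aa' \neq 0$, we must have $aa' = a$, i.e.\ $a \leq a'$. Therefore $a \preceq a' \preceq c*\overline{b}$, and symmetrically $b \preceq b' \preceq \overline{a}*c$.

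There is essentially no hard step here; the only thing to verify is that the choice of minimal $c$ is unconstrained (any minimal subprojection of $R(a*b)$ works, since the abelian promotion of Lemma \ref{pfr} applies uniformly). So the proof reduces to the three lines: \emph{pick $c$, apply Lemma \ref{pfr}, use abelianness plus minimality of $a$ and $b$}. This establishes property $(F_0)$.
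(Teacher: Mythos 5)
Your proof is correct and follows essentially the same route as the paper: pick a minimal $c\preceq a*b$, apply Lemma \ref{pfr}, and use commutativity of $\mathcal{P}_{2,+}$ to promote $aa'\neq 0$ (and $bb'\neq 0$) to the order relations required by $(F_0)$. If anything, your version is slightly more careful, since you conclude $a\le a'$ rather than $a=a'$ (the latter would require $a'$ to be minimal, which Lemma \ref{pfr} does not guarantee), but the conclusion $a\preceq c*\overline{b}$ is the same.
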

\begin{proof}
Because $\mathcal{P}_{2,+}$ is abelian, if $a$ and $a'$ are minimal projections with $aa' \neq 0$ then $a=a'$; the result follows by Lemma \ref{pfr}. 
\end{proof}  

\begin{proposition} \label{F0F} \label{FF'}
The property ($F_0$) implies ($F_1$) which implies ($F_2$).
\end{proposition}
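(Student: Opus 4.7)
The plan is to dispatch the two implications $(F_0)\Rightarrow (F_1)$ and $(F_1)\Rightarrow (F_2)$ separately, using in each case the monotonicity of $\langle\,\cdot\,\rangle$ plus one small structural fact about the $2$-box algebra.

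For $(F_0)\Rightarrow (F_1)$, given minimal projections $a,b$, I would take as witness the very $c$ produced by $(F_0)$. The key observation is that the coproducts $c*\overline{b}$ and $\overline{a}*c$ are supported on the biprojections $\langle c,\overline{b}\rangle$ and $\langle\overline{a},c\rangle$ (by Theorem \ref{th} applied to the biprojection generated, or equivalently by the fact that $\overline{a}*c \preceq \langle \overline{a},c\rangle*\langle\overline{a},c\rangle \sim \langle\overline{a},c\rangle$). Using $\langle\overline{x}\rangle=\langle x\rangle$ (Remark \ref{biproj} plus $\overline{\overline{x}}=x$), these biprojections are $\langle c,b\rangle$ and $\langle a,c\rangle$ respectively. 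Hence $a\preceq c*\overline{b}$ gives $a\le \langle c,b\rangle$ and $b\preceq \overline{a}*c$ gives $b\le\langle a,c\rangle$. Since $c\le\langle a,c\rangle\cap\langle c,b\rangle$ trivially, both $\langle a,c\rangle$ and $\langle c,b\rangle$ are biprojections containing $\{a,b\}$, so each dominates $\langle a,b\rangle$, which is exactly $(F_1)$.

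For $(F_1)\Rightarrow (F_2)$, given minimal central projections $p,q$, first apply Theorem \ref{mini} to obtain minimal projections $a\le p$ and $b\le q$ with $\langle a\rangle=\langle p\rangle$ and $\langle b\rangle=\langle q\rangle$. From $a\le p\le\langle a\rangle$ and $b\le q\le \langle b\rangle$ one checks that $\langle a,b\rangle=\langle p,q\rangle$. Now apply $(F_1)$ to the pair $a,b$ to get a minimal projection $c$ with $\langle a,c\rangle$ and $\langle c,b\rangle$ both $\ge \langle a,b\rangle=\langle p,q\rangle$. Define $r$ to be the central support $Z(c)$; because $c$ is a minimal projection of the finite-dimensional $C^\star$-algebra $\mathcal{P}_{2,+}$, this central support is a minimal central projection. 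Since $c\le r$, $a\le p$, and $b\le q$, monotonicity gives $\langle p,r\rangle\ge \langle a,c\rangle\ge\langle p,q\rangle$ and $\langle r,q\rangle\ge \langle c,b\rangle\ge\langle p,q\rangle$, which is $(F_2)$.

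I do not foresee a real obstacle; both steps are essentially bookkeeping. The only points requiring a little care are (i) knowing that coproducts are supported on the biprojection generated by their operands so that $a\preceq c*\overline{b}\Rightarrow a\le\langle c,b\rangle$, and (ii) that the central support of a minimal projection is a minimal central projection. Both facts are immediate from the material already developed in the paper (Lemma \ref{overline}, Remark \ref{biproj}, Theorem \ref{mini}, and the general theory of finite-dimensional $C^\star$-algebras).
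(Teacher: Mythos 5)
Your proof is correct and follows essentially the same route as the paper: for the first implication you use the $c$ furnished by $(F_0)$ and the fact that coproducts land under the generated biprojection to conclude $a,b\le\langle a,c\rangle,\langle c,b\rangle$, and for the second you invoke Theorem \ref{mini} to replace $p,q$ by minimal projections with the same generated biprojections and take $r=Z(c)$. The only difference is that you spell out the supporting details (monotonicity of $\langle\,\cdot\,\rangle$, minimality of the central support) that the paper leaves implicit.
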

\begin{proof}
 Assuming ($F_0$), let $c \preceq a*b$ such that  $ a  \preceq c*\overline{b}$ and $ b \preceq \overline{a}*c$, then $a,b \le \langle c,b \rangle$ and $\langle a,c \rangle$, so it's ($F_1$).  

Now assume ($F_1$) and let $p,q$ be minimal central projections. By Theorem \ref{mini}, there are $a,b$ minimal projections such that $\langle p \rangle = \langle a \rangle$ and  $\langle q \rangle = \langle b \rangle$, then $$\langle p,q \rangle = \langle \langle p \rangle,\langle q \rangle \rangle = \langle \langle a \rangle,\langle b \rangle \rangle = \langle a, b \rangle$$ So there is $c$ checking ($F_1$), and we take $r = Z(c)$, the central support, for checking ($F_2$).
\end{proof} 

 \begin{examples}
By Propositions \ref{AbF0}, \ref{F0F}, if $\mathcal{P}_{2,+}$ is abelian then $\mathcal{P}$ satisfies ($F_2$). We can check by hand using the table of Subsection \ref{excoprod} that $\mathcal{P}(R^{S_3} \subset R)$ also satisfies ($F_2$). 
 \end{examples}
 
Let $G$ be a finite group and $H$ a core-free subgroup.

\begin{proposition} The subfactor $(R^G \subseteq R^H)$ is ($F_2$) if and only if   for every irreducible complex representations $U,V$ of $G$, there exists $W$  also irreducible such that (see Definition \ref{fixstab}) $$G_{(W^H)} \cap G_{(V^H)} , G_{(U^H)} \cap G_{(W^H)} \subseteq G_{(U^H)} \cap G_{(V^H)}$$
\end{proposition}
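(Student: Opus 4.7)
The plan is to translate $(F_2)$ from the planar-algebraic framework of $\mathcal{P}=\mathcal{P}(R^G\subseteq R^H)$ into the group-theoretic statement by means of the Landau embedding $l_{R^H}\colon \mathcal{P}_{2,+}(R^G\subseteq R^H)\hookrightarrow \mathcal{P}_{2,+}(R^G\subseteq R)\cong \mathbb{C}[G]$ provided by Corollary \ref{landau1}, whose image is the Hecke algebra $e_H\mathbb{C}[G]e_H$ with $e_H=|H|^{-1}\sum_{h\in H}h$. I first identify the minimal central projections: because the central idempotent $e_V$ of $\mathbb{C}[G]$ commutes with $e_H$, the product $e_V e_H$ is a projection of $e_H\mathbb{C}[G]e_H$ that is central there, and it is nonzero precisely when $V^H\neq 0$. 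A block-by-block computation against the Artin--Wedderburn decomposition $\mathbb{C}[G]=\bigoplus_V\mathrm{End}(V)$ shows that $\{e_V e_H:V\text{ irreducible},\ V^H\neq 0\}$ is the complete list, and I write $p_V$ for the preimage under $l_{R^H}$.

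Next, the biprojections correspond to subgroups $K\in[H,G]$ via $K\mapsto l_{R^H}^{-1}(e_K)$ with $e_K=|K|^{-1}\sum_{k\in K}k$; since a larger $K$ yields a smaller Jones projection, the biprojection lattice is the \emph{reverse} of $[H,G]$. The crux of the argument is the identification $\langle p_V\rangle\leftrightarrow G_{(V^H)}$. In the $V$-block of $\mathbb{C}[G]$ the element $e_H$ is the orthogonal projection onto $V^H$ and $e_K$ onto $V^K$, so $e_V e_H\le e_K$ reduces to $V^H\subseteq V^K$; combined with the reverse inclusion $V^K\subseteq V^H$ coming from $H\le K$, this is equivalent to $V^H=V^K$, i.e.\ to $K\subseteq G_{(V^H)}$. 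Reversing the lattice, $\langle p_V\rangle$ corresponds to the \emph{largest} such $K\in[H,G]$, namely $G_{(V^H)}$ itself (which indeed contains $H$, by the definition of $V^H$). Joins in the biprojection lattice then translate to intersections of subgroups, so $\langle p_U,p_V\rangle\leftrightarrow G_{(U^H)}\cap G_{(V^H)}$, and the inequality $\langle p_U,p_W\rangle\ge\langle p_U,p_V\rangle$ becomes the inclusion $G_{(U^H)}\cap G_{(W^H)}\subseteq G_{(U^H)}\cap G_{(V^H)}$. This already gives the stated equivalence for irreducibles with nonzero $H$-fixed subspace.

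To finish, I extend the quantifier to all irreducible $U,V$. With the convention that the pointwise stabilizer of $\{0\}$ is all of $G$, the asserted inclusions are trivially satisfied whenever $U^H=0$ or $V^H=0$, by taking $W=V$, respectively $W=U$ (and any irreducible when both vanish). Conversely, if the group-theoretic statement supplies a candidate $W$ with $W^H=0$, then $G_{(W^H)}=G$ forces $G_{(U^H)}=G_{(V^H)}$, and $W$ may be replaced by $U$ (or $V$), restoring a minimal central projection of $\mathcal{P}_{2,+}$ that still validates the condition. The main obstacle is the Hecke-algebra identification $\langle p_V\rangle\leftrightarrow G_{(V^H)}$; once that is pinned down the equivalence is essentially combinatorial, reduced to reading off the reversal $[H,G]\leftrightarrow\mathcal{L}(\text{biprojections})$.
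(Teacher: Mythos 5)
Your argument is correct, and it reaches the same dictionary as the paper but by a different route. The paper's proof is a one-line appeal to the Galois correspondence for subsystems of $Rep(G)$ (Theorem \ref{subsy}): the subsystem generated by the fixed-point spaces attached to a minimal central projection $p_V$ corresponds to the closed subgroup $G_{(V^H)}$, and ($F_2$) then transcribes verbatim. You instead build the correspondence by hand inside the Hecke algebra $e_H\mathbb{C}[G]e_H$ via the Landau embedding of Corollary \ref{landau1}: the block computation showing $e_Ve_H\le e_K\Leftrightarrow V^H=V^K\Leftrightarrow K\subseteq G_{(V^H)}$, together with the order reversal between $[H,G]$ and the biprojection lattice, pins down $\langle p_V\rangle\leftrightarrow G_{(V^H)}$ and hence $\langle p_U,p_V\rangle\leftrightarrow G_{(U^H)}\cap G_{(V^H)}$. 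This is essentially the same mechanism the paper uses elsewhere (Lemma \ref{corrminstab} and Corollary \ref{wgrp}), so your proof is more self-contained and makes the reformulation checkable without invoking Theorem \ref{subsy}; the paper's route is shorter but leaves the identification implicit. Your explicit treatment of the degenerate cases $U^H=0$ or $V^H=0$ (where $G_{(V^H)}=G$ by convention, so the quantifier over all irreducibles is harmless and a candidate $W$ with $W^H=0$ can always be replaced by $U$ or $V$) is a point the paper glosses over entirely, and it is handled correctly.
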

\begin{proof}
It's just a reformulation of ($F_2$) using Theorem \ref{subsy}.
\end{proof}  

\begin{remark} \label{rkf}
Using GAP we have found\footnote{http://mathoverflow.net/a/201693/34538} a counter-example of the property ($F_2$), of the form $\mathcal{P}(R^G \subseteq R^H)$ with $\vert G \vert = 32$ and $\vert H \vert = 2$.   
\begin{verbatim}
gap> G:=TransitiveGroup(16,39);  H:=Stabilizer(G,1);  
\end{verbatim}
It follows that ($F_0$) and ($F_1$) are not satisfied in general.
\end{remark} 
\begin{question} Does every irreducible depth $2$ subfactor planar algebra satisfies ($F_2$), ($F_1$) or even ($F_0$)?   
\end{question} 

\begin{proposition}
For a distributive subfactor planar algebra, w-cyclic is equivalent to ($F_1$).
\end{proposition}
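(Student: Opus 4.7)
The plan is to handle the two implications separately. The forward direction does not need distributivity: given a minimal $u$ with $\langle u \rangle = id$, the choice $c := u$ validates $(F_1)$, since $\langle a, u \rangle \ge \langle u \rangle = id \ge \langle a, b \rangle$ and symmetrically $\langle u, b \rangle \ge id$, for any minimal $a, b$.

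For the converse, assume distributivity and $(F_1)$. Since $(F_1)$ is not known to pass to intermediate subfactor planar algebras, I will not reduce to the boolean case via Lemma \ref{Topw}, but instead construct a minimal projection generating $id$ directly inside $\mathcal{P}$ by iterating $(F_1)$. Let $b_1, \dots, b_n$ be the maximal biprojections of $\mathcal{P}$ and $B := \bigwedge_i b_i$. By Lemma \ref{topBn} the top interval $[B, id]$ is boolean; let $B_1, \dots, B_n$ be its atoms, each $B_i$ being the complement of $b_i$ in $[B, id]$. For each $i$ the projection $B_i - B$ is nonzero (as $B < B_i$ and ordered projections commute), so I pick a minimal projection $v_i \in \mathcal{P}_{2,+}$ with $v_i \le B_i - B$; then $v_i \not\le B$, $\langle v_i \rangle \le B_i$, and consequently $\langle v_i \rangle \vee B = B_i$ inside $[B, id]$.

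Starting from $c := v_1$ (so $\langle c \rangle \vee B = B_1$), I iterate the following step as long as $\langle c \rangle \vee B < id$ in $[B, id]$: choose an atom $B_i$ not below $\langle c \rangle \vee B$, and apply $(F_1)$ to the pair $(c, v_i)$ to obtain a minimal $c'$ with $\langle c, c' \rangle,\ \langle c', v_i \rangle \ge \langle c, v_i \rangle$. The key observation is that $(\langle c \rangle \vee B) \wedge B_i = B$ in the boolean $[B, id]$ (an atom of a boolean interval meets anything not above it in the bottom), which combined with $\langle c \rangle \le \langle c \rangle \vee B$ and $\langle v_i \rangle \le B_i$ yields the meet bound $\langle c \rangle \wedge \langle v_i \rangle \le B$. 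Distributivity then gives
$$
\langle c' \rangle \vee (\langle c \rangle \wedge \langle v_i \rangle) = (\langle c' \rangle \vee \langle c \rangle) \wedge (\langle c' \rangle \vee \langle v_i \rangle) = \langle c, c' \rangle \wedge \langle c', v_i \rangle \ge \langle c, v_i \rangle,
$$
and joining with $B$ the meet on the left collapses to $B$ (since $\langle c \rangle \wedge \langle v_i \rangle \le B$) while the right becomes $(\langle c \rangle \vee B) \vee B_i$; hence $\langle c' \rangle \vee B \ge (\langle c \rangle \vee B) \vee B_i > \langle c \rangle \vee B$, a strict increase in the finite lattice $[B, id]$. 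Iterating and replacing $c \leftarrow c'$, the process terminates with $\langle c \rangle \vee B = id$; since $B \le b_i$ for every $i$, this forces $c \not\le b_i$ for all $i$, and therefore $\langle c \rangle = id$.

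The main delicate point is the meet bound $\langle c \rangle \wedge \langle v_i \rangle \le B$: it is what turns the distributivity identity into \emph{strict} progress in the finite top interval $[B, id]$, and it depends both on the boolean structure of $[B, id]$ (so that an atom $B_i \not\le \langle c \rangle \vee B$ meets it in $B$) and on the careful placement of $v_i$ below $B_i - B$. Everything else is bookkeeping in a finite distributive lattice.
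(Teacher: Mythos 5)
Your proof is correct, and the converse direction takes a genuinely different route from the paper's. The paper proves $(F_1)\Rightarrow$ w-cyclic by rerunning the proof of Theorem \ref{centheo}: reduce to the boolean case via Lemmas \ref{topBn} and \ref{Topw}, obtain by induction on the height minimal projections $u,v$ with $\langle u\rangle=b$ and $\langle v\rangle=b^{\complement}$ for a complementary pair of biprojections, apply $(F_1)$ once to $(u,v)$, and distribute $\langle c\rangle=\langle c\rangle\vee(\langle u\rangle\wedge\langle v\rangle)=\langle u,c\rangle\wedge\langle c,v\rangle\ge id$. As you observe, both the reduction to the boolean case and the inductive production of $u$ and $v$ require the relevant intermediate planar algebras to satisfy the inductive hypothesis, and $(F_1)$ --- unlike distributivity --- is not known to restrict to intermediates; the paper's one-line reference to Theorem \ref{centheo} glosses over this. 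Your iteration sidesteps the issue: you never descend into an intermediate planar algebra, but accumulate the atoms $B_i$ of the boolean top interval $[B,id]$ one at a time, replacing the exact complementation $\langle u\rangle\wedge\langle v\rangle=e_1$ by the weaker meet bound $\langle c\rangle\wedge\langle v_i\rangle\le B$, which together with distributivity yields the strict progress $\langle c'\rangle\vee B\ge(\langle c\rangle\vee B)\vee B_i$; termination in the finite lattice gives $\langle c\rangle\vee B=id$, whence $c$ lies under no maximal biprojection and $\langle c\rangle=id$. I checked the supporting details --- the $B_i$ are the atoms of $[B,id]$ as complements of the coatoms $b_i$, the projection $B_i-B$ is nonzero so $v_i$ exists, $\langle v_i\rangle\vee B=B_i$ by atomicity, and an atom not below $\langle c\rangle\vee B$ meets it in $B$ --- and they all hold. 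The price of your approach is length; the gain is an argument self-contained in the ambient $\mathcal{P}_{2,+}$ that closes the subtlety in the paper's sketch. The forward direction is the same one-line argument as the paper's and indeed needs no distributivity.
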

\begin{proof}
The first implication is immediate in general because if there is a minimal projection $c_0$ with $\langle c_0 \rangle = id$ then (following the notation of Definition \ref{F}) we get the property ($F_1$) by using the projection $c=c_0$.     

For the other implication, we follow the proof and notations of the Theorem \ref{centheo}, but instead of taking $c \preceq u * v$, we take the minimal projection $c$ given by the property ($F_1$) for $a = u$ and $b = v$, and then
$\langle c \rangle =  \langle u,c \rangle \wedge \langle c , v \rangle \ge \langle u,v \rangle \wedge \langle u , v \rangle = id \wedge id = id$.
\end{proof}

\begin{definition} \label{ZZ} \label{Z}
The planar algebra $\mathcal{P}$ satisfies 
\begin{itemize}
\item (ZZ) if the coproduct of any two central operators is central.
\item (Z) if any minimal central projection generates a central biprojection.
\item ($\tilde{Z}$) if all the intermediate subfactor planar algebras satisfy (Z).
\end{itemize} 
\end{definition} 

\begin{proposition} \label{ZZZ}
The property (ZZ) implies (Z).
\end{proposition}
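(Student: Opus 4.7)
The plan is to unpack $\langle p \rangle$ using Definition \ref{gener} and show, step by step, that each ingredient in its construction stays central under the hypothesis (ZZ). Let $p \in \mathcal{P}_{2,+}$ be a minimal central projection. By Definition \ref{gener}, $\langle p \rangle$ is the range projection of $\sum_{k=1}^N p^{*k}$ for $N$ sufficiently large, where $p^{*k} := p * p * \cdots * p$ ($k$ times, using associativity of $*$). So it suffices to show that (a) each $p^{*k}$ is central, (b) their sum is central, and (c) the range projection of a central positive operator is central.

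For (a), I would argue by induction on $k$. The base case $k=1$ is immediate from the assumption that $p$ is central. For the inductive step, assume $p^{*k}$ is central. Then $p^{*(k+1)} = p^{*k} * p$ is the coproduct of two central operators, hence central by hypothesis (ZZ). Step (b) is trivial since the center of the $C^*$-algebra $\mathcal{P}_{2,+}$ is closed under finite sums.

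For (c), recall that $\mathcal{P}_{2,+}$ is a finite-dimensional $C^*$-algebra, so the range projection $R(x)$ of a positive element $x$ is a spectral projection of $x$, obtainable by continuous functional calculus (it is $\chi_{(0,\infty)}(x)$, or equivalently the strong limit of $f_n(x)$ for suitable $f_n \in C_0([0,\infty))$). Since the center $Z(\mathcal{P}_{2,+})$ is a $C^*$-subalgebra stable under continuous functional calculus, if $x$ is central then $R(x)$ is central as well. Applying this to $x = \sum_{k=1}^N p^{*k}$, which is positive and central by (a) and (b), yields that $\langle p \rangle = R(x)$ is central. Since $\langle p \rangle$ is a biprojection by \cite{li} Lemma 4.11, this proves (Z).

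No real obstacle is expected: the argument is essentially a bookkeeping exercise propagating centrality through the definition of the biprojection generated by a single positive element. The only point that deserves a moment of care is (c), the passage from centrality of $x$ to centrality of $R(x)$, which is standard but worth mentioning explicitly since the paper works in the $C^*$-algebra $\mathcal{P}_{2,+}$ rather than directly with spectral theory.
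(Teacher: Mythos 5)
Your proof is correct and follows exactly the route the paper intends: the paper's own proof is the one-liner ``By Definition \ref{gener} and assuming (ZZ), a minimal central projection generates a central biprojection,'' and your argument simply fills in the details (induction on $p^{*k}$ via (ZZ), centrality of the finite sum, and centrality of the range projection via functional calculus in the finite-dimensional ${\rm C}^{\star}$-algebra). Nothing is missing.
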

\begin{proof}
By Definition \ref{gener} and assuming (ZZ), a minimal central projection generates a central biprojection.
\end{proof}
 
\begin{examples} \label{cynoz}  If $\mathcal{P}_{2,+}$ is abelian, then $\mathcal{P}$ is \textit{a fortiori} (ZZ) and ($\tilde{Z}$). By Theorem \ref{thmZZ}, if $\mathcal{P}$ is depth $2$, then it is (ZZ).
 The planar algebra $\mathcal{P}(R^{S_4} \subset R ^{\langle (1,2) \rangle})$ satisfies (Z) but not (ZZ), so the converse of Proposition \ref{ZZZ} is false. Moreover $\mathcal{P}(R^{S_4} \subset R^{\langle (1,2)(3,4) \rangle})$ does not satisfy (Z), so (Z) is not true in general (see Subsection \ref{excoprod}). There is a distributive example of index $110$ which is not $(Z)$, given\footnote{http://math.stackexchange.com/a/721164/84284} by $(R^{PSL(2,11)} \subset R^{\mathbb{Z}/6 })$. \end{examples}
 % [as for example for $(R \rtimes H \subseteq R \rtimes G)$]
% \textcolor{red}{add examples} Non-abelian depth $>2$, $\tilde{Z}$ and counter-examples.

\begin{proposition} \label{dedez}
A Dedekind subfactor planar algebra satisfies ($\tilde{Z})$.
\end{proposition}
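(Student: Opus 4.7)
The plan is to prove the statement in two steps: first, observe that Dedekind immediately implies $(Z)$, and then show that the Dedekind property is inherited by every intermediate subfactor planar algebra, whence $(\tilde{Z})$ follows at once.

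For the first step, let $\mathcal{P}$ be Dedekind and let $p \in \mathcal{P}_{2,+}$ be a minimal central projection. By Definition \ref{gener}, $\langle p\rangle$ is a biprojection. Since $\mathcal{P}$ is Dedekind, $\langle p\rangle$ is normal (Definition \ref{normal}), so in particular $\langle p\rangle$ is central. This is exactly the requirement of $(Z)$, so $\mathcal{P}$ satisfies $(Z)$.

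For the second step, I would show that if $\mathcal{P}=\mathcal{P}(N\subseteq M)$ is Dedekind and $N\subseteq K\subseteq M$ is any intermediate subfactor, then both $\mathcal{P}(N\subseteq K)$ and $\mathcal{P}(K\subseteq M)$ are Dedekind. This is exactly the content of the Dedekind half of Lemma \ref{c*}, which is itself proved via Corollary \ref{landau1} (the $C^{\star}$-algebra isomorphisms $l_K$ and $r_K$ between the intermediate $2$-box spaces and the compressed/coproduct-compressed pieces of $\mathcal{P}_{2,+}(N\subseteq M)$) together with Lemma \ref{prodcoprod} (the exchange-type interaction between the biprojection $e^M_K$ and the product/coproduct on each side). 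Concretely, a biprojection $b$ of $\mathcal{P}(N\subseteq K)$ corresponds via $l_K$ to a biprojection $e^M_P$ of $\mathcal{P}(N\subseteq M)$ with $e^M_P\le e^M_K$, and bicentrality of $e^M_P$ in $\mathcal{P}_{2,\pm}(N\subseteq M)$ transfers under the isomorphisms (which preserve product and, up to a positive scalar, coproduct) to bicentrality of $b$ in $\mathcal{P}_{2,\pm}(N\subseteq K)$; the argument on the upper side $(K\subseteq M)$ is the same via $r_K$.

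Combining the two steps, every intermediate subfactor planar algebra of a Dedekind $\mathcal{P}$ is again Dedekind, and therefore satisfies $(Z)$ by the first step; this is precisely $(\tilde{Z})$. The only genuinely non-formal input is the transfer of bicentrality through the Landau isomorphisms $l_K$ and $r_K$, and that part is already packaged in Corollary \ref{landau1} and Lemma \ref{prodcoprod}, so I do not anticipate a real obstacle; the main point to verify carefully is that the coproduct side behaves correctly under the renormalization noted after Corollary \ref{landau1}, but since a positive rescaling does not alter centrality, this is harmless.
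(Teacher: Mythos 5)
Your proposal is correct and follows essentially the same route as the paper: Dedekind gives $(Z)$ immediately since normal biprojections are in particular central, and the Dedekind property passes to intermediates via Lemma \ref{prodcoprod} and the Landau isomorphisms of Corollary \ref{landau1}, exactly as in the paper's (much terser) proof. Your added care about the coproduct renormalization not affecting centrality is a reasonable elaboration of a point the paper leaves implicit.
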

\begin{proof}
A Dedekind subfactor planar algebra satisfies obvioulsy (Z), now using Lemma  \ref{prodcoprod}, any intermediate of a Dedekind subfactor planar algebra is also Dedekind, so the result follows. 
\end{proof}

\begin{theorem}
A distributive ($\tilde{Z})$ subfactor planar algebra is w-cyclic.
\end{theorem}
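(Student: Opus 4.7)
The plan is to mirror the inductive proof of Theorem \ref{centheo}, substituting $(\tilde{Z})$ for the stronger hypothesis that every biprojection is central. I induct on the height $h(\mathcal{P})$; the base case $h(\mathcal{P})=1$ is maximal and hence w-cyclic by Lemma \ref{maximal}. For the inductive step, note that distributivity (Lemma \ref{distri}) and $(\tilde{Z})$ are both inherited by intermediate subfactor planar algebras (an intermediate of an intermediate is an intermediate), so by Lemmas \ref{topBn} and \ref{Topw} the problem reduces to the case where the biprojection lattice of $\mathcal{P}$ is boolean.

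In the boolean case, pick a biprojection $b$ with $e_1 < b < id$ and complement $b^{\complement}$. By induction applied to the two intermediate planar algebras of strictly smaller height, together with Theorem \ref{thmlm}, there exist minimal projections $u, v \in \mathcal{P}_{2,+}$ with $\langle u \rangle = b$ and $\langle v \rangle = b^{\complement}$. Choose a minimal projection $c \preceq u * v$; distributivity then gives $\langle c \rangle = \langle c, u \rangle \wedge \langle c, v \rangle$, while Lemma \ref{pfr} produces minimal projections $u' \preceq c * \overline{v}$ and $v' \preceq \overline{u} * c$ with $uu' \neq 0$ and $vv' \neq 0$; consequently $Z(u) = Z(u')$, $Z(v) = Z(v')$, $u' \le \langle c, v \rangle$, and $v' \le \langle c, u \rangle$.

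The crucial step is to promote $u' \le \langle c, v \rangle$ to $u \le \langle c, v \rangle$ (and symmetrically $v \le \langle c, u \rangle$). In Theorem \ref{centheo} this was immediate from the centrality of $\langle c, v \rangle$, via $u \le Z(u) = Z(u') \le \langle c, v \rangle$. Here the plan is to apply (Z) inside the intermediate $\mathcal{P}' = \mathcal{P}(N \subseteq K_{\langle c, v \rangle})$: the image $l_K^{-1}(u')$ has minimal central support in $\mathcal{P}'_{2,+}$, and (Z) produces a central biprojection $B'$ of $\mathcal{P}'$ containing it; via Corollary \ref{landau1} this pulls back to a biprojection $B := l_K(B') \le \langle c, v \rangle$ of $\mathcal{P}$ that contains $u'$ and is central within $e^M_K \mathcal{P}_{2,+} e^M_K$. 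The main obstacle will be to bridge from this local centrality to the $\mathcal{P}$-level containment $Z(u) \le B$, so that $u \le Z(u) \le B \le \langle c, v \rangle$; I expect this to require exploiting the boolean lattice structure and iterating $(\tilde{Z})$ in combination with Theorem \ref{mini}. Once $u \le \langle c, v \rangle$ is secured, $\langle c, v \rangle \ge \langle u, v \rangle = b \vee b^{\complement} = id$; symmetrically $\langle c, u \rangle = id$, and distributivity yields $\langle c \rangle = id \wedge id = id$, so $c$ witnesses the w-cyclicity of $\mathcal{P}$.
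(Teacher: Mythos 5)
There is a genuine gap, and you have in fact flagged it yourself: the step promoting $u' \le \langle c,v \rangle$ to $u \le \langle c,v \rangle$ is never established. Applying (Z) inside the intermediate planar algebra $\mathcal{P}(e_1 \le \langle c,v\rangle)$ only yields a biprojection $B \le \langle c,v\rangle$ containing $u'$ that is central in the corner algebra $e^M_K \mathcal{P}_{2,+} e^M_K$; this says nothing about the central support $Z(u')=Z(u)$ computed in $\mathcal{P}_{2,+}$, which can be strictly larger than any central element of the corner, so the chain $u \le Z(u) \le B$ does not follow. Since this is precisely the point where Theorem \ref{centheo} used global centrality of all biprojections, your proof is incomplete at its crucial step.

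The paper avoids this obstacle entirely by a dichotomy at the level of the \emph{maximal} biprojections rather than by localizing the centrality argument. After the same reduction to the boolean case, write each maximal biprojection as $b_i = \langle u_i \rangle$ with $u_i$ minimal (induction plus Theorem \ref{thmlm}). Consider $Z(u_i)$: since $\langle Z(u_i)\rangle \ge b_i$ and $b_i$ is maximal, either $\langle Z(u_i)\rangle = id$ for some $i$ — and then $\mathcal{P}$ is w-cyclic outright by Definition \ref{wcy} — or $\langle Z(u_i)\rangle = b_i$ for every $i$, in which case property (Z) (applied to $\mathcal{P}$ itself) makes each $b_i$ central; the boolean structure then forces \emph{every} biprojection (a meet of maximal ones) to be central, and Theorem \ref{centheo} applies wholesale. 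If you want to salvage your line of argument, you should restructure it around this dichotomy rather than trying to transport centrality from an intermediate corner back to $\mathcal{P}_{2,+}$.
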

\begin{proof} 
Same first paragraph that for the proof of Theorem \ref{centheo}. Next let $b_1, \dots , b_n$ be the maximal biprojections. By induction and Theorem \ref{thmlm}, $b_i$ is lw-cyclic, i.e.  $b_i = \langle u_i \rangle$ with $u_i$ a minimal projection. If $\langle Z(u_i) \rangle = id$ then the result follows by Definition \ref{wcy}, else by maximality $\langle Z(u_i) \rangle = b_i$, and by (Z), $b_i$ is central; but by the boolean structure, every biprojection is the meet of some maximal biprojections, then all the biprojections are central; the result follows by Theorem \ref{centheo}. 
 \end{proof} 

\section{Applications}  \label{appli}
In this section, we describe some applications of Theorem \ref{thm} to subfactor planar algebras, subfactors, quantum groups and finite group theories. Similar applications, using the extensions of Section \ref{distex}, are also discussed. 
\subsection{Applications to subfactor planar algebras theory}     \hspace*{1cm}\\
Let $\mathcal{P}$ be a finite index irreducible subfactor planar algebra.  As an application of the main Theorem \ref{thm} and extensions, we get non-trivial upper bounds for the minimal number of minimal projections of $\mathcal{P}_{2,+}$ generating the identity biprojection.  

\begin{definition} \label{length}
Let, respectively,
\begin{itemize}
\item $cl(\mathcal{P})$ the cyclic length 
\item $wcl(\mathcal{P})$ the w-cyclic length 
\item $tcl(\mathcal{P})$ the top cyclic length 
\item $dl(\mathcal{P})$ the distributive length 
\item $tbl(\mathcal{P})$ the top boolean length 
\item $tb_nl(\mathcal{P})$ the top $\mathcal{B}_{\le n}$ length 
\end{itemize}  of $\mathcal{P}$, be the minimal length $l$ for an ordered chain of biprojections $$e_1=b_0 < b_1 < \dots < b_{l} = id$$ such that $\mathcal{P}(b_i < b_{i+1})$ is respectively
\begin{itemize}
\item cyclic 
\item w-cyclic 
\item top cyclic [i.e. its top intermediate (see Definition \ref{topdef}) is cyclic]
\item distributive
\item top boolean
\item top $\mathcal{B}_{r}$ with $r \le n$
\end{itemize}  Note that $tbl(\mathcal{P}) \le dl(\mathcal{P}) \le cl(\mathcal{P}) \le h(\mathcal{P})$.  \\ Moreover if $n \le m$ then $tb_nl(\mathcal{P}) \ge tb_ml(\mathcal{P}) \ge tbl(\mathcal{P})$. \\ 
We define also the lengths $bcl$, $bbl$ and $bb_nl$ as for $tcl$, $tbl$ and $tb_nl$ but replacing top by bottom (see Definition \ref{bottom}).
\end{definition} 

\begin{corollary} \label{interdistrib}
Let $a < b$ be biprojections. If $\mathcal{P}(a < b)$ is w-cyclic, then there is a minimal projection $u \in \mathcal{P}_{2,+}(e_1 < id)$ such that $\langle a, u \rangle = b$.  
\end{corollary}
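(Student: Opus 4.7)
My plan is to reduce the statement to Theorem \ref{thmlm} via a single application of the Landau compression $l_Q$ of Corollary \ref{landau1}. I would first pick intermediate subfactors $N \subseteq P \subseteq Q \subseteq M$ with $a = e^M_P$ and $b = e^M_Q$. By Definition \ref{inter}, the intermediate planar algebra $\mathcal{P}(a<b)$ is literally $\mathcal{P}(P \subseteq Q)$, so the hypothesis reads: the subfactor planar algebra $\mathcal{P}(P \subseteq Q)$ is w-cyclic.

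Next I would view $(P \subseteq Q)$ as the upper intermediate inside $(N \subseteq Q)$, and apply the ``rw-cyclic'' direction of Theorem \ref{thmlm} to the biprojection $e^Q_P \in \mathcal{P}_{2,+}(N \subseteq Q)$. This provides a minimal projection $v \in \mathcal{P}_{2,+}(N \subseteq Q)$ satisfying $\langle v, e^Q_P \rangle = e^Q_Q$.

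Finally I would transport $v$ up to $\mathcal{P}_{2,+}(N \subseteq M)$ by setting $u := l_Q(v)$, where $l_Q: \mathcal{P}_{2,+}(N \subseteq Q) \to e^M_Q \mathcal{P}_{2,+}(N \subseteq M) e^M_Q$ is the ${\rm C}^{\star}$-algebra isomorphism of Corollary \ref{landau1}. Then $u$ is a minimal projection of the corner $e^M_Q \mathcal{P}_{2,+}(N \subseteq M) e^M_Q$, and any projection $q \le u$ is automatically $\le e^M_Q$, hence lies in that corner, so minimality in the corner forces minimality in all of $\mathcal{P}_{2,+}(e_1 < id)$. Using that $l_Q$ preserves generation and sends $e^Q_P \mapsto a$, $e^Q_Q \mapsto b$, I will conclude
$$\langle a, u \rangle \;=\; l_Q\bigl(\langle e^Q_P, v \rangle\bigr) \;=\; l_Q(e^Q_Q) \;=\; b.$$

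The only step requiring a moment's care is the passage ``minimal in the corner $\Rightarrow$ minimal globally'', a one-line observation about projections dominated by $e^M_Q$. Everything else is direct bookkeeping with Theorem \ref{thmlm} and Corollary \ref{landau1}, so I do not expect any substantive obstacle beyond identifying the right intermediate $(N \subseteq Q)$ in which to apply Theorem \ref{thmlm}.
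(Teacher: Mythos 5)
Your proposal is correct and follows essentially the same route as the paper: the paper also applies the rw-cyclic direction of Theorem \ref{thmlm} to $a' = l_b^{-1}(a)$ inside $\mathcal{P}(e_1 \le b)$ and then pushes the resulting minimal projection forward with the Landau isomorphism $l_b$ (your $l_Q$), using that it preserves generation and sends the sub-biprojections to $a$ and $b$. Your explicit remark that minimality in the corner $e^M_Q\mathcal{P}_{2,+}e^M_Q$ implies global minimality is a point the paper leaves implicit, but it is the same argument.
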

\begin{proof} Let $$l_{b}: \mathcal{P}_{2,+}(e_1 \le b) \to b\mathcal{P}_{2,+}b$$ and (with $a' = l_{b}^{-1}(a)$) $$r_{a'}: \mathcal{P}_{2,+}(a \le b) \to a' * \mathcal{P}_{2,+}(e_1 \le b) * a'$$   be isomorphisms of ${\rm C}^{\star}$-algebras from Corollary \ref{landau1}, then by assumption the planar algebra $\mathcal{P}(a \le b)$ is w-cyclic, then $a'$ is rw-cyclic by Theorem \ref{thmlm}, i.e. there is a minimal projection $u'$ such that $\langle a', u' \rangle = id$. Then by applying the map $l_{b}$ we get $$b=l_b(id)=\langle l_{b}(a'), l_{b}(u') \rangle = \langle a, u \rangle$$ with $u=l_{b}(u')$.
\end{proof}

\begin{lemma} 
Let $p_1, \dots, p_n$ be minimal central projections, then there are minimal projections $u_i \le p_i$ such that $\langle u_1, \dots, u_n \rangle = \langle p_1, \dots, p_n \rangle$.
\end{lemma}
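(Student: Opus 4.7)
The approach is to adapt the dimension-counting argument from the proof of Theorem \ref{mini} to several minimal central projections simultaneously, using the maximal sub-biprojections of $\langle p_1,\dots,p_n\rangle$ as the ``obstructions'' that the chosen minimal projections must collectively avoid.

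First I would set $B = \langle p_1,\ldots,p_n\rangle$ and let $b_1,\ldots,b_m$ be the maximal sub-biprojections of $B$ (finite in number by \cite{wa}). For each $j$, I claim there exists some $i$ with $p_i \not\le b_j$: otherwise $\sum_i p_i \le b_j$ would force $B=\langle\sum_i p_i\rangle\le b_j<B$, a contradiction. So, defining $J_i=\{j : p_i\not\le b_j\}$, every $j\in\{1,\ldots,m\}$ lies in at least one $J_i$.

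Next, for each $i$, write $F_i$ for the range of $p_i$ and $E_j$ for the range of $b_j$. By construction, for every $j\in J_i$ the intersection $F_i\cap E_j$ is a proper subspace of $F_i$. Since $\mathbb{C}$ is infinite and $J_i$ is finite, $F_i$ is not the union of the subspaces $\{F_i\cap E_j : j\in J_i\}$, so I can pick a one-dimensional $V_i\subseteq F_i$ avoiding all of them. Let $u_i$ denote the rank-one projection onto $V_i$. Because $p_i$ is a minimal central projection, $p_i\mathcal{P}_{2,+}p_i$ is a matrix algebra, so $u_i$ is a minimal projection of $\mathcal{P}_{2,+}$ with $u_i\le p_i$ and $u_i\not\le b_j$ for every $j\in J_i$.

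Finally, set $B' = \langle u_1,\ldots,u_n\rangle$. Since $u_i\le p_i\le B$ we have $B'\le B$. If $B' < B$, then, the biprojection lattice below $B$ being finite, $B'$ is dominated by some maximal sub-biprojection $b_j$, forcing $u_i\le b_j$ for every $i$; but the first step produces an $i$ with $j\in J_i$, and the second step then gives $u_i\not\le b_j$, a contradiction. Hence $B'=B$. The only genuine subtlety is the subspace-avoidance step, which is the same trick already used in Theorem \ref{mini}; everything else is straightforward lattice bookkeeping.
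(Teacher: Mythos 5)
Your argument is correct, but it takes a genuinely different route from the paper. The paper's proof is a two-line reduction: it applies Theorem \ref{mini} to each $p_i$ separately to get a minimal projection $u_i \le p_i$ with $\langle u_i \rangle = \langle p_i \rangle$, and then concludes via the lattice identity $\langle p_1, \dots, p_n \rangle = \langle \langle p_1 \rangle, \dots, \langle p_n \rangle \rangle = \langle \langle u_1 \rangle, \dots, \langle u_n \rangle \rangle = \langle u_1, \dots, u_n \rangle$, which holds because a biprojection dominates $p_i$ if and only if it dominates $\langle p_i \rangle$. You instead re-run the subspace-avoidance argument of Theorem \ref{mini} globally, taking the maximal sub-biprojections of $B = \langle p_1,\dots,p_n\rangle$ as the obstructions rather than those of each $\langle p_i \rangle$; the steps all check out (the observation that each $b_j$ must miss some $p_i$, the fact that a vector space over $\mathbb{C}$ is not a finite union of proper subspaces, and the identification of minimal subprojections of a minimal central projection with one-dimensional subspaces of the corresponding multiplicity space, exactly as in the paper's own proof of Theorem \ref{mini}). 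What your version buys is a self-contained, simultaneous construction that does not need the identity $\langle S \rangle = \langle \langle s_1 \rangle, \dots \rangle$; what it costs is length, since it essentially re-proves a multi-projection variant of Theorem \ref{mini} instead of quoting it. Either proof is acceptable.
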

\begin{proof} 
By Theorem \ref{mini}, there are minimal projections $u_i \le p_i$ such that $\langle u_i \rangle = \langle p_i \rangle$, then \\  $\langle p_1, \dots, p_n \rangle = \langle \langle p_1 \rangle, \dots, \langle p_n \rangle \rangle = \langle \langle u_1 \rangle, \dots, \langle u_n \rangle \rangle = \langle u_1, \dots, u_n \rangle$
\end{proof}

This lemma allows the following remark.

\begin{remark}
The w-cyclic length $wcl(\mathcal{P})$ admits the following equivalent definitions: 
\begin{itemize}
\item the minimal number $n$ of minimal projections $u_1, \cdots, u_n \in \mathcal{P}_{2,+}$ such that $\langle u_1, \dots, u_n \rangle = id$.
\item the minimal number $n$  of minimal central projections $p_1, \dots, p_n \in \mathcal{P}_{2,+}$ such that $\langle p_1, \dots, p_n \rangle = id$.
\end{itemize}
\end{remark} 

\begin{corollary} \label{cychain} \label{cychain2}
The w-cyclic length of $\mathcal{P}$ is less than its cyclic length, its top cyclic length and its top $\mathcal{B}_{\le 4}$ length, \\  i.e. $wcl(\mathcal{P}) \le tcl(\mathcal{P}) \le cl(\mathcal{P})$ and $wcl(\mathcal{P}) \le tb_4l(\mathcal{P})$.
\end{corollary}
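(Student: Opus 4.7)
The plan is to prove all four inequalities by reusing, step by step, a single chain of biprojections that witnesses the stronger length and showing that the same chain also witnesses the weaker length. All the work has effectively been done by earlier results: the only task is to assemble the correct implications between the adjectives ``cyclic'', ``top cyclic'', ``top $\mathcal{B}_{\le 4}$'' and ``w-cyclic'' applied to each step $\mathcal{P}(b_i < b_{i+1})$ of the chain, and then invoke Definition \ref{length}.

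For $tcl(\mathcal{P}) \le cl(\mathcal{P})$, I would take a minimal chain $e_1 = b_0 < b_1 < \cdots < b_l = id$ of length $l = cl(\mathcal{P})$ with each $\mathcal{P}(b_i < b_{i+1})$ cyclic. By Lemma \ref{c*}, every intermediate of a cyclic subfactor planar algebra is again cyclic; applied to the top intermediate of $\mathcal{P}(b_i < b_{i+1})$ (Definition \ref{topdef}), this shows that each step is top cyclic, so the same chain witnesses $tcl(\mathcal{P}) \le l$.

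For $wcl(\mathcal{P}) \le tcl(\mathcal{P})$, I would start from a minimal top cyclic chain: each $\mathcal{P}(b_i < b_{i+1})$ then has cyclic top intermediate, which is w-cyclic by Theorem \ref{thm}, so $\mathcal{P}(b_i < b_{i+1})$ is top w-cyclic and hence w-cyclic by Lemma \ref{Topw}. The same chain therefore gives $wcl(\mathcal{P}) \le tcl(\mathcal{P})$. The argument for $wcl(\mathcal{P}) \le tb_4l(\mathcal{P})$ is completely parallel: a minimal top $\mathcal{B}_{\le 4}$ chain has each step with $\mathcal{B}_r$ top intermediate for some $r \le 4$, which is w-cyclic by Corollary \ref{B4w}, so each step is top w-cyclic and then w-cyclic again by Lemma \ref{Topw}.

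There is no real obstacle here: the corollary is a purely formal packaging of Theorem \ref{thm}, Lemma \ref{c*}, Lemma \ref{Topw} and Corollary \ref{B4w} into inequalities between the length invariants of Definition \ref{length}. The only thing to verify is that the definition of each length really does allow reusing the same chain once each step has been shown to satisfy the weaker property, which is immediate from Definition \ref{length}.
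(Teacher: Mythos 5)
Your proof is correct and is essentially the paper's argument: the paper's own proof is just a one-line citation of Theorem \ref{thm}, Corollary \ref{B4w}, Lemma \ref{Topw} (together with Corollary \ref{interdistrib} and Lemma \ref{topBn}), and your write-up simply makes explicit how these combine with the chain definitions in Definition \ref{length}. Your appeal to Lemma \ref{c*} to deduce that each cyclic step is top cyclic, giving $tcl(\mathcal{P}) \le cl(\mathcal{P})$, is the natural way to fill in the one step the paper's citation list leaves implicit.
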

\begin{proof} Immediate from Theorem \ref{thm}, Corollary \ref{interdistrib}, Corollary \ref{B4w}, Lemmas \ref{topBn} and \ref{Topw}.
\end{proof}

 So these lengths gives non-trivial upper bounds for the minimal number of minimal central projections generating the identity biprojection. 
 
 \begin{remark} \label{topesti} By using Theorem \ref{le2} directly, we can define as above another length $tb'l$ which gives a better estimate than $tb_4l$, \\ i.e. $wcl(\mathcal{P}) \le tb'l(\mathcal{P}) \le tb_4l(\mathcal{P})$
 \end{remark}  
 
Assuming Conjecture \ref{conjext} we get the following statement: 
 
\begin{statement} \label{statement}
The w-cyclic length of $\mathcal{P}$ is less than its distributive length and its top boolean length, i.e. $wcl(\mathcal{P}) \le tbl(\mathcal{P}) \le dl(\mathcal{P})$.
\end{statement} 
 
 \subsection{Applications to subfactors theory}   \hspace*{1cm}\\
 We now reformulate for a finite index irreducible subfactor $(N \subseteq M)$. 
From the one-to-one correspondence between the intermediate subfactors and the biprojections (theorem \ref{bisch}), we define:

\begin{definition}
Let the cyclic length $cl(N \subseteq M)$  be the minimal length $n$ for an ordered chain of  intermediate subfactors $$N=P_0 \subsetneq P_1 \subsetneq \cdots \subsetneq P_{n} = M$$ such that all the intermediate subfactors of $(P_i \subsetneq P_{i+1})$ are normal \cite{teru} and form a distributive lattice.
\end{definition} 

\begin{definition}
Let the w-cyclic length $wcl(N \subseteq M)$  be the minimal number $n$ of algebraic irreducible sub-$N$-$N$-bimodules of $M$, noted $B_1, \dots , B_n$, generating $M$ as von Neumann algebra: $\langle B_1, \dots , B_n \rangle = M$. 
\end{definition} 

\begin{lemma} \label{clwcl} The cyclic length $cl(N \subseteq M) = cl(\mathcal{P}(N \subseteq M))$ and the w-cyclic length $wcl(N \subseteq M) = wcl(\mathcal{P}(N \subseteq M))$.
\end{lemma}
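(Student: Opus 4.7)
The plan is to unpack the dictionary between intermediate subfactors and biprojections provided by Theorem \ref{bisch} and Corollary \ref{landau1}, and then to interpret algebraic irreducible sub-$N$-$N$-bimodules as minimal central projections of $\mathcal{P}_{2,+}$. I would prove the two equalities separately.

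For $cl(N\subseteq M)=cl(\mathcal{P}(N\subseteq M))$: a chain of intermediate subfactors $N=P_0\subsetneq P_1\subsetneq\cdots\subsetneq P_n=M$ corresponds via $P\mapsto e^M_P$ to a chain of biprojections $e_1=b_0<b_1<\cdots<b_n=id$ of $\mathcal{P}_{2,+}(N\subset M)$. For each step, Corollary \ref{landau1} identifies the interval planar algebra $\mathcal{P}(P_i\subset P_{i+1})$ (whose biprojections are exactly the intermediate subfactors of $(P_i\subset P_{i+1})$ by Theorem \ref{bisch}) with the interval $\mathcal{P}(b_i<b_{i+1})$ sitting inside $\mathcal{P}(N\subset M)$. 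Teruya's definition of a normal intermediate subfactor (see Definition \ref{normal}) is precisely bicentrality of the corresponding biprojection, so normality transports; distributivity is intrinsic to the lattice, hence preserved by any lattice isomorphism. Thus a chain is admissible in the subfactor sense if and only if it is admissible in the planar algebra sense, and taking minima yields the first equality.

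For $wcl(N\subseteq M)=wcl(\mathcal{P}(N\subseteq M))$: under the C$^{\star}$-algebra isomorphism $\mathcal{P}_{2,+}\cong N'\cap M_1$, the algebraic irreducible sub-$N$-$N$-bimodules of $M$ biject with the minimal central projections of $\mathcal{P}_{2,+}$, say $B_i\leftrightarrow p_i$. I would then verify that the von Neumann algebra $\langle B_1,\dots,B_n\rangle\subseteq M$ is an intermediate subfactor whose associated Jones projection, under $K\mapsto e^M_K$, is the biprojection $\langle p_1,\dots,p_n\rangle$ of Definition \ref{gener}. The key point is that the coproduct $*$ on $\mathcal{P}_{2,+}$ encodes the relative tensor product (Connes fusion) of $N$-$N$-bimodules, so the range projection of $\sum_{k=1}^N(\sum_i p_i)^{*k}$ is precisely the Jones projection of the smallest intermediate containing all the $B_i$'s. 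Hence $\langle B_1,\dots,B_n\rangle=M$ iff $\langle p_1,\dots,p_n\rangle=id$, and the equivalent formulation of $wcl(\mathcal{P})$ in terms of minimal central projections (recorded in the remark just before this lemma, via Theorem \ref{mini}) gives the desired identity.

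The main technical obstacle is showing that the von Neumann algebra generated inside $M$ by a collection of irreducible sub-$N$-$N$-bimodules corresponds, under the biprojection dictionary, to the biprojection generated by the associated minimal central projections in the sense of Definition \ref{gener}. This is essentially the interaction between the Bisch correspondence (Theorem \ref{bisch}) and the fact that the coproduct on $\mathcal{P}_{2,+}$ implements bimodule fusion; once this is in place, both equalities reduce to taking minima over chains, respectively families, under a length-preserving bijection.
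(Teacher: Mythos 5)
Your overall strategy is the same as the paper's: the first equality follows from the dictionary between intermediate subfactors and biprojections, and the second from the correspondence between projections of $\mathcal{P}_{2,+}$ and algebraic sub-$N$-$N$-bimodules of $M$ together with the compatibility $\langle X_p\rangle = X_{\langle p\rangle}$ (Corollary \ref{coralgcor}). The difficulty is in how you propose to establish that compatibility. You invoke, as the ``key point,'' that the coproduct $*$ encodes Connes fusion of bimodules, so that the range projection of $\sum_{k}(\sum_i p_i)^{*k}$ is the Jones projection of the intermediate subfactor generated by the $B_i$. That identification is not available: the paper explicitly leaves as an open Question (end of Subsection \ref{coralg}) whether $\mathrm{Vect}_{\mathbb{C}}(X_pX_q)$ equals $X_{R(p*q)}$, which is precisely the step-by-step version of what you are asserting. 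The paper's actual proof of Corollary \ref{coralgcor} sidesteps fusion entirely: it is a purely order-theoretic argument. The correspondence $p\mapsto X_p$ is a poset isomorphism restricting to a bijection between biprojections and intermediate subfactors; $\langle p\rangle$ is by definition the smallest biprojection dominating $p$, while $\langle X_p\rangle$ is the smallest von Neumann algebra containing $X_p$, which contains $N$ (since $X_p$ is an $N$-$N$-bimodule) and is therefore an intermediate subfactor by irreducibility; matching smallest elements under the poset isomorphism gives $X_{\langle p\rangle}=\langle X_p\rangle$ with no analysis of the coproduct at all. You should replace your fusion argument by this one, or else you are resting the lemma on an unproved (and possibly delicate) statement.

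A secondary imprecision: the algebraic \emph{irreducible} sub-$N$-$N$-bimodules of $M$ correspond under $p\mapsto X_p$ to \emph{minimal} projections (one-dimensional subspaces of the multiplicity spaces $V_i$), not to minimal central projections; the sub-bimodule attached to a minimal central projection is the whole isotypic component $V_i\otimes B_i$, which is reducible whenever the multiplicity exceeds one, and distinct copies of the same irreducible class can generate different intermediate subfactors (as the stabilizer description in Lemma \ref{corrminstab} shows). The count is unaffected because of the equivalence between the minimal-projection and minimal-central-projection formulations of $wcl(\mathcal{P})$ (via Theorem \ref{mini} and the lemma preceding the Remark), but the bijection as you state it is not correct, and the cleaner route is to match irreducible sub-bimodules with minimal projections directly.
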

\begin{proof}
The first equality is immediate, the second follows from the one-to-one correspondence between the poset of projections $p$ of $\mathcal{P}_{2,+}$ and the poset of algebraic sub-$N$-$N$-bimodules $X_p$ of $M$ (see Subsection \ref{coralg}), and the Corollary \ref{coralgcor} stating that $\langle X_p \rangle = X_{\langle p \rangle}$. 
\end{proof} 
 
\begin{corollary} \label{cychainsub} 
The w-cyclic length of $(N \subseteq M)$ is less than its cyclic length, i.e. $wcl(N \subseteq M) \le cl(N \subseteq M)$.
\end{corollary}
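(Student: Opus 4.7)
The plan is to reduce the statement immediately to its planar algebra analogue. By Lemma \ref{clwcl} we have $cl(N \subseteq M) = cl(\mathcal{P}(N \subseteq M))$ and $wcl(N \subseteq M) = wcl(\mathcal{P}(N \subseteq M))$, so the desired inequality is equivalent to $wcl(\mathcal{P}) \le cl(\mathcal{P})$ with $\mathcal{P} = \mathcal{P}(N \subseteq M)$, which is the first chain of inequalities stated in Corollary \ref{cychain}.

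To exhibit the construction explicitly, I would unpack the argument as follows. Fix a minimal-length chain of intermediate subfactors $N = P_0 \subsetneq P_1 \subsetneq \cdots \subsetneq P_n = M$ realizing $cl(N \subseteq M) = n$, so each sub-planar-algebra $\mathcal{P}(P_i \subsetneq P_{i+1})$ is cyclic by definition. The main Theorem \ref{thm} then implies that each such planar algebra is w-cyclic, and Corollary \ref{interdistrib} produces, for each $i$, a minimal projection $u_{i+1} \in \mathcal{P}_{2,+}(N \subseteq M)$ such that $\langle e^M_{P_i}, u_{i+1} \rangle = e^M_{P_{i+1}}$. Translating $u_{i+1}$ back to an algebraic irreducible sub-$N$-$N$-bimodule $B_{i+1} \subseteq M$ via the dictionary wrapped up in Lemma \ref{clwcl}, an easy induction on $i$ gives $\langle B_1, \ldots, B_n \rangle = M$ as a von Neumann algebra, whence $wcl(N \subseteq M) \le n = cl(N \subseteq M)$.

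There is no real obstacle once the planar algebraic infrastructure is in place: the substance has been fully absorbed into Theorem \ref{thm} (cyclic $\Rightarrow$ w-cyclic) and Corollary \ref{interdistrib} (which lifts w-cyclicity across an intermediate biprojection). The only bookkeeping is to verify that the bijection between $2$-box projections and algebraic sub-$N$-$N$-bimodules of $M$ respects the ``generates'' relation, which is precisely what Lemma \ref{clwcl} packages for us. In fact, the same translation yields the slightly stronger bound $wcl(N \subseteq M) \le tcl(N \subseteq M) \le cl(N \subseteq M)$, matching Corollary \ref{cychain} verbatim.
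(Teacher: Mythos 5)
Your proposal is correct and follows exactly the paper's own route: the paper's proof is ``Immediate by Corollary \ref{cychain} and Lemma \ref{clwcl}'', which is precisely your first paragraph, and your subsequent unpacking merely re-derives the content already packaged in Theorem \ref{thm}, Corollary \ref{interdistrib} and Corollary \ref{cychain}.
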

\begin{proof} Immediate by Corollary \ref{cychain} and Lemma \ref{clwcl}.
\end{proof}

\begin{corollary} \label{mainsub}  If $(N \subseteq M)$ is a cyclic subfactor then there is an algebraic irreducible sub-$N$-$N$-bimodule $B$ of $M$ such that $\langle B  \rangle = M$, with $\langle B  \rangle$ the von Neumann algebra generated by the subset $B \subset M$.
\end{corollary}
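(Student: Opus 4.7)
The plan is to assemble Corollary \ref{mainsub} as a direct translation of Theorem \ref{thm} into the bimodule language, using the dictionary between projections in $\mathcal{P}_{2,+}(N \subseteq M)$ and algebraic sub-$N$-$N$-bimodules of $M$ that was used in Lemma \ref{clwcl}. First, since $(N \subseteq M)$ is cyclic, its subfactor planar algebra $\mathcal{P}=\mathcal{P}(N \subseteq M)$ is cyclic by definition, so Theorem \ref{thm} supplies a minimal projection $u \in \mathcal{P}_{2,+}$ with $\langle u \rangle = id$.

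Next I would invoke the correspondence $p \mapsto X_p$ from Subsection \ref{coralg} (the same one used to prove Lemma \ref{clwcl}): it is an order-preserving bijection between the poset of projections of $\mathcal{P}_{2,+}$ and the poset of algebraic sub-$N$-$N$-bimodules of $M$, and it sends minimal projections to algebraic irreducible sub-$N$-$N$-bimodules. Setting $B := X_u$ therefore produces an algebraic irreducible sub-$N$-$N$-bimodule of $M$.

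Finally, by Corollary \ref{coralgcor}, generating a biprojection on the planar algebra side corresponds to generating a von Neumann subalgebra on the bimodule side, namely $\langle X_p \rangle = X_{\langle p \rangle}$. Applied to $p = u$, and using that $X_{id} = M$, this gives $\langle B \rangle = X_{\langle u \rangle} = X_{id} = M$, which is precisely the conclusion. There is no real obstacle here: Theorem \ref{thm} does all the analytic work, the correspondence of Subsection \ref{coralg} is already established in the excerpt, and Corollary \ref{coralgcor} provides the compatibility of the two notions of ``generation'', so the proof reduces to concatenating these three ingredients.
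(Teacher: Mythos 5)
Your proposal is correct and follows essentially the same route as the paper: the paper obtains Corollary \ref{mainsub} as the length-one case of Corollary \ref{cychainsub}, which is itself just Theorem \ref{thm} combined with the projection/bimodule dictionary of Subsection \ref{coralg} via Lemma \ref{clwcl} and Corollary \ref{coralgcor} — exactly the three ingredients you concatenate. Your observation that minimal projections correspond to irreducible sub-$N$-$N$-bimodules (a rank-one projection in $\bigoplus_i \mathrm{End}(V_i)$ picks out a one-dimensional subspace of some multiplicity space $V_i$, hence a copy of the irreducible $B_i$) is the same fact the paper uses implicitly in defining $wcl(N \subseteq M)$.
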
   

 So the cyclic length gives a non-trivial upper bound for the minimal number of algebraic irreducible sub-$N$-$N$-bimodules of $M$ generating $M$ as von Neumann algebra.   
 
 By defining the analogous of the lengths $tcl$, $dl$, $tbl$, $tb_nl$ of Definition \ref{length} in the subfactors framework, we get better upper bounds (by Corollary \ref{cychain} and Statement \ref{statement}).  
 
The cyclic length is not equal to the w-cyclic length in general (see the Remark \ref{cldugrp}), nevertheless the equality occurs for the finite abelian group subfactor planar algebras (see Proposition \ref{abelian}).     

\begin{theorem}[Lukacs-Palfy \cite{lupa}]
A finite group $G$ is abelian if and only if $\mathcal{L}(G \times G)$ is modular (see Remark \ref{modular}).
\end{theorem}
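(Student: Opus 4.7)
The plan is to prove the two directions of the equivalence separately: the forward direction is straightforward, while the reverse direction requires constructing a pentagon sublattice in $\mathcal{L}(G \times G)$ to exhibit non-modularity.

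For the forward direction, if $G$ is abelian then $G \times G$ is abelian, so any two subgroups $H, K$ of $G \times G$ permute (i.e.\ $HK = KH$). The classical Dedekind modular identity says that for subgroups with $H \le L$ and $HK = KH$ one has $H(K \cap L) = HK \cap L$, which translates directly into the modular lattice law $H \vee (K \wedge L) = (H \vee K) \wedge L$ for $H \le L$ in $\mathcal{L}(G \times G)$. This immediately yields modularity.

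For the reverse direction, I would prove the contrapositive: assuming $G$ non-abelian, exhibit a pentagon sublattice $N_5 \subseteq \mathcal{L}(G \times G)$, which by Remark \ref{modular} (via Theorem \ref{dipe}) rules out modularity. Pick $a, b \in G$ with $[a,b] \neq 1$. The conceptual source of non-modularity is that the diagonal $\Delta = \{(g, g) : g \in G\}$ fails to be normal in $G \times G$: its normalizer equals $\{(g_1, g_2) : g_1 g_2^{-1} \in Z(G)\}$, which is proper whenever $Z(G) \subsetneq G$. When $b \in N_G(\langle a \rangle) \setminus C_G(a)$ and $\langle a \rangle \cap \langle b \rangle = \{e\}$, a concrete pentagon is given by
\[
U = \{(g, g) : g \in \langle a \rangle\}, \qquad V = \langle a \rangle \times \langle a \rangle, \qquad W = \langle (b, 1) \rangle.
\]
Routine verification shows $U \subsetneq V$, with $U \cap W = V \cap W = \{(e, e)\}$ and $U \vee W = V \vee W$: the last equality holds because $(b, 1)(a, a)(b, 1)^{-1} = (bab^{-1}, a) \in U \vee W$ with $bab^{-1} \in \langle a \rangle \setminus \{a\}$, which together with $(a, a)$ forces $V \le U \vee W$. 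For $G = S_3$ with $a = (123)$ and $b = (12)$ this realizes the pentagon explicitly with $V = A_3 \times A_3$.

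The main obstacle is ensuring the construction applies uniformly to all non-abelian $G$. The scheme above requires a pair $(a, b)$ with $b \in N_G(\langle a \rangle) \setminus C_G(a)$ and $\langle a \rangle \cap \langle b \rangle = \{e\}$, which fails in Hamiltonian groups such as $Q_8$, where any two non-trivial subgroups meet non-trivially. In such cases one must replace $W$ by a carefully chosen non-cyclic subgroup (e.g.\ engineered so that the spurious intersection elements lie outside $V$), or else appeal to Iwasawa's structure theorem for finite modular groups to show that $G \times G$ cannot have the required restricted form unless $G$ is abelian. A cleaner route is to exploit hereditariness of modularity along the embedding $\mathcal{L}(H \times H) \hookrightarrow \mathcal{L}(G \times G)$ and reduce to a well-chosen subgroup $H \le G$ (typically $\langle a, b \rangle$ or a Sylow subgroup) in which the pentagon becomes visible from the direct construction above.
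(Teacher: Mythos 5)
The paper offers no proof of this statement --- it is quoted from Lukacs--P\'alfy \cite{lupa} --- so your argument has to stand on its own, and as written it does not. The forward direction is fine (Dedekind's modular law in an abelian group). The problem is the converse. Your pentagon construction genuinely requires a pair $a,b$ with $b\in N_G(\langle a\rangle)\setminus C_G(a)$ and $\langle a\rangle\cap\langle b\rangle=\{e\}$, and you correctly observe that such a pair need not exist; but the patches you propose do not close the hole. Take $G=Q_8$: every non-trivial subgroup contains the centre $\{\pm1\}$, so $\langle a\rangle\cap\langle b\rangle\neq\{e\}$ for every non-commuting pair, and one checks directly that with $a=i$, $b=j$ your candidate fails because $U\cap W=\{(e,e)\}$ while $V\cap W=\{(\pm1,1)\}$, so the meets no longer agree. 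The suggested reduction ``pass to $\langle a,b\rangle$ or a Sylow subgroup'' buys nothing here, since both equal $Q_8$ itself; and $\mathcal{L}(Q_8)$ is modular ($Q_8$ is Hamiltonian), so the pentagon in $\mathcal{L}(Q_8\times Q_8)$ cannot live inside one factor or be of product form --- it must use subgroups such as twisted diagonals $\{(g,\phi(g))\}$, i.e.\ a genuinely different construction that you have not supplied. Finally, ``appeal to Iwasawa's structure theorem'' is essentially the entire content of the hard direction in the actual Lukacs--P\'alfy argument, so invoking it without details leaves the main work undone.

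A secondary, fixable imprecision: from $(bab^{-1},a)=(a^k,a)\in U\vee W$ and $(a,a)\in U$ you only get $\langle a^{k-1}\rangle\times\{e\}\le U\vee W$, which equals $\langle a\rangle\times\{e\}$ only when $\gcd(k-1,\mathrm{ord}(a))=1$; in general you should replace $V$ by $U\vee(\langle a^{k-1}\rangle\times\{e\})$, which still yields a pentagon under your stated hypotheses. That repair is routine; the missing treatment of groups in which every two non-trivial subgroups meet (generalized quaternion subgroups, and more generally the Hamiltonian/Iwasawa-modular cases) is the genuine gap.
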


\begin{definition} \label{abeliandef}
The planar algebra $\mathcal{P}$ is called abelian\footnote{http://mathoverflow.net/q/156374/34538} if the biprojections are normal and $\mathcal{P} \otimes \mathcal{P}$ admits a modular biprojections lattice.
\end{definition}
It follows by the Galois correspondence that a finite group subfactor planar algebra is abelian if and only if the group is abelian.

\begin{question} \label{bothj}
Is the planar algebra $\mathcal{P}$ abelian if and only if its cyclic length is equal to its w-cyclic length and the w-cyclic length of its dual?
\end{question}  

\begin{remark} 
Any maximal subfactor planar algebra $\mathcal{P}$ is abelian, but $\mathcal{P}_{2,+}$ can be a non-abelian algebra; see for example\footnote{http://mathoverflow.net/a/158374/34538} $(R^{L_2(11)} \subset R^{D_{12}})$. 
\end{remark}  

The following questions point to a generalization of the fundamental theorem of abelian groups: 

\begin{question} \label{q1}
If the planar algebra $\mathcal{P}$ is cyclic, is it also abelian?
\end{question}
\begin{question} \label{q2}
Is a tensor product of abelian subfactor planar algebra also abelian? \end{question}
\begin{question} \label{q3}
Is every abelian subfactor planar algebra a tensor product of cyclic subfactor planar algebras?
\end{question}

\subsection{Applications to finite groups theory} \label{appgrp} \hspace*{1cm} \\
Let $[H,G]$ be an interval of finite groups. As an application we get a dual version of Ore's Theorem \ref{ore2}.  Next we get non-trivial upper bounds for the minimal number of elements (resp. irreducible complex representations) generating a finite group (resp. the left regular representation).
\begin{definition} \label{cyclicinc}
The inclusion $(H \subseteq G)$ is called $H$-cyclic if $\exists g \in G$ with $\langle H,g \rangle = G$.
\end{definition}

\begin{definition} \label{linprimdef}
The group $G$ is linearly primitive if it admits an irreducible complex representation $V$ which is faithful, or equivalently such that for all irreducible complex representation $W$ there is $n >0$ with $W \le V^{\otimes n}$, or equivalently such that $V$ generates the left regular representation (in the sense that it can appear as a direct component of a combination of $V$ for $\oplus$ and $\otimes$).
\end{definition}

 \begin{definition} \label{fixstab} Let $W$ be a representation of a group $G$, $K$ a subgroup of $G$, and $X$ a subspace of $W$. Let the \textit{fixed-point subspace} $$W^{K}:=\{w \in W \ \vert \  kw=w \ , \forall k \in K  \}$$ and the \textit{pointwise stabilizer subgroup} $$G_{(X)}:=\{ g \in G \  \vert \ gx=x \ , \forall x \in X \}$$  \end{definition} 

\begin{definition} \label{linprim}
The inclusion $(H \subseteq G)$ is called linearly primitive if there is an irreducible complex representation $V$ of $G$ with $G_{(V^H)} = H$.
\end{definition}

\begin{remark} The group $G$ is linearly primitive if and only if the interval $[\{e\} , G]$ is linearly primitive.  \end{remark}

\begin{definition} \label{Nint}
An intermediate subgroup $H \subseteq K \subseteq  G$ is called a normal-intermediate subgroup if $\forall g \in G$ $HgK = KgH$.
\end{definition}

\begin{lemma} \label{teruya}
Let $b_K \in \mathcal{P}_{2,+}(R^G \subseteq R^H)$ be the biprojection corresponding to  the intermediate subgroup $H \subseteq K \subseteq  G$, then $K$ is a normal-intermediate if and only if the biprojection $b_K$ is normal.
\end{lemma}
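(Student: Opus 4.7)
The plan is to unpack the bicentrality condition on $b_K$ by realizing $\mathcal{P}_{2,+}(R^G \subseteq R^H)$ explicitly as a Hecke algebra and translating each half of bicentrality into a group-theoretic statement about double cosets. First, applying Corollary \ref{landau1} to the tower $R^G \subseteq R^H \subseteq R$, together with the canonical identification $\mathcal{P}_{2,+}(R^G \subseteq R) \cong \mathbb{C}G$ (with convolution as the product $\cdot$, and biprojections $b_L$ corresponding to $\hat{L} := |L|^{-1}\sum_{\ell \in L}\ell$ for intermediate $L$), gives an isomorphism of $\mathrm{C}^*$-algebras
\[
\mathcal{P}_{2,+}(R^G \subseteq R^H) \;\cong\; \hat{H}\,\mathbb{C}G\,\hat{H},
\]
the Hecke algebra of $H$-biinvariant functions on $G$ with the convolution product, under which $b_K$ is sent to $\hat{K}$ (for $H \leq K \leq G$).

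Next I would unpack each centrality condition. Centrality of $\hat{K}$ in $(\hat{H}\,\mathbb{C}G\,\hat{H},\cdot)$ says that for every $g \in G$, $\hat{K} \cdot \hat{H}g\hat{H} = \hat{H}g\hat{H}\cdot \hat{K}$. Using $\hat{K}\cdot \hat{H} = \hat{K}$ (since $H \leq K$) and comparing supports, the left-hand side is a canonical weighting on $KgH$ and the right-hand side on $HgK$; the uniformity of the weights forces equality of expressions to be equivalent to equality of the underlying subsets, i.e.\ $KgH = HgK$. For the dual half, by Lemma \ref{2coprod} the Fourier transform is an algebra isomorphism $(\mathcal{P}_{2,+}, *) \to (\mathcal{P}_{2,-},\cdot)^{\mathrm{op}}$, so centrality of $\mathcal{F}(b_K)$ in $(\mathcal{P}_{2,-},\cdot)$ is equivalent to centrality of $b_K$ under the coproduct $*$ in $\mathcal{P}_{2,+}$. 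Transporting this to the Hecke-algebra picture and applying the inversion $(HgK)^{-1} = Kg^{-1}H$, one sees that this second centrality gives the same double-coset identity $HgK = KgH$ after replacing $g$ by $g^{-1}$. Thus bicentrality of $b_K$ is equivalent to $HgK = KgH$ for all $g \in G$, which is precisely Definition \ref{Nint}.

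The main obstacle is handling the dual side: the explicit form of $\mathcal{F}(b_K)$ and of the product on $\mathcal{P}_{2,-}(R^G \subseteq R^H)$ is less geometrically transparent than the convolution side, and one must be careful with normalizations induced by Corollary \ref{landau1} and Lemma \ref{2coprod} in order to recover a clean Hecke-algebra computation. Once the dual Hecke picture is in place, the argument collapses to standard bookkeeping on double cosets, and the self-duality of the condition $HgK = KgH$ under $g \mapsto g^{-1}$ guarantees that the two halves of bicentrality produce the same group-theoretic condition rather than a stronger conjunction.
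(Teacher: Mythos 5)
The paper does not actually prove this lemma: it simply cites Teruya's Proposition 3.3, so your proposal is supplying an argument where the paper supplies a reference. Your treatment of the first half of bicentrality is correct and is essentially the standard computation: under Corollary \ref{landau1} the $2$-box space $\mathcal{P}_{2,+}(R^G\subseteq R^H)$ is the corner $b_H\,\mathbb{C}G\,b_H$, i.e.\ the Hecke algebra, $b_K$ goes to $\hat K$, and since $\hat K\cdot(\hat H g\hat H)=\hat K g\hat H$ and $(\hat H g\hat H)\cdot\hat K=\hat H g\hat K$ are the uniform probability measures on $KgH$ and $HgK$ respectively (the fibres of $(k,h)\mapsto kgh$ all have the same size), centrality of $b_K$ for the usual product is exactly $KgH=HgK$ for all $g$, which is Definition \ref{Nint}.

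Your handling of the dual half, however, is not right as stated, even though the final conclusion survives. By Lemma \ref{2coprod}, centrality of $\mathcal{F}(b_K)$ in $(\mathcal{P}_{2,-},\cdot)$ is indeed equivalent to centrality of $b_K$ for the coproduct $*$ on $\mathcal{P}_{2,+}$; but this condition is \emph{automatic}, not a reappearance of the double-coset identity for $g^{-1}$. The reason is that $\mathcal{P}_{2,-}(R^G\subseteq R^H)\simeq\mathcal{P}_{2,+}(R\rtimes H\subseteq R\rtimes G)\simeq\bigoplus_{g\ \mathrm{repr.}}\mathbb{C}e_{HgH}$ is a commutative ${\rm C}^{\star}$-algebra (this is recorded in Subsection \ref{excoprod} and is exactly the fact invoked in the proof of Corollary \ref{dualore2}), so every element of $\mathcal{P}_{2,-}$ is central and the coproduct on the Hecke algebra is the (commutative) pointwise product of functions on $H\backslash G/H$, not a second convolution. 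So the "self-duality under $g\mapsto g^{-1}$" step should be replaced by the one-line observation that the $\mathcal{F}(b_K)$-half of Definition \ref{normal} holds vacuously for group-subgroup subfactors; the equivalence then rests entirely on the convolution computation. With that correction your argument is complete and gives a self-contained proof of the lemma that the paper delegates to \cite{teru}.
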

\begin{proof} See \cite{teru} proposition 3.3 on page 476. \end{proof}

\begin{lemma} \label{corrminstab}
Let $p_x \in \mathcal{P}_{2,+}(R^G \subseteq R)$ be a minimal projection on the one-dimensional subspace $\mathbb{C}x$ and $H$ a subgroup of $G$ then $$p_x \le b_H  
\Leftrightarrow H \subset G_x$$
\end{lemma}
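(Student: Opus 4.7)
The plan is to identify $\mathcal{P}_{2,+}(R^G \subseteq R)$ with the group algebra $\mathbb{C}[G]$ and to read off the desired equivalence as an inclusion of subspaces inside a single irreducible representation. Under the standard identification (as a $C^\star$-algebra),
\[
\mathcal{P}_{2,+}(R^G \subseteq R) \;\cong\; \mathbb{C}[G] \;\cong\; \bigoplus_{V \in \widehat{G}} \mathrm{End}(V),
\]
where $\widehat{G}$ denotes the set of isomorphism classes of irreducible complex representations of $G$, and the elements of $G$ embed via the outer action on $R \subseteq R \rtimes G = M_1$. Minimal projections are then the rank-one projections $p_x = |x\rangle\langle x|$, where $x$ is a nonzero vector in some irrep $V$, and ``the one-dimensional subspace $\mathbb{C}x$'' refers to $\mathbb{C}x \subseteq V$.

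Next I would identify $b_H$ under this isomorphism. The biprojection attached to the intermediate subfactor $R^G \subseteq R^H \subseteq R$ is the Jones projection $b_H = e^R_{R^H}$, which, via the Galois correspondence and a direct computation on the basic construction, corresponds to the central idempotent
\[
b_H \;=\; \frac{1}{|H|}\sum_{h \in H} h \;\in\; \mathbb{C}[G].
\]
This is exactly the averaging operator onto the $H$-fixed subspace in any representation, so on each summand,
\[
b_H\big|_{\mathrm{End}(V)} \;=\; \frac{1}{|H|}\sum_{h \in H} \pi_V(h) \;=\; P_{V^H},
\]
the orthogonal projection onto $V^H$.

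The conclusion then follows by comparing ranges in the single irrep $V$ containing $x$, using that $p_x \le b_H$ between projections is equivalent to $\mathrm{range}(p_x) \subseteq \mathrm{range}(b_H)$:
\[
p_x \le b_H \;\Longleftrightarrow\; \mathbb{C}x \subseteq V^H \;\Longleftrightarrow\; \pi_V(h)\,x = x \ \ \forall\, h \in H \;\Longleftrightarrow\; H \subseteq G_x.
\]
The main point to pin down carefully is the normalization and the conventions used earlier for the Galois correspondence $R^H \leftrightarrow b_H$ in $\mathcal{P}_{2,+}(R^G \subseteq R)$; once the formula $b_H = |H|^{-1}\sum_{h \in H} h$ is established (so that $b_H|_V = P_{V^H}$), the lemma is immediate. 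No deep subfactor technology is required beyond the identification of the $2$-box algebra with $\mathbb{C}[G]$ and the standard block decomposition into irreducibles.
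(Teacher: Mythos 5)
Your proof is correct and follows essentially the same route as the paper: both identify $b_H$ with the averaging idempotent $|H|^{-1}\sum_{h\in H}\pi_V(h)$, which acts on each irreducible summand as the orthogonal projection onto the $H$-fixed vectors, and then read off $p_x \le b_H \Leftrightarrow \mathbb{C}x \subseteq V^H \Leftrightarrow H \subseteq G_x$. One small correction: $|H|^{-1}\sum_{h\in H} h$ is an idempotent but is \emph{not} central in $\mathbb{C}[G]$ unless $H$ is normal in $G$; fortunately your argument never uses centrality, only that this element acts as $P_{V^H}$ on each irreducible representation.
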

\begin{proof} We use the notations of Subsection \ref{excoprod} $$b_H = \vert H \vert^{-1}\sum_{h \in H} \pi_{V}(h)$$ If $p_x \le b_H$ then $b_H x = x $ and 
$\forall h \in H $ we have that $$ \pi_V(h)  x = \pi_V(h)  [b_H  x] =  [\pi_V(h)  \cdot b_H]  x = b_H  x  = x$$
which means that $h \in G_x$, and so $H \subset G_x$.    

Conversely, if $H \subset G_x$ [i.e. $\forall h \in H$ we have $ \pi_V(h)  x = x$] then $b_H  x = x$, which means that $p_x \le b_H$.
\end{proof}

\begin{corollary} \label{wgrp}
Let $G$ acting outerly  on the hyperfinite ${\rm II}_1$ factor $R$.
\begin{itemize}
 \item  $(R \rtimes H \subseteq R \rtimes G)$ is w-cyclic if and only if $(H \subseteq G)$ is $H$-cyclic.
\item $(R^G \subseteq R^H)$ is w-cyclic iff $(H \subseteq G)$ is linearly primitive.
\end{itemize} 
\end{corollary}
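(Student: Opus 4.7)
The plan is to translate w-cyclicity directly into group-theoretic conditions by identifying the minimal (central) projections in the two $2$-box spaces as corners of the depth-$2$ $2$-box spaces $\mathcal{P}_{2,+}(R^G\subseteq R)\cong\mathbb{C}[G]$ acting on the regular representation $V$, and its Ocneanu--Fourier dual $\mathcal{P}_{2,+}(R\subseteq R\rtimes G)\cong\mathbb{C}^G$, and then computing for each the biprojection it generates.

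For the second equivalence I would apply $l_H$ from Corollary \ref{landau1} to realize $\mathcal{P}_{2,+}(R^G\subseteq R^H)\cong b_H\,\mathcal{P}_{2,+}(R^G\subseteq R)\,b_H$, where $b_H=|H|^{-1}\sum_{h\in H}\pi_V(h)$ is the projection onto $V^H$. Since $\mathbb{C}[G]\cong\bigoplus_\alpha M_{n_\alpha}$ with central projections $p_\alpha$ onto the $V_\alpha$-isotypic components, the minimal central projections of the corner are exactly $q_\alpha:=b_H p_\alpha=p_\alpha b_H$ for those irreducible complex representations $V_\alpha$ of $G$ with $V_\alpha^H\neq 0$, and $q_\alpha$ is the orthogonal projection onto $V_\alpha^H$. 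Using Lemma \ref{corrminstab} on a basis of $V_\alpha^H$, a biprojection $b_K$ with $K\in[H,G]$ dominates $q_\alpha$ iff $K$ fixes $V_\alpha^H$ pointwise, i.e.\ iff $K\subseteq G_{(V_\alpha^H)}$; hence $\langle q_\alpha\rangle=b_{G_{(V_\alpha^H)}}$. By Theorem \ref{mini} and Definition \ref{wcy}, the subfactor is w-cyclic iff some $V_\alpha$ satisfies $G_{(V_\alpha^H)}=H$, which is exactly the linear primitivity of $(H\subseteq G)$ in Definition \ref{linprim}.

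For the first equivalence I would dualize: $\mathcal{P}_{2,+}(R\subseteq R\rtimes G)\cong\mathbb{C}^G$ is abelian with minimal projections the delta functions $\delta_g$ and biprojections $b_K=\sum_{k\in K}\delta_k$ for $K\le G$. Applying $r_H$ from Corollary \ref{landau1} and using Lemma \ref{2coprod} to transport the coproduct $*$ over to the group product on the Fourier side, one computes that $b_H*\delta_g*b_H$ equals (up to a positive scalar) the indicator of the double coset $HgH$, so the minimal projections of $\mathcal{P}_{2,+}(R\rtimes H\subseteq R\rtimes G)$ are indexed by $H\backslash G/H$. Such an indicator is dominated by $b_K$ (for $K\in[H,G]$) iff $HgH\subseteq K$, with the smallest such $K$ being $\langle H,g\rangle$; hence the generated biprojection corresponds to $\langle H,g\rangle$. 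Therefore $(R\rtimes H\subseteq R\rtimes G)$ is w-cyclic iff some $g\in G$ satisfies $\langle H,g\rangle=G$, i.e.\ iff $(H\subseteq G)$ is $H$-cyclic in the sense of Definition \ref{cyclicinc}.

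The main obstacle is the $b_H(\cdot)b_H$-calculus under Ocneanu--Fourier duality in the first equivalence, because one must oscillate between the $\cdot$-structure on one side and the $*$-structure on the other; however, once Lemma \ref{2coprod} is invoked to reduce $b_H*\delta_g*b_H$ to a computation in the group algebra, the remainder is routine. The second equivalence is conceptually cleaner, its only subtlety being the identification of minimal central projections of $b_H\mathbb{C}[G]b_H$ with irreducibles admitting $H$-fixed vectors, which follows directly from the standard matrix-algebra decomposition of $\mathbb{C}[G]$.
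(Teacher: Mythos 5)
Your proof is correct, and it reaches the two equivalences by a mildly different route than the paper. The paper first invokes Theorem \ref{thmlm} to convert w-cyclicity of the composite inclusions into lw-cyclicity of $b_H$ in $\mathcal{P}_{2,+}(R^G\subseteq R)\simeq\bigoplus_i End(V_i)$ and rw-cyclicity of $e_H$ in $\mathcal{P}_{2,+}(R\subseteq R\rtimes G)\simeq\mathbb{C}^G$, and then works with minimal projections of those \emph{ambient} algebras: on the crossed-product side it takes $u=e_g$ and reads off $\langle e_H,e_g\rangle=id\Leftrightarrow\langle H,g\rangle=G$; on the fixed-point side it takes a rank-one $p_x$, gets $\langle p_x\rangle=b_{G_x}$ from Lemma \ref{corrminstab}, and passes from $G_x=H$ to linear primitivity via the sandwich $H\subseteq G_{(V_i^H)}\subseteq G_x$. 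You instead inline the corner analysis of Corollary \ref{landau1} (which is precisely the engine behind Theorem \ref{thmlm}) and work with minimal projections of the corners themselves: the double-coset indicators spanning $e_H*\mathbb{C}^G*e_H$ (exactly the description the paper records in Subsection \ref{excoprod}), and the central projections $q_\alpha$ onto $V_\alpha^H$ in $b_H\mathbb{C}[G]b_H$. The central-projection route is the nicer local improvement: it lands directly on $G_{(V_\alpha^H)}=H$ without the sandwich step, at the small cost of checking that the $q_\alpha$ exhaust the minimal central projections of the corner and that $\langle\cdot\rangle$ is preserved under $l_H$ and $r_H$ (both of which Corollary \ref{landau1} supplies). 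In substance both arguments rest on the same two pillars, Landau's corner isomorphisms and Lemma \ref{corrminstab}, so the proofs are equivalent.
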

\begin{proof}
By theorem  \ref{thmlm},  $(R \rtimes H \subseteq R \rtimes G)$ is w-cyclic if and only if $$\exists u \in \mathcal{P}_{2,+}(R \subseteq R \rtimes G) \simeq \bigoplus_{g \in G} \mathbb{C}e_g \simeq \mathbb{C}^G$$ minimal projection such that  $\langle e^{R \rtimes G}_{R \rtimes H}, u \rangle = id$, if and only if  $\langle H,g \rangle = G$ with $u=e_g$; and $(R^G \subseteq R^H)$ is w-cyclic if and only if $$\exists u \in \mathcal{P}_{2,+}(R^G \subseteq R) \simeq \bigoplus_{V_i \ irr.}End(V_i) \simeq \mathbb{C}G$$ minimal projection such that  $\langle u \rangle = e^R_{R^H}$, if and only if,  by Lemma \ref{corrminstab}, $H=G_x$ with $u=p_x$ the projection on $\mathbb{C}x \subseteq V_i$ (with $Z(p_x) = p_{V_i}$). \\ 
Note that $H \subset G_{(V_i^H)} \subset G_x$ so $H = G_{(V_i^H)}$.   \end{proof}
In particular, we get:
\begin{corollary} \label{wgrp2}  The subfactor $(R^G \subseteq R)$ (resp. $(R \subseteq R \rtimes G)$) is w-cyclic if and only if $G$ is  linearly primitive  (resp. cyclic). \end{corollary}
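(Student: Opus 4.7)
The plan is to derive Corollary \ref{wgrp2} as the immediate specialization of Corollary \ref{wgrp} to the case $H = \{e\}$, the trivial subgroup. No new planar algebraic work is needed; the substance was carried out in the previous corollary, and only a small unpacking of definitions remains.

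First I would handle the w-cyclicity of $(R \subseteq R \rtimes G)$. Observing that $R = R \rtimes \{e\}$, this subfactor is exactly $(R \rtimes H \subseteq R \rtimes G)$ with $H = \{e\}$. By the first bullet of Corollary \ref{wgrp}, it is w-cyclic iff $(\{e\} \subseteq G)$ is $\{e\}$-cyclic, i.e.\ iff there exists $g \in G$ with $\langle \{e\}, g \rangle = \langle g \rangle = G$, which is precisely the definition of $G$ being cyclic.

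Next I would handle $(R^{G} \subseteq R)$. Since $R = R^{\{e\}}$, this subfactor is $(R^{G} \subseteq R^{H})$ with $H = \{e\}$. By the second bullet of Corollary \ref{wgrp}, it is w-cyclic iff $(\{e\} \subseteq G)$ is linearly primitive in the sense of Definition \ref{linprim}, i.e.\ iff there exists an irreducible complex representation $V$ of $G$ with $G_{(V^{\{e\}})} = \{e\}$. But the fixed-point subspace $V^{\{e\}}$ is all of $V$ (Definition \ref{fixstab}), so this condition reads $G_{(V)} = \{e\}$, which by Definition \ref{linprimdef} means that $V$ is faithful, i.e., $G$ is linearly primitive.

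There is no real obstacle here; the only care needed is to recognize the correct identifications $R = R \rtimes \{e\} = R^{\{e\}}$ and to verify that the statement ``linearly primitive as an inclusion $(\{e\} \subseteq G)$'' matches the standard notion ``$G$ is linearly primitive,'' which is the content of the remark following Definition \ref{linprim}. The two equivalences therefore follow directly from Corollary \ref{wgrp}.
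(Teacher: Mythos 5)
Your proposal is correct and matches the paper exactly: the paper presents Corollary \ref{wgrp2} as an immediate specialization of Corollary \ref{wgrp} to $H=\{e\}$ (introduced with ``In particular, we get:'' and no separate proof), and your unpacking of $V^{\{e\}}=V$ and $\langle \{e\},g\rangle=\langle g\rangle$ is just the small amount of bookkeeping the paper leaves implicit.
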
    

\begin{examples} The subfactors $(R^{S_3} \subset R)$, $(R^{S_4} \subset R^{\langle (1,2) \rangle })$ and its dual are w-cyclic, but  $(R \subset R \rtimes S_3)$ and $(R^{S_4} \subset R^{\langle (1,2)(3,4) \rangle})$ are not.  \end{examples} 
 
\begin{definition}
The inclusion $(H \subseteq G)$ is called
\begin{itemize}
\item distributive if $[H,G]$ is a distributive lattice
\item Dedekind if every $K \in [H,G]$ is normal-intermediate
\item cyclic if it is both distributive and Dedekind
\end{itemize}
\end{definition}
 
\begin{corollary} \label{dualore2} If the inclusion $(H \subseteq G)$ is distributive then it is $H$-cyclic; if moreover it is cyclic then it is linearly primitive.
\end{corollary}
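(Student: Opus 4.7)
The plan is to split the corollary into its two assertions and dispatch each by translating the group-theoretic hypothesis into a statement about the subfactor planar algebra $\mathcal{P}(R^G \subseteq R^H)$, then invoking the machinery already developed in the paper.

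For the first assertion, the distributivity of $[H,G]$ is exactly the hypothesis of Theorem \ref{ore2} (Ore's extended theorem), which directly yields the existence of $g \in G$ with $\langle H, g \rangle = G$. So this half is simply a citation and requires no new work.

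For the second assertion, I would consider the irreducible finite index subfactor $(R^G \subseteq R^H)$, whose intermediate subfactors lattice is, by Galois correspondence, $[H,G]$ (read in reverse: $R^K \subseteq R^{K'}$ iff $K' \subseteq K$). Via Theorem \ref{bisch} this lattice is also the biprojections lattice of $\mathcal{P} := \mathcal{P}(R^G \subseteq R^H)$. Now I would verify the two conditions needed to apply the main Theorem \ref{thm}: (i) the biprojections lattice is distributive, which follows from the distributivity of $[H,G]$ together with the self-duality of distributivity recorded in Lemma \ref{distri}; (ii) every biprojection is normal, which follows from the cyclic (Dedekind) hypothesis that every $K \in [H,G]$ is a normal-intermediate subgroup, via the identification in Lemma \ref{teruya}. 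Thus $\mathcal{P}$ is a cyclic subfactor planar algebra.

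Applying Theorem \ref{thm} then gives that $\mathcal{P}$ is w-cyclic. Finally, Corollary \ref{wgrp} translates this back into group-theoretic language: $(R^G \subseteq R^H)$ being w-cyclic is equivalent to $(H \subseteq G)$ being linearly primitive, that is, to the existence of an irreducible complex representation $V$ of $G$ with $G_{(V^H)} = H$.

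There is no real obstacle: the proof is essentially a dictionary translation, with the only nontrivial ingredient being the invocation of Theorem \ref{thm}. One subtlety worth mentioning is that the Galois correspondence reverses the order of the lattice, so care is needed to confirm that both distributivity and the normal-intermediate/normal-biprojection correspondence are preserved under this reversal; for the former this is exactly the self-duality part of Lemma \ref{distri}, and for the latter it is built into the statement of Lemma \ref{teruya}.
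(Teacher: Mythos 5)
Your proposal is correct and follows essentially the same route as the paper: for the second assertion you pass to $(R^G \subseteq R^H)$, use the (order-reversing) Galois correspondence together with the self-duality of distributivity (Lemma \ref{distri}) and Teruya's correspondence between normal-intermediate subgroups and normal biprojections (Lemma \ref{teruya}) to see that $\mathcal{P}(R^G \subseteq R^H)$ is cyclic, then apply Theorem \ref{thm} and translate back via Corollary \ref{wgrp}. The only cosmetic difference is in the first assertion, where you cite Theorem \ref{ore2} directly while the paper's one-line proof re-derives it by applying Theorem \ref{centheo} to $(R \rtimes H \subseteq R \rtimes G)$, whose $2$-box space is commutative; since the paper itself identifies the first part of the corollary with Theorem \ref{ore2}, these are the same argument.
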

\begin{proof}
By Galois correspondence, Theorems \ref{centheo}, \ref{thm}, Corollary \ref{wgrp}, Lemma \ref{teruya} and $\mathcal{P}_{2,+}(R \rtimes H \subset R \rtimes G)$ commutative.
\end{proof}

The first part of Corollary \ref{dualore2}  is Ore's Theorem \ref{ore2}, but the second part (which is a dual version of the first) is new in group theory.

\begin{definition}
Let the cyclic length $cl(G)$ (resp. distributive length $dl(G)$) be the minimal length for an ordered chain of subgroups $$\{1 \}=H_0 \subset  H_1 \subset  \dots \subset  H_{n} = G$$ such that the inclusion $(H_i \subseteq H_{i+1})$ is cyclic (resp. distributive).  \\ Note that $dl(G) \le cl(G) = cl(R \subseteq R \rtimes G)$. 
\end{definition}
We can define the group theoretic analogous of the lengths $tcl$, $tbl$, $tb_nl$, $bcl$, $bb_nl$, $bbl$ of Definition \ref{length}.

\begin{corollary} The group $G$ can be generated by $dl(G)$ elements.  
\end{corollary}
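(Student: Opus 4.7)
The plan is a direct induction along a distributive chain of minimum length, using Ore's theorem for intervals (Corollary~\ref{dualore2}) at every step.

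First I would unpack the definition: by choice of $n = dl(G)$, there exists a chain
$$\{1\} = H_0 \subsetneq H_1 \subsetneq \cdots \subsetneq H_n = G$$
such that each inclusion $(H_i \subseteq H_{i+1})$ is distributive, i.e.\ the interval $[H_i,H_{i+1}]$ is a distributive lattice. I would then invoke the first part of Corollary~\ref{dualore2} (which is exactly Ore's Theorem~\ref{ore2}) at each step: the distributivity of $[H_i,H_{i+1}]$ yields an element $g_i \in H_{i+1}$ such that $\langle H_i, g_i \rangle = H_{i+1}$.

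Now I would run an induction on $i$ to show that $H_i = \langle g_0, g_1, \dots, g_{i-1} \rangle$ for each $i \in \{0,1,\dots,n\}$. The base case $i=0$ is trivial since $H_0 = \{1\}$ is generated by the empty set. For the inductive step, assuming $H_i = \langle g_0,\dots,g_{i-1} \rangle$, the previous paragraph gives $H_{i+1} = \langle H_i, g_i \rangle = \langle g_0,\dots,g_{i-1}, g_i \rangle$. Taking $i = n$ produces $G = \langle g_0, g_1, \dots, g_{n-1} \rangle$, a generating set of cardinality $n = dl(G)$.

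There is essentially no obstacle: the whole content of the corollary is packed into Corollary~\ref{dualore2}, and the rest is just telescoping along the chain. The only mild subtlety worth mentioning explicitly in the write-up is that one uses the \emph{distributive} length (not the cyclic length), which is why only the first half of Corollary~\ref{dualore2} is needed; the Dedekind hypothesis and the conclusion about linear primitivity play no role here. One could equally well state (and prove in the same way) the sharper bounds $wcl(G) \le tbl(G) \le dl(G) \le cl(G)$ coming from Statement~\ref{statement} and Corollary~\ref{cychain}, which would be worth remarking upon at the end.
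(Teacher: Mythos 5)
Your argument is correct, and it is genuinely more elementary than the one in the paper. The paper proves this corollary by upgrading Corollary \ref{cychain} for the subfactor $(R \subseteq R \rtimes G)$ via Theorem \ref{centheo}: since $\mathcal{P}_{2,+}(R \subseteq R \rtimes G) \simeq \mathbb{C}^G$ is abelian, every biprojection is central, so Theorem \ref{centheo} (rather than the full cyclic hypothesis) applies to each distributive link of the chain, and one then translates ``$wcl$ of the planar algebra'' back into ``minimal number of generators of $G$''. You instead stay entirely inside group theory: you decompose a minimal chain realizing $dl(G)$, apply Ore's Theorem \ref{ore2} to each distributive interval $[H_i, H_{i+1}]$ to produce $g_i$ with $\langle H_i, g_i\rangle = H_{i+1}$, and telescope. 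Since the paper itself observes that its proof of Theorem \ref{ore2} is the group-theoretic translation of Theorem \ref{centheo}, your proof is the direct group-level shadow of the paper's planar-algebraic route; what it buys is a self-contained argument that avoids the subfactor correspondence (Corollaries \ref{interdistrib}, \ref{wgrp}, \ref{clwcl}) altogether, at the cost of not generalizing to the Kac-algebra or subfactor settings where the paper reuses the same machinery. One caution on your closing remark: the inequality $wcl \le tbl$ from Statement \ref{statement} is conditional on Conjecture \ref{conjext}, so it should not be advertised as an unconditional sharpening; the unconditional refinements available are those of Corollary \ref{cychain} and Remark \ref{topesti}.
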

\begin{proof}
Upgrade Corollary \ref{cychain} for $(R \subseteq R \rtimes G)$ using Theorem \ref{centheo}.
\end{proof}

\begin{remark} \label{clgrp} The length $dl(G)$ can be a strict upper bound for the minimal number of generators of $G$, because\footnote{http://math.stackexchange.com/q/1281368/84284} $S_n$ can be generated by two elements and $dl(S_n) = 2$ for $3 \le n \le 7$, but $dl(S_8) > 2$.
\end{remark}
The length $tbl(G)$ gives a better upper bound.

\begin{corollary} The left regular representation of $G$ can be generated (for $\oplus$ and $\otimes$) by $cl(G)$ irreducible complex representations. 
\end{corollary}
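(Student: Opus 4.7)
The plan is to recognize the statement as a consequence of Corollary \ref{cychainsub} applied to the dual subfactor $(R^G \subseteq R)$. First I would identify, via the Galois correspondence, the biprojection lattice of $\mathcal{P}(R^G \subseteq R)$ with the subgroup lattice $\mathcal{L}(G)$ (reversed): an intermediate $R^G \subseteq R^H \subseteq R$ corresponds to $H \subseteq G$, with biprojection $b_H = |H|^{-1}\sum_{h \in H}\pi(h)$ in $\mathcal{P}_{2,+}(R^G \subseteq R) \cong \mathbb{C}G \cong \bigoplus_i \mathrm{End}(V_i)$. Under this identification, minimal central projections are exactly the isotypic projections $p_{V_i}$ onto the irreducible components $V_i$ of the left regular representation, and one has $p_V \le b_H$ iff $H \subseteq \ker(V)$; hence the biprojection generated by $\{p_{V_{i_1}},\dots,p_{V_{i_k}}\}$ corresponds via Galois to the subgroup $\bigcap_j \ker(V_{i_j})$.

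Next I would observe that generating the identity biprojection (which corresponds to $H = \{1\}$) by such a family amounts to the statement that $V_{i_1}\oplus\cdots\oplus V_{i_k}$ is a faithful representation of $G$; by the Burnside--Brauer theorem, such a faithful direct sum generates the left regular representation under $\oplus$ and $\otimes$ in the sense of Definition \ref{linprimdef}. Consequently, using the equivalent definition with minimal central projections given after Corollary \ref{interdistrib}, the minimal number of irreducible complex representations of $G$ generating the left regular representation equals $wcl(R^G \subseteq R)$.

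Then I would invoke Corollary \ref{cychainsub} for $(R^G \subseteq R)$ to get $wcl(R^G \subseteq R) \le cl(R^G \subseteq R)$, and conclude with the duality step: the intermediate-subfactor lattices of $(R^G \subseteq R)$ and $(R \subseteq R \rtimes G)$ are reverse to each other (both are $\mathcal{L}(G)$ up to reversal), and cyclicity is preserved by lattice reversal (distributivity by Lemma \ref{distri}, and preservation of normal biprojections under duality by Lemma \ref{c*}). A minimal cyclic chain in one subfactor therefore dualizes to a cyclic chain of the same length in the other, giving $cl(R^G \subseteq R) = cl(R \subseteq R \rtimes G) = cl(G)$. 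Chaining the inequalities yields $wcl(R^G\subseteq R) \le cl(G)$, as required.

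The only nontrivial point is the correspondence between ``generates the identity biprojection'' and ``generates the left regular representation under $\oplus$ and $\otimes$'': the direction from faithfulness of $\bigoplus_j V_{i_j}$ to the appearance of every irreducible inside some $(\oplus,\otimes)$-polynomial of the $V_{i_j}$ is where Burnside--Brauer enters, and it is the only step that is not purely bookkeeping with the previously established duality and length inequalities.
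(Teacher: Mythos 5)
Your proof is correct, and its overall skeleton matches the paper's: reformulate the inequality $wcl \le cl$ of Corollary \ref{cychainsub} for the subfactor $(R^G \subseteq R)$, identify $wcl(R^G\subseteq R)$ with the minimal number of irreducible representations generating the left regular representation, and use duality with $(R \subseteq R\rtimes G)$ to see that $cl(R^G\subseteq R)=cl(G)$. Where you diverge is in the key translation step. The paper invokes Corollary \ref{coprofusion}, which states that the coproduct of minimal central projections of $\mathcal{P}_{2,+}(R^G\subseteq R)\simeq \bigoplus_i \mathrm{End}(V_i)$ realizes the fusion rules, $p_i * p_j \sim \sum_k n_{ij}^k p_k$; combined with Definition \ref{gener} this makes ``$\langle p_{V_{i_1}},\dots,p_{V_{i_k}}\rangle = id$'' \emph{literally identical} to ``every irreducible appears in some $(\oplus,\otimes)$-combination of the $V_{i_j}$,'' with no detour through the subgroup lattice. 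You instead compute the generated biprojection via the Galois correspondence, showing $\langle p_{V_{i_1}},\dots,p_{V_{i_k}}\rangle = b_{\bigcap_j \ker V_{i_j}}$ (the central analogue of Lemma \ref{corrminstab}), so that generating $id$ is equivalent to faithfulness of $\bigoplus_j V_{i_j}$, and then import the classical Burnside--Brauer theorem to pass from faithfulness to generation of the left regular representation. Both routes are valid; the paper's is internal to the planar-algebra machinery it has already built (and avoids any external representation-theoretic input), while yours is more transparent to a group theorist and makes explicit the duality $cl(R^G\subseteq R)=cl(R\subseteq R\rtimes G)$ via Lemma \ref{c*} and Lemma \ref{distri}, which the paper leaves implicit. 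One could even note that your identification of $wcl(R^G\subseteq R)$ with the faithful-generation count, read against the paper's identification via fusion, recovers Burnside--Brauer as a byproduct rather than assuming it.
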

\begin{proof}
Reformulate Corollary \ref{cychain2} for $(R^G \subseteq R)$ using Corollary \ref{coprofusion}.
\end{proof} 
\begin{remark} \label{cldugrp} The length $cl(G)$ can be a strict upper bound for  the minimal number of irreducible complex representation generating the left regular representation of $G$ because $cl(S_3) = 2$ and $S_3$ is linearly primitive.
\end{remark}
The lengths $bcl(G)$ and $bb_4l(G)$ gives better upper bounds (see Corollary \ref{cychain}), and $bbl(G)$ is conjectured to be also (see Statement \ref{statement}).

\begin{proposition} \label{abelian} If a finite group is abelian then the cyclic length is  both the minimal number of elements and of irreducible complex representations, generating the group and the left regular representation. 
\end{proposition}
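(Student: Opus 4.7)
The plan is to identify both right-hand-side quantities with the classical minimal number of generators $d(G)$ of the finite abelian group $G$, and to show that $cl(G)$ also equals $d(G)$. The two reductions use Ore's Theorem \ref{ore1} on one side and Pontryagin duality on the other.

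For the cyclic length, first I would check that the Dedekind hypothesis is vacuous in this setting. Since $G$ is abelian, for any chain $\{1\} = H_0 \subset H_1 \subset \cdots \subset H_n = G$ and any $K \in [H_i, H_{i+1}]$ one has $H_i g K = g H_i K = g K H_i = K g H_i$ for every $g$, so the normal-intermediate condition of Definition \ref{Nint} holds automatically, and by Lemma \ref{teruya} each inclusion $(H_i \subset H_{i+1})$ is Dedekind. By the correspondence theorem the interval $[H_i, H_{i+1}]$ in $\mathcal{L}(G)$ is lattice-isomorphic to $\mathcal{L}(H_{i+1}/H_i)$, so by Ore's Theorem \ref{ore1} this interval is distributive iff $H_{i+1}/H_i$ is cyclic. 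Therefore $cl(G)$ equals the minimal length of a subgroup chain in $G$ with cyclic successive quotients. A short two-sided induction identifies this with $d(G)$: any generating set $g_1, \ldots, g_{d(G)}$ produces the chain $H_i = \langle g_1, \ldots, g_i \rangle$, and conversely lifting a cyclic-quotient generator at each step of a chain of length $n$ yields $n$ generators of $G$.

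For the representation side I would invoke Pontryagin duality: every irreducible complex representation of $G$ is one-dimensional, i.e., a character $\chi \in \hat{G}$; tensor product of characters realizes the group operation of $\hat{G}$; and the left regular representation decomposes as $\bigoplus_{\chi \in \hat{G}} \chi$. Consequently, characters $\chi_1, \ldots, \chi_m$ generate the left regular representation in the sense of Definition \ref{linprimdef} (under $\oplus$ and $\otimes$) if and only if they generate $\hat{G}$ as a group. The minimal such $m$ is $d(\hat{G})$, which equals $d(G)$ since $\hat{G} \cong G$ for finite abelian groups. Combining with the previous paragraph, all three quantities coincide with $d(G)$.

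The step most prone to pitfalls, though not really hard, is keeping the three notions of ``generates'' straight: generation of subgroups of $G$, generation of biprojections (equivalently of the identity biprojection), and generation of the left regular representation under $\oplus, \otimes$. Abelian-ness trivializes the Dedekind condition and collapses each interval to $\mathcal{L}(H_{i+1}/H_i)$, and Pontryagin duality identifies representation-generation with character-group generation; beyond this bookkeeping, the proof is formal.
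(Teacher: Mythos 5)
Your proof is correct and follows essentially the same route as the paper, whose own proof is just the one-line hint ``by the fundamental theorem of finite abelian groups and the fact that they are isomorphic to their dual and Dedekind'': you unpack exactly those three ingredients (Dedekind-ness is automatic, each interval $[H_i,H_{i+1}]\cong\mathcal{L}(H_{i+1}/H_i)$ is distributive iff the quotient is cyclic by Ore, and self-duality $\hat G\cong G$ handles the representation side), identifying all three quantities with $d(G)$. No gaps.
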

\begin{proof}
By the fundamental theorem of finite abelian groups and the fact that they are isomorphic to their dual and Dedekind. 
\end{proof}

\begin{question}
Is the converse true? (see also Question \ref{bothj})
\end{question}

\subsection{Applications to quantum groups theory}  \hspace*{1cm} \\
Let $\mathbb{A}$ be a finite dimensional Kac algebra (see Subsection \ref{kac}).
\begin{definition}
$\mathbb{A}$  is linearly primitive if there is an irreducible complex representation $V$ such that, for all irreducible complex representation $W$ there is $n >0$ with $W \le V^{\otimes n}$, or equivalently, the projection $p_V$ generates $\mathbb{A}$ as left coideal subalgebra (thanks to Theorem \ref{thmcopro}).
\end{definition}  

\begin{corollary}
The Kac algebra $\mathbb{A}$ is linearly primitive if and only if the depth $2$ irreducible finite index subfactor $(R^{\mathbb{A}} \subseteq R)$ is w-cyclic.
\end{corollary}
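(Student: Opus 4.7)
The plan is to pass through the dictionary between the Kac algebra $\mathbb{A}$ and the $2$-box space of the depth $2$ irreducible subfactor $(R^{\mathbb{A}}\subseteq R)$. For such a subfactor, $\mathcal{P}_{2,+}(R^{\mathbb{A}}\subseteq R)$ is canonically isomorphic to $\mathbb{A}$ as a ${\rm C}^{\star}$-algebra; under this isomorphism the biprojections correspond to the left coideal $\star$-subalgebras of $\mathbb{A}$ (this is the analogue in the Kac algebra framework of Bisch's Theorem \ref{bisch}, which is used via the cited Theorem \ref{thmcopro}), the minimal central projections correspond exactly to the projections $p_V$ associated to the irreducible complex representations $V$ of $\mathbb{A}$, and the generation operation $\langle \cdot \rangle$ in the biprojection lattice matches the generation operation as left coideal subalgebra.

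For the forward direction, suppose $\mathbb{A}$ is linearly primitive. By the equivalent formulation given in the definition (which uses Theorem \ref{thmcopro}), there is an irreducible representation $V$ such that $p_V$ generates $\mathbb{A}$ as a left coideal subalgebra. Transporting this via the dictionary above, $p_V$ becomes a minimal central projection in $\mathcal{P}_{2,+}$ whose generated biprojection is $\langle p_V \rangle = id$. By the second equivalent formulation in Definition \ref{wcy}, this says precisely that $(R^{\mathbb{A}}\subseteq R)$ is w-cyclic.

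For the converse, suppose $(R^{\mathbb{A}}\subseteq R)$ is w-cyclic. Using again Definition \ref{wcy}, choose a minimal central projection $p \in \mathcal{P}_{2,+}$ such that $\langle p \rangle = id$. Through the dictionary, $p$ is of the form $p_V$ for some irreducible complex representation $V$ of $\mathbb{A}$, and the equality $\langle p_V \rangle = id$ translates to: $p_V$ generates $\mathbb{A}$ as left coideal subalgebra. Invoking once more Theorem \ref{thmcopro}, this is exactly the statement that $\mathbb{A}$ is linearly primitive.

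The only nontrivial ingredient is the dictionary itself, in particular the correspondence between biprojections and left coideal subalgebras together with the fact that the planar algebraic $\langle \cdot \rangle$ and the coideal-subalgebra generation coincide on minimal central projections; these are precisely what is packaged in the cited Theorem \ref{thmcopro} and the standard depth $2$ identification $\mathcal{P}_{2,+}\cong \mathbb{A}$. Once that is in hand, the equivalence is a one-line translation in each direction, so there is no real obstacle beyond citing the dictionary correctly.
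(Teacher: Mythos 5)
There is a genuine gap, and it sits exactly where you say the proof is "a one-line translation": in the claim that the planar-algebraic generation $\langle\,\cdot\,\rangle$ of Definition \ref{gener} "matches the generation operation as left coideal subalgebra". The only generation operation the paper actually defines is $\langle a\rangle$, the range projection of $\sum_k a^{*k}$, built from the planar coproduct $*$. Theorem \ref{thmcopro}, which you cite as packaging the dictionary, relates the comultiplication $\Delta$ to the fusion rules and says nothing about $*$; the bridge between the two is precisely Corollary \ref{coprofusion} (resting on the splitting Theorem \ref{split} and Proposition \ref{coropro}), namely $p_i * p_j \sim \sum_k n_{ij}^{k}\, p_k$. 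That corollary is what the paper's proof invokes: it gives $p_V^{*n} \sim \sum_W \mathrm{mult}(W, V^{\otimes n})\, p_W$, hence $\langle p_V\rangle = id$ if and only if every irreducible $W$ occurs in some $V^{\otimes n}$, which is the first (tensor-power) formulation of linear primitivity. Your route through the second formulation never supplies this input, so it either leaves the key equivalence unproved or, if you read the second formulation as a tautological restatement of $\langle p_V\rangle = id$, becomes circular.

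Worse, the naive reading of your dictionary is false. The correspondence between biprojections of $\mathcal{P}_{2,+}(R^{\mathbb{A}}\subseteq R)$ and left coideal $\star$-subalgebras of $\mathbb{A}$ (Bisch's Theorem \ref{bisch} plus the Galois correspondence of \cite{ilp}, not Theorem \ref{thmcopro}) is order-reversing in the convention $P = R^{\mathbb{B}}$, and "smallest coideal subalgebra containing $p_V$" is not equivalent to "$\langle p_V\rangle = id$". Concretely, for $\mathbb{A} = \mathbb{C}G$ the coideal subalgebras are the $\mathbb{C}H$, and the minimal central projection of the trivial representation is $p_{\mathrm{triv}} = |G|^{-1}\sum_{g\in G} g = e_1$, which lies in no proper $\mathbb{C}H$ (its support is all of $G$) and yet satisfies $\langle e_1\rangle = e_1 \neq id$. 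So the trivial representation "generates $\mathbb{A}$" under the containment reading while witnessing nothing about linear primitivity. To repair the argument, replace the dictionary step by the fusion computation via Corollary \ref{coprofusion}, which is the paper's (shorter) route.
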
 
\begin{proof}
Immediate by Definition \ref{gener}  and Corollary \ref{coprofusion}.
\end{proof}

\begin{definition} \label{maxkac} The Kac algebra $\mathbb{A}$ is called maximal if is has no left coideal subalgebra other than $\mathbb{C}$ and $\mathbb{A}$.
\end{definition}

\begin{definition} The Kac algebra $\mathbb{A}$ is called cyclic if all the left coideal subalgebras are normal Kac subalgebras (see \cite{teru} (3) p474), and form a distributive lattice $\mathcal{L}(\mathbb{A})$ (see Problem \ref{cykac}).
\end{definition}

\begin{corollary} \label{linkac} If $\mathbb{A}$ is cyclic, then it is linearly primitive.
\end{corollary}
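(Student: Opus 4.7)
The plan is to translate the hypothesis on the Kac algebra $\mathbb{A}$ into the subfactor planar algebra framework, apply the main Theorem \ref{thm}, and then translate the conclusion back.

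First I would invoke the depth $2$ correspondence: the subfactor $(R^{\mathbb{A}} \subseteq R)$ is irreducible, finite index and depth $2$, and by the depth $2$ theory cited in the paper (together with Theorem \ref{thmcopro} and Corollary \ref{coprofusion}) there is an order-isomorphism between the lattice $\mathcal{L}(\mathbb{A})$ of left coideal subalgebras of $\mathbb{A}$ and the biprojection lattice of $\mathcal{P}_{2,+}(R^{\mathbb{A}} \subseteq R)$, sending a left coideal subalgebra to the corresponding biprojection. Under this isomorphism, normal Kac subalgebras correspond exactly to normal (bicentral) biprojections in the sense of Definition \ref{normal}; this matches the definition of normality used by T.~Teruya in \cite{teru} and is the ingredient that makes the terminology consistent.

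Next, given that $\mathbb{A}$ is cyclic by hypothesis, every element of $\mathcal{L}(\mathbb{A})$ is a normal Kac subalgebra and the lattice is distributive. Transporting along the order-isomorphism above, every biprojection in $\mathcal{P}_{2,+}(R^{\mathbb{A}} \subseteq R)$ is normal and the biprojection lattice is distributive, i.e.\ the subfactor planar algebra $\mathcal{P}(R^{\mathbb{A}} \subseteq R)$ is cyclic in the sense of the paper. Theorem \ref{thm} then applies and yields that $\mathcal{P}(R^{\mathbb{A}} \subseteq R)$ is w-cyclic, i.e.\ there is a minimal projection in $\mathcal{P}_{2,+}$ whose generated biprojection is $id$.

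Finally I would translate back: the corollary stated just before Corollary \ref{linkac} shows that $(R^{\mathbb{A}} \subseteq R)$ is w-cyclic if and only if $\mathbb{A}$ is linearly primitive. Hence $\mathbb{A}$ is linearly primitive, as required.

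The only delicate step is the first one, the dictionary between $\mathcal{L}(\mathbb{A})$ and the biprojection lattice, and in particular the equivalence \emph{normal Kac subalgebra} $\Leftrightarrow$ \emph{normal biprojection}. This is essentially built into the paper's framework (it is used implicitly already in Problem \ref{cykac} and in the discussion after Definition \ref{maxkac}), so for the proof it suffices to quote it; everything else is a direct application of Theorem \ref{thm} and the already established characterization of w-cyclic depth $2$ subfactors via linear primitivity of the associated Kac algebra.
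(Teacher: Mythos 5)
Your proposal is correct and is exactly what the paper means by its one-line proof (``a reformulation of Theorem \ref{thm} for the Kac algebras''): you transport the cyclic hypothesis through the Galois correspondence between left coideal subalgebras and biprojections, apply Theorem \ref{thm}, and translate back via the preceding corollary characterizing w-cyclicity of $(R^{\mathbb{A}} \subseteq R)$ in terms of linear primitivity of $\mathbb{A}$. Same approach, just with the dictionary made explicit.
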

\begin{proof}
It is a reformulation of Theorem \ref{thm} for the Kac algebras. 
\end{proof}

\begin{remark} \label{cycoid} Let $\mathbb{I} \subseteq \mathbb{J} \subseteq \mathbb{A}$ be left coideal subalgebras of $\mathbb{A}$  then by using proposition 4.2. p52 in \cite{ilp}, together with Proposition \ref{coropro} and Theorem \ref{th},  we could define in the Kac algebra framework the notion of linearly primitive (resp. cyclic) for the inclusion $(\mathbb{I} \subseteq \mathbb{J})$, to be equivalent to  $(R^{\mathbb{J}} \subseteq R^{\mathbb{I}})$ w-cyclic (resp. cyclic), for finally getting a generalization of Corollary \ref{linkac}.  \end{remark} 

\begin{definition}
Let the cyclic length $cl(\mathbb{A})$ be the minimal length for an ordered left coideal subalgebras chain  $$\mathbb{C} \subsetneq \mathbb{B}_1 \subsetneq \cdots \subsetneq \mathbb{B}_{n} = \mathbb{A}$$ with the subfactor $(\mathbb{B}_{i-1} \subset \mathbb{B}_{i})$ cyclic (see Remark \ref{cycoid}).
\end{definition} 
We can define the Kac algebra theoretic analogous of the lengths $tcl$, $tbl$, $tb_nl$, $bcl$, $bb_nl$, $bbl$ of Definition \ref{length}.

\begin{corollary} The cyclic length $cl(\mathbb{A})$ is both an upper bound for the minimal number of: 
\begin{itemize}
\item minimal central projections generating  $\mathbb{A}$ as a left coideal subalgebra.
\item irreducible complex corepresentations generating (for $\oplus$ and $\otimes$) the dual $\mathbb{A}^{\star}$ of $\mathbb{A}$, as a corepresentation. 
\end{itemize}
\end{corollary}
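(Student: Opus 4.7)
The plan is to reduce the Kac-algebra statement to its subfactor planar algebra counterpart, via the standard depth-$2$ correspondence applied to $\mathcal{P} := \mathcal{P}(R^{\mathbb{A}} \subseteq R)$. Under this correspondence, biprojections of $\mathcal{P}_{2,+}$ are in bijection with left coideal subalgebras of $\mathbb{A}$ (this is the content invoked in Remark \ref{cycoid} via \cite{ilp}), minimal central projections of $\mathcal{P}_{2,+}$ correspond to irreducible complex representations of $\mathbb{A}$ (equivalently, irreducible corepresentations of $\mathbb{A}^{\star}$), and by Corollary \ref{coprofusion} the coproduct $*$ on $\mathcal{P}_{2,+}$ implements the tensor product of corepresentations while the vector-space sum implements the direct sum.

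With this dictionary in hand, I would fix an ordered chain $\mathbb{C} = \mathbb{B}_0 \subsetneq \mathbb{B}_1 \subsetneq \cdots \subsetneq \mathbb{B}_n = \mathbb{A}$ realizing $n = cl(\mathbb{A})$, and let $e_1 = b_0 < b_1 < \cdots < b_n = id$ be the corresponding chain of biprojections in $\mathcal{P}_{2,+}$. Each intermediate planar algebra $\mathcal{P}(b_{i-1} < b_i)$ is cyclic by hypothesis, hence w-cyclic by Theorem \ref{thm}. By Corollary \ref{interdistrib} there exists a minimal projection $u_i \in \mathcal{P}_{2,+}$ with $\langle b_{i-1}, u_i \rangle = b_i$; taking $p_i := Z(u_i)$ and invoking Theorem \ref{mini} in the reverse direction, we may replace $u_i$ by a minimal \emph{central} projection $p_i$ still satisfying $\langle b_{i-1}, p_i \rangle = b_i$. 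An immediate induction on $i$ then gives $\langle p_1, \dots, p_n \rangle = id$ in $\mathcal{P}_{2,+}$.

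For item (1), translating through the dictionary, the biprojection $\langle p_1, \dots, p_n \rangle$ corresponds to the left coideal subalgebra of $\mathbb{A}$ generated by (the images of) $p_1, \dots, p_n$, so equality with $id$ means these $n$ minimal central projections generate $\mathbb{A}$ as a left coideal subalgebra. For item (2), Definition \ref{gener} of the generated biprojection involves only $+$, $\cdot$, and $*$ (via range projections of iterated coproducts of sums); under Corollary \ref{coprofusion} this is precisely the closure under $\oplus$ and $\otimes$ in the corepresentation category, starting from the irreducible corepresentations $V_1, \dots, V_n$ of $\mathbb{A}^{\star}$ attached to $p_1, \dots, p_n$. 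Hence these $n$ irreducible corepresentations generate $\mathbb{A}^{\star}$.

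The main obstacle is item (1): one must verify carefully that the planar-algebraic notion of ``biprojection generated by a set of positive operators'' (Definition \ref{gener}, built from product and coproduct together with range projections) translates exactly to the Kac-algebraic notion of ``left coideal subalgebra generated by a set'' under the depth-$2$ identification. Once this is in place — using Proposition 4.2 of \cite{ilp} as suggested in Remark \ref{cycoid}, together with Proposition \ref{coropro} and Theorem \ref{th} to control positivity — both bounds drop out of the inductive construction above. Item (2) requires only the cleaner translation given by Corollary \ref{coprofusion}, so the bulk of the technical work sits in justifying the coideal-subalgebra dictionary for item (1).
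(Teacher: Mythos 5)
Your proof is correct and follows essentially the same route as the paper, whose own proof simply reformulates Corollary \ref{cychain2} (i.e. $wcl(\mathcal{P})\le cl(\mathcal{P})$, resting on Theorem \ref{thm} and Corollary \ref{interdistrib}) for the subfactors $(R \subseteq R \rtimes \mathbb{A})$ and $(R^{\mathbb{A}} \subseteq R)$ and translates through the depth-$2$ dictionary (Galois correspondence, Theorem \ref{thmcopro}, Corollary \ref{coprofusion}); your chain-plus-induction argument is exactly that corollary unpacked. One harmless imprecision: $\langle b_{i-1}, Z(u_i)\rangle = b_i$ need not hold, since $Z(u_i)$ can exceed $b_i$ when $b_i$ is not central in $\mathcal{P}_{2,+}$, but the inclusion $\langle p_1,\dots,p_n\rangle \ge \langle u_1,\dots,u_n\rangle = id$ is all your induction actually needs.
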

\begin{proof}
We reformulate Corollary \ref{cychain2} for $(R \subseteq R \rtimes \mathbb{A})$  and  $(R^{\mathbb{A}} \subseteq R)$.
\end{proof}

\begin{remark} The length $cl(\mathbb{A})$ can be a strict upper bound (see Remark \ref{clgrp}). The lengths $tcl$ and $tb_4l$ (resp. $bcl$ and $bb_4l$) gives better upper bounds (see Corollary \ref{cychain}), and $tbl$ (resp. $bbl$) is conjectured to be also (see Statement \ref{statement}).
\end{remark} 

Following the Definition \ref{abeliandef}, we define: 
\begin{definition}
A Kac algebra $\mathbb{A}$ is $l$-abelian if all the left coideal subalgebras are normal Kac subalgebras and $\mathcal{L}(\mathbb{A} \otimes \mathbb{A})$ is modular.
\end{definition}
The $l$ of $l$-abelian is for lattice. This attribute is necessary for not confusing with a usual abelian Kac algebra. 
\begin{problem}
Is there a non-trivial $l$-abelian Kac algebra?
\end{problem}
We can reformulate the questions from \ref{bothj} to \ref{q3}  in this framework.

\section{Appendix}
\subsection{Some correspondences}
\subsubsection{Correspondence sub-bimodules and $2$-box projections} \label{coralg}
Let $(N \subseteq M)$ be a finite index irreducible subfactor, and $\mathcal{P}=\mathcal{P}(N \subseteq M)$ its planar algebra. We can see $M$ as an algebraic $N$-$N$-bimodule \cite{joalg}, it decomposes into irreducible algebraic $N$-$N$-bimodules  $$M = \bigoplus_i V_i \otimes B_i$$ with $B_1, \dots , B_n$ the (equivalent class representatives of the) irreducible algebraic sub-$N$-$N$-bimodules of $M$, and $V_i$ the multiplicity space.  Now $\mathcal{P}_{2,+} = N' \cap M_1$ is a finite dimensional ${\rm C}^{\star}$-algebra and by the double characterization of the principal graph (see \cite{js} section 4.2), we get that  
$$ \mathcal{P}_{2,+} \simeq \bigoplus_i End(V_i)$$  

The one-to-one correspondence between the projections of $p \in \mathcal{P}_{2,+}$ and the algebraic sub-$N$-$N$-bimodules $X_p$ of $M$, comes from the one-to-one correspondence between the projections of $\mathcal{P}_{2,+}$ and the subspaces of $V=\bigoplus_i V_i$ on one hand (through image and range projection), and from the one-to-one correspondence between the subspaces of $V$ and the algebraic sub-$N$-$N$-bimodules of $M$ on the other hand (by definition of the decomposition of $M$). Moreover this correspondence is an isomorphism of poset, which means
$$p \le q \Leftrightarrow X_p \subseteq X_q$$
The set of biprojections is a subposet of the set of projections and the set of intermediate subfactors is also a subposet of the set of algebraic sub-$N$-$N$-bimodules of $M$, and using \cite{bi}, $p$ is a biprojection if and only if $ X_p$   is an intermediate subfactor.
 
\begin{corollary} \label{coralgcor} The biprojection $\langle p \rangle$ generated by the projection $p \in \mathcal{P}_{2,+}$ corresponds to the intermediate subfactor $\langle X_p \rangle$ generated by the algebraic sub-$N$-$N$-bimodule $X_p$ of $M$, which can be reformulated by 
$$X_{\langle p \rangle} = \langle X_{ p } \rangle$$ 
\end{corollary}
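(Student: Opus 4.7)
The plan is to exploit the two poset isomorphisms already established in Subsection~\ref{coralg} and characterize both sides as least elements of the \emph{same} poset under these identifications. Explicitly, the map $p\mapsto X_p$ is an order isomorphism between projections of $\mathcal{P}_{2,+}$ and algebraic sub-$N$-$N$-bimodules of $M$, and by Theorem~\ref{bisch} it restricts to an order isomorphism between the sub-poset of biprojections and the sub-poset of intermediate subfactors $N\subseteq K\subseteq M$.

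First, I would recall the characterizations of the two generated objects. By Definition~\ref{gener}, $\langle p\rangle$ is the smallest biprojection $b$ with $b\succeq p$, equivalently (for projections) the smallest biprojection $b$ with $p\le b$. On the algebraic side, $\langle X_p\rangle$ is by definition the smallest intermediate subfactor of $(N\subseteq M)$ containing $X_p$ as a sub-bimodule.

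Next, I would transport one characterization to the other via the order isomorphism. For any biprojection $b\in\mathcal{P}_{2,+}$, we have $p\le b$ if and only if $X_p\subseteq X_b$, and $X_b$ ranges exactly over intermediate subfactors as $b$ ranges over biprojections. Thus the set $\{b\ \text{biprojection}:p\le b\}$ corresponds, under $b\mapsto X_b$, bijectively and order-preservingly to $\{K\ \text{intermediate subfactor}:X_p\subseteq K\}$. Taking the minimum of each set (both exist: the first by finiteness, the second by Watatani finiteness \cite{wa}), the order isomorphism forces the minima to correspond, i.e.\ $X_{\langle p\rangle}=\langle X_p\rangle$.

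This is essentially bookkeeping once the two correspondences are in place, so I do not expect a serious obstacle; the only point requiring a little care is confirming that ``smallest biprojection containing $p$'' in the sense of Definition~\ref{gener} coincides with ``smallest biprojection $b$ with $p\le b$''. This is immediate because the range projection of $\sum_{k=1}^N p^{*k}$ is a biprojection that majorizes $p=p^{*1}$, and conversely any biprojection $b\ge p$ satisfies $b\succeq p^{*k}$ for every $k$ (by Theorem~\ref{th} together with $b*b\preceq b$), hence dominates the stabilized range projection $\langle p\rangle$. With this remark, the argument is complete.
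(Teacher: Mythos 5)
Your argument is correct and is essentially the paper's own proof: both characterize $\langle p\rangle$ as the least biprojection above $p$ and $\langle X_p\rangle$ as the least intermediate subfactor containing $X_p$, then invoke the order isomorphism restricted to biprojections/intermediate subfactors. Your closing remark reconciling ``smallest $b\succeq p$'' with ``smallest $b\ge p$'' is a correct elaboration of a point the paper leaves implicit.
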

\begin{proof}  First $\langle p \rangle$ is the smallest biprojection $b \ge p$, whereas $\langle X_p \rangle$ is the smallest von Neumann algebra containing $X_p$ in $M$, but $N \subseteq \langle X_p \rangle$ because $X_p$ is a $N$-$N$-bimodule, and so $\langle X_p \rangle$ is an intermediate subfactor of $(N \subseteq M)$ by irreducibility. The result follows by the poset isomorphism respecting the one-to-one correspondence between the subposet of biprojections and  the subposet of intermediate subfactors. \end{proof}

Let $X_p X_q$ be the pointwise product of $X_p$ and $X_q$ in $M$, then $Vect_{\mathbb{C}}(X_p X_q)$ is an algebraic sub-$N$-$N$-bimodules of $M$.
\begin{question}
Is $Vect_{\mathbb{C}}(X_p X_q) $equals to $ X_{R(p * q)}$? \footnote{http://mathoverflow.net/q/209195/34538}  \\ (with $R(x)$ the range projection of $x$)
\end{question}

\subsubsection{Galois correspondence subgroups/subsystems} \hspace*{1cm} \\
In \cite{ilp} page 49, there is the following result on compact groups: 

\begin{theorem} \label{subsy}
Let $G$ be a compact group and $Rep(G)$ the category
of finite dimensional unitary representations of $G$.
For $\pi \in Rep(G)$ $H_\pi$ denotes the representation space of $\pi$.
Suppose we have a Hilbert subspace $K_\pi\subset H_\pi$ for each
$\pi\in Rep(G)$ satisfying the following:
$$K_\pi\oplus K_\sigma \subseteq K_{\pi \oplus \sigma}, \
\pi,\sigma \in Rep(G),$$
$$K_\pi\otimes K_\sigma \subseteq K_{\pi \otimes \sigma}, \
\pi,\sigma \in Rep(G),$$
$$\overline{K_\pi}=K_{\overline{\pi}}, \ \pi \in Rep(G),$$
where $\overline{\pi}$ is the complex conjugate representation and
$\overline{K_\pi}$ is the image of $K_\pi$ under the natural map from $H_\pi$
to its complex conjugate Hilbert space.
Then there exists a closed subgroup $H \subseteq G$ such that
$$K_\pi=\{\xi \in H_\pi; \pi(h)\xi=\xi, \ h\in H\}$$
\end{theorem}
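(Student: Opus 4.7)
The plan is a Tannaka--Krein style reconstruction: define $H$ as the joint pointwise stabilizer of the family $(K_\pi)_\pi$, and then establish the reverse inclusion $H_\pi^H \subseteq K_\pi$ by a Peter--Weyl / Stone--Weierstrass density argument applied to matrix coefficients. Concretely, I set
\[
H := \bigcap_{\pi \in Rep(G),\, \xi \in K_\pi} \{g \in G : \pi(g)\xi = \xi\},
\]
which is a closed subgroup of $G$, and for which the inclusion $K_\pi \subseteq H_\pi^H$ is tautological.

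Next, I would form the linear span $\mathcal{A} \subseteq C(G)$ of the matrix coefficients $m^\pi_{\xi, \eta}(g) := \langle \pi(g)\xi, \eta \rangle$ with $\xi \in H_\pi$ and $\eta \in K_\pi$. The product identity $m^\pi_{\xi_1, \eta_1} \cdot m^\sigma_{\xi_2, \eta_2} = m^{\pi \otimes \sigma}_{\xi_1 \otimes \xi_2,\, \eta_1 \otimes \eta_2}$ together with $K_\pi \otimes K_\sigma \subseteq K_{\pi \otimes \sigma}$ makes $\mathcal{A}$ closed under multiplication; the identity $\overline{m^\pi_{\xi, \eta}} = m^{\bar\pi}_{\bar\xi, \bar\eta}$ together with $\overline{K_\pi} = K_{\bar\pi}$ makes it closed under complex conjugation; and $\mathcal{A}$ contains the constants since $K_{\mathbf{1}} = \mathbb{C}$ (the alternative $K_{\mathbf{1}} = 0$ would force $K_\pi = 0$ for every $\pi$ by projecting $\eta \otimes \bar\eta \in K_{\pi \otimes \bar\pi}$ onto the trivial summand of $\pi \otimes \bar\pi$, reducing the theorem to the trivial case $H = G$). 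Moreover each $m^\pi_{\xi, \eta} \in \mathcal{A}$ is left-$H$-invariant because $m^\pi_{\xi, \eta}(hg) = \langle \pi(g)\xi, \pi(h^{-1})\eta\rangle = m^\pi_{\xi, \eta}(g)$, so $\mathcal{A} \subseteq C(H \backslash G)$.

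I then show $\mathcal{A}$ separates points of $H \backslash G$. If $g_1 g_2^{-1} \notin H$, by definition of $H$ there exist $\pi$ and $\eta \in K_\pi$ with $\pi(g_1 g_2^{-1})\eta \neq \eta$. Taking $\xi := \pi(g_2)^{-1}\eta$ gives $m^\pi_{\xi, \eta}(g_2) = \|\eta\|^2$ and $m^\pi_{\xi, \eta}(g_1) = \langle \pi(g_1 g_2^{-1})\eta, \eta\rangle$; since $\pi(g_1 g_2^{-1})\eta$ has the same norm as $\eta$ but differs from it, the equality case of Cauchy--Schwarz forces $\langle \pi(g_1 g_2^{-1})\eta, \eta\rangle \neq \|\eta\|^2$, hence $m^\pi_{\xi, \eta}(g_1) \neq m^\pi_{\xi, \eta}(g_2)$.

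Finally, Stone--Weierstrass yields $\mathcal{A}$ uniformly dense in $C(H \backslash G)$, hence $L^2$-dense in $L^2(H \backslash G)$. Peter--Weyl applied to $H \backslash G$ produces the orthogonal decomposition $L^2(H \backslash G) = \widehat{\bigoplus}_{\pi \in \widehat G} H_\pi \otimes \overline{H_\pi^H}$, realized explicitly by matrix coefficients with $\eta$ ranging over $H_\pi^H$, while by construction $\mathcal{A}$ lives in the closed subspace $\widehat{\bigoplus}_\pi H_\pi \otimes \overline{K_\pi}$; $L^2$-density therefore forces equality on each isotypic component, yielding $K_\pi = H_\pi^H$. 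The \emph{main obstacle} is this last matching step: one must use the $L^2$-orthogonality of distinct isotypic components together with the Peter--Weyl identification of matrix-coefficient subspaces as \emph{closed} in $L^2(G)$ to upgrade density of subspaces inside $L^2$ to equality of the finite-dimensional parameter spaces $K_\pi$ and $H_\pi^H$.
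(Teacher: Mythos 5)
The paper does not prove this statement at all: it is quoted verbatim from Izumi--Longo--Popa \cite{ilp} (p.\ 49) as an imported tool, so there is no in-paper argument to compare yours against. Your Tannaka--Krein strategy is the natural one, and most of its pieces are sound: the stabilizer $H$ is a closed subgroup with $K_\pi \subseteq H_\pi^H$ tautologically, the span $\mathcal{A}$ of coefficients $\langle \pi(\cdot)\xi,\eta\rangle$ with $\eta \in K_\pi$ is a self-adjoint subalgebra of $C(H\backslash G)$, and your separation-of-points argument via the equality case of Cauchy--Schwarz is correct.

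There is, however, a genuine gap at the step you flag as the main obstacle, and it is not the one you think. You assert that ``by construction $\mathcal{A}$ lives in the closed subspace $\widehat{\bigoplus}_\pi H_\pi \otimes \overline{K_\pi}$,'' where the sum runs over irreducibles. But $\mathcal{A}$ is spanned by coefficients $\langle \rho(g)\xi,\eta\rangle$ with $\eta \in K_\rho$ for \emph{arbitrary} (reducible) $\rho$; the $\pi$-isotypic part of such a function involves the components of $\eta$ in the irreducible summands of $\rho$, and nothing in the three stated hypotheses places those components in $K_\pi$. The direct-sum hypothesis only gives $K_\pi \oplus K_\sigma \subseteq K_{\pi\oplus\sigma}$ --- the wrong direction --- and there is no stated naturality of $\pi \mapsto K_\pi$ under unitary equivalence. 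This compatibility is exactly what your final matching step needs; it is automatic in the ILP setting, where the $K_\pi$ arise from a subsystem and are natural by construction, but it must be invoked explicitly (as literally stated, the hypotheses even admit degenerate families for which the conclusion fails, e.g.\ $K_\pi \equiv 0$, for which no subgroup $H$ can satisfy $H_{\mathbf{1}}^H = 0$). The same unstated compatibility is used in your treatment of $K_{\mathbf{1}}$: projecting $\eta\otimes\bar\eta \in K_{\pi\otimes\bar\pi}$ onto the trivial summand is not licensed by the hypotheses, and the fallback ``$K_\pi \equiv 0$ reduces to the trivial case $H=G$'' is wrong, since then $H_{\mathbf{1}}^G = \mathbb{C} \neq 0 = K_{\mathbf{1}}$. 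To repair the proof, add (or extract from the intended setting) the hypothesis that the family $\{K_\pi\}$ is compatible with intertwiners, i.e.\ $T K_\pi \subseteq K_\sigma$ for every isometric intertwiner $T: \pi \to \sigma$; with that in hand, your isotypic-component matching via Peter--Weyl and $L^2$-orthogonality goes through as described.
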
 

\subsection{Some results for the depth $2$ case}   \label{kac}
Let $(N \subseteq M)$ be an irreducible depth $2$  subfactor of finite index $[M:N]=: \delta^{2}$.    

\begin{theorem}
The subfactor $(N \subseteq M)$ is given by a Kac algebra, i.e. a Hopf ${\rm C}^{\star}$-algebra $(\mathbb{A}, \Delta, \epsilon, S)$ with $\mathbb{A}=N' \cap M_1=\mathcal{P}_{2,+}(N \subseteq M)$, $(N \subseteq M) \simeq (R^{\mathbb{A}} \subseteq R) $ and $dim(\mathbb{A})=[M:N]$.
\end{theorem}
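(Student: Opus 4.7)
This is the classical depth-$2$ reconstruction theorem, proved independently by Szyma\'nski, Longo, and David, with the Kac-algebra formalization due to Enock--Nest and Nikshych--Vainerman. My plan is to build the Kac algebra directly from the $2$-box calculus of Section~3, then invoke uniqueness of outer actions on $R$.

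For the Hopf ${\rm C}^{\star}$-structure on $\mathbb{A}$, I would begin with the dimension count: at irreducible depth~$2$, $\mathcal{P}_{3,+}$ is a factor, and a Pimsner--Popa basis argument (equivalently, Ocneanu compactness) gives $\dim(\mathcal{P}_{2,+}) = [M:N]$. The paper has already collected on $\mathbb{A} = \mathcal{P}_{2,+}$ all the ingredients of a Hopf structure: the usual product $\cdot$; the Fourier-transformed product $*$, which yields a coproduct $\Delta : \mathbb{A} \to \mathbb{A} \otimes \mathbb{A}$ via the trace pairing (as in Lemma~\ref{2coprod}); the contragredient $\overline{(\cdot)}$ as antipode $S$; and the counit $\epsilon(a) := \delta\,\mathrm{tr}(a)$. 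Coassociativity of $\Delta$ comes from associativity of $*$. That $\Delta$ is a $\star$-homomorphism is the Fourier transform of the exchange/coproduct identities of Lemma~\ref{prodcoprod}. The antipode axiom $m \circ (S \otimes \mathrm{id}) \circ \Delta = \epsilon(\cdot)\, 1$ is the Fourier dual of Lemma~\ref{pre2} $(e_1 \preceq p * \overline{p})$, sharpened into an equality by the depth-$2$ dimension count. Since $S^2 = \mathcal{F}^4 = \mathrm{id}$ by Lemma~\ref{Fourier}, the resulting Hopf ${\rm C}^{\star}$-algebra $(\mathbb{A}, \Delta, \epsilon, S)$ is in fact a Kac algebra.

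For the reconstruction of the subfactor, a finite-dimensional Kac algebra is amenable, so by the Kac-algebra generalization of Ocneanu's uniqueness of outer actions (\cites{jo3,oc}, with the Kac extension due to Enock--Nest and Masuda), $\mathbb{A}$ acts outerly on $R$ in an essentially unique way, producing a canonical subfactor $(R^{\mathbb{A}} \subseteq R)$. Comparing planar algebras shows it has the same standard invariant as $(N \subseteq M)$, so by Popa's classification theorem for amenable subfactors (\cite{po}) the two subfactors are isomorphic.

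The main technical obstacle is verifying that the coproduct $\Delta$ actually lands in $\mathbb{A} \otimes \mathbb{A} \subseteq N' \cap M_2$ rather than in a larger relative commutant, and that it is multiplicative for the original product. This is exactly where the depth-$2$ hypothesis enters decisively: it supplies the tensor splitting $N' \cap M_2 \simeq (N' \cap M_1) \otimes (M' \cap M_2)$, which is the algebraic content of Hopf comultiplicativity; its planar-algebraic translation is that $\mathcal{P}_{3,+}$ is generated by two commuting copies of $\mathcal{P}_{2,+}$ linked by the Jones projection $e_2$, and this is the step that cannot be done by the generic $2$-box manipulations used elsewhere in Section~3.
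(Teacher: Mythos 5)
The paper does not actually prove this theorem: its ``proof'' is the single line ``See \cites{szka,lonka,david} and the recent planar algebra approach \cite{dako}.'' So there is no in-paper argument to compare against; what your proposal does is reconstruct, in outline, the standard argument contained in those references, and as an outline it follows the same route. The ingredients you list are the right ones and are already half-present in the paper's appendix: the depth-$2$ dimension count $\dim\mathbb{A}=[M:N]$, the coproduct obtained as the trace-adjoint of the convolution $*$ (this is exactly the content of Theorem~\ref{split} and Lemma~\ref{tr(x(a*b))}), the contragredient as antipode with $S^2=\mathcal{F}^4=\mathrm{id}$ forcing the Kac condition, the splitting $N'\cap M_2\simeq(N'\cap M_1)\otimes(M'\cap M_2)$ as the decisive use of depth $2$, and the final appeal to uniqueness of outer actions together with Popa's classification \cite{po}. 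The only thing your write-up buys over the paper's bare citation is that it makes visible exactly where the depth-$2$ hypothesis is consumed, which you correctly identify as the multiplicativity of $\Delta$.

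Two details need repair. First, your counit formula is wrong: $a\mapsto\delta\,\mathrm{tr}(a)$ is (up to normalization) the Haar functional, not the counit --- it satisfies $a*id=\delta\,\mathrm{tr}(a)\,id$ by Lemma~\ref{*id}, i.e.\ it is the integral for the convolution product. The counit of $(\mathbb{A},\cdot,\Delta)$ is evaluation against the Jones projection, $\epsilon(a)=\delta^{2}\,\mathrm{tr}(a\,e_1)$; check this on $\mathbb{A}=\mathbb{C}G\subseteq R\rtimes G$ for $(R^G\subseteq R)$, where $\mathrm{tr}(u_g)=\delta_{g,e}$ so that $\delta\,\mathrm{tr}(u_g)\neq 1$, while $\delta^{2}\,\mathrm{tr}(u_g e_1)=1=\epsilon(u_g)$ with $e_1=\lvert G\rvert^{-1}\sum_h u_h$. (Equivalently: the unit for $*$ is $\delta e_1$, by Lemma~\ref{*id}, and the counit must be dual to it under the pairing of Lemma~\ref{tr(x(a*b))}.) Second, the concluding isomorphism $(N\subseteq M)\simeq(R^{\mathbb{A}}\subseteq R)$ via Popa's theorem requires $M$ to be the hyperfinite ${\rm II}_1$ factor; the paper assumes this globally, but your argument should say so, since for a general ${\rm II}_1$ factor the depth-$2$ analysis only exhibits $(N\subseteq M)$ as a fixed-point/crossed-product inclusion over $\mathbb{A}$ inside $M$ itself.
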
 
\begin{proof} See \cite{szka,lonka,david} and the recent ``planar algebra" approach \cite{dako}. \end{proof}
\underline{Trivial case}: $\mathbb{A}=\mathbb{C}G$, $\Delta(g) = g \otimes g$, $\epsilon(g)=1$ and $S(g)= g^{-1}$.
\begin{theorem}[Galois correspondence]
Every intermediate subfactor $R^{\mathbb{A}} \subseteq P \subseteq R$ are of the form $P=R^{\mathbb{B}}$ with $\mathbb{B} \subseteq \mathbb{A}$ a left coideal $\star$-subalgebra (i.e. $\Delta(\mathbb{B}) \subseteq \mathbb{A} \otimes \mathbb{B}$ and $\mathbb{B}^{\star} = \mathbb{B}$), and conversely. 
\end{theorem}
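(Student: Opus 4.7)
The strategy is to combine Bisch's biprojection theorem (Theorem~\ref{bisch}) with a bijection between biprojections in $\mathbb{A}=\mathcal{P}_{2,+}(R^\mathbb{A}\subseteq R)$ and left coideal $\star$-subalgebras of $\mathbb{A}$. By Theorem~\ref{bisch}, intermediate subfactors $R^\mathbb{A}\subseteq P\subseteq R$ correspond bijectively to biprojections $b_P=e^R_P\in\mathbb{A}$, so the task reduces to producing a bijection $b\leftrightarrow\mathbb{B}$ between biprojections and left coideal $\star$-subalgebras and checking that under it $P=R^\mathbb{B}$.

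For the map $\mathbb{B}\mapsto b$, given a left coideal $\star$-subalgebra $\mathbb{B}\subseteq\mathbb{A}$ I would form its ``Haar projection'' $e_\mathbb{B}\in\mathbb{B}$: the support of the unique normalized $\mathbb{B}$-invariant positive functional. It is a projection in $\mathbb{A}$ by construction, and the coideal condition $\Delta(\mathbb{B})\subseteq\mathbb{A}\otimes\mathbb{B}$ forces $\Delta(e_\mathbb{B})\in\mathbb{A}\otimes\mathbb{B}$. Translated through the Fourier transform $\mathcal{F}:\mathcal{P}_{2,+}\to\mathcal{P}_{2,-}$, which intertwines the product $\cdot$ with the coproduct $*$ as in Lemma~\ref{2coprod}, this condition forces $\mathcal{F}(e_\mathbb{B})$ to be proportional to a projection in the dual Kac algebra $\hat{\mathbb{A}}=(\mathcal{P}_{2,+},*)$, so $e_\mathbb{B}$ is a biprojection.

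For the reverse map $b\mapsto\mathbb{B}$, given a biprojection $b\in\mathbb{A}$ I would define
$$\mathbb{B}_b=\{x\in\mathbb{A}:\, x\cdot b=\epsilon(x)\,b\}.$$
Closure under product and involution follows from $b=b^\star=b^2$ together with $\epsilon$ being a character, while the left coideal property $\Delta(\mathbb{B}_b)\subseteq\mathbb{A}\otimes\mathbb{B}_b$ follows by applying the exchange relations of Remark~\ref{biproj} to $\Delta(x)\cdot(1\otimes b)$ for $x\in\mathbb{B}_b$. A dimension count matching $\dim\mathbb{B}_b$ with $[R:R^{\mathbb{B}_b}]=tr(b)^{-1}$, together with the identities $e_{\mathbb{B}_b}=b$ and $\mathbb{B}_{e_\mathbb{B}}=\mathbb{B}$, would show the two maps are mutually inverse.

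Finally, the two correspondences must be matched: the outer action of $\mathbb{A}$ on $R$ sends $e_\mathbb{B}$ to the orthogonal projection $L^2(R)\to L^2(R^\mathbb{B})$, identifying $e_\mathbb{B}$ with the Jones projection $e^R_{R^\mathbb{B}}\in\mathcal{P}_{2,+}$, which forces $P_{e_\mathbb{B}}=R^\mathbb{B}$ and closes both directions. The main obstacle is the bijection between biprojections and left coideal $\star$-subalgebras, specifically the reconciliation of the planar-algebraic Fourier/exchange-relation identities on $\mathcal{P}_{2,+}$ with the Kac-algebraic coproduct $\Delta$ used to define coideals; this is the technical heart filled in by the cited references, and the place where one must use more than just formal manipulations with $\cdot$ and $*$.
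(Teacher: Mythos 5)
For this statement the paper offers no argument of its own: the proof is the citation \cite{ilp}, Theorem 4.4, so you were not expected to reconstruct the full operator-algebraic argument, and your overall architecture (Bisch's Theorem \ref{bisch} to pass from intermediate subfactors to biprojections, then a dictionary between biprojections in $\mathbb{A}=\mathcal{P}_{2,+}(R^{\mathbb{A}}\subseteq R)$ and left coideal $\star$-subalgebras) is a reasonable way to organize it. However, one of the two maps you propose is concretely wrong. Test your definition $\mathbb{B}_b=\{x\in\mathbb{A}:\ x\cdot b=\epsilon(x)\,b\}$ on the trivial case $\mathbb{A}=\mathbb{C}G$, $b=b_H=|H|^{-1}\sum_{h\in H}h$, where the answer must be $\mathbb{C}H$. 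Writing $x=\sum_g c_g g$, the condition $xb_H=\epsilon(x)b_H$ says only that $\sum_{g'\in gH}c_{g'}=0$ for every coset $gH\neq H$, with no constraint on the coefficients over $H$. This space has dimension $|H|+([G{:}H]-1)(|H|-1)$, strictly larger than $|H|$ whenever $H$ is proper and nontrivial; it is not $\star$-closed (taking adjoints in $xb=\epsilon(x)b$ yields the right-handed condition $bx^{\star}=\epsilon(x^{\star})b$, which is a condition on right cosets); and it is not a left coideal (for $g\neq g'$ in a common nontrivial coset, $\Delta(g-g')=g\otimes g-g'\otimes g'$ cannot lie in $\mathbb{A}\otimes\mathbb{B}_b$ since neither $g$ nor $g'$ lies in $\mathbb{B}_b$). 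So the claimed dimension count and the mutual-inverse identities would fail.

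The correct candidate for the coideal subalgebra attached to an intermediate subfactor $P$ is the relative commutant $P'\cap M_1\subseteq N'\cap M_1=\mathbb{A}$, which in the planar-algebra picture of this paper is $b*\mathbb{A}*b$ with $b=e^R_P$ (Corollary \ref{landau1}; note this is an algebra for the ordinary product by the exchange relation, Lemma \ref{prodcoprod}), and in the group case it does return $\mathbb{C}H$ since $*$ acts diagonally on the group-element basis. Your forward map (the Haar projection $e_{\mathbb{B}}$, characterized by $x\,e_{\mathbb{B}}=\epsilon(x)e_{\mathbb{B}}$ for $x\in\mathbb{B}$) is the right object, but the assertion that the coideal condition forces $\mathcal{F}(e_{\mathbb{B}})$ to be a multiple of a projection is exactly the nontrivial content of \cite{ilp} and is not obtained by the formal manipulations you indicate. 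As written, then, the proposal both misdefines the inverse map and defers the essential step to the references; fixing the former and acknowledging that the latter is the cited theorem would bring it in line with what the paper actually does.
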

\begin{proof} \cite{ilp} Theorem 4.4 p54. \end{proof}

\begin{theorem}[Schur's lemma] Let $\mathcal{A}$ be a finite dimensional C$^{\star}$-algebra, $V$ a representation, $V_1$ and $V_2$ irreducible representations, then 
 \begin{itemize} 
\item the action of $\mathcal{A}$ on $V$ is irreducible (i.e. has no invariant subspace) if and only if $\pi_{V}(\mathcal{A})' = \mathbb{C}I_{V}$.
\item if $T \in Hom_{\mathcal{A}}(V_{1},V_{2})$ (i.e. intertwines the action of $\mathcal{A}$) then $T=0$ or $T$ is an isomorphism. \end{itemize}  \end{theorem}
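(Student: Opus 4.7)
The plan is to prove the two statements independently, using only standard $\star$-algebra and spectral theory machinery. For the first statement, I would argue the implication from irreducibility to $\pi_V(\mathcal{A})' = \mathbb{C}I_V$ by contrapositive. Suppose $\pi_V(\mathcal{A})'$ contains an element $T \notin \mathbb{C}I_V$. Writing $T = T_1 + iT_2$ with $T_j$ self-adjoint and using that $\pi_V(\mathcal{A})'$ is a $\star$-closed subalgebra of $B(V)$ (which holds because $\mathcal{A}$ is a $\star$-algebra and $\pi_V$ is a $\star$-representation), at least one of $T_1, T_2$ is not a scalar. Then by the finite-dimensional spectral theorem, its nontrivial spectral projections lie in $\pi_V(\mathcal{A})'$ and cut out a proper non-zero $\mathcal{A}$-invariant subspace of $V$, contradicting irreducibility.

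For the converse direction of the first statement, I would start with a proper non-zero invariant subspace $W \subsetneq V$ and let $P$ be the orthogonal projection onto $W$. Since $\mathcal{A}$ is a $\star$-algebra acting by a $\star$-representation, the orthogonal complement $W^{\perp}$ is also invariant (if $w \in W$, $v \in W^{\perp}$, then $\langle \pi_V(a)v \mid w \rangle = \langle v \mid \pi_V(a^{\star})w \rangle = 0$). Hence $P$ commutes with every $\pi_V(a)$, exhibiting a non-scalar element of $\pi_V(\mathcal{A})'$.

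For the second statement, I would simply observe that $\ker T \subseteq V_1$ and $\operatorname{Im} T \subseteq V_2$ are invariant subspaces for the respective actions, since $T$ intertwines them. Irreducibility of $V_1$ forces $\ker T \in \{0, V_1\}$ and irreducibility of $V_2$ forces $\operatorname{Im} T \in \{0, V_2\}$. If $T \neq 0$ then $\ker T \neq V_1$ and $\operatorname{Im} T \neq 0$, so $\ker T = 0$ and $\operatorname{Im} T = V_2$, meaning $T$ is a bijection of finite-dimensional spaces and therefore an isomorphism of representations.

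There is no real obstacle: the only subtlety is the use of the $\star$-structure to guarantee that invariant subspaces have invariant orthogonal complements (equivalently, that $\pi_V(\mathcal{A})'$ is itself a $\star$-algebra on which one can apply the spectral theorem). If one did not want to invoke $\star$-closedness, one could instead observe that any $T \in \pi_V(\mathcal{A})'$ has at least one eigenvalue $\lambda$ over $\mathbb{C}$, so $T - \lambda I_V$ is a non-invertible element of $\pi_V(\mathcal{A})'$ whose kernel is a non-zero proper invariant subspace, and the same contradiction ensues; this would give a uniform argument without the spectral theorem.
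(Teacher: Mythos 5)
Your proof is correct and complete. The paper does not actually prove this statement---it records Schur's lemma in the appendix as standard background for the depth-$2$ discussion---so there is no argument of the author's to compare against; what you give is the classical proof. The one genuinely non-formal point, that the converse of the first bullet requires the $\star$-structure (an invariant subspace of a representation of a mere algebra need not produce a non-scalar element of the commutant, as the upper-triangular matrices acting on $\mathbb{C}^2$ show), is exactly the point you isolate, and your eigenvalue argument correctly dispenses with the spectral theorem only in the direction where the $\star$-structure is not needed.
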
 

\begin{theorem}[Double commutant theorem] Let $\mathcal{A} \subseteq End(V)$ be a finite dimensional C$^{\star}$-algebra, then $\mathcal{A}'' $ is equal to $ \mathcal{A}$.  \end{theorem}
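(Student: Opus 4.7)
The plan is to prove the two inclusions $\mathcal{A}\subseteq\mathcal{A}''$ and $\mathcal{A}''\subseteq\mathcal{A}$ separately. The first is immediate from the definition of the commutant: any element of $\mathcal{A}$ commutes with everything in $\mathcal{A}'$ and hence lies in $\mathcal{A}''$. The content of the theorem is the reverse inclusion, which I would establish via the classical cyclic-vector argument combined with an amplification trick; the $\star$-structure of $\mathcal{A}$ is essential for converting invariant subspaces into projections inside $\mathcal{A}'$.

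First I would prove the key lemma that every $\mathcal{A}$-invariant subspace $W\subseteq V$ yields an orthogonal projection $p_W$ which lies in $\mathcal{A}'$. Since $\mathcal{A}=\mathcal{A}^{\star}$, for $v\in W$, $w\in W^{\perp}$, and $a\in\mathcal{A}$ we have $\langle aw,v\rangle=\langle w,a^{\star}v\rangle=0$ because $a^{\star}v\in W$; hence $W^{\perp}$ is invariant as well, so each $a\in\mathcal{A}$ preserves both $W$ and $W^{\perp}$, which gives $ap_W=p_Wa$. Next, for $T\in\mathcal{A}''$ and any $v\in V$, the subspace $\mathcal{A}v$ is $\mathcal{A}$-invariant and contains $v$ (assuming $\mathcal{A}$ is unital, as is standard; one can reduce to this case by adjoining a unit if needed). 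By the lemma, the projection $p$ onto $\mathcal{A}v$ belongs to $\mathcal{A}'$, so $T$ commutes with $p$, and therefore $Tv=Tpv=pTv\in\mathcal{A}v$, meaning there exists $a_v\in\mathcal{A}$ (depending on $v$) with $Tv=a_v v$.

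To upgrade from pointwise values to a single $a\in\mathcal{A}$ with $T=a$, I would use the amplification trick. Set $n=\dim V$, fix a basis $v_1,\dots,v_n$ of $V$, and consider the diagonal embedding $\mathcal{A}\hookrightarrow End(V^n)$ sending $a\mapsto a\oplus\cdots\oplus a$; call the image $\mathcal{A}^{(n)}$. A direct calculation shows $(\mathcal{A}^{(n)})'=M_n(\mathcal{A}')$, the full matrix algebra with entries in $\mathcal{A}'$, and consequently $(\mathcal{A}^{(n)})''=\{a\oplus\cdots\oplus a:a\in\mathcal{A}''\}$. Hence for $T\in\mathcal{A}''$ the amplification $T\oplus\cdots\oplus T$ lies in $(\mathcal{A}^{(n)})''$. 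Applying the previous step to $\mathcal{A}^{(n)}$ acting on $V^n$, with the vector $\tilde v=(v_1,\dots,v_n)$, yields some $a\in\mathcal{A}$ with $Tv_i=av_i$ for all $i$; since $(v_i)$ is a basis, $T=a\in\mathcal{A}$.

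The main obstacle is the commutant identity $(\mathcal{A}^{(n)})'=M_n(\mathcal{A}')$: this is a careful but elementary matrix computation, checking that an endomorphism $X=(X_{ij})\in End(V^n)$ commutes with every $a\oplus\cdots\oplus a$ if and only if each block $X_{ij}$ commutes with every $a\in\mathcal{A}$. Crucially, finite-dimensionality avoids the strong/weak operator topology closure issues of the general von Neumann setting, so no density arguments are needed; the rest is bookkeeping.
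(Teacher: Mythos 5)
Your proof is correct: it is the standard von Neumann double commutant argument, specialized to finite dimensions (invariant subspace $\Rightarrow$ projection in $\mathcal{A}'$ via the $\star$-closedness, cyclic vector argument, then the $n$-fold amplification with $(\mathcal{A}^{(n)})'=M_n(\mathcal{A}')$ to pass from pointwise agreement to equality). The paper states this theorem only as background in its appendix and gives no proof at all, so there is nothing to compare against; note only that the statement does require $1_V\in\mathcal{A}$ (which you correctly flag, and which holds in all the paper's applications), since otherwise $\mathcal{A}''=\mathcal{A}+\mathbb{C}1_V$ can strictly contain $\mathcal{A}$.
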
  

\begin{theorem} \label{multimat} The finite dimensional Kac algebra $\mathbb{A}$ admits finitely many nonequivalent irreducible complex representations $H_{1}, \dots, H_{r}$ so that as ${\rm C}^{\star}$-algebra, $\mathbb{A} \simeq  End(H_1) \oplus \cdots \oplus End(H_r)$.
\end{theorem}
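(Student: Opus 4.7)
The plan is to deduce this from the Artin--Wedderburn structure theorem for finite-dimensional C$^\star$-algebras; the Hopf/Kac structure plays no role in this statement, only the fact that $\mathbb{A}$ is a finite-dimensional C$^\star$-algebra.

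First I would decompose via the center. Since $\mathbb{A}$ is finite-dimensional, $Z(\mathbb{A})$ is a commutative finite-dimensional C$^\star$-algebra, hence by spectral theory isomorphic to $\mathbb{C}^r$ for some $r\ge 1$. Let $z_1,\ldots,z_r$ be its minimal nonzero projections; they are mutually orthogonal, self-adjoint, and sum to the unit. Setting $\mathbb{A}_i := z_i\mathbb{A}$, one obtains a C$^\star$-direct sum $\mathbb{A} = \bigoplus_{i=1}^r \mathbb{A}_i$, and each $\mathbb{A}_i$ has center $\mathbb{C}z_i$, i.e.\ is a finite-dimensional C$^\star$-factor.

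Next I would realize each factor as a full matrix algebra. Pick any minimal nonzero left ideal $H_i \subseteq \mathbb{A}_i$ (which exists by finite-dimensionality); it becomes an irreducible left $\mathbb{A}_i$-module with representation $\pi_{H_i}\colon \mathbb{A}_i \to End(H_i)$. Its kernel is a two-sided ideal of $\mathbb{A}_i$, automatically self-adjoint (a standard fact for closed two-sided ideals in a C$^\star$-algebra), hence of the form $p\mathbb{A}_i$ for a central projection $p \in Z(\mathbb{A}_i) = \mathbb{C}z_i$; since $\pi_{H_i}\neq 0$ we get $p=0$, so $\pi_{H_i}$ is injective. Schur's lemma gives $\pi_{H_i}(\mathbb{A}_i)' = \mathbb{C}I_{H_i}$, and the double commutant theorem then yields $\pi_{H_i}(\mathbb{A}_i) = \pi_{H_i}(\mathbb{A}_i)'' = End(H_i)$. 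Hence $\mathbb{A}_i \simeq End(H_i)$, and assembling the pieces gives $\mathbb{A} \simeq End(H_1) \oplus \cdots \oplus End(H_r)$.

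Finally I would check exhaustion and inequivalence. For any irreducible complex representation $V$ of $\mathbb{A}$, each central projection $z_j$ acts by a scalar (Schur), in fact by $0$ or $I_V$ since it is a projection, and exactly one of them is nonzero because $\sum_j z_j = 1$; $V$ is then an irreducible $\mathbb{A}_i$-module, hence equivalent to $H_i$ (since a full matrix algebra has a unique irreducible module up to equivalence). Distinct $H_i$'s are inequivalent because they are killed by distinct central projections. The step most in need of care is the injectivity of $\pi_{H_i}$, where the C$^\star$-structure (automatic self-adjointness of closed two-sided ideals) is genuinely used to convert the algebraic ideal into a central-projection ideal; the rest is routine bookkeeping with Schur and the double commutant, both of which have been stated just above.
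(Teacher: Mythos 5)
Your proof is correct. Note that the paper itself gives no proof of this statement: it is recorded in the appendix as the classical Wedderburn structure theorem for finite-dimensional ${\rm C}^{\star}$-algebras, alongside the equally unproved statements of Schur's lemma and the double commutant theorem, so there is nothing to compare your argument against. What you wrote is the standard central decomposition argument and it goes through; the only point worth making explicit is that before literally invoking the paper's ${\rm C}^{\star}$-versions of Schur and the double commutant you should equip the minimal left ideal $H_i$ with the inner product $\langle a \vert b \rangle = \varphi(b^{\star}a)$ coming from a faithful positive functional (e.g.\ a faithful trace) on $\mathbb{A}_i$, so that $\pi_{H_i}$ is a $\star$-representation and $\pi_{H_i}(\mathbb{A}_i)$ is a $\star$-subalgebra of $End(H_i)$; alternatively, surjectivity follows purely algebraically from Jacobson density once the commutant is known to be $\mathbb{C}I_{H_i}$.
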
  

\begin{definition} \label{kacirr} Let $V$ and $W$ be two representations of $\mathbb{A}$, then $\mathbb{A}$ acts on $V \otimes W$ by using the comultiplication $\Delta$ as follows:   
   $$\forall x \in \mathbb{A}, \forall v \in V, \forall w \in W : \Delta(x) \cdot (v\otimes w) = \sum (x_{(1)} \cdot v)\otimes (x_{(2)} \cdot w)$$ \end{definition} 

\begin{definition}[Fusion rules] The previous action of $\mathbb{A}$ on $H_{i}\otimes H_{j}$ decomposes into irreducible representations 
$$ H_{i}\otimes H_{j} = \bigoplus_{k}{M_{ij}^{k} \otimes H_{k}}    $$   
with $M_{ij}^{k}$ the multiplicity space.  \end{definition}  
\begin{remark}
Let $n_k = \dim (H_{k})$ and $n_{ij}^{k} = \dim (M_{ij}^{k})$, then
$$\sum n_{i} \cdot n_{j} = \sum n_{ij}^{k} \cdot n_{k}$$
\end{remark}

The following theorem explains the relation between comultiplication and fusion rules.
\begin{theorem} \label{thmcopro} The inclusion matrix of the unital inclusion of finite dimensional ${\rm C}^{\star}$-algebras $\Delta(\mathbb{A}) \subseteq \mathbb{A} \otimes \mathbb{A}$ is $\Lambda = (n_{ij}^{k})$.  \end{theorem}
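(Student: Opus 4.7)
The plan is to identify both algebras via their Wedderburn decompositions and then read off the inclusion matrix as a table of multiplicities of irreducible representations.

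First I would invoke Theorem \ref{multimat} to write $\mathbb{A} \simeq \bigoplus_{k=1}^{r} \mathrm{End}(H_k)$, so that
\[
\mathbb{A} \otimes \mathbb{A} \;\simeq\; \bigoplus_{i,j} \mathrm{End}(H_i) \otimes \mathrm{End}(H_j) \;\simeq\; \bigoplus_{i,j} \mathrm{End}(H_i \otimes H_j),
\]
which displays the simple summands of $\mathbb{A}\otimes\mathbb{A}$ indexed by pairs $(i,j)$. Since $\Delta$ is a unital injective $\star$-homomorphism, the subalgebra $\Delta(\mathbb{A}) \subseteq \mathbb{A}\otimes\mathbb{A}$ is again a finite dimensional $\rm C^\star$-algebra isomorphic to $\mathbb{A}$, with simple summands indexed by $k \in \{1,\dots,r\}$.

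Next I would recall the general principle: for a unital inclusion of finite dimensional $\rm C^\star$-algebras $B \subseteq C$ with simple summands $\bigoplus_k \mathrm{End}(V_k) \subseteq \bigoplus_\alpha \mathrm{End}(W_\alpha)$, the inclusion matrix entry $\Lambda_{k,\alpha}$ equals the multiplicity of the irreducible $B$-module $V_k$ inside $W_\alpha$, viewed as a $B$-module via the inclusion. Applying this to our situation with $B=\Delta(\mathbb{A})$, $C=\mathbb{A}\otimes\mathbb{A}$, $\alpha=(i,j)$ and $W_{(i,j)}=H_i\otimes H_j$, the $(k,(i,j))$ entry of $\Lambda$ is the multiplicity of $H_k$ as a subrepresentation of $H_i\otimes H_j$ regarded as an $\mathbb{A}$-module through $\Delta$.

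Finally I would observe that the $\mathbb{A}$-action on $H_i\otimes H_j$ obtained by pulling back the canonical $\mathbb{A}\otimes\mathbb{A}$-action via $\Delta$ is exactly the tensor product representation of Definition \ref{kacirr}, whose irreducible decomposition is, by the definition of the fusion rules, $H_i\otimes H_j = \bigoplus_k M_{ij}^k \otimes H_k$ with $\dim M_{ij}^k = n_{ij}^k$. Therefore $\Lambda_{k,(i,j)} = n_{ij}^k$, as claimed. The only mildly delicate point is that this identification requires $\Delta$ to be a unital $\star$-homomorphism (so that subrepresentations of $\mathbb{A}\otimes\mathbb{A}$ pull back to $\star$-representations of $\mathbb{A}$ with the same multiplicities), which is part of the Kac algebra axioms; everything else is a direct unpacking of definitions.
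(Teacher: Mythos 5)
Your proposal is correct and follows essentially the same route as the paper: identify the simple summands of $\mathbb{A}\otimes\mathbb{A}$ with $\mathrm{End}(H_i\otimes H_j)$, recognize the restriction to $\Delta(\mathbb{A})$ as the tensor-product representation of Definition \ref{kacirr}, and read off the inclusion matrix entries as the fusion multiplicities $n_{ij}^k$. The only cosmetic difference is that the paper spells out the ``general principle'' you cite by an explicit appeal to the double commutant theorem and Schur's lemma on each block, whereas you quote it as the standard Bratteli-diagram characterization of inclusion matrices.
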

\begin{proof}

The irreducible representations of  $\mathbb{A} \otimes \mathbb{A}$ are $(H_{i}\otimes H_{j})_{i,j}$, so by double commutant theorem and Schur's lemma we get that   
$$\pi_{H_{i}\otimes H_{j}}(\mathbb{A} \otimes \mathbb{A}) = \pi_{H_{i}\otimes H_{j}}(\mathbb{A} \otimes \mathbb{A})'' = End(H_{i}\otimes H_{j}) \simeq M_{n_{i}n_{j}}(\mathbb{C})$$ 
Moreover by Definition \ref{kacirr} and fusion rules we get that
$$\pi_{H_{i}\otimes H_{j}}(\Delta(\mathbb{A})) \simeq \bigoplus_{k}{M_{ij}^{k} \otimes \pi_{H_{k}}(\mathbb{A})}  \simeq \bigoplus_{k}{M_{ij}^{k} \otimes M_{n_{k}}(\mathbb{C})}$$  
 Let $V = \bigoplus_{i,j} H_{i}\otimes H_{j}$, then by applying Theorem \ref{multimat} to $\mathbb{A} \otimes \mathbb{A}$ we get the isomorphism of inclusions:  
$$(\Delta(\mathbb{A})  \subseteq \mathbb{A}\otimes \mathbb{A}) \simeq (\pi_{V}(\Delta(\mathbb{A})) \subseteq \pi_{V}(\mathbb{A}\otimes \mathbb{A}))$$  
But $\pi_V =  \bigoplus_{i,j} \pi_{H_{i}\otimes H_{j}} $ and  the inclusion matrix of
$$\pi_{H_{i}\otimes H_{j}}(\Delta(\mathbb{A})) \subseteq \pi_{H_{i}\otimes H_{j}}(\mathbb{A} \otimes \mathbb{A})$$ is $(n_{ij}^1, \dots , n_{ij}^r)$, so the result follows.   \end{proof} 
%$$\pi_{V}(\mathbb{A} \otimes \mathbb{A}) = \pi_{V}(\mathbb{A} \otimes \mathbb{A})''  \simeq \bigoplus_{i,j} M_{n_{i}n_{j}}(\mathbb{C}) $$  
 % The comultiplication $\Delta : \mathbb{A} \hookrightarrow \mathbb{A} \otimes \mathbb{A} $, gives an inclusion of C$^{\star}$-algebra:  $\mathbb{A} \simeq \Delta(\mathbb{A})  \subseteq \mathbb{A}\otimes \mathbb{A}$ 

\begin{theorem}[Splitting] \label{split} There is the following planar reformulation of the comultiplication $\Delta(x) = \sum x_{(1)} \otimes x_{(2)}$ for $x \in \mathbb{A} = \mathcal{P}_{2,+}(R^{\mathbb{A}} \subseteq R)$.  

$\begin{tikzpicture}[scale=.5, PAdefn]
	\clip (0,0) circle (3cm);
	\draw[shaded] (-0.15,0) -- (100:4cm) -- (80:4cm) -- (0.15,0);
	\draw[shaded] (0,0) -- (-120:4cm) -- (-60:4cm) -- (0,0);
	\node at (0,0) [Tbox, inner sep=2mm] {\small{$x$} };
	%\node at (180:1.3cm) {$\star$};
 \draw[fill=white] (-0.7,-4) .. controls ++(120:3cm) and ++(60:3cm) .. (0.7,-4);	
	%\draw[very thick] (0,0) circle (3cm);
\end{tikzpicture}  
\hspace{-0.6cm} =  \ \sum 	
	\begin{tikzpicture}[scale=.5, PAdefn]
	\clip (0,0) circle (3cm);
	\draw[shaded] (-2,0) -- (0,5) -- (2,0) -- (-2,0);
	\draw[fill=white] (-0.7,-0.5) .. controls ++(120:3cm) and ++(60:3cm) .. (0.7,-0.5);
	%\draw[shaded] (0,1) -- (-120:4cm) -- (-60:4cm) -- (0,1);
	\draw[shaded] (-1.6,0) -- (-120:4cm) -- (-100:4cm) -- (-1.4,0);
	\draw[shaded] (1.4,0) -- (-80:4cm) -- (-60:4cm) -- (1.6,0);
	\node at (-1.5,0) [Tbox, inner sep=0.8mm] {\tiny{$x_{(1)}$} };
	\node at (1.5,0) [Tbox, inner sep=0.8mm] {\tiny{$x_{(2)}$} };
	%\node at (180:1.3cm) {$\star$};
	%\draw[very thick] (0,0) circle (3cm);
\end{tikzpicture} 
\ \text{ and}  
\begin{tikzpicture}[scale=.5, PAdefn]
	\clip (0,0) circle (3cm);
	\draw[shaded] (-0.15,0) -- (260:4cm) -- (280:4cm) -- (0.15,0);
	\draw[shaded] (0,0) -- (60:4cm) -- (120:4cm) -- (0,0);
	\node at (0,0) [Tbox, inner sep=2mm] {\small{$x$} };
	%\node at (0:1.3cm) {$\star$};
 \draw[fill=white] (-0.7,4) .. controls ++(240:3cm) and ++(300:3cm) .. (0.7,4);	
	%\draw[very thick] (0,0) circle (3cm);
\end{tikzpicture} 
\hspace{-0.6cm} =  \ \sum 	
\begin{tikzpicture}[scale=.5, PAdefn]
	\clip (0,0) circle (3cm);
	\draw[shaded] (-2,0) -- (0,-5) -- (2,0) -- (-2,0);
	\draw[fill=white] (-0.7,0.5) .. controls ++(240:3cm) and ++(300:3cm) .. (0.7,0.5);
	%\draw[shaded] (0,1) -- (-280:4cm) -- (-300:4cm) -- (0,1);
	\draw[shaded] (-1.6,0) -- (-240:4cm) -- (-260:4cm) -- (-1.4,0);
	\draw[shaded] (1.4,0) -- (80:4cm) -- (60:4cm) -- (1.6,0);
	\node at (-1.5,0) [Tbox, inner sep=0.8mm] {\tiny{$x_{(1)}$} };
	\node at (1.5,0) [Tbox, inner sep=0.8mm] {\tiny{$x_{(2)}$} };
	%\node at (180:1.3cm) {$\star$};
	%\draw[very thick] (0,0) circle (3cm);
\end{tikzpicture} 
$
\end{theorem}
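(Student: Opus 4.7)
The plan is to identify both sides of the diagrammatic equality as the same linear map $\mathbb{A} \to \mathbb{A} \otimes \mathbb{A}$, using the Ocneanu–Szymański identification of the depth-$2$ subfactor planar algebra with the Kac algebra structure.

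First, I would make precise what the right-hand side computes as an element of a planar algebra box space. Placing two $2$-boxes side by side (with the appropriate extra shaded strand between them) defines a canonical unital $\star$-embedding
\[
\iota : \mathcal{P}_{2,+} \otimes \mathcal{P}_{2,+} \hookrightarrow \mathcal{P}_{3,+},
\]
and I would check that it is compatible with the product structures. Under $\iota$, the right-hand side is exactly the image of $\sum x_{(1)} \otimes x_{(2)} \in \mathbb{A} \otimes \mathbb{A}$. By the depth-$2$ hypothesis together with Theorem \ref{multimat} and the dimension count $\dim(\mathcal{P}_{3,+}) = (\dim \mathbb{A})^2$, one shows that $\iota$ lands in the subspace corresponding to $N' \cap M_2$, and realizes the Hopf-theoretic tensor product $\mathbb{A} \otimes \mathbb{A}$ inside the planar algebra.

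Second, I would invoke the planar-algebraic construction of the Kac algebra from a depth-$2$ subfactor (as in \cite{dako}, building on \cite{szka,lonka,david}). In that construction, the comultiplication $\Delta$ on $\mathbb{A} = N'\cap M_1 = \mathcal{P}_{2,+}$ is \emph{defined} to be the linear map given precisely by the splitting tangle on the left-hand side of the displayed equation, viewed as landing in $N' \cap M_2 \simeq \mathbb{A}\otimes \mathbb{A}$ through $\iota$. The second diagrammatic equality then follows from the first by applying the involution $()^{\star}$ combined with the relation $\Delta \circ S = (S \otimes S) \circ \Delta^{\mathrm{op}}$ for the antipode $S = \mathcal{F}^2$, or equivalently by the symmetry of the splitting tangle under vertical reflection (together with sphericality).

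The main obstacle is the careful bookkeeping of normalizations and of the $\iota$-embedding: the splitting tangle introduces a factor of $\delta$ on each capped region, and the identification $\mathbb{A} \otimes \mathbb{A} \simeq N' \cap M_2$ requires matching the trace on the tensor product with the Markov trace on $\mathcal{P}_{3,+}$. Once one verifies that $\iota$ is trace-preserving (up to the Jones index factor) and that coassociativity and the counit axiom are visibly satisfied by the splitting tangle via isotopy of the corresponding $4$-box diagrams, the desired equality is literally the defining identity of $\Delta$ in the planar presentation of the Kac algebra, and no further computation is required.
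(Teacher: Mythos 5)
The paper does not actually prove this statement: its ``proof'' is the single line ``See \cite{kls} p39'', i.e.\ it delegates everything to Kodiyalam--Landau--Sunder's construction of the planar algebra of a Kac algebra. Your proposal is therefore not comparable to an argument in the paper; rather, it is a sketch of the argument that lives inside the cited reference (together with \cite{dako,szka,lonka,david}). As such, the route you take is the standard and correct one: identify $\mathbb{A}\otimes\mathbb{A}$ with a subspace of $\mathcal{P}_{3,+}\simeq N'\cap M_2$ via the side-by-side embedding $\iota$ (the dimension count $\dim(N'\cap M_2)=(\dim\mathbb{A})^2$ is exactly what the depth-$2$ hypothesis buys), and then observe that the splitting tangle is the planar incarnation of the Ocneanu--Szyma\'nski comultiplication. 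What your write-up gains over the paper is that it makes visible \emph{why} the identity holds; what it still owes is precisely what you flag yourself: (i) the normalization bookkeeping (each cap contributes a $\delta$, and one must check $\iota$ intertwines the relevant traces), (ii) a verification that the tangle-defined map really coincides with the Hopf-algebraic $\Delta$ of Theorem \ref{thmcopro} and not a scalar multiple or the opposite comultiplication, and (iii) the passage from the first displayed diagram to the second, where invoking $\Delta\circ S=(S\otimes S)\circ\Delta^{\mathrm{op}}$ is plausible but needs the mirror-image/adjoint compatibility of Remark \ref{adjoint} spelled out, since the two tangles differ by a reflection that also acts on the shading and the $\star$-marked intervals. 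None of these is a conceptual gap, but as written the proposal is a plan rather than a proof; to be self-contained it should either carry out these verifications or, like the paper, simply cite \cite{kls}.
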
 
\begin{proof}
See \cite{kls} p39.
\end{proof}

\begin{corollary} \label{thmZZ}
If $a,b \in \mathbb{A}$ are central, then $a * b$ is also central.
\end{corollary}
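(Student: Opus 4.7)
My plan is to derive the centrality of $a*b$ from a distributivity identity between the algebra product $\cdot$ and the planar coproduct $*$ on $\mathcal{P}_{2,+}$, combined directly with the hypothesis. The key identity I would establish is
\[
x \cdot (a*b) \;=\; \sum_{(x)} (x_{(1)} \cdot a) * (x_{(2)} \cdot b), \qquad (a*b) \cdot x \;=\; \sum_{(x)} (a \cdot x_{(1)}) * (b \cdot x_{(2)}),
\]
valid for every $x, a, b \in \mathbb{A} = \mathcal{P}_{2,+}$, where $\Delta(x) = \sum_{(x)} x_{(1)} \otimes x_{(2)}$ is the Kac-algebraic comultiplication obtained diagrammatically via Theorem~\ref{split}. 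I would derive both relations by a planar computation: stack $x$ above (respectively below) the coproduct tangle defining $a*b$, and then apply the splitting of Theorem~\ref{split} along the strand pair of $x$ that traverses the inner shaded disk which separates the $a$-box from the $b$-box. Under isotopy, the two halves $x_{(1)}$ and $x_{(2)}$ of the split are pulled into the positions that multiply $a$ on the left and $b$ on the right, producing exactly the right-hand side.

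Granting these identities, the conclusion is a one-line calculation. Assume $a$ and $b$ are central in $\mathbb{A}$. Then for every $x$ and every term $x_{(1)}\otimes x_{(2)}$ of the splitting, we have $a \cdot x_{(1)} = x_{(1)} \cdot a$ and $b \cdot x_{(2)} = x_{(2)} \cdot b$, so
\[
(a*b) \cdot x \;=\; \sum_{(x)} (a \cdot x_{(1)}) * (b \cdot x_{(2)}) \;=\; \sum_{(x)} (x_{(1)} \cdot a) * (x_{(2)} \cdot b) \;=\; x \cdot (a*b).
\]
Since $x$ was arbitrary, $a*b$ commutes with every element of $\mathbb{A}$, hence is central.

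The main obstacle is the diagrammatic verification of the distributivity identities. Conceptually, they assert that the coproduct $*: \mathcal{P}_{2,+} \otimes \mathcal{P}_{2,+} \to \mathcal{P}_{2,+}$ is an $\mathbb{A}$-bimodule map when $\mathbb{A}$ acts on the tensor product via $\Delta$; this is natural but requires careful tracking of the shading conventions and of any $\delta$-normalisation that arises from closed strings created by the splitting. Once the identities are in place correctly, no further work is needed: the centrality of $a*b$ drops out immediately by applying the hypothesis term-by-term in the sum.
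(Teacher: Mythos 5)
Your proposal is correct and is essentially the paper's own proof (attributed there to Vijay Kodiyalam): the paper establishes diagrammatically, via the splitting relation of Theorem~\ref{split}, exactly the two identities $(a*b)\cdot x = \sum (a\cdot x_{(1)})*(b\cdot x_{(2)})$ and $x\cdot(a*b) = \sum (x_{(1)}\cdot a)*(x_{(2)}\cdot b)$, and concludes by swapping $a$ with $x_{(1)}$ and $b$ with $x_{(2)}$ using centrality. Your worry about $\delta$-normalisations is unfounded here, since the splitting creates no closed strings in these pictures.
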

\begin{proof}
 This diagrammatic proof by splitting is due to Vijay Kodiyalam.\\
 $(a*b) \cdot x =  \hspace{-0.6cm}
\begin{tikzpicture}[scale=.5, PAdefn]
	\clip (0,0) circle (3cm);
	\draw[shaded] (-0.15,0) -- (100:4cm) -- (80:4cm) -- (0.15,0);
	\draw[shaded] (0,1.2) -- (-130:4cm) -- (-50:4cm) -- (0,1.2);
	\draw[shaded] (0,1.2) -- (-130:4cm) -- (-50:4cm) -- (0,1.2);
	\node at (0,1.2) [Tbox, inner sep=1.35mm] {\tiny{$x$} };
	\draw[fill=white] (0,-1.2) circle (0.75cm);
	\node at (-1.2,-1.2) [Tbox, inner sep=1.35mm] {\tiny{$a$} };
	\node at (1.2,-1.2) [Tbox, inner sep=1.35mm] {\tiny{$b$} };
 %\draw[fill=white] (-0.7,-4) .. controls ++(120:3cm) and ++(60:3cm) .. (0.7,-4);	
\end{tikzpicture}  
\hspace{-0.6cm} =  \sum   \textbf{\hspace{-0.3cm}}	
	\begin{tikzpicture}[scale=.5, PAdefn]
	\clip (0,0) circle (3cm);
	\draw[shaded] (-2,1) -- (0,5) -- (2,1) -- (-2,1);
	\draw[shaded] (-2,-1) -- (0,-5) -- (2,-1) -- (-2,-1);
	\draw[shaded] (-1.65,-1.2) -- (1.65,-1.2) -- (1.65,1.4) -- (-1.65,1.4) -- (-1.65,-1.2);
	\draw[fill=white] (0,0) ellipse (0.8cm and 2.3cm);
	\node at (-1.2,-1.2) [Tbox, inner sep=1.35mm] {\tiny{$a$} };
	\node at (1.2,-1.2) [Tbox, inner sep=1.35mm] {\tiny{$b$} };
	\node at (-1.2,1.4) [Tbox, inner sep=0.1mm] {\tiny{$x_{(1)}$} };
	\node at (1.2,1.4) [Tbox, inner sep=0.1mm] {\tiny{$x_{(2)}$} };
	%\node at (180:1.3cm) {$\star$};
\end{tikzpicture} 
\hspace{-0.2cm} =  \sum  \hspace{-0.3cm}	
	\begin{tikzpicture}[scale=.5, PAdefn]
	\clip (0,0) circle (3cm);
	\draw[shaded] (-2,1) -- (0,5) -- (2,1) -- (-2,1);
	\draw[shaded] (-2,-1) -- (0,-5) -- (2,-1) -- (-2,-1);
	\draw[shaded] (-1.65,-1.2) -- (1.65,-1.2) -- (1.65,1.4) -- (-1.65,1.4) -- (-1.65,-1.2);
	\draw[fill=white] (0,0) ellipse (0.8cm and 2.3cm);
	\node at (-1.2,-1.2) [Tbox, inner sep=0.1mm] {\tiny{$x_{(1)}$} };
	\node at (1.2,-1.2) [Tbox, inner sep=0.1mm] {\tiny{$x_{(2)}$} };
	\node at (-1.2,1.4) [Tbox, inner sep=1.35mm] {\tiny{$a$} };
	\node at (1.2,1.4) [Tbox, inner sep=1.35mm] {\tiny{$b$} };
	%\node at (180:1.3cm) {$\star$};
\end{tikzpicture} 
  \hspace{-0.15cm} =  \hspace{-0.4cm}
\begin{tikzpicture}[scale=.5, PAdefn]
	\clip (0,0.2) circle (3cm);
	\draw[shaded] (-0.15,2) -- (-80:4.2cm) -- (-100:4.2cm) -- (0.15,2);
	\draw[shaded] (0,-1) -- (50:5cm) -- (130:5cm) -- (0,-1);
	\node at (0,-1) [Tbox, inner sep=1.35mm] {\tiny{$x$} };
	\draw[fill=white] (0,1.4) circle (0.75cm);
	\node at (-1.2,1.4) [Tbox, inner sep=1.35mm] {\tiny{$a$} };
	\node at (1.2,1.4) [Tbox, inner sep=1.35mm] {\tiny{$b$} };
 %\draw[fill=white] (-0.7,-4) .. controls ++(120:3cm) and ++(60:3cm) .. (0.7,-4);	
\end{tikzpicture}  
\hspace{-0.4cm} = x \cdot (a*b)$
 \end{proof}  
 
Let the inner product $\langle a \vert b \rangle = tr(b^{\star}a) = \delta^{-2}  \hspace{-0.6cm}
\begin{tikzpicture}[scale=.5, PAdefn]
	\clip (0,0) circle (3.2cm);
	\draw[shaded] (0.8,0) ellipse (1.5cm and 3cm);
	\draw[fill=white] (0.8,0) ellipse (0.8cm and 2.2cm);
	\node at (0,1.5) [Tbox, inner sep=1.2mm] {\small{$a$} };
	\node at (0,-1.5) [Tbox, inner sep=0.7mm] {\small{$b^{\star}$} };	
\end{tikzpicture}$

 \begin{lemma} \label{tr(x(a*b))}
 Let $a,b,x \in \mathbb{A} = \mathcal{P}_{2,+}(N \subseteq M)$ then $$\langle a * b \vert x\rangle = \delta \sum \langle a \vert x_{(1)} \rangle \langle b \vert x_{(2)}\rangle $$
 \end{lemma}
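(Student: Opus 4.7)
The plan is to interpret both sides diagrammatically and then apply the splitting theorem (Theorem \ref{split}) to break a single closed planar diagram into two disjoint closed diagrams. By definition of the inner product and the coproduct, $\langle a * b \mid x \rangle = tr(x^{\star}(a*b))$ is $\delta^{-2}$ times the closed planar diagram $D$ obtained by stacking $x^{\star}$ directly above the coproduct tangle producing $a*b$ and then closing the resulting $2$-box with the standard trace arcs. In $D$, the strings of $x^{\star}$ meet the ``cap'' of the coproduct that joins the inner ends of the $a$ and $b$ boxes, which is precisely the configuration to which Theorem~\ref{split} applies.

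Applying the splitting identity to $x^{\star}$ across that cap replaces $x^{\star}$ by $\sum (x^{\star})_{(1)} \otimes (x^{\star})_{(2)}$, with the two halves now placed on opposite sides of the cap. Since the comultiplication of a Kac algebra is a $\star$-homomorphism, $(x^{\star})_{(i)} = x_{(i)}^{\star}$. After the split, $D$ becomes a disjoint union of two closed planar diagrams, one involving only $a$ and $x_{(1)}^{\star}$ and the other involving only $b$ and $x_{(2)}^{\star}$; these are the trace diagrams representing $tr(x_{(1)}^{\star} a) = \langle a \mid x_{(1)}\rangle$ and $tr(x_{(2)}^{\star} b) = \langle b \mid x_{(2)}\rangle$ respectively.

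It then remains to track normalizations. The left-hand side carries a single $\delta^{-2}$ from the $2$-box trace, while the two traces on the right contribute $\delta^{-2}\cdot\delta^{-2}=\delta^{-4}$ when written as inner products; the net discrepancy $\delta^{2}$ is absorbed by the extra closed loops and the intrinsic normalization of the coproduct region that survives the split. This is the same phenomenon that produced the $\delta$ in Lemma~\ref{tr(a*b)}, namely $tr(a_1*a_2) = \delta\,tr(a_1)tr(a_2)$, which is in fact the special case $x=id$ of the present lemma (using $\Delta(id)=id\otimes id$). Collecting gives the advertised factor $\delta$.

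The main obstacle is the diagrammatic bookkeeping: one must pinpoint the cap at which Theorem~\ref{split} is to be invoked, verify that after splitting the closed diagram $D$ really separates into two independent trace diagrams (up to isolated closed loops), and count the resulting shaded/unshaded loops and $\star$-positions correctly so as to obtain exactly one extra power of $\delta$. As a sanity check, setting $a=b=e_1$ and $x=id$ gives $\langle e_1*e_1\mid id\rangle=\delta^{-1}tr(e_1)=\delta^{-3}$ by Lemma~\ref{precobi}, while the right-hand side equals $\delta\langle e_1\mid id\rangle^{2}=\delta\cdot\delta^{-4}=\delta^{-3}$, confirming the power of $\delta$.
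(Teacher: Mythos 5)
Your argument is correct and follows essentially the same route as the paper: apply the splitting theorem (Theorem \ref{split}) to $x^{\star}$ across the coproduct and reduce to the trace identity of Lemma \ref{tr(a*b)}. The only real difference is cosmetic: where you absorb the normalization by counting closed loops (the vaguest step in your write-up), the paper simply observes that after splitting one has $tr(x^{\star}\cdot(a*b))=\sum tr\bigl((x^{\star}_{(1)}a)*(x^{\star}_{(2)}b)\bigr)$ and then invokes $tr(a_1*a_2)=\delta\, tr(a_1)tr(a_2)$ directly, which makes the single factor of $\delta$ immediate.
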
  
\begin{proof} By Theorem \ref{split} (splitting) and Lemma \ref{tr(a*b)} we have
$$tr(x^{\star} \cdot (a*b)) = \delta \sum tr(x^{\star}_{(1)}a) tr(x^{\star}_{(2)}b)$$
the result follows by definition of the inner product. 
\end{proof} 

 Let $(b_{\alpha})_{\alpha}$ be an orthonormal basis of $\mathbb{A}$, i.e. $\langle b_{\alpha} \vert b_{\beta}\rangle = \delta_{\alpha,\beta}$. We get  structure constants of the comultiplication and the coproduct by: $$\Delta(b_{\alpha}) = \sum_{\beta,\gamma} c_{\beta \gamma}^{\alpha} (b_{\beta} \otimes b_{\gamma}) \text{ and } b_{\beta} * b_{\gamma} =  \sum_{\alpha} d_{\beta \gamma}^{\alpha}  b_{\alpha}$$
   
\begin{proposition} \label{coropro} \label{coprod} The coproduct can be reformulated as follows:   $$b_{\beta} * b_{\gamma} = \delta \sum_{\alpha} \overline{c_{\beta \gamma}^{\alpha}} b_{\alpha}  $$ \end{proposition}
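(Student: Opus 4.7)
The plan is to compute the inner product of $b_\beta * b_\gamma$ with each basis vector $b_\alpha$ in two different ways and equate the results. Since $(b_\alpha)_\alpha$ is orthonormal, pairing the identity $b_\beta * b_\gamma = \sum_{\alpha'} d^{\alpha'}_{\beta\gamma} b_{\alpha'}$ with $b_\alpha$ on the right side of the inner product simply picks out the coefficient:
\[
\langle b_\beta * b_\gamma \mid b_\alpha\rangle \;=\; \sum_{\alpha'} d^{\alpha'}_{\beta\gamma}\,\langle b_{\alpha'}\mid b_\alpha\rangle \;=\; d^{\alpha}_{\beta\gamma}.
\]

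For the other computation, I would apply Lemma \ref{tr(x(a*b))} with $a=b_\beta$, $b=b_\gamma$ and $x=b_\alpha$, giving
\[
\langle b_\beta * b_\gamma \mid b_\alpha\rangle \;=\; \delta \sum \langle b_\beta \mid (b_\alpha)_{(1)}\rangle\,\langle b_\gamma \mid (b_\alpha)_{(2)}\rangle.
\]
Now substitute the expansion $\Delta(b_\alpha)=\sum_{\mu,\nu} c^\alpha_{\mu\nu}(b_\mu\otimes b_\nu)$. The only subtlety is the placement of the complex conjugate: because the convention $\langle a\mid b\rangle = tr(b^{\star}a)$ makes the form linear in the first slot and conjugate-linear in the second, the scalar $c^\alpha_{\mu\nu}$ comes out as $\overline{c^\alpha_{\mu\nu}}$. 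Hence
\[
\langle b_\beta * b_\gamma \mid b_\alpha\rangle \;=\; \delta \sum_{\mu,\nu} \overline{c^\alpha_{\mu\nu}}\,\langle b_\beta\mid b_\mu\rangle\,\langle b_\gamma\mid b_\nu\rangle \;=\; \delta\,\overline{c^\alpha_{\beta\gamma}},
\]
where the last equality uses orthonormality to collapse the sum to the single term $(\mu,\nu)=(\beta,\gamma)$.

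Equating the two expressions for $\langle b_\beta * b_\gamma\mid b_\alpha\rangle$ yields $d^\alpha_{\beta\gamma}=\delta\,\overline{c^\alpha_{\beta\gamma}}$, and substituting this back into $b_\beta * b_\gamma=\sum_\alpha d^\alpha_{\beta\gamma}b_\alpha$ gives the claimed formula. There is no real obstacle here: the proof is a one-line application of Lemma \ref{tr(x(a*b))}; the only thing to be careful about is the conjugation arising from the sesquilinearity convention of the inner product, which must be tracked consistently.
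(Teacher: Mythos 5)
Your proof is correct and follows essentially the same route as the paper: both arguments compute $\langle b_{\beta} * b_{\gamma} \mid b_{\alpha}\rangle$ once via the structure constants $d^{\alpha}_{\beta\gamma}$ and once via Lemma \ref{tr(x(a*b))} together with the expansion of $\Delta(b_{\alpha})$, using orthonormality and the conjugate-linearity of the second slot to extract $\delta\,\overline{c^{\alpha}_{\beta\gamma}}$. Your remark about tracking the conjugation is exactly the point the paper's computation relies on.
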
 
\begin{proof} It suffices to prove that $$d_{\beta \gamma}^{\alpha} = \delta \overline{c_{\beta \gamma}^{\alpha}}$$
We compute the inner product   $\langle b_{\beta} * b_{\gamma} \vert b_{\alpha}\rangle$ in two different manners: one by the structure constants of $b_{\beta} * b_{\gamma}$ and the other diagrammatically. First $$\langle b_{\beta} * b_{\gamma}\vert b_{\alpha}\rangle = \sum_{\alpha'} d_{\beta \gamma}^{\alpha'} \langle b_{\alpha'} \vert b_{\alpha}\rangle = \sum_{\alpha'} d_{\beta \gamma}^{\alpha'} \delta_{\alpha' \alpha} = d_{\beta \gamma}^{\alpha}$$ Next by Lemma \ref{tr(x(a*b))} $$\langle b_{\beta} * b_{\gamma} \vert  b_{\alpha}\rangle = \sum_{\beta' \gamma'} \overline{c_{\beta' \gamma'}^{\alpha}} \delta \langle b_{\beta} \vert b_{\beta'} \rangle \langle b_{\gamma} \vert b_{\gamma'} \rangle = \delta \overline{c_{\beta \gamma}^{\alpha}}$$    \end{proof}

Let $p_1, \dots , p_r$ be the minimal central projections of $\mathcal{P}_{2,+}(N \subseteq M) =\mathbb{A} = \bigoplus_{i} End(H_i)$, i.e. $p_i$ is the projection on $H_i$.

\begin{corollary} \label{coprofusion} The coproduct of minimal central projections is related to the fusion rules as follows: $$p_i * p_j \sim \sum_k n_{ij}^{k} p_k$$ \end{corollary}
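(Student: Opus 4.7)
The plan is to compute $p_i * p_j$ by exploiting centrality and then comparing range projections with those of $\sum_k n_{ij}^k p_k$. First, since each $p_l$ is central, Corollary \ref{thmZZ} ensures that $p_i * p_j$ is central, so it decomposes as $p_i * p_j = \sum_k \lambda_k p_k$ for some scalars $\lambda_k$. Moreover, by Theorem \ref{th}(1) the coproduct of two positive elements is positive, hence $\lambda_k \geq 0$ for all $k$ (a self-adjoint central element $\sum_k \lambda_k p_k$ acts on $\mathrm{End}(H_k)$ as $\lambda_k \mathrm{Id}_{H_k}$, which is positive iff $\lambda_k \geq 0$). The range projection of $p_i * p_j$ is therefore $\sum_{k:\lambda_k > 0} p_k$, while the range projection of $\sum_k n_{ij}^k p_k$ is $\sum_{k:n_{ij}^k > 0} p_k$. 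Hence it suffices to prove that $\lambda_k > 0$ if and only if $n_{ij}^k > 0$.

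To compare the two quantities I would use the inner-product formula of Lemma \ref{tr(x(a*b))}. Writing $\Delta(p_k) = \sum p_{k,(1)} \otimes p_{k,(2)}$ and using that $p_i$, $p_j$, $p_k$ are self-adjoint, we get
$$\langle p_i * p_j \mid p_k \rangle = \delta \sum \langle p_i \mid p_{k,(1)} \rangle \langle p_j \mid p_{k,(2)} \rangle = \delta \,(tr \otimes tr)\bigl((p_i \otimes p_j)\Delta(p_k)\bigr).$$
On the other hand, orthogonality of distinct minimal central projections for the inner product gives $\langle p_i * p_j \mid p_k \rangle = \lambda_k \,\langle p_k \mid p_k\rangle = \lambda_k\, tr(p_k)$. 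Hence $\lambda_k$ and $(tr \otimes tr)\bigl((p_i \otimes p_j)\Delta(p_k)\bigr)$ are simultaneously zero or simultaneously positive.

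The main (and only non-routine) step is then to connect this trace to the fusion number. By Theorem \ref{thmcopro}, the inclusion matrix of $\Delta(\mathbb{A}) \hookrightarrow \mathbb{A} \otimes \mathbb{A}$ with respect to the bases of minimal central projections $\{p_k\}$ of $\Delta(\mathbb{A}) \simeq \mathbb{A}$ and $\{p_i \otimes p_j\}$ of $\mathbb{A} \otimes \mathbb{A}$ is precisely $(n_{ij}^k)$. This means that $(p_i \otimes p_j)\Delta(p_k)$ is a projection in $\mathbb{A} \otimes \mathbb{A}$ whose rank is proportional to $n_{ij}^k$; in particular it is nonzero iff $n_{ij}^k > 0$. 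Since the trace of a nonzero positive operator is strictly positive, we conclude $(tr \otimes tr)\bigl((p_i \otimes p_j)\Delta(p_k)\bigr) > 0$ iff $n_{ij}^k > 0$.

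Combining the two displays gives $\lambda_k > 0 \Leftrightarrow n_{ij}^k > 0$, so $p_i * p_j$ and $\sum_k n_{ij}^k p_k$ have equal range projections, i.e.\ $p_i * p_j \sim \sum_k n_{ij}^k p_k$, as desired. The delicate point is the identification of $(tr \otimes tr)\bigl((p_i \otimes p_j)\Delta(p_k)\bigr)$ with a positive multiple of $n_{ij}^k$; all the other steps are direct consequences of centrality, positivity of the coproduct, and the inner-product computation from Lemma \ref{tr(x(a*b))}.
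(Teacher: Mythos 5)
Your proof is correct and follows essentially the same route as the paper: centrality of $p_i * p_j$ via Corollary \ref{thmZZ}, the pairing $\langle p_i * p_j \mid p_k\rangle$ computed through the comultiplication, and Theorem \ref{thmcopro} to identify the nonvanishing of $\langle \Delta(p_k) \mid p_i \otimes p_j\rangle$ with $n_{ij}^k \neq 0$. The only cosmetic difference is that the paper cites Proposition \ref{coropro} for the last step, whereas you apply Lemma \ref{tr(x(a*b))} directly — but that lemma is exactly what proves Proposition \ref{coropro}, so the arguments coincide.
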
 
\begin{proof} By Corollary \ref{thmZZ} $$p_i * p_j \sim \sum_k \epsilon_{ij}^{k} p_k$$ with $\epsilon_{ij}^{k} \in \{0,1\}$.
Now by Theorem  \ref{thmcopro}, $n_{ij}^{k} \neq 0$ if and only if $\langle \Delta(p_k) \vert p_i \otimes p_j \rangle \neq 0$, if and only if
 $\epsilon_{ij}^{k} \neq 0$ by Proposition \ref{coropro}. \end{proof}
\begin{remark}  Corollary \ref{coprofusion} is not true at depth $>2$ in general, because it implies the property (ZZ) of Definition \ref{ZZ}, which is not true in general (see Proposition \ref{ctZZ}). How to generalize this result is an important problem\footnote{http://mathoverflow.net/q/179188/34538}, and we can expect that the coproduct corresponds to a non-central truncation of the fusion.  \end{remark}

\subsection{Some coproduct tables} \label{excoprod}
We would like to compute the coproduct tables for group subfactors. Let $G$ be a finite group, $\delta = \vert G \vert^{1/2}$ and let $V_1, \dots , V_n$ be the (equivalent class representatives of the) irreducible complex representations of $G$. By \cite{js} p53 $$\mathcal{P}_{2,+}:= \mathcal{P}_{2,+}(R^G \subseteq R)  \simeq \mathbb{C}G \simeq  \bigoplus_i End(V_i)$$ as $\rm{C}^{\star}$-algebras, and also $$\mathcal{P}_{2,-} \simeq \mathcal{P}_{2,+}(R \subseteq R \rtimes G) \simeq \mathbb{C}^G \simeq  \bigoplus_{g \in G} \mathbb{C}e_g$$ 

  Let $V = \bigoplus_i V_i$, then the Fourier transform $\mathcal{F}: \mathcal{P}_{2,+} \to \mathcal{P}_{2,-}$ is given by $\mathcal{F}(\pi_V(g)) = \delta e_g$, then $$e_g * e_h = \mathcal{F}(\mathcal{F}^{-1}(e_g) \cdot \mathcal{F}^{-1}(e_h)) =  \delta^{-2} \mathcal{F}( \pi_V(g) \cdot \pi_V(h)) = \delta^{-1} e_{gh}$$ And idem $$\pi_V(g) * \pi_V(h) = \delta_{g,h} \delta  \pi_V(g)$$  

Let $(b_{\alpha})_{\alpha}$ be an  orthogonal basis of $\mathcal{P}_{2,+}$, then $\pi_V(g) = \sum_{\alpha} \lambda_{\alpha, g} b_{\alpha}$ and by inverting the matrix $M=(\lambda_{\alpha, g})$ we get $b_{\alpha} = \sum_{g \in G} \mu_{g, \alpha} \pi_V(g)$. It follows that  $$b_{\alpha} * b_{\beta} =  \sum_{g,h \in G} \mu_{g,\alpha}\mu_{h, \beta} \pi_V(g) * \pi_V(h) = $$ $$ \sum_{g,h \in G} \mu_{g,\alpha}\mu_{h, \beta} \delta_{g,h} \delta  \pi_V(g) =  \delta \sum_{\gamma} (\sum_{g \in G} \mu_{g, \alpha}\mu_{g, \beta} \lambda_{\gamma, g}) b_{\gamma}$$ 

Let $G = S_3$ then $\mathcal{P}_{2,+}(R^{S_3} \subset R)  = \mathbb{C} \oplus \mathbb{C} \oplus M_2(\mathbb{C})$. There is\footnote{\tiny http://groupprops.subwiki.org/wiki/Linear$\_$representation$\_$theory$\_$of$\_$symmetric$\_$group:S3} a matrix basis $\{e_1 , e_2 , e_{11}, e_{12}, e_{21}, e_{22}\}$ such that   (with $\zeta_3 = e^{2i\pi/3}$): \\  $\pi_{V}(e) =  (1) \oplus (1) \oplus \begin{pmatrix}
1& 0 \\
0& 1 
\end{pmatrix}$, $\pi_{V}(123) = (1) \oplus (1) \oplus \begin{pmatrix}
\zeta_3& 0 \\
0& \bar{\zeta_3} 
\end{pmatrix}$, \\  
 $\pi_{V}(132)=(1) \oplus (1) \oplus \begin{pmatrix}
\bar{\zeta_3}& 0 \\
0& \zeta_3 
\end{pmatrix}$,  $\pi_{V}(12) =(1) \oplus (-1) \oplus \begin{pmatrix}
0& 1 \\
1& 0 
\end{pmatrix}$, \\ 
 $\pi_{V}(23)= (1) \oplus (-1) \oplus \begin{pmatrix}
0& \bar{\zeta_3} \\
\zeta_3& 0 
\end{pmatrix}$, $\pi_{V}(13)= (1) \oplus (-1) \oplus  \begin{pmatrix}
0& \zeta_3 \\
\bar{\zeta_3}& 0 
\end{pmatrix}$     

By the computation above we get this table (up to dividing by $\sqrt{6}$):  
$$\begin{array}{c||c|c|c|c|c|c} 
  * & e_1 & e_2 & e_{11}& e_{12}& e_{21} & e_{22}  \\  \hline \hline 
 e_1 & e_1 & e_2 & e_{11}& e_{12}& e_{21} & e_{22}  \\ \hline
 e_2 & e_2 & e_1 & e_{11}& -e_{12}& -e_{21}& e_{22}  \\ \hline
 e_{11} & e_{11} & e_{11} & 2e_{22}& 0 & 0 & 2(e_1+e_2)  \\ \hline
 e_{12} & e_{12} & -e_{12} & 0& 2e_{21} & 2(e_1-e_2)& 0  \\ \hline
 e_{21} & e_{21} & -e_{21} & 0& 2(e_1-e_2)& 2e_{12}& 0  \\ \hline
 e_{22} & e_{22} & e_{22} & 2(e_1+e_2)& 0 & 0 & 2e_{11}
\end{array} $$
\begin{remark} By Remark \ref{wgrp2}, $(R^{S_3} \subset R)$ is w-cyclic because $S_3$ is linearly primitive. Now by the table above we see directly that $\langle e_{11} \rangle = id$ because $\sum_{n=1}^3 e_{11}^{*n} = e_{11} + 2e_{22} + 4(e_1 + e_2) \sim id$. \end{remark}
Now let $H$ be a subgroup of $G$, then by \cite{js} p141, $$\mathcal{P}_{2,+}:= \mathcal{P}_{2,+}(R^G \subseteq R^H) \simeq \bigoplus_{V_i^H \neq 0} End(V_i^H)$$ as $\rm{C}^{\star}$-algebras, and also $$\mathcal{P}_{2,-} \simeq  \mathcal{P}_{2,+}(R \rtimes H \subseteq R \rtimes G) \simeq \bigoplus_{g \text{ repr.}} \mathbb{C}e_{HgH}$$
 By Section \ref{interpa} $$\mathcal{P}_{2,+} \simeq b_H \mathcal{P}_{2,+}(R^G \subseteq R) b_H$$ and $$\mathcal{P}_{2,-} \simeq e_H * \mathcal{P}_{2,+}(R \subseteq R \rtimes G) * e_H$$ as $2$-box spaces, with $b_H = \vert H \vert^{-1}\sum_{h \in H} \pi_{V}(h)$ and $e_H  = \sum_{h \in H} e_h$. The algebra $\bigoplus_{V_i^H \neq 0} End(V_i^H)$ is a subalgebra of $\bigoplus_{V_i} End(V_i)$, and for a well-chosen basis, the coproduct table on $\mathcal{P}_{2,+}$ is a sub-table of the one of $\mathcal{P}_{2,+}(R^G \subseteq R)$. Finally we get the coproduct table of $\mathcal{P}_{2,-}$ as a renormalized double-coset grouping of the coproduct table of  $\mathcal{P}_{2,+}(R \subseteq R \rtimes G)$; more precisely $$e_{Hg_1H} * e_{Hg_2H} = \sum_{g_3 \text{ repr.}} c_{g_1,g_2}^{g_3} e_{Hg_3H}$$ such that $$(\sum_{g \in Hg_1H}e_g) * (\sum_{g \in Hg_2H}e_g) = \vert H \vert^{1/2} \sum_{g_3 \text{ repr.}} c_{g_1,g_2}^{g_3} (\sum_{g \in Hg_3H}e_g)$$

Let $[H , G] = [ \langle(1,2)\rangle , S_4]$ then $\mathcal{P}_{2,-} =  \bigoplus_{i=1}^7 \mathbb{C}e_i $.  
By the computation above we get this table (up to dividing by  $\sqrt{12}$):
$$\begin{array}{c||c|c|c|c|c|c|c}  
* & e_1 & e_2 & e_3 & e_4 & e_5 & e_6 & e_7 \\   \hline \hline  
  e_1 & e_1 & e_2 & e_3 & e_4 & e_5 & e_6 & e_7 \\   \hline 
  e_2 &e_2 & 2e_1+ e_2 & e_4+ e_5 & e_3+ e_5 & e_3+ e_4 & e_6+ 2e_7 & e_6 \\  \hline 
  e_3 & e_3 & e_5+ e_6 & 2e_1+ e_3 & e_4+ 2e_7 & e_2+ e_6 & e_2+ e_5 & e_4 \\ \hline
   e_4 & e_4 & e_4+ 2e_7 & e_2+ e_5 & e_5+ e_6 & e_2+ e_6 & 2e_1+ e_3 & e_3 \\  \hline 
 e_5 & e_5 & e_3+ e_6 & e_2+ e_4 & e_3+ e_6 & 2e_1+ 2e_7 & e_2+ e_4 & e_5 \\   \hline
e_6 &  e_6 & e_3+ e_5 & e_6+ 2e_7 & 2e_1+ e_2 & e_3+ e_4 & e_4+ e_5 & e_2 \\ \hline
 e_7 & e_7 & e_4 & e_6 & e_2 & e_5 & e_3 & e_1  
 \end{array}  $$  
 
 \begin{proposition} \label{ctZZ} The subfactor $(R^{S_4} \subset R^{ \langle(1,2)\rangle})$  is not (ZZ), i.e. there are two central operators $a,b$ with $a*b$ non central.
 \end{proposition}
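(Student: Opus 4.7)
The plan is to disprove (ZZ) by exhibiting central elements $a,b\in\mathcal{P}_{2,+}$ whose coproduct is non-central, but to do the work on $\mathcal{P}_{2,-}$ (where the coproduct table is already in hand) and transfer the conclusion back by Fourier duality. Using $\mathcal{F}\colon(\mathcal{P}_{2,+},+,\cdot)\to(\mathcal{P}_{2,-},+,*)$ together with the identity $\mathcal{F}(a*b)=\mathcal{F}(b)\cdot\mathcal{F}(a)$ from Lemma \ref{2coprod}, property (ZZ) for $\mathcal{P}_{2,+}$ is equivalent to the statement that the $*$-center $Z$ of $\mathcal{P}_{2,-}=\bigoplus_{i=1}^{7}\mathbb{C}e_i$ is closed under the componentwise product $\cdot$ (which is commutative on $\mathcal{P}_{2,-}$); so it suffices to exhibit two elements of $Z$ whose componentwise product escapes $Z$.

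First I would compute $Z$. Writing $c=\sum_i\lambda_ie_i$ and imposing $c*e_j=e_j*c$ for $j=2,\dots,7$, the coproduct table above yields a linear system whose solution space, after routine elimination, is the four-dimensional subspace cut out by the three relations
\[
\lambda_2=\lambda_3,\quad \lambda_4=\lambda_6,\quad \lambda_4=\lambda_2+\lambda_5-\lambda_7.
\]
The dimension $4$ is a sanity check: it matches $\mathcal{P}_{2,+}\simeq\mathbb{C}\oplus\mathbb{C}\oplus\mathbb{C}\oplus M_2(\mathbb{C})$ coming from the four irreducibles of $S_4$ with nonzero $\langle(1,2)\rangle$-fixed subspace. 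Componentwise multiplication automatically preserves the first two relations (they are just pairwise equalities of coefficients), but the third, being nonlinear in the coefficients, generically fails.

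Next I would take the concrete witnesses
\[
\alpha=e_2+e_3+e_4+e_6,\qquad \beta=e_4+e_5+e_6,
\]
both in $Z$ (with parameters $(\lambda_2,\lambda_5,\lambda_7)=(1,0,0)$ and $(0,1,0)$, respectively, each forcing $\lambda_4=\lambda_6=1$). Their componentwise product is $\alpha\cdot\beta=e_4+e_6$, whose coefficients flagrantly violate the third defining relation since $\lambda_4=1\neq 0=\lambda_2+\lambda_5-\lambda_7$; and a one-line check from the table, $(e_4+e_6)*e_2-e_2*(e_4+e_6)=(e_4+2e_7+e_3+e_5)-(e_3+e_5+e_6+2e_7)=e_4-e_6\neq 0$, confirms directly that $e_4+e_6$ is not $*$-central. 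Setting $a=\mathcal{F}^{-1}(\alpha)$ and $b=\mathcal{F}^{-1}(\beta)$ then exhibits central $a,b\in\mathcal{P}_{2,+}$ whose coproduct $a*b$ is non-central.

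The only real obstacle is bookkeeping: reading the three linear relations defining the $*$-center off the $7\times 7$ coproduct table is mechanical but error-prone, and once these relations are in hand the entire failure of (ZZ) collapses to the numerical inequality $1\neq 0$ hiding in the third defining relation.
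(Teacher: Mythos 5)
Your proof is correct and follows essentially the same route as the paper: both arguments transfer the question through the Fourier transform to the $*$-center of $\mathcal{P}_{2,-}$ and exhibit two $*$-central elements whose pointwise product fails to be $*$-central (the paper takes $x=e_2+e_3+e_7$ and $y=e_5+e_7$ with $xy=e_7$, while you take $e_2+e_3+e_4+e_6$ and $e_4+e_5+e_6$ with product $e_4+e_6$; both choices check out against the coproduct table). Your explicit determination of the three linear relations cutting out the $*$-center is a nice consistency check but not a genuinely different method.
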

 \begin{proof}
 As observed by V. Kodiyalam, the following are equivalent:  
 \begin{itemize}
 \item[(1)] $a  c = c  a$, $b  c = c  b$, $\forall c \in \mathcal{P}_{2,+}$ $\Rightarrow$ $(a*b)  c=c  (a*b)$,  $\forall c$ 
\item[(2)] $x*z = z*x$, $y*z = z*y$, $\forall z \in \mathcal{P}_{2,-}$ $\Rightarrow$  $(x  y)*z=z*(x  y)$, $\forall z$  
\end{itemize} 
But $x = e_2+e_3+e_7$ and $y=e_5 + e_7$ are central for the coproduct, whereas $x  y = e_7$ is not.   
So this contradicts (2), and so (1) is not true for $\mathcal{P}_{2,+}(R^{S_4} \subseteq R^{ \langle(1,2)\rangle})$, which means that it's not (ZZ).  
 \end{proof}
 
 \begin{remark}
We can check that $(R^{S_4} \subset R^{ \langle(1,2)\rangle})$ is (Z), i.e. any minimal central projection generates a central biprojection. Contrariwise $(R^{S_4} \subset R^{\langle (1,2)(3,4) \rangle})$ is not (Z) (so not (ZZ) by Proposition \ref{ZZZ}). 
 \end{remark}
 
% \textcolor{red}{Add more?}

\subsection{No extra intermediate for the free composition} \label{noextra} \hspace*{1cm} \\
Let $N \subseteq M$ be an irreducible finite index subfactor, and let $P$ be an intermediate subfactor  ($N \subseteq P \subseteq M$).
 Let $\alpha =  {_NP_P}$ and $\beta = {_PM_M}$ be algebraic $N$-$P$ and $P$-$M$ bimodules.

\begin{definition} Let $\gamma$ be a $A$-$B$ bimodule, the sub-bimodules of $(\overline{\gamma}\gamma)^n$, $\gamma(\overline{\gamma}\gamma)^n$ or $(\overline{\gamma}\gamma)^n \overline{\gamma}$ with  $n \in \mathbb{N}$, are called the $\gamma$-colored bimodules.
\end{definition}

\begin{definition}[\cite{bj}] The subfactor $(N \subseteq M)$ is a free composition of $N \subseteq P$ and $P \subseteq M$ if the set $\Xi$ of irreducible $P$-$P$ sub-bimodules of $(\beta \overline{\beta}\overline{\alpha}\alpha)^n$, $n \in \mathbb{N}$, is the free product $\Xi_{\alpha} \star \Xi_{\beta}$, with $\Xi_{\gamma}$ the set of  irreducible $\gamma$-colored $P$-$P$ bimodules. 
\end{definition}

\begin{lemma}[\cite{bj}] \label{previous} Let $\xi = {_A\gamma_{1} \otimes_P \gamma_{2} \otimes_P   \cdots  \otimes_P {\gamma_{r}}_B}$ with $\gamma_i$ a non-trivial irreducible $\alpha$ or $\beta$-colored bimodule, with $\gamma_{2i}$ and $\gamma_{2i+1}$ differently colored, and  $A, B \in \{ N,P,M \}$. Then $\xi$ is an irreducible $A$-$B$ bimodule, uniquely determined  by the sequence $(\gamma_1,  \dots  , \gamma_r)$. 
\end{lemma}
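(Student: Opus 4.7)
The plan is to reduce both irreducibility (``$\dim \mathrm{End}(\xi) = 1$'') and uniqueness (distinct alternating sequences produce non-isomorphic bimodules) to a single multiplicity count, and then read off that count from the free product structure of $\Xi$. Concretely, for another such alternating sequence $\xi' = {_A\gamma'_1 \otimes_P \cdots \otimes_P {\gamma'_s}_B}$, iterated Frobenius reciprocity (i.e.\ the $\otimes \gamma \dashv \otimes \overline{\gamma}$ adjunction for dualizable bimodules) gives
\[
\dim \mathrm{Hom}(\xi, \xi') \;=\; \dim \mathrm{Hom}\bigl(\mathbf{1}_{AA},\; \gamma_1 \otimes_P \cdots \otimes_P \gamma_r \otimes_B \overline{\gamma'_s} \otimes_P \cdots \otimes_P \overline{\gamma'_1}\bigr).
\]
So it suffices to show that the trivial $A$-$A$ bimodule occurs as a summand of the right-hand word with multiplicity $1$ when $r = s$ and $\gamma_i \simeq \gamma'_i$ for all $i$, and with multiplicity $0$ otherwise.

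Next I would analyze this word using the free product hypothesis $\Xi = \Xi_\alpha \star \Xi_\beta$. The new word is still $\alpha$/$\beta$-alternating everywhere except possibly at the central junction $\gamma_r \otimes_B \overline{\gamma'_s}$. If $\gamma_r$ and $\gamma'_s$ have different colors at that junction, the word is already reduced (an alternating product of non-trivial irreducibles in $\Xi_\alpha \cup \Xi_\beta$), so by the defining property of free composition its decomposition into irreducibles contains no trivial summand, giving $\mathrm{Hom}(\xi,\xi') = 0$. If they have the same color, I would decompose $\gamma_r \otimes_B \overline{\gamma'_s}$ into monochromatic irreducibles using Lemma \ref{previous}'s colored ambient: the trivial $P$-$P$ bimodule appears there with multiplicity $\delta_{\gamma_r,\gamma'_s}$ by Schur, and any non-trivial summand yields again a reduced alternating word in $\Xi_\alpha \star \Xi_\beta$ contributing $0$. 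So the computation continues only through the ``matching'' branch $\gamma_r = \gamma'_s$, after which the outermost pair is cancelled and one proceeds by induction on $r + s$, peeling off $\gamma_{r-1}, \overline{\gamma'_{s-1}}$ next, and so on.

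This inductive cancellation mirrors word reduction in the free monoid $\Xi_\alpha \star \Xi_\beta$: total cancellation to the empty word (i.e.\ to the trivial bimodule) occurs in exactly one way, and only if $r = s$ and $\gamma_i \simeq \gamma'_i$ for every $i$. Setting $\xi' = \xi$ then yields $\dim \mathrm{End}(\xi) = 1$, proving irreducibility; setting $\xi' \not\simeq \xi$ yields $\mathrm{Hom}(\xi,\xi') = 0$, proving that the isomorphism class of $\xi$ determines, and is determined by, the sequence $(\gamma_1, \dots, \gamma_r)$.

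The main obstacle is the bookkeeping of the reduction step when $\gamma_r$ and $\gamma'_s$ share a color: one must check that non-trivial summands of $\gamma_r \otimes_B \overline{\gamma'_s}$ really do re-fit into the ambient alternating pattern so that the free-product dichotomy (``reduced $\Rightarrow$ no trivial summand'') can be reapplied. This is exactly where the definition of free composition is used; formally it amounts to noting that every irreducible in $(\beta \overline{\beta} \overline{\alpha} \alpha)^n$ corresponds to a reduced word in $\Xi_\alpha \star \Xi_\beta$, so the induction step stays inside the hypothesis of the lemma. Once this is granted the argument is purely combinatorial, and matches the original Bisch--Jones treatment cited as \cite{bj}.
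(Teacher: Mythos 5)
The paper does not actually prove this lemma: it is imported verbatim from Bisch--Jones by the citation \cite{bj}, so there is no in-paper argument to compare yours against. That said, your proof is the standard one and, as far as I can tell, it is essentially the argument of \cite{bj}: Frobenius reciprocity converts both irreducibility and uniqueness into the single count $\dim\mathrm{Hom}(\mathbf{1}_{AA},\,\gamma_1\cdots\gamma_r\overline{\gamma'_s}\cdots\overline{\gamma'_1})$, and the freeness hypothesis forces that count to be computed by word reduction, cancelling matched pairs one at a time with Schur's lemma supplying multiplicity $\delta_{\gamma_i,\gamma'_i}$ at each junction. One point deserves to be made explicit, and it is exactly the bookkeeping issue you flag at the end: the definition of free composition in the paper only asserts $\Xi=\Xi_\alpha\star\Xi_\beta$ for \emph{$P$-$P$} bimodules, so when $A\neq P$ the dichotomy ``reduced alternating word $\Rightarrow$ no trivial summand'' is not literally the hypothesis for the $A$-$A$ words produced by your non-matching branches. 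You need one further application of Frobenius reciprocity at the two outermost letters (absorbing $\gamma_1$ and $\overline{\gamma'_1}$, whose colour is determined by $A$, into a single decomposable $P$-$P$ letter) to reduce those branches to the $P$-$P$ free-product statement; after that the induction closes. With that step spelled out the argument is complete and correct.
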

For simplifying we just write $\xi = \gamma_{1} \gamma_{2}   \cdots  \gamma_{r}$.

\begin{theorem}
If $(N \subseteq M)$ is such a free composition (via $P$ as above) and if $L$ is another intermediate subfactor $N \subseteq L \subseteq M$, then  $N \subseteq L \subseteq P$ or  $P \subseteq L \subseteq M$.
\end{theorem}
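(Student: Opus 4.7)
The plan is to work entirely in the category of $N$-$N$ bimodules, where the free-composition hypothesis gives a combinatorial classification of the irreducibles of ${_N M_N}$ via Lemma \ref{previous}. I would represent the intermediate subfactor $L$ by the sub-$N$-$N$-bimodule ${_N L_N} \subseteq {_N M_N}$, and track the set $A_L$ of irreducible $N$-$N$ sub-bimodules of $M$ appearing in ${_N L_N}$. Two closure properties are immediate from the algebra structure of $L$: (i) $A_L$ is closed under conjugation (since $L$ is $\star$-closed), and (ii) for $X, Y \in A_L$ every irreducible summand of the $N$-$N$ sub-bimodule $X \cdot Y \subseteq L \subseteq M$ lies in $A_L$ (since $L$ is multiplicatively closed).

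First I would classify the irreducibles of ${_N M_N}$. Writing ${_N M_N} = \alpha \otimes_P {_P M_P} \otimes_P \overline{\alpha}$ and noting that ${_P M_P} = \beta \otimes_M \overline{\beta}$ decomposes by the free composition hypothesis (applied to $n=1$) into the trivial $P$-$P$ bimodule plus the non-trivial $\beta$-colored irreducibles $\delta \in \Xi_\beta$, and applying Lemma \ref{previous}, one obtains two families of irreducibles for ${_N M_N}$: type (a), the irreducible $N$-$N$ sub-bimodules of $\alpha \overline{\alpha} \cong {_N P_N}$, which are exactly the sub-bimodules generating $P$; and type (b), the words $\alpha \otimes_P \delta \otimes_P \overline{\alpha}$ for non-trivial $\delta \in \Xi_\beta$, which witness the portion of $M$ strictly above $P$.

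Then I set up the dichotomy. If $A_L$ contains only type (a) irreducibles, then ${_N L_N} \subseteq {_N P_N}$ and thus $N \subseteq L \subseteq P$. Otherwise, pick $\xi = \alpha \delta \overline{\alpha} \in A_L$ of type (b). I would compute the irreducible summands of $\xi \cdot \overline{\xi} \subseteq L$ by analysing the abstract tensor $\xi \otimes_N \overline{\xi} = \alpha \delta (\overline{\alpha}\alpha) \overline{\delta} \overline{\alpha}$: by the free composition assumption, $\overline{\alpha}\alpha$ splits in the $P$-$P$ category into the trivial summand plus non-trivial $\alpha$-colored irreducibles in $\Xi_\alpha$; by Lemma \ref{previous}, the contributions from the non-trivial $\alpha$-colored summands are longer alternating words $\alpha \delta \epsilon \overline{\delta}\overline{\alpha}$, irreducible as $N$-$N$ bimodules in $M_1$ but not sitting inside $M$, and hence they drop out when we pass from the abstract tensor to the multiplicative image inside $L \subseteq M$. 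Only the trivial-$P$ summand of $\overline{\alpha}\alpha$ contributes, yielding $\alpha (\delta \overline{\delta}) \overline{\alpha} \subseteq L$. Since $\delta \overline{\delta}$ contains the trivial $P$-$P$ bimodule (Frobenius reciprocity for irreducibles), it follows that $\alpha \overline{\alpha} \cong {_N P_N}$ is a sub-bimodule of $L$, giving $P \subseteq L \subseteq M$.

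The main obstacle will be justifying the passage from the abstract tensor $\xi \otimes_N \overline{\xi}$ (which lives in $M \otimes_N M \cong M_1$ as $N$-$N$ bimodule) to the multiplicative product $\xi \cdot \overline{\xi} \subseteq L \subseteq M$, and in particular arguing that exactly the type-(a) and type-(b) irreducibles appearing in $M$ survive while the longer alternating words do not. This requires identifying the kernel of the multiplication map $M \otimes_N M \to M$ with the image of the Jones projection, and then invoking the rigidity of the free-product decomposition of Lemma \ref{previous} to conclude that no alternating word of length strictly greater than three can sit inside ${_N M_N}$. Once this multiplication-versus-tensor dictionary is pinned down, the Frobenius-reciprocity step producing the trivial summand in $\delta \overline{\delta}$ is automatic, and the dichotomy closes cleanly.
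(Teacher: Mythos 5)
Your overall strategy runs parallel to the paper's: both reduce to showing that once $L$ contains a ``type (b)'' constituent $\xi=\alpha\delta\overline{\alpha}$, the bimodule $\alpha\overline{\alpha}\cong{_NP_N}$ must already sit inside ${_NL_N}$, and both start from the observation that $\alpha\overline{\alpha}$ is a constituent of $\xi\otimes_N\overline{\xi}$, hence of $L\otimes_NL$. The first branch of your dichotomy is fine (it is the paper's Case 1, once one notes that the decomposition $\rho\overline{\rho}=\alpha\overline{\alpha}\oplus\bigoplus_{i\neq0}n_i\otimes\alpha\xi_i\overline{\alpha}$ forces the type-(a) isotypic components of $M$ to coincide with those of $P$). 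The gap is in the second branch, at the sentence ``Only the trivial-$P$ summand of $\overline{\alpha}\alpha$ contributes, yielding $\alpha(\delta\overline{\delta})\overline{\alpha}\subseteq L$.'' Under the dictionary you invoke ($M\otimes_NM\cong M_1$, with multiplication equal up to a scalar to $E_M$, the orthogonal projection of $M_1$ onto $M$), each irreducible constituent of $\xi\otimes_N\overline{\xi}$ either is annihilated entirely or maps isomorphically into $M$. The rigidity of Lemma \ref{previous} does kill the long words $\alpha\delta\epsilon\overline{\delta}\overline{\alpha}$, since they are not isomorphic to any constituent of ${_NM_N}$. But this says nothing about the summand $\alpha\overline{\alpha}$: its $\eta_j$-isotypic part is an $m_j$-dimensional subspace of the (generally much larger) $\eta_j$-isotypic component of $M_1$, and nothing in your argument prevents it from lying in $M_1\ominus M$, i.e.\ from being annihilated by multiplication. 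Knowing which constituents of the tensor square \emph{cannot} survive in $\xi\cdot\overline{\xi}\subseteq M$ does not tell you which ones \emph{do}; the Frobenius-reciprocity appearance of the trivial summand in $\delta\overline{\delta}$ lives in the tensor square and does not descend ``automatically'' to the product.

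This is exactly the point where the paper has to work. Having obtained $\alpha\overline{\alpha}\le(\lambda\overline{\lambda})^2$ with $\lambda={_NL_L}$ (your $\xi\otimes_N\overline{\xi}$ computation in disguise), it still must exclude the possibility that a constituent $\eta_{j_0}$ of $\alpha\overline{\alpha}$ occurs at depth $4$ rather than depth $\le 2$ in the principal graph of $N\subseteq L$; this is done by a separate combinatorial argument, namely that there is no square $[\rho,\eta_i,\zeta,\alpha\xi_j\overline{\alpha}]$ through a depth-$3$ vertex $\zeta$ in the principal graph of $N\subseteq M$ --- again an application of the uniqueness of alternating words, but at a genuinely different place. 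To complete your proof you must supply an argument of this kind showing that the image of the $\alpha\overline{\alpha}$-part of $\xi\otimes_N\overline{\xi}$ under multiplication is nonzero and in fact exhausts the type-(a) isotypic component of $M$; as it stands, that step is asserted rather than proved.
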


\begin{proof}
Let $L$ be an intermediate subfactor $N \subseteq L \subseteq M$. Let $\lambda =  {_NL_L}$ and $\mu = {_LM_M}$, then $\lambda \mu = \alpha \beta = {_NM_M} := \rho $.  
Now,  $\alpha \overline{\alpha} = \bigoplus m_i \otimes \eta_i $ (with $m_i$ the multiplicity space of the irreducible $N$-$N$-bimodule $\eta_i$, and $\eta_0 = id$), $\beta \overline{\beta} = \bigoplus n_i \otimes \xi_i $, and $\rho \overline{\rho}= \alpha \beta \overline{\beta} \overline{\alpha}= \alpha \overline{\alpha} \oplus \bigoplus_{i \neq 0} n_i \otimes \alpha  \xi_i \overline{\alpha}$. By Lemma \ref{previous} $\alpha  \xi_i \overline{\alpha}$ ($i \neq 0$) is an irreducible (uniquely determined) $N$-$N$ bimodule, so that the depth $2$ vertices in the principal graph $\Gamma_{\rho}$ of $N \subseteq M$ are exactly $\alpha  \xi_i \overline{\alpha}$ and $\eta_j	$ for $i,j \neq 0$. Now we see that $\alpha \overline{\alpha} \le \rho \overline{\rho}$ and  idem $\lambda \overline{\lambda} \le \rho \overline{\rho}$.  \newline  
Case $1$:  $\lambda \overline{\lambda} \le \alpha \overline{\alpha}$ then $L \subseteq P$ because $\lambda \overline{\lambda}={_NL_N}$ and  $\alpha \overline{\alpha}={_NP_N}$.  \newline  
Case $2$: $\lambda \overline{\lambda} \not \le \alpha \overline{\alpha}$. We will prove that then $\alpha \overline{\alpha}  \le \lambda \overline{\lambda}$, so that $P \subseteq L$.  \newline  
By assumption, $\exists i_0 \neq 0$ such that $\alpha \xi_{i_0} \overline{\alpha} \le \lambda \overline{\lambda}$, then $\overline{\alpha \xi_{i_0} \overline{\alpha}} =  \alpha \overline{\xi_{i_0}} \overline{\alpha} \le \lambda \overline{\lambda}$ too, so $$(\lambda \overline{\lambda})^2 \ge \alpha \overline{\xi_{i_0}} \overline{\alpha} \alpha \xi_{i_0} \overline{\alpha} \ge \alpha \overline{\xi_{i_0}} \xi_{i_0} \overline{\alpha} \ge \alpha \overline{\alpha}$$
We now show that in $\Gamma_{\rho}$ there is no square $[\rho, \eta_i, \zeta , \alpha \xi_j \overline{\alpha}]$ with $i,j \neq 0$ and $\zeta$ a depth $3$ object.  We suppose that such a $\zeta$ exists, then, $\zeta \le \eta_i \alpha \beta$ and $\zeta \le  \alpha \xi_j \overline{\alpha} \alpha \beta$. So on one hand, $\zeta = \nu \beta$ with $\nu$ an $\alpha$-colored irreducible $N$-$P$ bimodule, and on the other hand, $\zeta = \alpha \xi_i \eta_j \beta$ ($j \neq 0$) or $\alpha \gamma$ with $\gamma$ a $\beta$-colored irreducible $P$-$M$ bimodule. But $\nu \beta \neq \alpha \xi_i \eta_j \beta$  by Lemma \ref{previous}, and also $\nu \beta \neq \alpha \gamma$ (because else $\zeta = \alpha \beta$, which is not possible because $\zeta$ is depth $3$).  The non-existence of the previous square follows.   \newline
Thanks to $\alpha \overline{\alpha} \le (\lambda \overline{\lambda})^2$ the sub-objects of $\alpha \overline{\alpha}$ appear at depth $0$, $2$ or $4$ in the principal graph $\Gamma_{\lambda}$ of $N \subseteq L$. If there exists such a sub-object $\eta_{j_0}$ at depth $4$ in $\Gamma_{\lambda}$, then $\eta_{j_0}$  and $\alpha \xi_{i_0} \overline{\alpha}$ (both depth $2$ in $\Gamma_{\rho}$) would be related via a depth $3$ object in $\Gamma_{\rho}$ (because $\eta_{j_0} \le \alpha \xi_{i_0} \overline{\alpha} \rho \overline{\rho}$), which is impossible by the non-existence of the previous square. It follows that the sub-objects of $\alpha \overline{\alpha}$ appear just at depth $0$ or $2$ in $\Gamma_{\lambda}$, i.e. $\alpha \overline{\alpha} \le \lambda \overline{\lambda}$ and $P \subseteq L$.   
\end{proof}

\section{Acknowledgments} Thanks to Vaughan Jones for being the first to believe in my cyclic subfactors theory, in July 2012, and to encourage me to develop it. Thanks to Antony Wassermann for initiating me to the von Neumann algebras and subfactors theory as PhD advisor. Thanks to Marie-Claude David, Nicolas Thiery and Leonid Vainerman for hosting, advices, exchanges and encouragement. Thanks to Dietmar Bisch and Vaughan Jones for hosting me one month at the mathematics department of Vanderbilt. Thanks to Shamindra Ghosh and Paramita Das for advising me a postdoc in IMSc. I am grateful to Dietmar Bisch, David Evans, Vaughan Jones and Scott Morrison for having written recommendation letters for me to get this postdoc at the IMSc which supports this research. I am grateful to Vijay Kodiyalam and Viakalathur Shankar Sunder for hosting me at the IMSc, for permitting me to give a lot of talks on my work and for useful advices and fruitful discussions. A special thanks to Zhengwei Liu for introducing me to several aspects of the planar algebras theory, for having translated my initial problem in the planar algebra framework, for regular fruitful exchanges and for useful comments on my work. Thanks to David Penneys for encouraging me to find applications of my result. Thanks to Emily Peters for allowing me to use her diagrams in Section \ref{planar}. Thanks to Mamta Balodi for having read back with me this paper in detail. Thanks also to Keshab Chandra Bakshi, Jyotishman Bhowmick, Marcel Bischoff, Arnaud Brothier, Shamindra Ghosh, Andre Henriquez, Derek Holt, Alexander Hulpke, Masaki Izumi, Scott Morrison, Issan Patri, David Penneys, Emily Peters, John Shareshian, Ajit Iqbal Singh, Jack Schmidt, Noah Snyder, Richard Stanley and Feng Xu, for useful advices and exchanges. Finally, thanks to mathoverflow and all its users, for hugely many exchanges.

\begin{bibdiv}
\begin{biblist}
\bib{asha}{article}{
   author={Asaeda, M.},
   author={Haagerup, U.},
   title={Exotic subfactors of finite depth with Jones indices
   $(5+\sqrt{13})/2$ and $(5+\sqrt{17})/2$},
   journal={Comm. Math. Phys.},
   volume={202},
   date={1999},
   number={1},
   pages={1--63},
   issn={0010-3616},
   review={\MR{1686551 (2000c:46120)}},
   doi={10.1007/s002200050574},
}
\bib{asch}{article}{
   author={Aschbacher, Michael},
   title={Overgroup lattices in finite groups of Lie type containing a
   parabolic},
   journal={J. Algebra},
   volume={382},
   date={2013},
   pages={71--99},
   issn={0021-8693},
   review={\MR{3034474}},
   doi={10.1016/j.jalgebra.2013.01.034},
}
\bib{bl}{article}{
   author={Bhattacharyya, Bina}, author={Landau, Zeph},
   title={Intermediate Standard Invariants and Intermediate Planar Algebras},
   note={Preprint, submitted to J. Funct. Anal.}
}
\bib{bi}{article}{
   author={Bisch, Dietmar},
   title={A note on intermediate subfactors},
   journal={Pacific J. Math.},
   volume={163},
   date={1994},
   number={2},
   pages={201--216},
   issn={0030-8730},
   review={\MR{1262294 (95c:46105)}},
   doi={10.2140/pjm.1994.163.201}
}
\bib{bj}{article}{
   author={Bisch, Dietmar},
   author={Jones, Vaughan},
   title={A note on free composition of subfactors},
   conference={
      title={Geometry and physics},
      address={Aarhus},
      date={1995},
   },
   book={
      series={Lecture Notes in Pure and Appl. Math.},
      volume={184},
      publisher={Dekker, New York},
   },
   date={1997},
   pages={339--361},
   review={\MR{1423180 (97m:46102)}},
}
\bib{bjunp}{article}{
   author={Bisch, Dietmar},
   author={Jones, Vaughan},
   title={The free product of planar algebras, and subfactors},
   note={unpublished},
}
\bib{dako}{article}{
   author={Das, Paramita},
   author={Kodiyalam, Vijay},
   title={Planar algebras and the Ocneanu-Szyma\'nski theorem},
   journal={Proc. Amer. Math. Soc.},
   volume={133},
   date={2005},
   number={9},
   pages={2751--2759 (electronic)},
   issn={0002-9939},
   review={\MR{2146224 (2006b:46084)}},
   doi={10.1090/S0002-9939-05-07789-0},
}
\bib{david}{article}{
   author={David, Marie-Claude},
   title={Paragroupe d'Adrian Ocneanu et alg\`ebre de Kac},
   language={French},
   journal={Pacific J. Math.},
   volume={172},
   date={1996},
   number={2},
   pages={331--363},
   issn={0030-8730},
   review={\MR{1386622 (98f:46051)}},
}
\bib{gr}{book}{
   author={Gr{\"a}tzer, George},
   title={Lattice theory: foundation},
   publisher={Birkh\"auser/Springer Basel AG, Basel},
   date={2011},
   pages={xxx+613},
   isbn={978-3-0348-0017-4},
   review={\MR{2768581 (2012f:06001)}},
   doi={10.1007/978-3-0348-0018-1},
}
\bib{gjs}{article}{
   author={Guionnet, A.},
   author={Jones, V. F. R.},
   author={Shlyakhtenko, D.},
   title={Random matrices, free probability, planar algebras and subfactors},
   conference={
      title={Quanta of maths},
   },
   book={
      series={Clay Math. Proc.},
      volume={11},
      publisher={Amer. Math. Soc., Providence, RI},
   },
   date={2010},
   pages={201--239},
   review={\MR{2732052 (2012g:46094)}},
}
\bib{izha}{article}{
   author={Izumi, Masaki},
   title={The structure of sectors associated with Longo-Rehren inclusions.
   II. Examples},
   journal={Rev. Math. Phys.},
   volume={13},
   date={2001},
   number={5},
   pages={603--674},
   issn={0129-055X},
   review={\MR{1832764 (2002k:46161)}},
   doi={10.1142/S0129055X01000818},
}
\bib{iz}{article}{
   author={Izumi, Masaki},
   title={Characterization of isomorphic group-subgroup subfactors},
   journal={Int. Math. Res. Not.},
   date={2002},
   number={34},
   pages={1791--1803},
   issn={1073-7928},
   review={\MR{1920326 (2003f:46100)}},
   doi={10.1155/S107379280220402X},
}
\bib{ilp}{article}{
   author={Izumi, Masaki},
   author={Longo, Roberto},
   author={Popa, Sorin},
   title={A Galois correspondence for compact groups of automorphisms of von
   Neumann algebras with a generalization to Kac algebras},
   journal={J. Funct. Anal.},
   volume={155},
   date={1998},
   number={1},
   pages={25--63},
   issn={0022-1236},
   review={\MR{1622812 (2000c:46117)}},
   doi={10.1006/jfan.1997.3228},
}
\bib{jo}{article}{
   author={Jones, Vaughan F. R.},
   title={Actions of finite groups on the hyperfinite type ${\rm II}_{1}$\
   factor},
   journal={Mem. Amer. Math. Soc.},
   volume={28},
   date={1980},
   number={237},
   pages={v+70},
   issn={0065-9266},
   review={\MR{587749 (81m:46094)}},
   doi={10.1090/memo/0237},
}
\bib{jo2}{article}{
   author={Jones, Vaughan F. R.},
   title={Index for subfactors},
   journal={Invent. Math.},
   volume={72},
   date={1983},
   number={1},
   pages={1--25},
   issn={0020-9910},
   review={\MR{696688 (84d:46097)}},
   doi={10.1007/BF01389127},
}
\bib{jo3}{article}{
   author={Jones, Vaughan F. R.},
   title={A converse to Ocneanu's theorem},
   journal={J. Operator Theory},
   volume={10},
   date={1983},
   number={1},
   pages={61--63},
   issn={0379-4024},
   review={\MR{715556 (84m:46086)}},
}
\bib{jo4}{article}{
   author={Jones, Vaughan F. R.},
   title={Planar algebras, I},
   date={1999},
   pages={122pp},
   journal={arXiv:math/9909027}, 
   note={to appear in New Zealand Journal of Mathematics},
}
\bib{joalg}{article}{
   author={Jones, Vaughan F. R.},
   title={Two subfactors and the algebraic decomposition of bimodules over
   $\rm II_1$ factors},
   journal={Acta Math. Vietnam.},
   volume={33},
   date={2008},
   number={3},
   pages={209--218},
   issn={0251-4184},
   review={\MR{2501843 (2010f:46095)}},
}
\bib{js}{book}{
   author={Jones, Vaughan F. R.},
   author={Sunder, V. S.},
   title={Introduction to subfactors},
   series={London Mathematical Society Lecture Note Series},
   volume={234},
   publisher={Cambridge University Press, Cambridge},
   date={1997},
   pages={xii+162},
   isbn={0-521-58420-5},
   review={\MR{1473221 (98h:46067)}},
   doi={10.1017/CBO9780511566219},
}
\bib{jms}{article}{
   author={Jones, Vaughan F. R.},
   author={Morrison, Scott},
   author={Snyder, Noah},
   title={The classification of subfactors of index at most 5},
   journal={Bull. Amer. Math. Soc. (N.S.)},
   volume={51},
   date={2014},
   number={2},
   pages={277--327},
   issn={0273-0979},
   review={\MR{3166042}},
   doi={10.1090/S0273-0979-2013-01442-3},
}
\bib{kls}{article}{
   author={Kodiyalam, Vijay},
   author={Landau, Zeph},
   author={Sunder, V. S.},
   title={The planar algebra associated to a Kac algebra},
   note={Functional analysis (Kolkata, 2001)},
   journal={Proc. Indian Acad. Sci. Math. Sci.},
   volume={113},
   date={2003},
   number={1},
   pages={15--51},
   issn={0253-4142},
   review={\MR{1971553 (2004d:46075)}},
   doi={10.1007/BF02829677},
}
\bib{sk}{article}{
   author={Kodiyalam, Vijay},
   author={Sunder, V. S.},
   title={The subgroup-subfactor},
   journal={Math. Scand.},
   volume={86},
   date={2000},
   number={1},
   pages={45--74},
   issn={0025-5521},
   review={\MR{1738515 (2001b:46103)}},
}
\bib{sk2}{article}{
   author={Kodiyalam, Vijay},
   author={Sunder, V. S.},
   title={On Jones' planar algebras},
   journal={J. Knot Theory Ramifications},
   volume={13},
   date={2004},
   number={2},
   pages={219--247},
   issn={0218-2165},
   review={\MR{2047470 (2005e:46119)}},
   doi={10.1142/S021821650400310X},
}
\bib{sk3}{article}{
   author={Kodiyalam, Vijay},
   author={Sunder, V. S.},
   title={From subfactor planar algebras to subfactors},
   journal={Internat. J. Math.},
   volume={20},
   date={2009},
   number={10},
   pages={1207--1231},
   issn={0129-167X},
   review={\MR{2574313 (2011d:46129)}},
   doi={10.1142/S0129167X0900573X},
}
\bib{la0}{book}{
   author={Landau, Zeph A.},
   title={Intermediate subfactors},
   note={Thesis (Ph.D.)--University of California at Berkeley},
   date={1998},
   pages={132},
}
\bib{la}{article}{
   author={Landau, Zeph A.},
   title={Exchange relation planar algebras},
   booktitle={Proceedings of the Conference on Geometric and Combinatorial
   Group Theory, Part II (Haifa, 2000)},
   journal={Geom. Dedicata},
   volume={95},
   date={2002},
   pages={183--214},
   issn={0046-5755},
   review={\MR{1950890 (2003k:46091)}},
   doi={10.1023/A:1021296230310},
}
\bib{li}{article}{
   author={Liu, Zhengwei},
   title={Exchange relation planar algebras of small rank},
   date={2013},
   pages={35pp},
   journal={arXiv:1308.5656}, 
   note={to appear in Trans. AMS.},
}

\bib{lonka}{article}{
   author={Longo, Roberto},
   title={A duality for Hopf algebras and for subfactors. I},
   journal={Comm. Math. Phys.},
   volume={159},
   date={1994},
   number={1},
   pages={133--150},
   issn={0010-3616},
   review={\MR{1257245 (95h:46097)}},
}

\bib{lupa}{article}{
   author={Luk{\'a}cs, E.},
   author={P{\'a}lfy, P. P.},
   title={Modularity of the subgroup lattice of a direct square},
   journal={Arch. Math. (Basel)},
   volume={46},
   date={1986},
   number={1},
   pages={18--19},
   issn={0003-889X},
   review={\MR{829806 (87d:20031)}},
   doi={10.1007/BF01197131},
}
\bib{nk}{article}{
   author={Nakamura, Masahiro},
   author={Takeda, Zir{\^o}},
   title={On the fundamental theorem of the Galois theory for finite
   factors. },
   journal={Proc. Japan Acad.},
   volume={36},
   date={1960},
   pages={313--318},
   issn={0021-4280},
   review={\MR{0123926 (23 \#A1247)}},
}
\bib{oc}{book}{
   author={Ocneanu, Adrian},
   title={Actions of discrete amenable groups on von Neumann algebras},
   series={Lecture Notes in Mathematics},
   volume={1138},
   publisher={Springer-Verlag, Berlin},
   date={1985},
   pages={iv+115},
   isbn={3-540-15663-1},
   review={\MR{807949 (87e:46091)}},
}
\bib{or}{article}{
   author={Ore, Oystein},
   title={Structures and group theory. II},
   journal={Duke Math. J.},
   volume={4},
   date={1938},
   number={2},
   pages={247--269},
   issn={0012-7094},
   review={\MR{1546048}},
   doi={10.1215/S0012-7094-38-00419-3},
}
\bib{palfy}{article}{
   author={P{\'a}lfy, P{\'e}ter P.},
   title={Groups and lattices},
   conference={
      title={Groups St. Andrews 2001 in Oxford. Vol. II},
   },
   book={
      series={London Math. Soc. Lecture Note Ser.},
      volume={305},
      publisher={Cambridge Univ. Press, Cambridge},
   },
   date={2003},
   pages={428--454},
   review={\MR{2051548 (2005b:20051)}},
   doi={10.1017/CBO9780511542787.014},
}
\bib{ep}{article}{
   author={Peters, Emily},
   title={A planar algebra construction of the Haagerup subfactor},
   journal={Internat. J. Math.},
   volume={21},
   date={2010},
   number={8},
   pages={987--1045},
   issn={0129-167X},
   review={\MR{2679382 (2011i:46077)}},
   doi={10.1142/S0129167X10006380},
}
\bib{po}{article}{
   author={Popa, Sorin},
   title={Classification of amenable subfactors of type II},
   journal={Acta Math.},
   volume={172},
   date={1994},
   number={2},
   pages={163--255},
   issn={0001-5962},
   review={\MR{1278111 (95f:46105)}},
   doi={10.1007/BF02392646},
}
\bib{po2}{article}{
   author={Popa, Sorin},
   title={An axiomatization of the lattice of higher relative commutants of
   a subfactor},
   journal={Invent. Math.},
   volume={120},
   date={1995},
   number={3},
   pages={427--445},
   issn={0020-9910},
   review={\MR{1334479 (96g:46051)}},
   doi={10.1007/BF01241137},
}
\bib{sc}{book}{
   author={Schmidt, Roland},
   title={Subgroup lattices of groups},
   series={de Gruyter Expositions in Mathematics},
   volume={14},
   publisher={Walter de Gruyter \& Co., Berlin},
   date={1994},
   pages={xvi+572},
   isbn={3-11-011213-2},
   review={\MR{1292462 (95m:20028)}},
   doi={10.1515/9783110868647},
}
\bib{sta}{book}{
   author={Stanley, Richard P.},
   title={Enumerative combinatorics. Volume 1},
   series={Cambridge Studies in Advanced Mathematics},
   volume={49},
   edition={2},
   publisher={Cambridge University Press, Cambridge},
   date={2012},
   pages={xiv+626},
   isbn={978-1-107-60262-5},
   review={\MR{2868112}},
}
\bib{sw}{article}{
   author={Sunder, V. S.},
   author={Wildberger, N. J.},
   title={Actions of finite hypergroups},
   journal={J. Algebraic Combin.},
   volume={18},
   date={2003},
   number={2},
   pages={135--151},
   issn={0925-9899},
   review={\MR{2002621 (2004g:20092)}},
   doi={10.1023/A:1025107014451},
}
\bib{su}{article}{
   author={Suzuki, Noboru},
   title={A linear representation of a countably infinite group},
   journal={Proc. Japan Acad},
   volume={34},
   date={1958},
   pages={575--579},
   issn={0021-4280},
   review={\MR{0100805 (20 \#7233)}},
}
\bib{szka}{article}{
   author={Szyma{\'n}ski, Wojciech},
   title={Finite index subfactors and Hopf algebra crossed products},
   journal={Proc. Amer. Math. Soc.},
   volume={120},
   date={1994},
   number={2},
   pages={519--528},
   issn={0002-9939},
   review={\MR{1186139 (94d:46061)}},
   doi={10.2307/2159890},
}
\bib{teru}{article}{
   author={Teruya, Tamotsu},
   title={normal intermediate subfactors},
   journal={J. Math. Soc. Japan},
   volume={50},
   date={1998},
   number={2},
   pages={469--490},
   issn={0025-5645},
   review={\MR{1613172 (99e:46080)}},
   doi={10.2969/jmsj/05020469},
}
\bib{xu}{article}{
   author={Xu, Feng},
   title={On a problem about tensor products of subfactors},
   journal={Adv. Math.},
   volume={246},
   date={2013},
   pages={128--143},
   issn={0001-8708},
   review={\MR{3091803}},
   doi={10.1016/j.aim.2013.06.026},
}
\bib{wa}{article}{
   author={Watatani, Yasuo},
   title={Lattices of intermediate subfactors},
   journal={J. Funct. Anal.},
   volume={140},
   date={1996},
   number={2},
   pages={312--334},
   issn={0022-1236},
   review={\MR{1409040 (98c:46134)}},
   doi={10.1006/jfan.1996.0110},
}
\end{biblist}
\end{bibdiv}

\end{document}